 \numberwithin{equation}{section}
\newtheorem{Theorem}{Theorem}[section]
\newtheorem{Lemma}[Theorem]{Lemma}
\newtheorem{Proposition}[Theorem]{Proposition}
\newtheorem{Remark}[Theorem]{Remark}
\newcommand{\R}{\mathbb{R}}
\newcommand{\C}{\mathbb{C}}
\def\Re{\mathop{\mathrm{Re}}}
\newcommand{\rmd}{\mathrm{d}}
\renewcommand{\ker}{\mathrm{Ker}\,}
\newcommand{\ad}{\mathrm{ad}}
\newcommand{\eps}{\varepsilon}
\newcommand{\h}{\mathcal{H}}
\title{Stable planar vegetation stripe patterns on sloped terrain in dryland ecosystems}
\author{Robbin Bastiaansen\thanks{Mathematisch Instituut, Universiteit Leiden, Niels Bohrweg 1, 2333CA Leiden, The Netherlands} \and Paul Carter\thanks{Department of Mathematics, University of Arizona, 617 N Santa Rita Ave, Tucson, AZ 85721, USA} \and Arjen Doelman\footnotemark[1]}
\begin{document}
\maketitle

\begin{abstract}
In water-limited regions, competition for water resources results in the formation of vegetation patterns; on sloped terrain, one finds that the vegetation typically aligns in stripes or arcs. We consider a two-component reaction-diffusion-advection model of Klausmeier type describing the interplay of vegetation and water resources and the resulting dynamics of these patterns. We focus on the large advection limit on constantly sloped terrain, in which the diffusion of water is neglected in favor of advection of water downslope. Planar vegetation pattern solutions are shown to satisfy an associated singularly perturbed traveling wave equation, and we construct a variety of  traveling stripe and front solutions using methods of geometric singular perturbation theory. In contrast to prior studies of similar models, we show that the resulting patterns are spectrally stable to perturbations in two spatial dimensions using exponential dichotomies and Lin's method. We also discuss implications for the appearance of curved stripe patterns on slopes in the absence of terrain curvature.
\end{abstract}

\section{Introduction}

Large parts of earth have an arid climate (deserts) with low mean annual precipitation and little to no vegetation; even larger parts of earth have a semi-arid climate with somewhat more precipitation, which allows (some) vegetation to grow. However, human pressure and global climate change have been turning semi-arid climates into arid climates, with severe consequences for life in these areas~\cite{UNCCDreport, gowda2018signatures}. This so-called desertification process has been studied extensively over the years, from both ecological and mathematical perspectives. These studies have shown the importance and omnipresence of spatial patterning of vegetation, which is widely recognized as the first step in the desertification process~\cite{bastiaansendata, rietkerk2008regular, gowda2018signatures, rietkerk2004self, noy1975stability, may1977thresholds, rietkerk1997site, gowda2014transitions}. On flat ground, the reported patterns are gaps, labyrinths and spots, while on sloped terrain, (curved) banded or striped patterns can form~\cite{von2001diversity, Rietkerk2002, deblauwe2011environmental, gandhi2018influence}; this article is focused on the latter, and in particular the stabilizing effect of terrain slope on striped vegetation patterns.

To understand the formation and dynamics of vegetation patterns in semi-arid climates, many conceptual models have been formulated~\cite{klausmeier1999regular, von2001diversity, Rietkerk2002, gilad2004ecosystem}. All of these dryland models describe the interplay between the available water and the concentration of vegetation, in different levels of detail. The simplest models only have two components: $U$, the water in the system and $V$, the vegetation. These two-component models generally have the following (rescaled) form:
\begin{equation}
	\begin{cases}
		U_t & = D \Delta U + S U_x + a - U - G(U,V)V, \\
		V_t & = \Delta V - m V + R(V)G(U,V)V.
	\end{cases}\label{eq:twoComponentDrylandModel}
\end{equation}
In~\eqref{eq:twoComponentDrylandModel}, the movement of water is modeled as a combined effect of diffusion ($D \Delta U$) and advection ($SU_x$), where $D$ is the diffusion constant and $S$ is a measure for the slope of the terrain. We assume the terrain is constantly sloped, so that uphill corresponds to the positive $x$ direction. The dispersal of plants is described by diffusion ($\Delta V$). The reaction terms describe the change in water due to rainfall ($+a$), evaporation of water ($-U$) and uptake by plants ($-G(U,V)V$). Simultaneously, the change of plant biomass is due to mortality ($-mV$) and plant growth ($R(V)G(U,V)$). 

In this formulation, $G$ and $R$ are functions that describe, respectively, the amount of water that is taken up by the plant's roots and the density-dependent growth rate of the vegetation. Because the presence of vegetation increases the soil's permeability, $G$ is typically assumed to increase with both $U$ and $V$. The conversion rate $R$ is decreasing with $V$ and for a specific $V^* > 0$ we have $R(V^*) = 0$. This value, $V^*$, is called the carrying capacity of the system and describes the total concentration of vegetation that can be supported at a certain location. In light of these ecological intuitions, one expects that the function $R(V)G(U,V)$ should take the from as depicted in Figure~\ref{fig:CGform} (for fixed $U$). A simple choice which satisfies these constraints is given by $R(V) = 1 - b V$ and $G(U,V) = UV$, where $1/b$ is the carrying capacity. For clarity of presentation, we fix this choice for the remainder of this paper; however, we emphasize that, with minor modifications, the following analysis can be shown to hold for a different choice of the functions $R$ and/or $G$ which take the same qualitative form. 
\begin{figure}
	\centering
	\includegraphics[width = 0.5\textwidth]{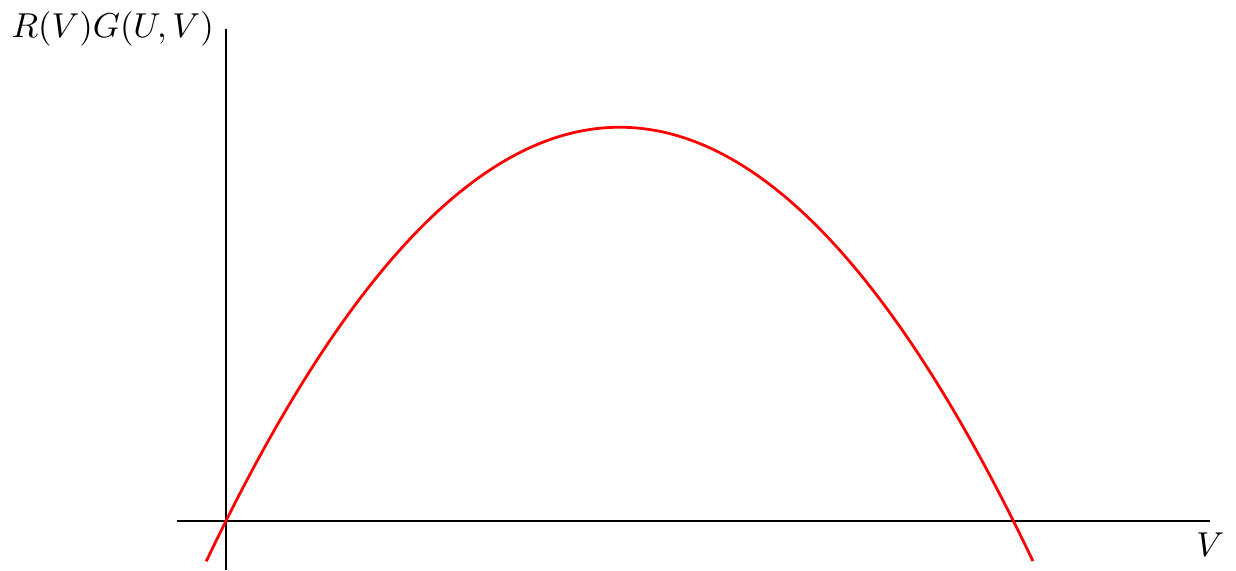}
	\caption{The qualitative form of $R(V)G(U,V)$ for fixed $U$ based on ecological intuition of dryland ecosystems.}
	\label{fig:CGform}
\end{figure}

Finally, in~\eqref{eq:twoComponentDrylandModel}, the displacement of water is modeled as a combined effect of diffusion and advection. However, in reality banded patterns are mainly observed on sloping grounds, where movement of water is dominated by the downhill flow and diffusive motion is of lesser importance~\cite{von2001diversity, Rietkerk2002, deblauwe2011environmental, gandhi2018influence}. Note that this agrees with recent studies on ecosystem models that show banded vegetation is unstable against lateral perturbations of sufficiently small wavenumber when diffusion is large enough (i.e. $D$ large enough compared to $S$)~\cite{siero2015striped, sewaltspatially}. Therefore, as a first step, we ignore the diffusion of water completely (as in~\cite{klausmeier1999regular}) and set $D = 0$. Moreover, due to the separation of scales between movement of water and dispersion of vegetation, we take $S = \frac{1}{\varepsilon}$, where $0 < \varepsilon \ll 1$ is a small parameter.

To summarize, the dryland model we consider in this article is given by
\begin{equation}
	\begin{cases}
		U_t & = \frac{1}{\varepsilon} U_x + a - U - G(U,V)V, \\
		V_t & = \Delta V - m V + R(V)G(U,V)V,
	\end{cases}\label{eq:modKlausmeier}
\end{equation}
where $a, m, b > 0$, $0 < \varepsilon \ll 1$ and the functions $R$ and $G$ are given by
\begin{equation}
	\begin{split}
		G(U,V) & = UV, \qquad R(V)  = 1 - b V
	\end{split}
\end{equation}

\begin{Remark}\label{rem:b0klaus}
Noteworthy, one of the first dryland ecosystem models, by Klausmeier~\cite{klausmeier1999regular}, takes $G(U,V) = UV$ and $R(V) = 1$. This corresponds to the assumption of infinite carrying capacity, and taking $b = 0$ in our formulation. Therefore in the limit $b \downarrow 0$ our model is the original Klausmeier model, and our model can thus be seen as a modified Klausmeier model. We emphasize, however, that the results in this article hold only for $b>0$. The limiting case $b=0$ turns out to be highly degenerate (see Remark~\ref{rem:b0degenerate}) and requires additional technical considerations; this is analyzed in detail in~\cite{CDklausmeier}. 
\end{Remark}

The model~\eqref{eq:modKlausmeier} admits a spatially homogeneous steady state
\begin{equation}\label{eq:desertState}
	(U,V) = (U_0,V_0) = (a,0),
\end{equation}
corresponding to the desert-state of the system. When $\frac{a}{m} > 2 \left(b + \sqrt{1+b^2}\right)$ there are also two additional vegetated steady state solutions, $(U_1,V_1)$ and $(U_2,V_2)$, where
\begin{equation}
\begin{split}
	U_{1,2} & = m \left( \frac{a}{m} - \frac{V_{1,2}}{1 - b V_{1,2}} \right) = m\ \frac{ \frac{a}{m} + 2 \frac{a}{m} b^2 + 2b \pm  \sqrt{(\frac{a}{m})^2 - 4 \left(1+ \frac{a}{m} b\right)}}{2(1+b^2)} ; \\
	V_{1,2} & = \frac{\frac{a}{m} \mp \sqrt{(\frac{a}{m})^2 - 4 \left(1+ \frac{a}{m} b\right)}}{2\left(1+\frac{a}{m}b\right)}.
\end{split}\label{eq:uniformSteadyStates}
\end{equation}
For $\frac{a}{m} = 2\left(b + \sqrt{1+b^2}\right)$ these two steady states coincide. The desert state, $(U_0,V_0)$ is stable against all homogeneous perturbations; the first vegetated state, $(U_1,V_1)$, is unstable against these perturbations and the last steady state, $(U_2,V_2)$, is stable if $V_2 > \frac{1}{2b}$ -- see Appendix~\ref{sec:stabilitySteadyStates}. The condition $V_2 > \frac{1}{2b}$, corresponding to $\frac{a}{m} > 4b + \frac{1}{b}$, is not strict; however the following analysis of banded vegetation patterns nonetheless restrict our results to this region.

\begin{Remark}
	Ecologically, the parameter $a$ is a measure for the rainfall and $m$ for the mortality of plants. Therefore, $\frac{a}{m}$ is a natural measure for the amount of resources needed for vegetation (patterns) to exist: if $m$ is large, vegetation dies faster and more water is needed to maintain vegetation; when $m$ is small, plants die slowly and less water is needed. Hence, $\frac{a}{m}$ is a natural bifurcation parameter. Also note that $\frac{a}{m}$ usually is taken as a small bifurcation parameter in studies of the extended-Klausmeier or generalized Klausmeier-Gray-Scott systems~\cite{van2013rise, BD2018, sewaltspatially, doelman2000slowly}.
\end{Remark}

In this article we aim to study patterned solutions to~\eqref{eq:modKlausmeier}, which arise as traveling wave solutions to~\eqref{eq:modKlausmeier}. We assume these waves travel in the uphill direction, and we define the traveling wave coordinate $\xi := x - ct$, where $c$ is the movement speed. Moreover, we set $(U,V)(x,y,t) = (u,v)(\xi,y,t)$, which results in the equation
\begin{equation}
	\begin{cases}
		u_t & = \frac{1}{\varepsilon} u_\xi + c u_\xi + a - u - G(u,v)v, \\
		v_t & = (\partial_\xi^2+\partial_y^2) v + c v_\xi - m v + R(v)G(u,v)v.
	\end{cases}\label{eq:TWPDE}
\end{equation}
Stationary solutions to~\eqref{eq:TWPDE} which are constant in $y$ correspond to traveling wave solutions of~\eqref{eq:modKlausmeier}; these solutions satisfy the first order traveling wave ODE
\begin{align}
\begin{cases}\label{eq:twode}
u_\xi&=\frac{\eps}{1+\eps c}\left(u-a+G(u,v)v\right),\\
v_\xi&=q,\\
q_\xi &=mv-R(v)G(u,v)v-cq. 
\end{cases}
\end{align}
This equation has an equilibrium at $(u,v,q)=(a,0,0)$ which represents the homogeneous desert state $(U_0,V_0)$ of~\eqref{eq:modKlausmeier}. There are two additional equilibrium points at $(u,v,q) = (u_{1,2},v_{1,2},0)$ corresponding to the other homogeneous steady states $(U_{1,2},V_{1,2})$ of~\eqref{eq:modKlausmeier}.

Based on the parameters of the model, several different patterned solutions to~\eqref{eq:modKlausmeier} can emerge that correspond to homoclinic or heteroclinic orbits of~\eqref{eq:twode}. Single vegetation stripe patterns occur as orbits that are homoclinic to the desert state. Similarly, vegetation gap patterns occur as orbits that are homoclinic to the vegetated state $(u_2,v_2,0)$. Besides these, there are also heteroclinic connections between the vegetated state and the desert state (and vice-versa) that represent transitions, or infiltration waves, between these uniform stationary states. Plots of these patterned solutions are shown in Figure~\ref{fig:numericalPatterns}.

\begin{figure}
	\centering
		\begin{subfigure}[t]{0.23\textwidth}
			\centering
			\includegraphics[width=\textwidth]{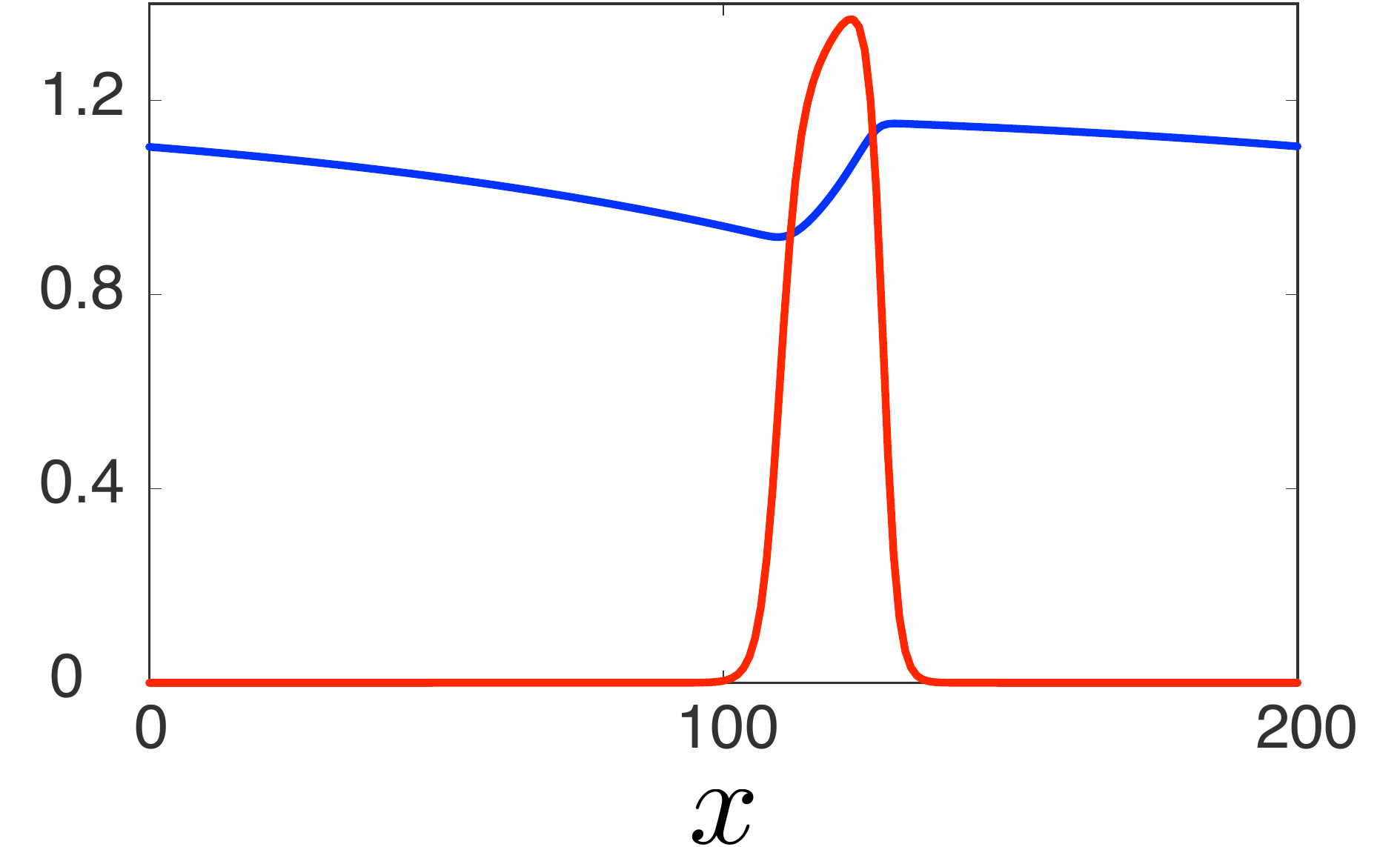}
			\caption{Vegetation stripe}
		\end{subfigure}
~
		\begin{subfigure}[t]{0.23\textwidth}
			\centering
			\includegraphics[width=\textwidth]{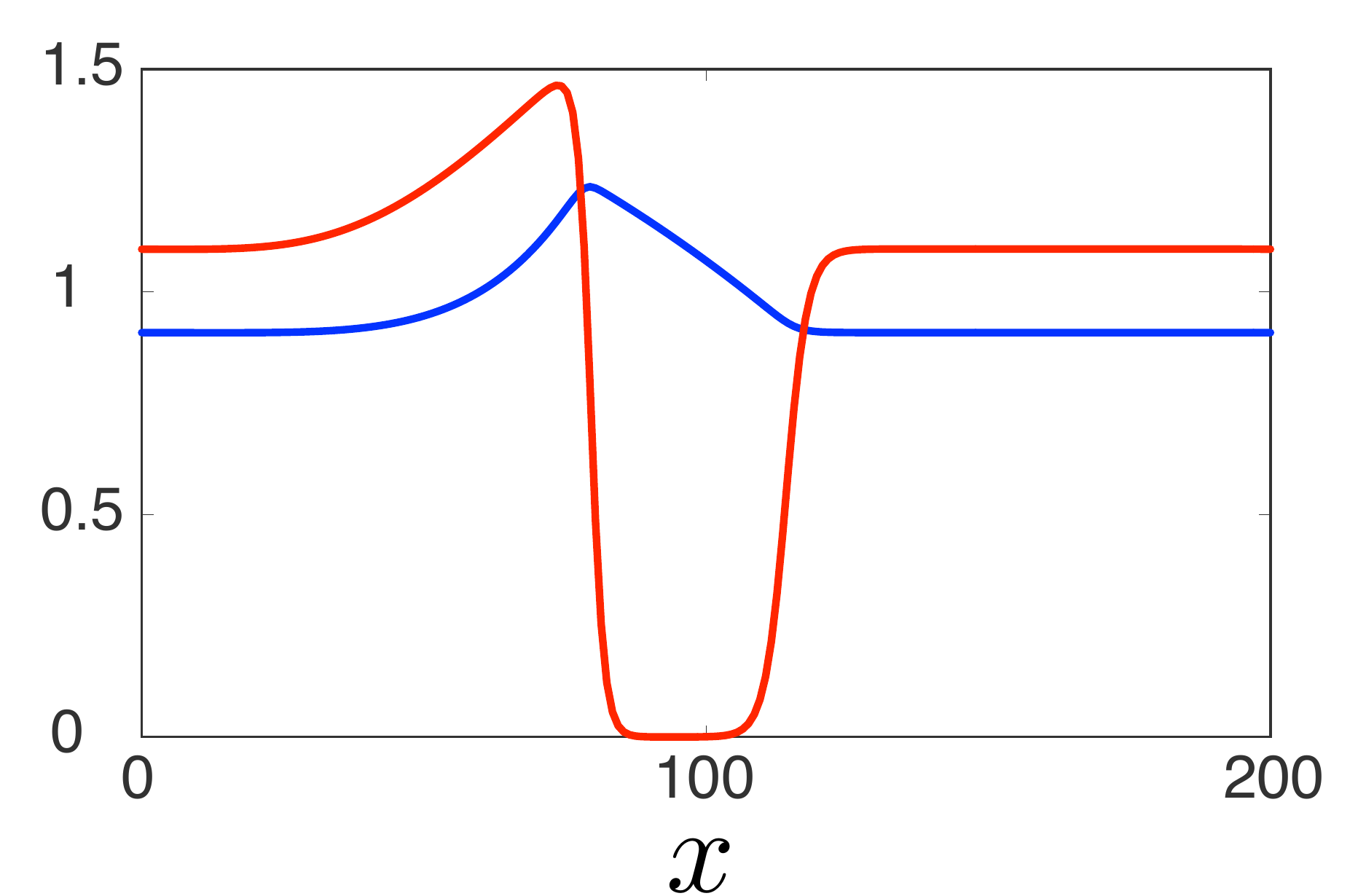}
			\caption{Vegetation gap}
		\end{subfigure}
~		
		\begin{subfigure}[t]{0.23\textwidth}
			\centering
			\includegraphics[width=\textwidth]{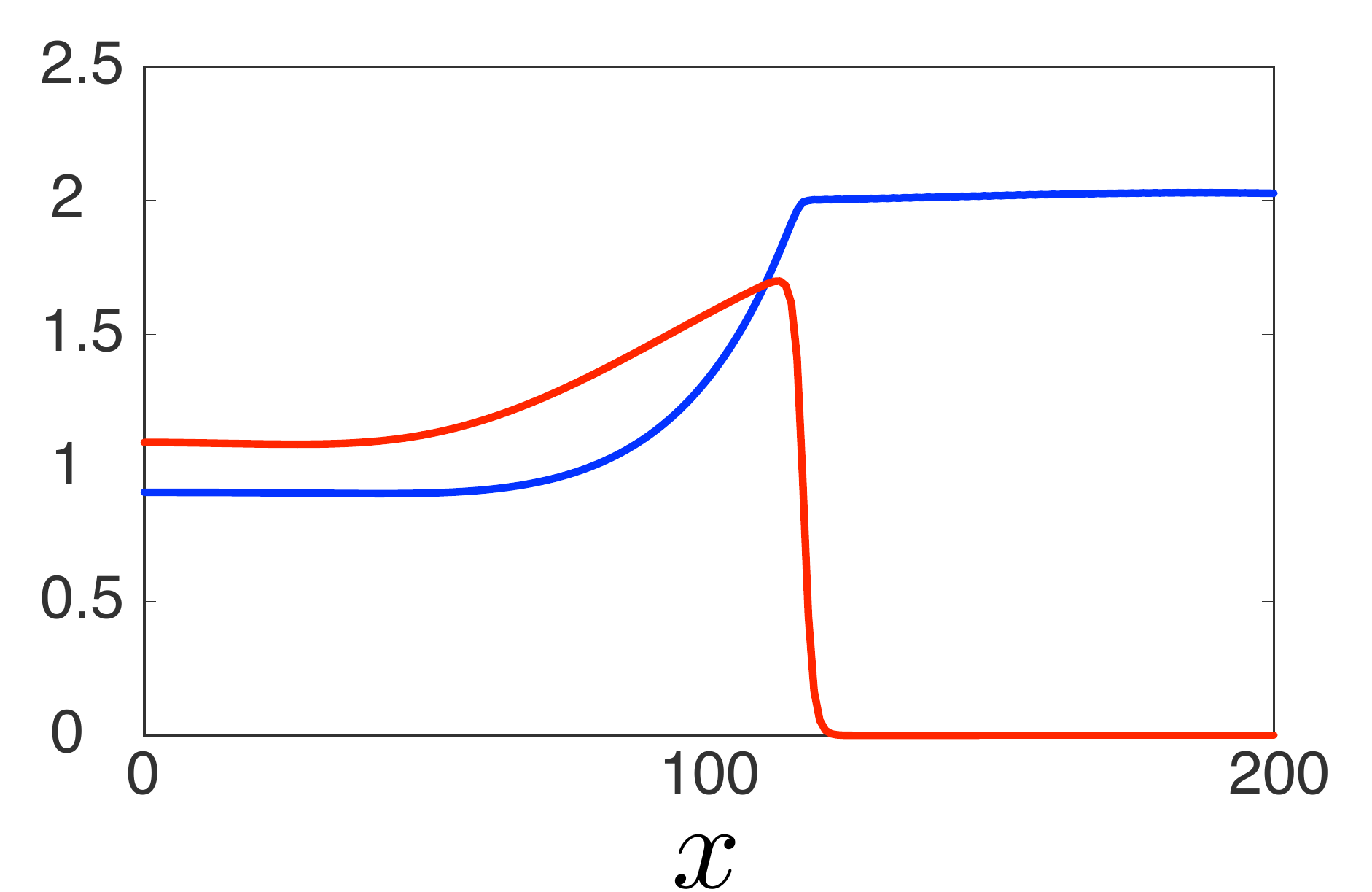}
			\caption{Vegetation-desert front}
		\end{subfigure}
~
		\begin{subfigure}[t]{0.23\textwidth}
			\centering
			\includegraphics[width=\textwidth]{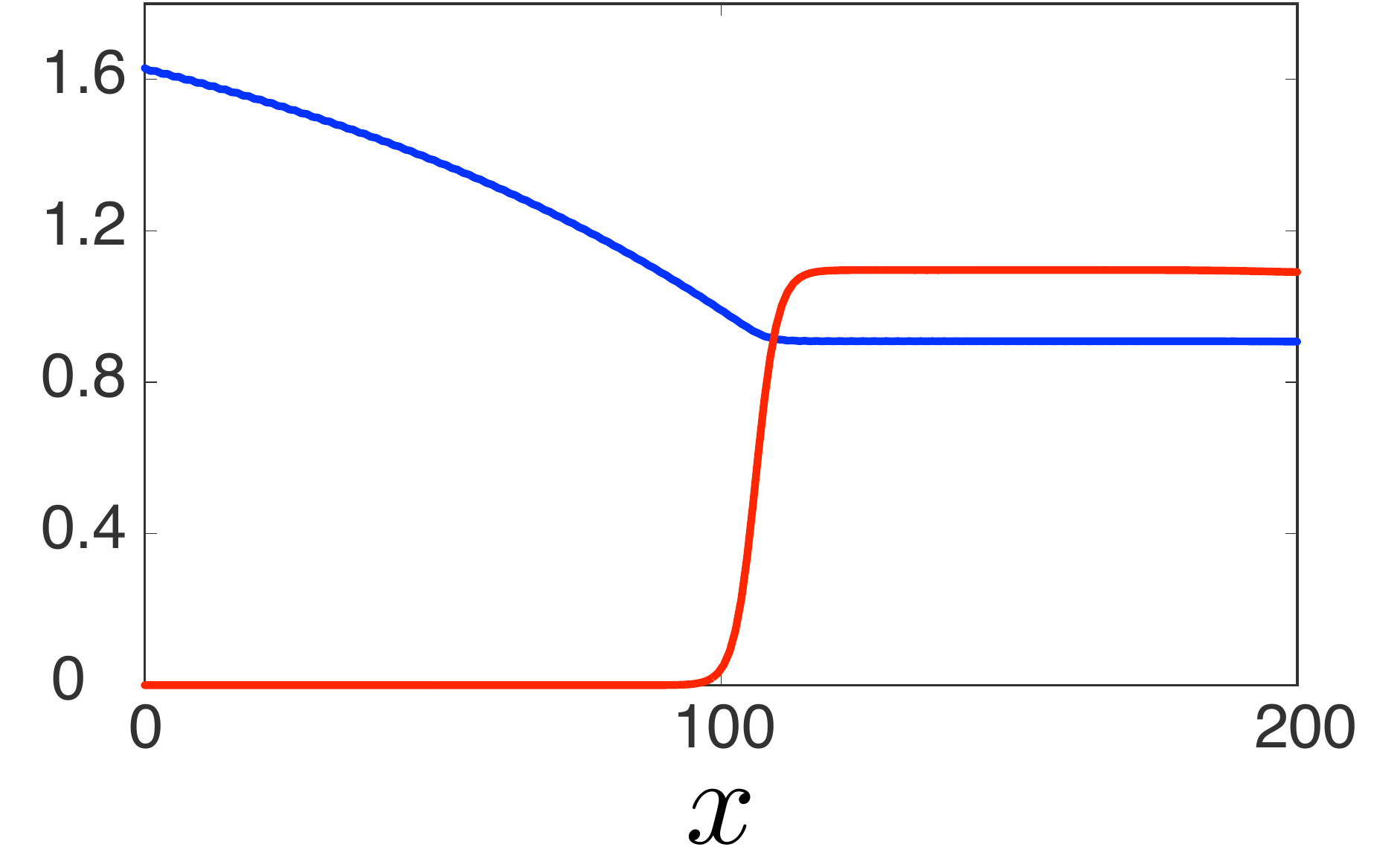}
			\caption{Desert-vegetation front}
		\end{subfigure}
	\caption{Shown are the different patterned solutions of~\eqref{eq:modKlausmeier} that are studied in this paper. Presented figures show cross-sections $u(x)$ (blue) and $v(x)$ (red) of direct numerical simulations with $\varepsilon = 0.01$, $m = 0.45$, $b = 0.5$ and $a = 1.2$ (a) or $a = 2.0$ (b-d). The 2D pattern is a trivial extension of these patterns in the $y$-direction, visualization of which is shown in Figure~\ref{fig:numericsStraightb0p5}.}
	\label{fig:numericalPatterns}
\end{figure}

In this article, we first establish existence of the aforementioned patterns rigorously. To that end, we exploit the scale separation in~\eqref{eq:twode} using the methods of geometric singular perturbation theory~\cite{fenichel1979geometric}. Using a fast-slow decomposition, these patterns are shown to correspond to the union of trajectories on so-called invariant slow manifolds of~\eqref{eq:twode} and fast connections between these slow manifolds. Specifically, \eqref{eq:twode} has three slow manifolds: one manifold, $\mathcal{M}^\ell$ ($\ell$ for left), consists of states without vegetation and the two others, $\mathcal{M}^m$ (middle) and $\mathcal{M}^r$ (right), consist of states with vegetation. Fast front-type solutions $\phi_\dagger$ exist which connect $\mathcal{M}^\ell$ to $\mathcal{M}^r$, and likewise there exist fast front solutions $\phi_\diamond$ which connect $\mathcal{M}^r$ to $\mathcal{M}^\ell$. Using these, stripes, gaps and fronts can be constructed for various parameter values. Pulse solutions to~\eqref{eq:modKlausmeier} consist of trajectories on $\mathcal{M}^\ell$ and $\mathcal{M}^r$ and \emph{two} fast front-type connections; front solutions to~\eqref{eq:modKlausmeier} only posses \emph{one} fast front-type connection. In Figure~\ref{fig:bd_reduced} these patterns are shown in the $\varepsilon = 0$ limit, where they are characterized by their speed in a sample bifurcation diagram.

\begin{figure}
			\centering
			\includegraphics[width=0.8\textwidth]{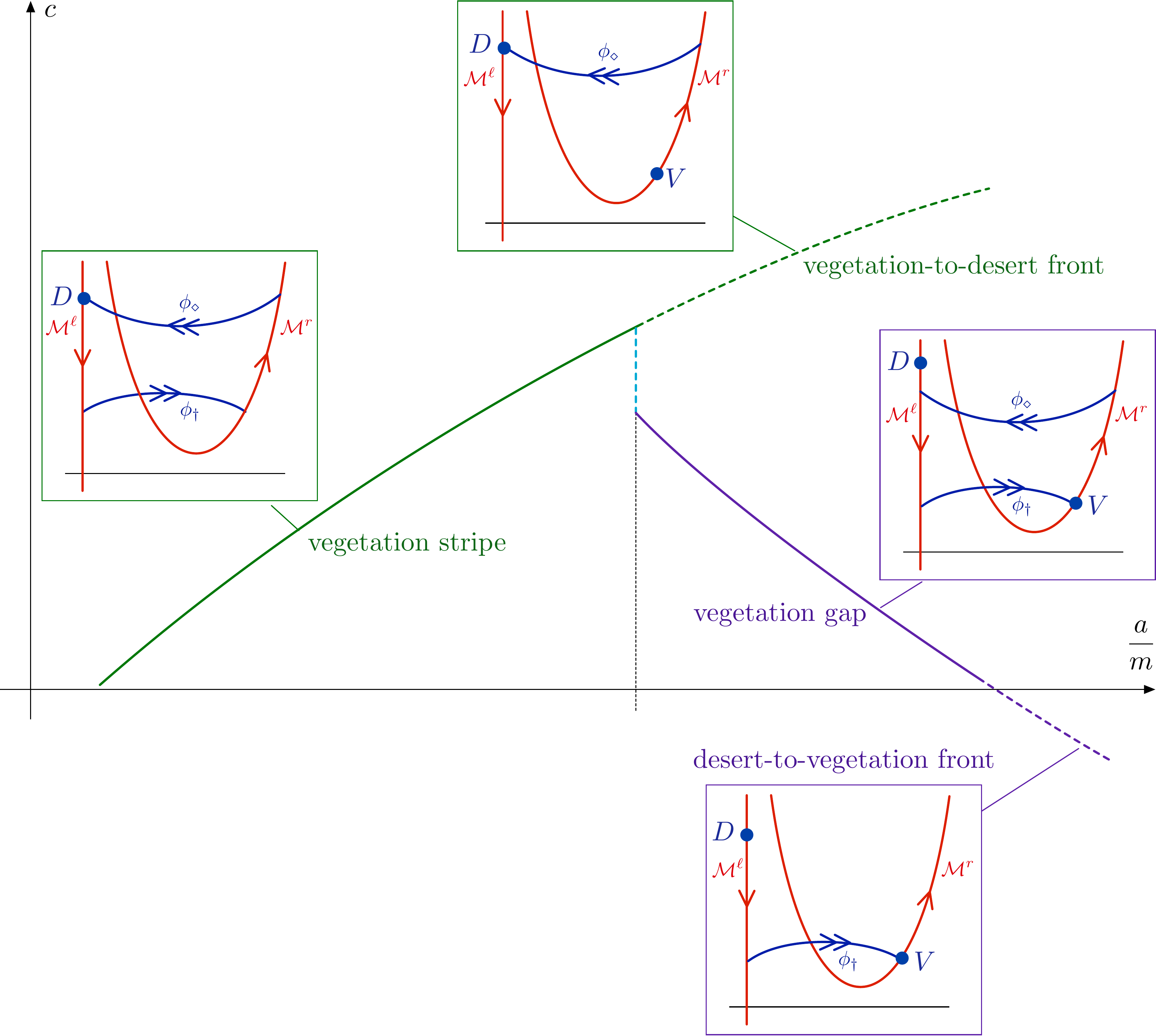}
\caption{A sample singular $\varepsilon = 0$ bifurcation diagram in $(a/m,c)$ parameter space. The solid green line indicates stripe solutions, while the solid purple line denotes the gap solutions. Vegetation-to-desert fronts are indicated by the dashed green line. Finally, desert-to-vegetation front solutions are given by the dashed and solid purple lines. Schematic depictions of the associated singular limit geometries are depicted in the insets, where the labels $D$ and $V$ denote the locations of the desert and vegetated equilibrium states, respectively. The precise bifurcation structure depends on the value of the parameter $b$; see~\S\ref{sec:singularsolns}.}
\label{fig:bd_reduced}
\end{figure}

The main theme of this paper is the spectral stability of the patterns. Because the main building blocks of all of the patterns are normally hyperbolic slow manifolds and fast front-type connections between these, we argue that destabilization can, a priori, only be caused by a `small' eigenvalue, one of which is created by every front-type connection. However, using formal asymptotic computations this possibility is excluded: all described patterns to~\eqref{eq:modKlausmeier} -- stripes, gaps and fronts -- are thus (always) stable against two-dimensional perturbations. These formal arguments are also verified rigorously by carefully constructing eigenfunctions using techniques previously employed to prove stability of traveling pulses in the FitzHugh--Nagumo system in~\cite{cdrs}; similar arguments were also used in~\cite{HOLZ, hupkes2013stability}. However, in those previous works, only stability with respect to perturbations in one spatial dimension was considered. By performing a Fourier decomposition in the transverse ($y$) direction, we show that these methods can also be used to obtain $2$D spectral stability of the full planar traveling waves.

Furthermore, the $2$D stability of the (straight) planar vegetation patterns implies that slightly curved variants of the same patterns, sometimes called corner defect solutions, are also solutions to~\eqref{eq:modKlausmeier} that are -- again -- 2D stable. An example of one of these solutions is given in Figure~\ref{fig:introCorner}. Existing techniques developed in~\cite{HS1, HS2} can be applied to infer that the orientation of these patterns is related to the speed $c$ of their associated straight patterns; in particular we predict that when $c > 0$ the corresponding corners are oriented convex upslope, and when $c < 0$ they are convex downslope. The presence of these curved patterns provides a possible explanation for observed vegetation arcs -- even in the absence of topographic mechanisms~\cite{gandhi2018influence}.

\begin{figure}
	\centering
	\begin{subfigure}[t]{0.45\textwidth}
		\centering	
		\includegraphics[width=0.8\textwidth]{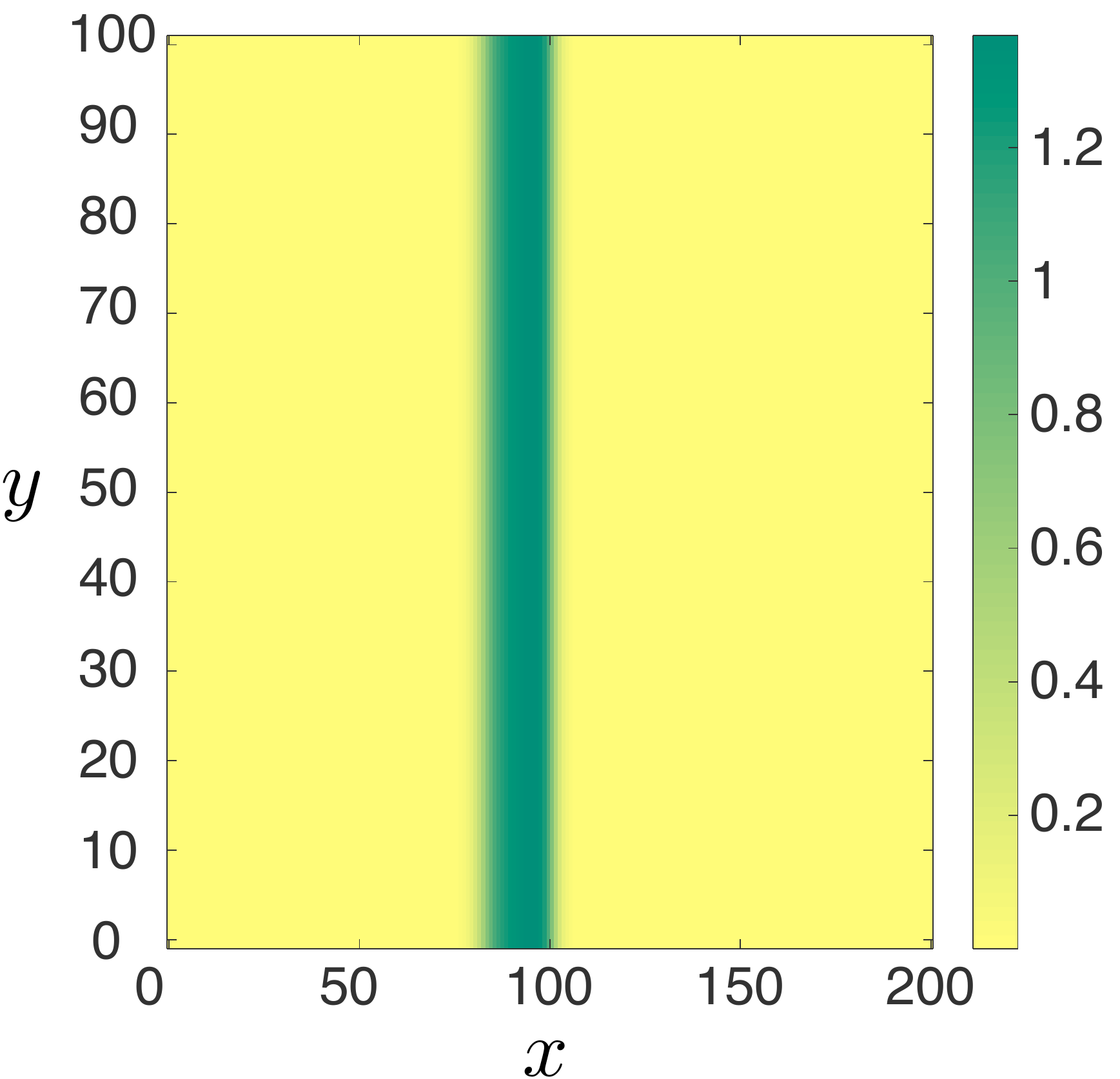}
		\caption{Straight vegetation stripe}
	\end{subfigure}
~
	\begin{subfigure}[t]{0.45\textwidth}
		\centering
		\includegraphics[width=0.8\textwidth]{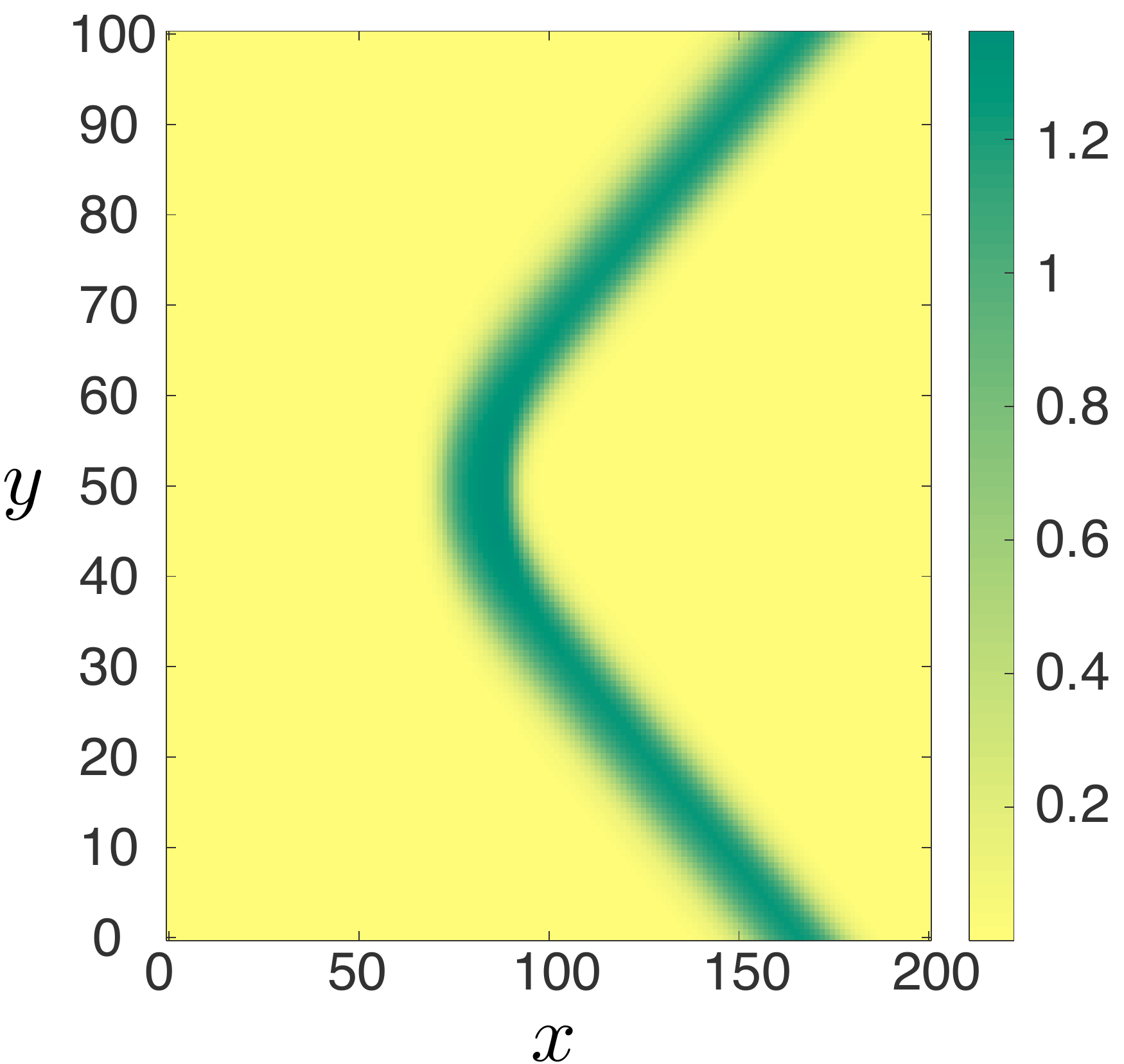}
		\caption{Curved vegetation `corner'}
	\end{subfigure}
	\caption{A snapsthot of a straight (a) and slightly bent (b) traveling vegetation stripe solution (with $c > 0$), obtained via direct numerical simulation of~\eqref{eq:modKlausmeier} with $\varepsilon = 0.01$, $m = 0.45$, $b = 0.5$ and $a = 1.2$.}
	\label{fig:introCorner}
\end{figure}

\begin{Remark}
	In an ecological context, traveling (spatially) periodic orbits are perhaps more relevant than the traveling pulse solutions constructed in this paper. However, once these pulse solutions are found, the periodic ones typically follow naturally~\cite{sewaltspatially} -- as is the case here. Furthermore, properties of these periodic orbits are closely related to those of the pulse solutions. See also \S~\ref{sec:singularperiodicorbits}.
\end{Remark}

The set-up for the rest of this article is as follows. In \S\ref{sec:slowfast}, we study~\eqref{eq:twode} as a slow/fast system in the context of geometric singular perturbation theory. We determine the slow manifolds $\mathcal{M}^\ell$, $\mathcal{M}^m$ and $\mathcal{M}^r$ and the fast connections $\phi_\dagger$ and $\phi_\diamond$ that connect the manifolds $\mathcal{M}^\ell$ and $\mathcal{M}^r$, which are then used to construct singular stripe, gap and front solutions to~\eqref{eq:twode}. In \S\ref{sec:existence}, we prove the persistence of these solutions for sufficiently small $\varepsilon > 0$. Next, in \S\ref{sec:stabilityFormal}, we compute the essential and point spectra of all these patterns using (formal) asymptotic computations, and show that all patterns are stable against all two dimensional perturbations. Subsequently, in \S\ref{sec:stabilityRigorous} these stability statements are made rigorous by carefully constructing eigenfunctions. In \S\ref{sec:corners} we inspect existence and stability of weakly bent (corner) solutions to~\eqref{eq:twode}. Then, in \S\ref{sec:numerics} we present the results of numerical computations on closely related spatially periodic patterns and numerical simulations of both straight and bent patterns. We conclude with a brief discussion of the results in \S\ref{sec:discussion}.

\section{Slow-fast analysis of traveling wave equation}\label{sec:slowfast}
In this section, we study the traveling wave equation~\eqref{eq:twode} as a slow-fast system in the singular limit $\eps=0$.  A discussion of the critical manifolds is given in~\S\ref{sec:criticalmanifold}. In~\S\ref{sec:layer}, we describe the singular layer problem, and we construct families of singular front solutions. We describe the reduced flow on the critical manifolds in~\S\ref{sec:reduced}, and we construct singular traveling front and stripe solutions in~\S\ref{sec:singularsolns}. Finally,~\S\ref{sec:mainexistenceresults} contains statements of our main existence results.

\subsection{Critical manifolds}\label{sec:criticalmanifold}
The traveling wave ODE~\eqref{eq:twode} is a two-fast-one-slow system. We obtain the fast subsystem or layer problem by setting $\eps=0$ in~\eqref{eq:twode}, which results in the system
\begin{align}
\begin{cases}\label{eq:fast0}
u' &=0,\\
v'&=q,\\
q' &=mv-R(v)G(u,v)v-cq, 
\end{cases}
\end{align}
or, equivalently, the collection of planar ODEs
\begin{align}
\begin{cases}\label{eq:layer}
v'&=q,\\
q' &=mv-R(v)G(u,v)v-cq, 
\end{cases}
\end{align}
parameterized by $u$. We note that $(v,q)=(0,0)=:p_0(u)$ is always an equilibrium of~\eqref{eq:layer}; there are additional equilibria $(v,0)$ whenever $v$ satisfies $R(v)G(u,v)=m$. Thus we see that there are additional equilibria $p_\pm(u):=(v_\pm(u),0)$, where
\begin{align}\label{eq:vplusminus}
v_\pm(u) = \frac{1\pm\sqrt{1-4bm/u}}{2b},
\end{align}
provided $u\geq 4bm$. We see that~\eqref{eq:layer} admits three equilibria for $u>4bm$, two equilibria for $u=4bm$, and a single equilibrium for $u<4bm$.

Denoting the right-hand-side of~\eqref{eq:layer} by 
\begin{align}
F(v,q;u):= \begin{pmatrix}q\\mv-R(v)G(u,v)v-cq\end{pmatrix},
\end{align}
we consider the linearization of~\eqref{eq:layer} about each of the three equilibria $p_0,p_\pm$ that is given by
\begin{align}
D_{(v,q)}F(0,0;u)&= \begin{pmatrix}0&1\\m&-c\end{pmatrix},\\
D_{(v,q)}F(v_\pm(u),0;u)&= \begin{pmatrix}0&1\\\frac{u-4mb\pm\sqrt{u^2-4mbu}}{2b}&-c\end{pmatrix}.
\end{align}
For $c>0$, we deduce that the equilibrium $p_0(u)$ is always a saddle. When $u>4bm$, the equilibrium $p_-(u)$ is a stable node or spiral, and the equilibrium $p_+(u)$ is a saddle. When $u=4bm$, the equilibrium $p_+(4bm)=p_-(4bm)$ is not hyperbolic.

In the full system, the equilibria of the layer problem~\eqref{eq:layer} form critical manifolds, given by three normally hyperbolic branches
\begin{align}\begin{split}
\mathcal{M}^\ell_0 &= \{v=q=0\},\\
\mathcal{M}^m_0 &= \{p_-(u):u>4bm\},\\
\mathcal{M}^r_0 &= \{p_+(u):u>4bm\},
\end{split}
\end{align}
with the branches $\mathcal{M}^m_0, \mathcal{M}^r_0$ meeting at a nonhyperbolic fold point $\mathcal{F} = p_+(4bm)=p_-(4bm)$; see Figure~\ref{fig:reduced_single}. For $u_1,u_2\in \mathbb{R}$, we will use the notation
\begin{align}\begin{split}\label{eq:criticalmanifoldsegment}
\mathcal{M}^j_0[u_1,u_2] &:= \mathcal{M}^j_0\cap \{ u_1\leq u\leq u_2 \}
\end{split}
\end{align}
to refer to a compact segment of one of the critical manifolds $\mathcal{M}^j_0$, $j=\ell,m,r$.

\begin{figure}
\centering
\includegraphics[width=0.45\textwidth]{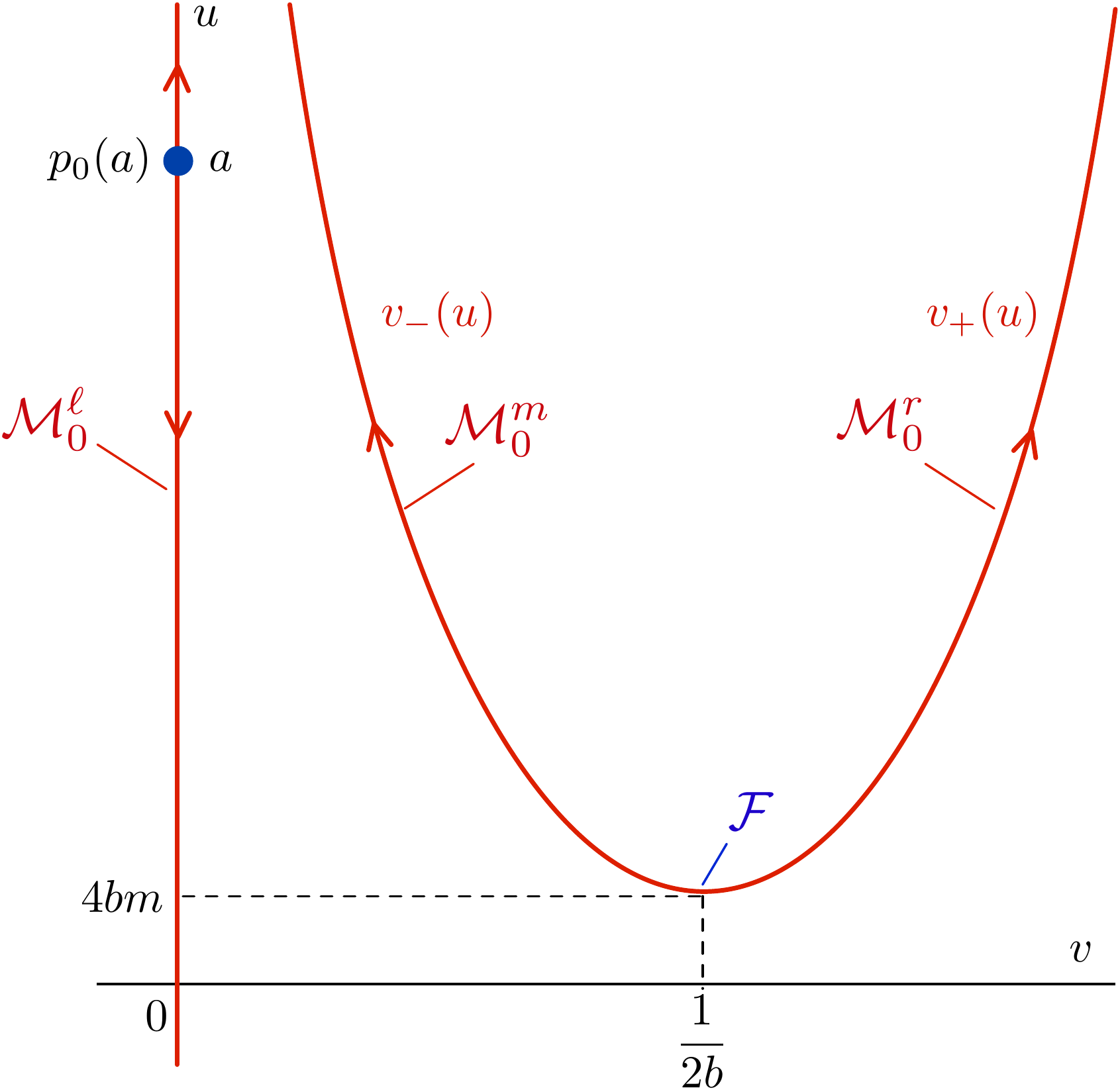}
\caption{Shown are the three branches of the critical manifold $\mathcal{M}_0$ and the associated reduced flow~\eqref{eq:reducedl}-\eqref{eq:reducedr} in the case $\frac{a}{m}<2\left(b+\sqrt{1+b^2}\right)$. There is a single equilibrium at $p_0(a)$ on the left branch $\mathcal{M}^\ell_0$ corresponding to the desert state $(u,v,q)=(a,0,0)$. }
\label{fig:reduced_single}
\end{figure}

We recall that there are (up to) three equilibria of the full system, given by $(u,v,q)=(a,0,0)$ and $(u,v,q) = (u_{1,2},v_{1,2},0)$; see Figures~\ref{fig:reduced_single} and~\ref{fig:reduced_multiple}. The equilibrium at $(u,v,q)=(a,0,0)$ lies on the left branch $\mathcal{M}^\ell_0$ and corresponds to $p_0(a)$, while that at $(u,v,q) = (u_{1},v_{1},0)$ corresponds to $p_-(u_1)$ and lies on the middle branch $\mathcal{M}^m_0$. The location of the equilibrium  $(u,v,q) = (u_{2},v_{2},0)$ depends on the parameter values: if $a / m>4b+1/b$, then it lies on the middle branch $\mathcal{M}^m_0$ at $p_-(u_2)$, while if $a / m>4b+1/b$, then it lies on the right branch $\mathcal{M}^r_0$ at $p_+(u_2)$. When $a / m=4b+1/b$, the equilibrium $(u,v,q) = (u_{2},v_{2},0)$ coincides with the fold $\mathcal{F}$.

\begin{Remark}\label{rem:b0degenerate}
We recall that the case $b=0$ corresponds to the original Klausmeier model~\cite{klausmeier1999regular}; see Remark~\ref{rem:b0klaus}. From the geometry of the critical manifold (see Figure~\ref{fig:reduced_single}), the degeneracy of the limit $b\to 0$ becomes apparent. In particular, the branch $\mathcal{M}^r_0$ of the critical manifold is sent to infinity, and the left branch $\mathcal{M}^\ell_0$ coincides with the hyperbola $v = m/u$ in the plane $q=0$. In the forthcoming analysis, we will consider only the case $b>0$. However, we note that under appropriate rescalings, it is possible to unfold the degenerate case $b=0$ and construct traveling wave solutions. Additional complications arise in the singular perturbation analysis due to loss of normal hyperbolicity along the critical manifold, for which blow up desingularization techniques are needed. We refer to~\cite{CDklausmeier} for the details.
\end{Remark}

\begin{figure}
\hspace{.0\textwidth}
\begin{subfigure}{.45 \textwidth}
\centering
\includegraphics[width=1\linewidth]{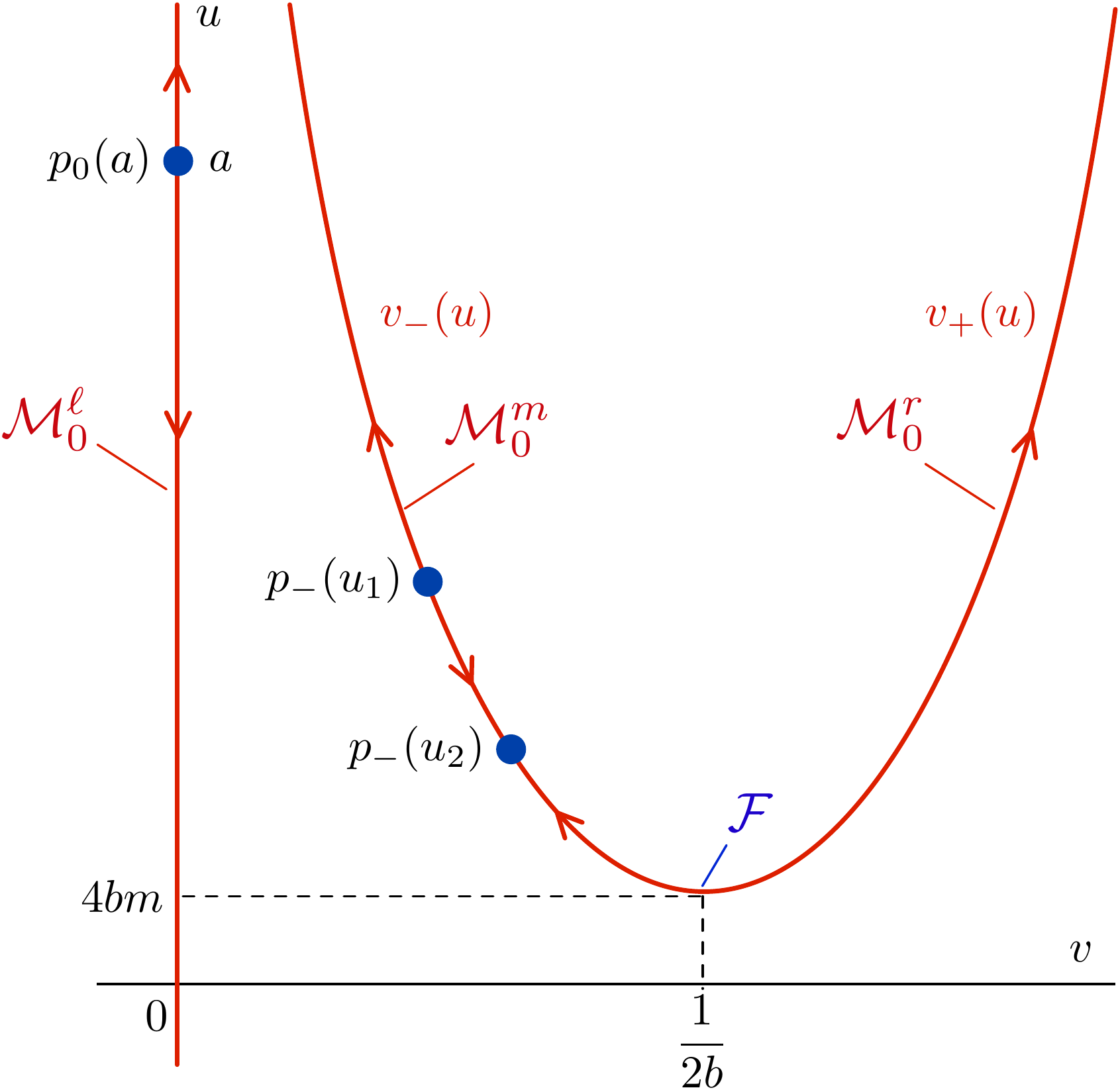}
\caption{$2\left(b+\sqrt{1+b^2}\right)<\frac{a}{m}<4b+\frac{1}{b}$}
\end{subfigure}
\hspace{.05\textwidth}
\begin{subfigure}{.45 \textwidth}
\centering
\includegraphics[width=1\linewidth]{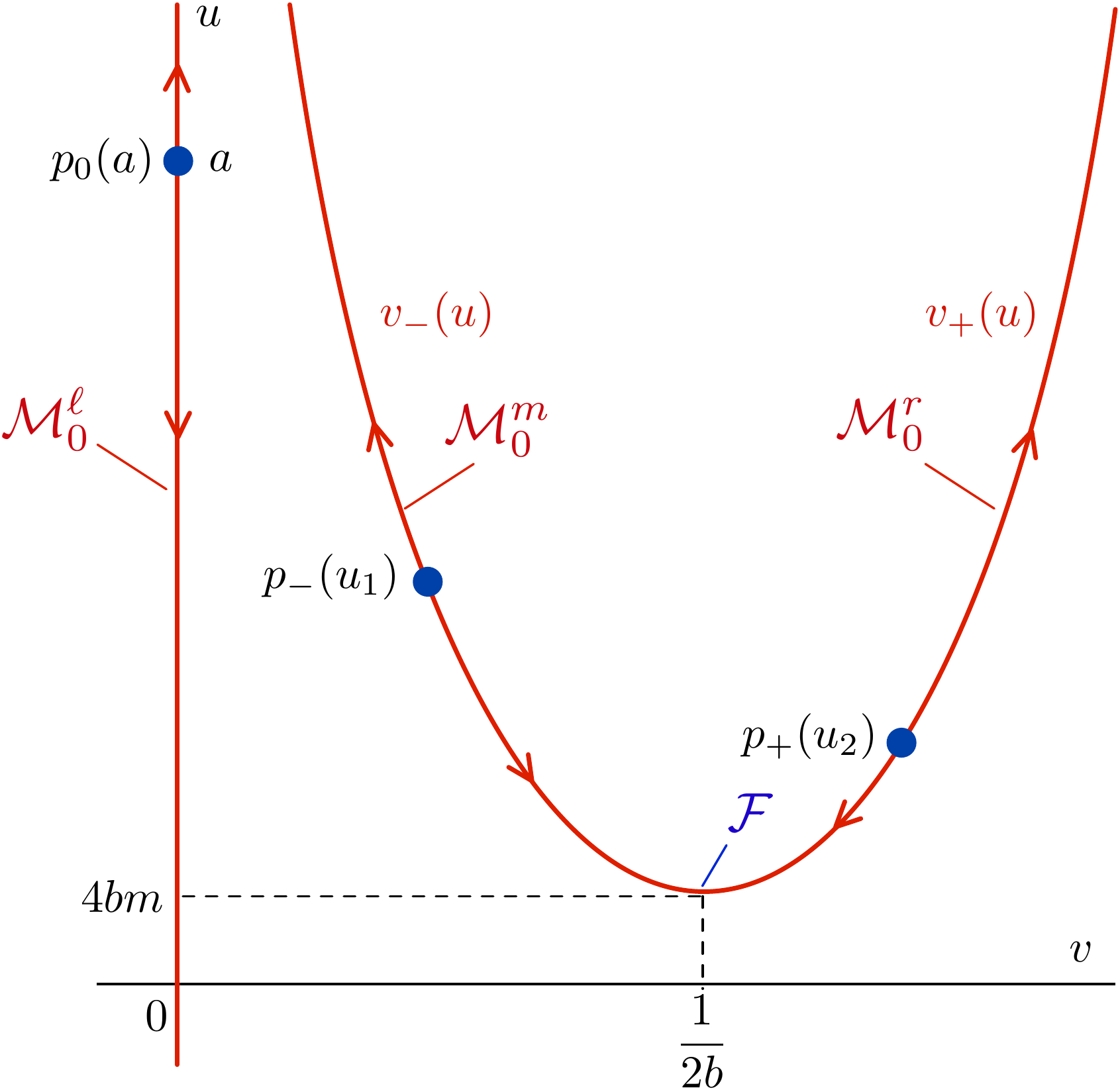}
\caption{$\frac{a}{m}>4b+\frac{1}{b}$}
\end{subfigure}
\hspace{.025\textwidth}
\caption{Shown are the three branches of the critical manifold $\mathcal{M}_0$ and the associated reduced flow~\eqref{eq:reducedl}-\eqref{eq:reducedr} in the case $\frac{a}{m}>2\left(b+\sqrt{1+b^2}\right)$. The reduced problem admits two addtional equilibria corresponding to the vegetated states $(u,v,q) = (u_{j},v_{j},0), j=1,2$. The equilibrium $(u,v,q) = (u_{1},v_{1},0)$ corresponds to $p_-(u_1)$ and lies on the middle branch $\mathcal{M}^m_0$. If $\frac{a}{m}>4b+1/b$, the equilibrium  $(u,v,q) = (u_{2},v_{2},0)$ lies on the middle branch $\mathcal{M}^m_0$ and corresponds to $p_-(u_2)$, while if $\frac{a}{m}>4b+1/b$, it lies on the right branch $\mathcal{M}^r_0$ at $p_+(u_2)$.}
\label{fig:reduced_multiple}
\end{figure}

\subsection{Layer fronts}\label{sec:layer}

We are interested in fronts between the two saddle equilibria $p_0(u)=(0,0)$ and $p_+(u)=(v_+(u),0)$; equivalently, we search for connections between the outer branches $\mathcal{M}^\ell_0 , \mathcal{M}^r_0 $. For each value of $u> 4mb$, there are two such fronts $ \phi_\diamond(\xi;u)$, $\phi_\dagger(\xi;u)$, with explicit $v$ profiles given by
\begin{align}
\begin{split}
v_{\diamond}(\xi;u) &= \frac{v_+(u)}{2}\left(1 - \tanh \left(\frac{v_+\sqrt{ub}}{2\sqrt{2}} \xi \right)\right),\\
v_{\dagger}(\xi;u) &= \frac{v_+(u)}{2}\left(1 + \tanh \left(\frac{v_+\sqrt{ub}}{2\sqrt{2}} \xi \right)\right),
\end{split}
\end{align}
and wave speeds
\begin{align}
\begin{split}
c^*_{\diamond}(u) &=  \frac{\sqrt{2bu}}{2}\left(v_+(u)-2v_-(u)\right)\\
c^*_{\dagger}(u) &= - \frac{\sqrt{2bu}}{2}\left(v_+(u)-2v_-(u)\right).
\end{split}
\end{align}
The $\diamond$-fronts connect $p_+$ to $p_0$, while the $\dagger$-fronts connect $p_0$ to $p_+$; see Figure~\ref{fig:singular_fronts}.

\begin{figure}
\hspace{.05\textwidth}
\begin{subfigure}{.4 \textwidth}
\centering
\includegraphics[width=1\linewidth]{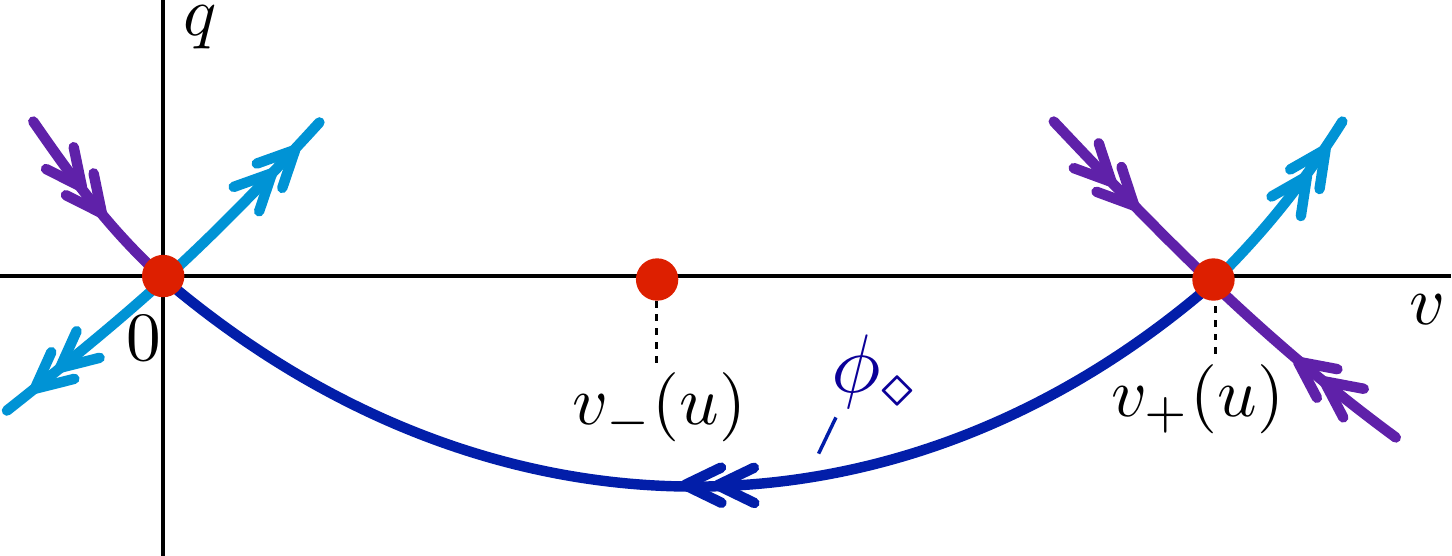}
\caption{$c=c_\diamond(u)$}
\end{subfigure}
\hspace{.05\textwidth}
\begin{subfigure}{.4 \textwidth}
\centering
\includegraphics[width=1\linewidth]{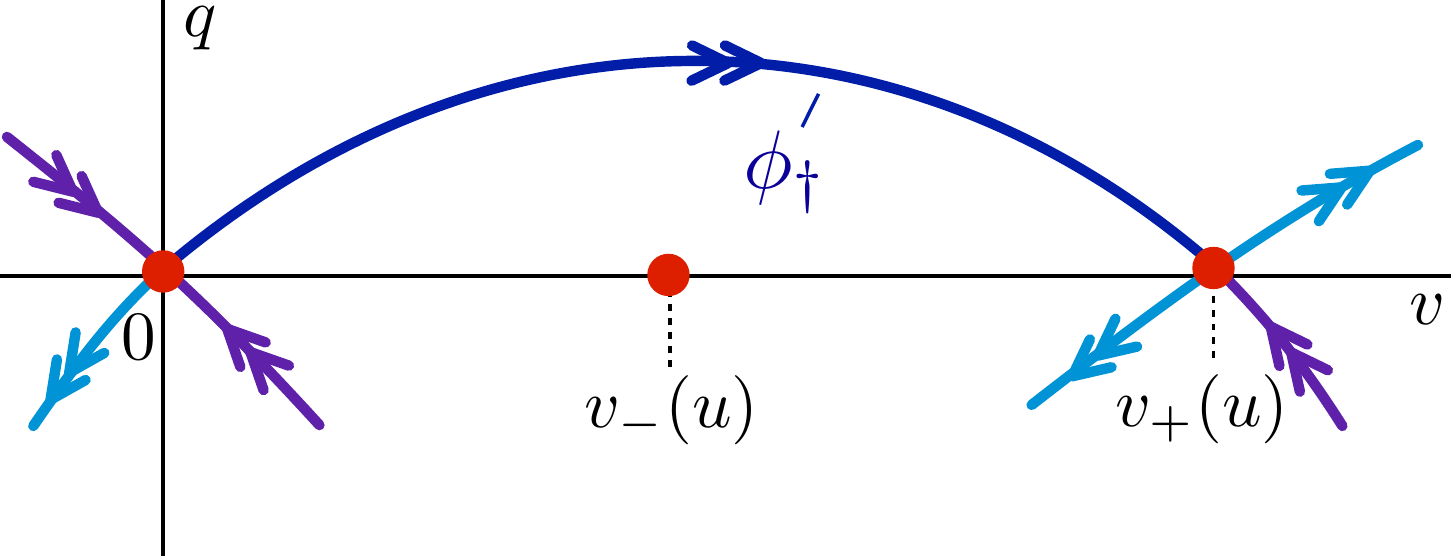}
\caption{$c=c_\dagger(u)$}
\end{subfigure}
\hspace{.01\textwidth}
\caption{Shown are the singular fronts fronts $ \phi_\diamond(\xi;u)$, $\phi_\dagger(\xi;u)$ of the layer problem~\eqref{eq:layer}.}
\label{fig:singular_fronts}
\end{figure}

When $u=4mb$, the situation is slightly different as the equilibria $p_\pm(u)$ collide in a saddle-node bifurcation at the fold point $\mathcal{F}$, and the equilibrium $p_+(u)$ is no longer a saddle. However, it is still possible to find fronts between $p_0$ and $p_+(4bm)=p_-(4bm)$. In particular, there exists a front connecting $p_+(4bm)$ to $p_0(4bm)$ for any
\begin{align}
\begin{split}
c\leq c_{\diamond,\mathrm{crit}} &= b\sqrt{2m}\left(v_+(4bm)-2v_-(4bm)\right)\\
&=-\sqrt{\frac{m}{2}}.
\end{split}
\end{align}
When $c=c_{\diamond,\mathrm{crit}}$ this front decays exponentially in backwards time, while for lesser speeds it decays only algebraically. Similarly, there exists a front connecting $p_0(4bm)$ to $p_+(4bm)$ for any
\begin{align}
\begin{split}
c\geq c_{\dagger,\mathrm{crit}} &= -b\sqrt{2m}\left(v_+(4bm)-2v_-(4bm)\right)\\
&=\sqrt{\frac{m}{2}}.
\end{split}
\end{align}
When $c=c_{\dagger,\mathrm{crit}}$ this front decays exponentially in forwards time, while for greater speeds it decays only algebraically.

In particular, provided $a>4bm$, these fronts exist when $u=a$. Therefore we have a front connecting $p_+(a)$ to $p_0(a)$ -- the equilibrium $(a,0,0)$ of the full system~\eqref{eq:twode} -- when
\begin{align}
\begin{split}
c&=c^*_\diamond(a)\\
&=\frac{1}{2\sqrt{2b}}\left(-\sqrt{a}+3\sqrt{a-4bm}\right).
\end{split}
\end{align}
 We now search for fronts which exist simultaneously for the same speed but different value of $u$, in particular for $u\leq a$. We have the following.
\begin{Lemma}\label{lem:desertlayerfronts}
For each $\frac{a}{m}\geq\frac{9}{2}b$, there exists a pair of fronts $ \phi_\diamond(\xi;a), \phi_\dagger(\xi;u^*(a))$ with speed 
\begin{align}
c&=c^*(a):=\frac{1}{2\sqrt{2b}}\left(-\sqrt{a}+3\sqrt{a-4bm}\right).
\end{align}
The front $ \phi_\diamond(\xi;a)$ connects $p_+(a)$ to $p_0(a)$ in the layer system~\eqref{eq:layer} for $u=a$, while the front $\phi_\dagger(\xi;u^*(a))$ connects $p_0(u^*(a))$ to  $p_+(u^*(a))$ in the layer system~\eqref{eq:layer} for $u=u^*(a)\leq a$, where
\begin{align}\label{eq:ustar}
u^*(a):=\left\{\begin{array}{c c} \frac{1}{8} \left(17 a-18 b m -15 \sqrt{a^2-4 ab m}\right), \quad & \frac{9}{2}b\leq \frac{a}{m}<\frac{25}{4}b;\\ 4bm, \quad&\frac{a}{m}\geq \frac{25}{4}b. \end{array}\right.
\end{align}
\end{Lemma}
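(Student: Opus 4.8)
The plan is to convert the requirement that the two layer fronts travel at a common speed into a single scalar equation for $u^*$, to solve it by a monotonicity argument together with an explicit computation, and to handle the non-hyperbolic fold value $u=4bm$ separately. Substituting the formulas~\eqref{eq:vplusminus} for $v_\pm$ into $c^*_\diamond$ and using $\sqrt{2bu}\,\sqrt{1-4bm/u}=\sqrt{2b}\,\sqrt{u-4bm}$ gives, for $u\geq 4bm$,
\begin{align*}
c^*_\diamond(u)=\frac{1}{2\sqrt{2b}}\bigl(-\sqrt{u}+3\sqrt{u-4bm}\bigr),
\end{align*}
so the $\diamond$-front anchored at $u=a$ has speed precisely $c^*(a)$. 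Since $c^*_\dagger(u)=-c^*_\diamond(u)$, a $\dagger$-front anchored at a value $u^*>4bm$ travels at this speed exactly when $c^*_\diamond(u^*)=-c^*(a)$; writing $g(u):=\sqrt{u}-3\sqrt{u-4bm}=-2\sqrt{2b}\,c^*_\diamond(u)$, this becomes the scalar equation $g(u^*)=-g(a)$.

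Next I would study $g$ on $[4bm,\infty)$: one has $g(4bm)=2\sqrt{bm}$, $g(u)\to-\infty$ as $u\to\infty$, and $g'(u)=\tfrac12 u^{-1/2}-\tfrac32(u-4bm)^{-1/2}<0$ for $u>4bm$, so $g$ is a strictly decreasing bijection of $[4bm,\infty)$ onto $(-\infty,2\sqrt{bm}]$. Two thresholds fall out of squaring elementary inequalities: $g(a)\leq 0$ is equivalent to $a/m\geq\tfrac92 b$, and $-g(a)<g(4bm)=2\sqrt{bm}$ is equivalent to $a/m<\tfrac{25}{4}b$. Hence for $\tfrac92 b\leq a/m<\tfrac{25}{4}b$ the number $-g(a)$ lies in $[0,2\sqrt{bm})$, so $g(u^*)=-g(a)$ has a unique solution $u^*\in(4bm,\infty)$; since moreover $-g(a)\geq g(a)$ in this regime, monotonicity of $g$ forces $u^*\leq a$, and the hyperbolic $\dagger$-front $\phi_\dagger(\xi;u^*)$ then exists with speed $-c^*_\diamond(u^*)=c^*(a)$.

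For the closed form, rewrite $g(u^*)=-g(a)$ as $\sqrt{u^*}+g(a)=3\sqrt{u^*-4bm}$, square, and set $w=\sqrt{u^*}$; the resulting quadratic $8w^2-2g(a)w-g(a)^2-36bm=0$ has discriminant $36\bigl(3\sqrt{a}-\sqrt{a-4bm}\bigr)^2$, a perfect square, and its unique positive root is $w=\tfrac14\bigl(5\sqrt{a}-3\sqrt{a-4bm}\bigr)$, whence $u^*=w^2=\tfrac18\bigl(17a-18bm-15\sqrt{a^2-4abm}\bigr)$, the first branch of~\eqref{eq:ustar}. When $a/m\geq\tfrac{25}{4}b$ this expression reduces to $4bm$, no hyperbolic $\dagger$-front has speed $c^*(a)$, and one instead anchors $\phi_\dagger$ at the fold value $u^*=4bm$, using the fronts from $p_0(4bm)$ to $p_+(4bm)$ available for every $c\geq c_{\dagger,\mathrm{crit}}=\sqrt{m/2}$; the inequality $c^*(a)\geq\sqrt{m/2}$ is (once more by squaring) equivalent to $a/m\geq\tfrac{25}{4}b$. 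Since $a>4bm$ throughout, $\phi_\diamond(\xi;a)$ exists in all cases, completing the pair.

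The computations here are all elementary, so the one genuine subtlety is the treatment at the fold: one must notice that for $a/m\geq\tfrac{25}{4}b$ the equation $g(u^*)=-g(a)$ is no longer solvable with $u^*>4bm$ and fall back on the one-parameter family of layer fronts available at $u=4bm$ (decaying only algebraically once $c>c_{\dagger,\mathrm{crit}}$), verifying the compatibility condition $c^*(a)\geq\sqrt{m/2}$. This is exactly what produces the case split and the sharp threshold $a/m=\tfrac{25}{4}b$ in~\eqref{eq:ustar}; everything else follows from the monotonicity of $g$ and routine algebra.
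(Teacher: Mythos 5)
Your proof is correct and follows essentially the same strategy as the paper's: solve $c^*_\diamond(a) = c^*_\dagger(u^*)$ for $u^*$, and fall back to the fold $u = 4bm$ once no hyperbolic $\dagger$-front remains; your monotonicity analysis of $g$ makes explicit what the paper leaves terse. One small slip in the narrative: the sentence ``When $a/m \geq \tfrac{25}{4}b$ this expression reduces to $4bm$'' is not accurate --- the closed form $\tfrac{1}{8}\bigl(17a - 18bm - 15\sqrt{a^2 - 4abm}\bigr)$ equals $4bm$ only at $a/m = \tfrac{25}{4}b$; for $a/m > \tfrac{25}{4}b$ it yields a spurious root of the squared equation (strictly greater than $4bm$) which violates the pre-squaring sign condition $\sqrt{u^*} + g(a) \geq 0$, and that failure --- rather than any numerical collapse of the formula --- is what forces the case split with $u^*(a) = 4bm$ on that range.
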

\begin{proof}
When $\frac{a}{m}=\frac{9}{2}b$, we have $c^*_\diamond(a)=0=c^*_\dagger(a)$, and therefore both heteroclinic orbits lie simultaneously in the plane $u=a$, forming a heteroclinic loop. For values of $\frac{9}{2}b<\frac{a}{m}<\frac{25}{4}b$, the second heteroclinic orbit exists for a value of $4bm<u^*<a$ given by~\eqref{eq:ustar}, which can be obtained by solving the relation $c^*_\diamond(a)=c^*_\dagger(u)$ for $u=u^*(a)$. 

For $a\geq \frac{25bm}{4}$, the second heteroclinic orbit occurs when $u=u^*(a)=4bm$; the decay is exponential in forward time when $a= \frac{25bm}{4}$, and algebraic for $a> \frac{25bm}{4}$.
\end{proof}
\begin{Remark}
In the case $4b \leq \frac{a}{m} \leq \frac{9}{2}b$, there (also) exists a second front $\phi_\dagger(\xi;u^*(a))$ with speed $c = c^*(a)$ that connects $p_0(u^*(a))$ to $p_+(u^*(a))$ in the layer system~\eqref{eq:layer} for $u=u^*(a)$, where 
\begin{equation*}
u^*(a) =  \frac{1}{8} \left(17 a-18 b m -15 \sqrt{a^2-4 ab m}\right).
\end{equation*}
However, in this case $u^*(a) > a$, which -- because of the flow on $\mathcal{M}_0^r$ (see \S\ref{sec:reduced}) -- prevents the existence of a homoclinic connection in the full system.
\end{Remark}

We recall that for $a / m >4b+1/b$, the equilibrium $p_+(u_2)$ on the right branch $\mathcal{M}^r_0$ corresponds to the equilibrium $(u_2,v_2,0)$ of the full system~\eqref{eq:twode}. For $a/m=4b+1/b$, this equilibrium lies precisely on the fold $\mathcal{F}$. We now search for singular fronts to this equilibrium for values of $a/m\geq4b+1/b$, and the argument is similar as above.  When $a/m>4b+1/b$, there exists a front connecting $p_0(u_2)$ to $p_+(u_2)$ when
\begin{align}
\begin{split}
c&=c^*_\dagger(u_2)\\
&= -\frac{1}{2\sqrt{2b}}\left(-\sqrt{u_2}+3\sqrt{u_2-4bm}\right),
\end{split}
\end{align}
and when $a/m=4b+1/b$ this front exists for each $c\geq c_{\dagger,\mathrm{crit}}$, with exponential decay in forward time for $c=c_{\dagger,\mathrm{crit}}$ and algebraic decay when $c>c_{\dagger,\mathrm{crit}}$. We again search for fronts which exist simultaneously for the same speed but different value of $u$, and we have the following lemma, analogous to Lemma~\ref{lem:desertlayerfronts}.
\begin{Lemma}\label{lem:veggielayerfronts}Concerning the layer problem~\eqref{eq:layer}, the following hold.
\begin{enumerate}[(i)]
\item \label{lem:veggielayerfrontsi}
For each $4b+\frac{1}{b}< \frac{a}{m}\leq \frac{9}{2}b+\frac{2}{b}$, there exists a pair of fronts $ \phi_\diamond(\xi;\hat{u}_2(a)), \phi_\dagger(\xi;u_2)$ with speed $\hat{c}(a) = c_\dagger^*(u_2)$. The front $ \phi_\dagger(\xi;u_2)$ connects $p_0(u_2)$ to $p_+(u_2)$ in the layer system~\eqref{eq:layer} for $u=u_2$, while the front $\phi_\diamond(\xi;\hat{u}_2(a))$ connects  $p_+(\hat{u}_2(a))$ to $p_0(\hat{u}_2(a))$ in the layer system~\eqref{eq:layer} for $u=\hat{u}_2(a)$, where
\begin{align}\label{eq:uhat}
\hat{u}_2(a):= \frac{1}{8} \left(17 u_2-18 b m -15 \sqrt{u_2^2-4 u_2b m}\right).
\end{align}
\item \label{lem:veggielayerfrontsii}
When $a / m=4b+1/b$, for each $c\geq c_{\dagger,\mathrm{crit}}$, there exists a pair of fronts $ \phi_\dagger(\xi;u_2), \phi_\diamond(\xi;\hat{u}(c))$, where $\hat{u}(c)$ is an increasing function of $c$ which satisfies $\hat{u}(c_{\dagger,\mathrm{crit}}) =\hat{u}_2(4mb+m/b) $. 

\end{enumerate}
\end{Lemma}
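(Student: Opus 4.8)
The plan is to follow the template of the proof of Lemma~\ref{lem:desertlayerfronts}: the $\diamond$- and $\dagger$-fronts are given explicitly, so the content is to equate their wave speeds by choosing the base point $u$ of one of them correctly and then to verify that this base point lies in the admissible range $u\geq 4bm$. Throughout I abbreviate $c^*_\diamond(u)=\tfrac{1}{2\sqrt{2b}}\big(3\sqrt{u-4bm}-\sqrt u\big)$ and $c^*_\dagger(u)=-c^*_\diamond(u)$ for $u\geq 4bm$. First I would record two elementary facts. A short derivative computation shows that $c^*_\diamond$ is strictly increasing on $[4bm,\infty)$, with $c^*_\diamond(4bm)=-\sqrt{m/2}$, $c^*_\diamond\big(\tfrac92 bm\big)=0$ and $c^*_\diamond(u)\to+\infty$, so $c^*_\diamond$ restricts to an increasing bijection $[4bm,\infty)\to[-\sqrt{m/2},\infty)$; and an algebraic check using~\eqref{eq:uniformSteadyStates} shows that $a/m\mapsto u_2$ is strictly increasing on $(4b+1/b,\infty)$, with $u_2=4bm$ at $a/m=4b+1/b$ and $u_2=\tfrac92 bm$ at $a/m=\tfrac92 b+\tfrac2b$. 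Hence the hypothesis of part~(i) is equivalent to $4bm<u_2\leq\tfrac92 bm$.

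For part~(i): since $u_2>4bm$, the front $\phi_\dagger(\xi;u_2)$ from $p_0(u_2)$ to $p_+(u_2)$ is a genuine saddle--saddle heteroclinic of~\eqref{eq:layer} of speed $\hat{c}(a):=c^*_\dagger(u_2)=-c^*_\diamond(u_2)$, and $u_2\leq\tfrac92 bm$ gives $\hat{c}(a)\geq 0$. By the monotonicity above, $\hat{c}(a)$ lies in the range of $c^*_\diamond$ on $[\tfrac92 bm,\infty)$, so there is a unique $\hat{u}_2\geq\tfrac92 bm>4bm$ with $c^*_\diamond(\hat{u}_2)=\hat{c}(a)$; then $\phi_\diamond(\xi;\hat{u}_2)$ is a saddle--saddle heteroclinic connecting $p_+(\hat{u}_2)$ to $p_0(\hat{u}_2)$ of speed $\hat{c}(a)$, as claimed. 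It remains to solve $c^*_\diamond(\hat{u}_2)+c^*_\diamond(u_2)=0$ for $\hat{u}_2$ explicitly: this is exactly the equation $c^*_\diamond(a)=c^*_\dagger(u^*)$ from the proof of Lemma~\ref{lem:desertlayerfronts} with $(a,u^*)$ replaced by $(u_2,\hat{u}_2)$, so isolating the $\hat{u}_2$-square roots and squaring twice produces~\eqref{eq:uhat}. Since $4bm<u_2\leq\tfrac92 bm$ stays strictly below the value $\tfrac{25}{4}bm$ at which the case distinction in~\eqref{eq:ustar} occurs, no truncation to $4bm$ is needed in this regime, which accounts for the absence of a case split in~\eqref{eq:uhat}.

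For part~(ii): at $a/m=4b+1/b$ one has $u_2=4bm$, so $(u_2,v_2,0)=\mathcal{F}$ and, as recalled in the discussion preceding the lemma, the front $\phi_\dagger(\xi;u_2)$ from $p_0(4bm)$ to $p_+(4bm)$ exists for every $c\geq c_{\dagger,\mathrm{crit}}=\sqrt{m/2}$. For each such $c$, since $c\geq\sqrt{m/2}=c^*_\diamond(4bm)$ lies in the range of the increasing bijection $c^*_\diamond|_{[4bm,\infty)}$, there is a unique $\hat{u}(c)\geq 4bm$ with $c^*_\diamond(\hat{u}(c))=c$, and $\phi_\diamond(\xi;\hat{u}(c))$ is a front of the required speed; since $c^*_\diamond$ is increasing, so is $c\mapsto\hat{u}(c)$. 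Finally, solving $c^*_\diamond(\hat{u})=\sqrt{m/2}$ gives $\hat{u}(c_{\dagger,\mathrm{crit}})=\tfrac{25}{4}bm$, and substituting $u_2=4bm$ into~\eqref{eq:uhat} (where the discriminant $u_2^2-4u_2bm$ vanishes) likewise gives $\hat{u}_2(4mb+m/b)=\tfrac{25}{4}bm$, which is the stated matching condition.

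I expect the one point requiring care to be the algebra behind~\eqref{eq:uhat}: the squaring that produces the closed form also introduces a spurious root (the one forcing $\sqrt{\hat{u}_2}<0$), which must be discarded. I would confirm that the surviving root coincides with~\eqref{eq:uhat} and satisfies $\hat{u}_2\geq\tfrac92 bm$ throughout the range of part~(i) by checking the endpoints — $\hat{u}_2=\tfrac92 bm$ when $u_2=\tfrac92 bm$, and $\hat{u}_2=\tfrac{25}{4}bm$ when $u_2=4bm$ — together with monotonicity of the right-hand side of~\eqref{eq:uhat} in $u_2$ on $(4bm,\tfrac92 bm]$. Everything else is a direct transcription of the proof of Lemma~\ref{lem:desertlayerfronts}.
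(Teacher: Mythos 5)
Your proof is correct and follows the same approach as the paper's: use the explicit speed formula $c^*_\diamond(u)=\frac{1}{2\sqrt{2b}}\bigl(3\sqrt{u-4bm}-\sqrt u\bigr)=-c^*_\dagger(u)$, exploit its strict monotonicity to solve $c^*_\diamond(\hat u_2)=c^*_\dagger(u_2)$ uniquely, and use $u_2=4bm$ at $a/m=4b+1/b$ for part~(ii). The paper's proof is terser — it simply states the heteroclinic-loop endpoint, asserts existence of $\hat u_2$ by solving the relation, and notes the monotonicity for part~(ii) — so the extra work you do (recording the bijection $c^*_\diamond\colon[4bm,\infty)\to[-\sqrt{m/2},\infty)$, checking that $u_2$ ranges over $(4bm,\tfrac92 bm]$, confirming that the squared equation's surviving root lands in $[\tfrac92 bm,\tfrac{25}{4}bm)$ so no truncation is needed) is exactly the routine verification the paper leaves to the reader.
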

\begin{proof}
For~\ref{lem:veggielayerfrontsi}, when $\frac{a}{m}=\frac{9}{2}b+\frac{2}{b}$, we have $c^*_\diamond(u_2)=0=c^*_\dagger(u_2)$, and therefore both heteroclinic orbits lie simultaneously in the plane $u=u_2$, forming a heteroclinic loop. For values of $4b+\frac{1}{b}< \frac{a}{m}< \frac{9}{2}b+\frac{2}{b}$, the second heteroclinic orbit exists for a value of $\hat{u}_2>u_2$ given by the solution of~\eqref{eq:uhat}, which can be obtained by solving the relation $c^*_\diamond(u)=c^*_\dagger(u_2)$ for $u=\hat{u}_2$.

For~\ref{lem:veggielayerfrontsii}, when $a/m=4b+1/b$, the equilibrium $p_+(u_2)$ lies precisely on the fold $\mathcal{F}$ and hence we obtain the fronts $\phi_\dagger(\xi;u_2)$ for each $c\geq c_{\dagger,\mathrm{crit}}$. The facts regarding $\hat{u}(c)$ follow by noticing that the relation
\begin{align}
\begin{split}
c^*_\diamond(u) &=  \frac{\sqrt{2bu}}{2}\left(v_+(u)-2v_-(u)\right)\\
&=\frac{1}{2\sqrt{2b}}\left(-\sqrt{u}+3\sqrt{u-4bm}\right)
\end{split}
\end{align}
defines $c^*_\diamond(u)$ as a strictly increasing function of $u$, and that $u_2=4bm$ when $a/m=4b+1/b$, so that $\hat{u}_2(4mb+m/b) = 25bm/4$, and $c^*_\diamond(25bm/4)=c_{\dagger,\mathrm{crit}}$.
\end{proof}

\subsection{Slow flow}\label{sec:reduced}
We now examine the slow flow restricted to the critical manifolds $\mathcal{M}^\ell_0$ and  $\mathcal{M}^r_0 $. We rescale $\tau=\eps \xi$ and obtain the corresponding slow system
\begin{align}
\begin{cases}\label{eq:slow}
u_\tau&=\frac{1}{1+\eps c}\left(u-a+G(u,v)v\right)\\
\eps v_\tau&=q\\
\eps q_\tau &=mv-R(v)G(u,v)v-cq.
\end{cases}
\end{align}
By setting $\eps=0$, we obtain the reduced flow on $\mathcal{M}^\ell_0$ as
\begin{align}
\begin{split}\label{eq:reducedl}
u_\tau&=u-a,
\end{split}
\end{align}
on $\mathcal{M}^m_0$ as
\begin{align}
\begin{split}\label{eq:reducedm}
u_\tau&=u-a+G(u,v_-(u))v_-(u),
\end{split}
\end{align}
and on $\mathcal{M}^r_0$ as
\begin{align}
\begin{split}\label{eq:reducedr}
u_\tau&=u-a+G(u,v_+(u))v_+(u).
\end{split}
\end{align}
See Figures~\ref{fig:reduced_single} and~\ref{fig:reduced_multiple} for depictions of the reduced flow, depending on the value of $a/m$. We see that for $u<a$, under the reduced flow on $\mathcal{M}^\ell_0$, $u$ is always decreasing, while on $\mathcal{M}^r_0$, $u$ is always increasing, provided $a/m<4b+1/b$. When $a/m=4b+1/b$, there exists an equilibrium of the full system $(u_2,v_2,0)$ which coincides with the fold $\mathcal{F}$, which thus takes the form of a canard point~\cite{krupaszmolyan2001}. As $a$ increases through this value, this equilibrium moves up along the right branch $\mathcal{M}^r_0$. In that case, the flow is away from this equilibrium point; that is, $u$ is decreasing when $u < u_2$ and increasing when $u > u_2$.

\subsection{Singular orbits} \label{sec:singularsolns}

In the previous sections we have studied the slow flow on the manifolds $\mathcal{M}_0^\ell$ and $\mathcal{M}_0^r$ and the dynamics of fast transitions between these manifolds. In this section, we use this knowledge to construct families of singular orbits, which will serve as the basis for constructing traveling front and pulse solutions to~\eqref{eq:modKlausmeier}. These singular orbits are constructed for open regions in $(a,b,m)$ parameter space, with the wavespeed $c$ in general determined uniquely by the value of $(a,b,m)$. The bifurcation structure, as well as the singular limit geometry of the associated solution orbits, is depicted in the bifurcation diagrams in Figures~\ref{fig:bd_case1} and~\ref{fig:bd_case2}. These diagrams show the dependence of the wave speed $c$ on the value of the quantity $a/m$, in the regions $b<2/3$ and $b>2/3$, as the bifurcation structure changes qualitatively as $b$ crosses through the critical value $2/3$.

We first consider traveling pulse solutions, which can be thought of as two front-type solutions glued together to create a profile which is bi-asymptotic to one of the equilibrium states with a plateau in between. These come in two varieties: vegetation stripe solutions, considered in~\S\ref{sec:singularstripes}, which manifest as homoclinic orbits to the desert equilibrium state $p_0(a)$, and vegetation gap solutions, considered in~\S\ref{sec:singulargaps}, which arise as homoclinic orbits to the equilibrium $p_+(u_2)$. In both cases, the corresponding homoclinic orbits are composed of two portions of the slow manifolds $\mathcal{M}_0^\ell$ and $\mathcal{M}_0^r$ concatenated with two fast jumps in between, which exist for the same value of $c$. The singular limit geometry for these solutions is shown in the bifrucation diagrams Figures~\ref{fig:bd_case1} and~\ref{fig:bd_case2} (see also Figure~\ref{fig:singular_pulse_desert} for more details), in which the stripe solutions are defined along the upper solid green, and the gap solutions are defined along the upper solid purple curve. The distinction between the cases $b<2/3$ and $b>2/3$ is related to the manner in which these two curves interact; this is discussed in more detail in~\S\ref{sec:singularstripes}.

Next we consider with singular front solutions in~\S\ref{sec:singularfronts}, characterized by a sharp transition from the uniform desert state to the uniformly vegetated state or vica versa. In the slow/fast framework of the traveling wave equation~\eqref{eq:twode}, these solutions manifest as heteroclinic orbits between the equilibria $p_0(a)$ and $p_+(u_2)$, and are composed of a single slow segment along one of the manifolds $\mathcal{M}_0^\ell$ and $\mathcal{M}_0^r$ concatenated with a fast jump to the opposite slow manifold. In the diagrams Figures~\ref{fig:bd_case1} and~\ref{fig:bd_case2}, these singular front solutions are defined along the upper solid and dashed green and purple curves in the region $a/m>4b+1/b$. The green curves correspond to front solutions in which the vegetated state is downslope of the desert state, while the desert state is downslope of the vegetated state along the purple curves.

We briefly discuss periodic orbits in~\S\ref{sec:singularperiodicorbits}, and in the following section~\S\ref{sec:mainexistenceresults}, we state our main existence results regarding traveling front, stripe, and gap solutions to~\eqref{eq:modKlausmeier}.

\begin{figure}
\centering
	\begin{subfigure}[t]{\textwidth}
	\centering
		\includegraphics[width=0.8\textwidth]{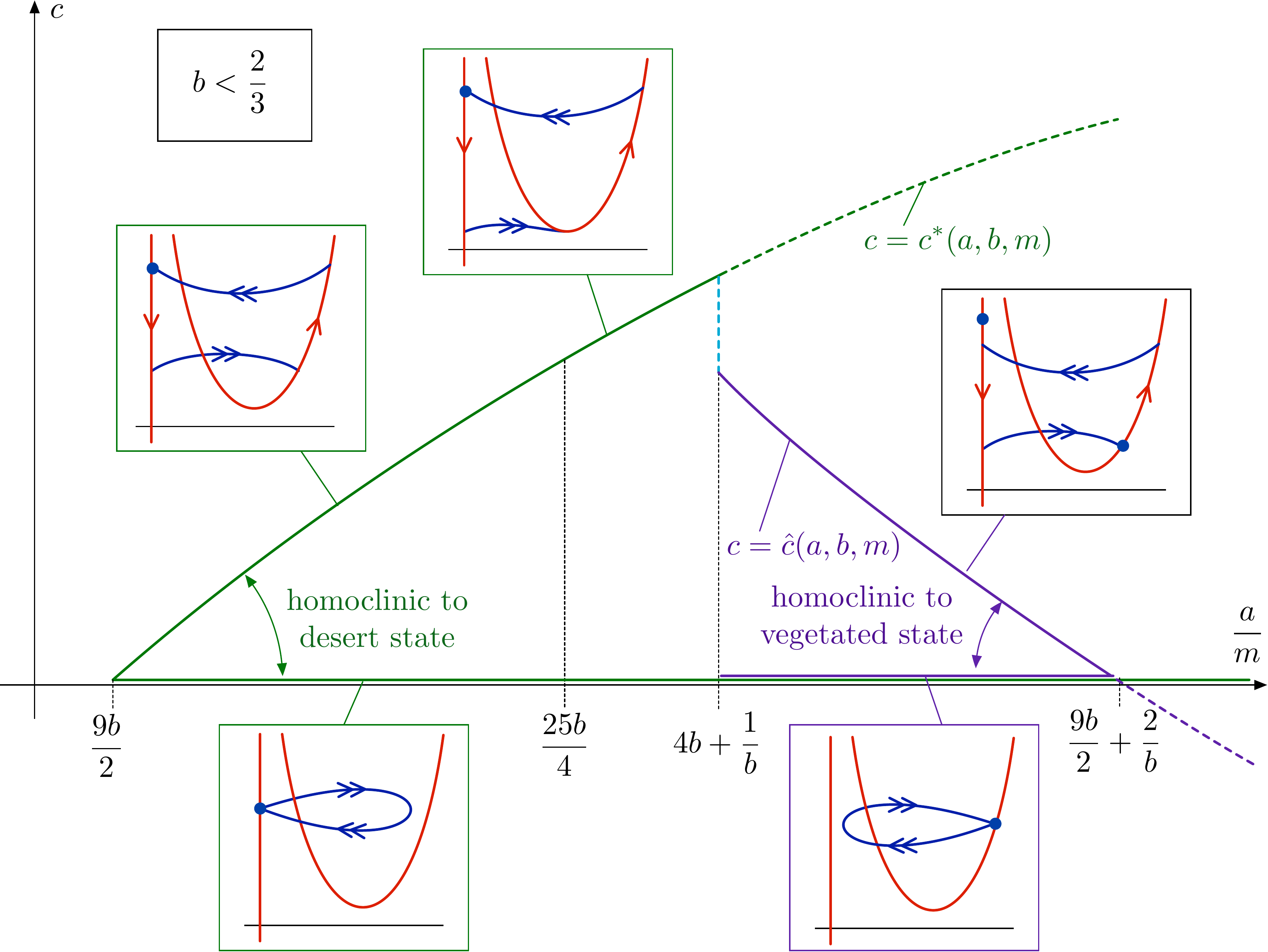}
	\caption{}
	\label{fig:bd_case1}
	\end{subfigure}
	\begin{subfigure}[t]{\textwidth}
	\centering
	\includegraphics[width=0.8\textwidth]{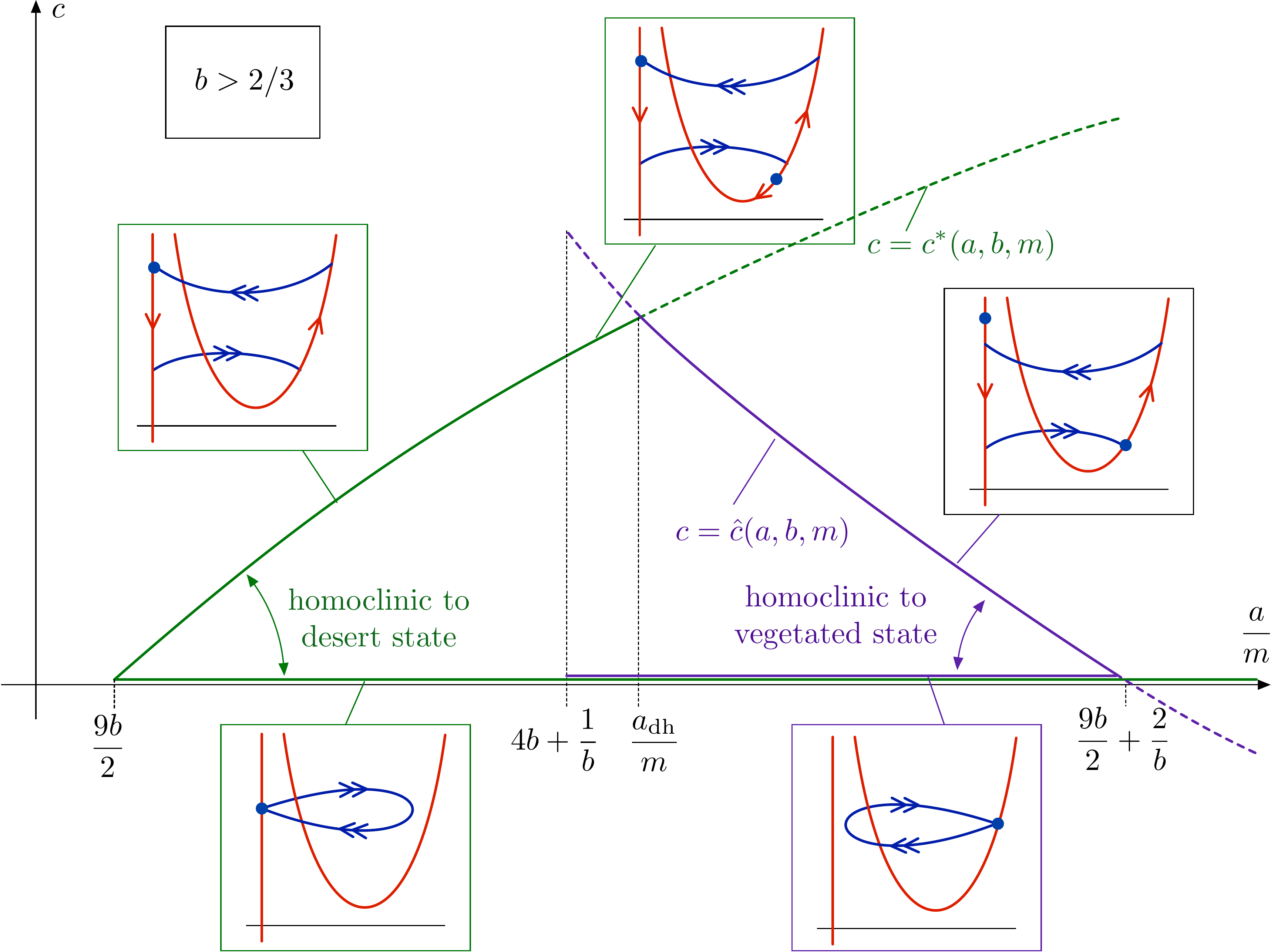}
	\caption{}
	\label{fig:bd_case2}
	\end{subfigure}
\caption{Shown are the singular $\eps = 0$ bifurcation diagrams in $(a,c)$ parameter space in the cases $b < 2/3$ (a) and $b > 2/3$ (b).}
\end{figure}

\subsubsection{Homoclinic orbits to the desert state $p_0(a)$}\label{sec:singularstripes}
By Lemma~\ref{lem:desertlayerfronts}, for each $\frac{a}{m}\geq\frac{9b}{2}$, there exists a pair of fronts $ \phi_\diamond(\xi;a), \phi_\dagger(\xi;u^*(a))$ with the same speed
\begin{align}
c&=c^*(a):=\frac{1}{2\sqrt{2b}}\left(-\sqrt{a}+3\sqrt{a-4bm}\right).
\end{align}
We can concatenate these fronts with portions of the critical manifolds $\mathcal{M}^{\ell,r}_0$ in order to construct singular homoclinic solutions to the equilibrium $p_0(a)$. However, when $a/m>4b+1/b$, the equilibrium $p_+(u_2)$ lies on $\mathcal{M}^r_0$ and can block these orbits. For each $\frac{a}{m} \geq \frac{9b}{2}$, we have a candidate singular homoclinic orbit to the desert state $p_0(a)$ given by
\begin{align}\label{eq:singhomd}
\mathcal{H}_\mathrm{d}(a):=\mathcal{M}^\ell_0[u^*(a),a]\cup \phi_\dagger(u^*(a)) \cup \mathcal{M}^r_0[u^*(a),a] \cup \phi_\diamond(a),
\end{align}
corresponding to a vegetation stripe solution (see Figure~\ref{fig:singular_pulse_desert}), where the notation $\mathcal{M}^j_0[u_1,u_2]$ was defined in~\eqref{eq:criticalmanifoldsegment}. This orbit will be blocked if the equilibrium $p_+(u_2)$ lies on $\mathcal{M}^r_0$ with $u^*(a)\leq u_2$. There are two cases based on the expression for $u^*(a)$ in~\eqref{eq:ustar}. If $a/m\geq 25b/4$, then this orbit is blocked whenever $p_+(u_2)$ lies on $\mathcal{M}^r_0$, that is, for any value of $a/m\geq 4b+1/b$. If $a/m< 25b/4$, then this orbit is blocked if $u_2\geq u^*(a)$, which occurs when 
\begin{align}
\frac{a}{m} \geq \bar{a}_\mathrm{dh}:= 2b+\frac{5\sqrt{3}b^2}{2\sqrt{4+3b^2}}+\frac{8}{\sqrt{12+9b^2}}.
\end{align}
We therefore expect a different singular bifurcation diagram for the cases $4b+1/b>25b/4$ or $4b+1/b<25b/4$ (i.e. $b < 2/3$ respectively $b > 2/3$). In the former case the singular front $\phi_\dagger(\xi;u^*(a))$ can jump precisely onto the fold point $\mathcal{F}$; in the latter case this is not possible. Equivalently, the structure changes depending on whether $b<2/3$ or $b>2/3$ (see Figures~\ref{fig:bd_case1} and~\ref{fig:bd_case2}). We define the quantity
\begin{align}
\bar{a}(b):=\begin{cases} 4b+1/b & b\leq 2/3\\ \bar{a}_\mathrm{dh} & b>2/3\end{cases}.
\end{align}
Then for each $b,m>0$, we can construct the singular homoclinic orbits $\mathcal{H}_\mathrm{d}(a)$ for $\frac{9}{2}b \leq \frac{a}{m} \leq \bar{a}(b)$. We note that when $b\leq 2/3$ and $\frac{a}{m}\in[4b+1/b, 25b/4]$, the front $\phi_\dagger(u^*(a))$ jumps precisely onto the nonhyperbolic fold point $\mathcal{F}$. While it is possible to construct homoclinic orbits in this regime as well as determine the stability of the underlying traveling wave solution~\cite{beck,cdrs,cas} using geometric blow-up methods, we do not consider this case here. Rather we restrict our attention to orbits which jump on/off normally hyperbolic portions of the critical manifold. To that end, we define the quantity
\begin{align}
\bar{a}_\mathrm{hyp}(b):=\begin{cases} 25b/4 & b\leq 2/3\\ \bar{a}_\mathrm{dh} & b>2/3\end{cases},
\end{align}
and consider only the singular homoclinic orbits $\mathcal{H}_\mathrm{d}(a)$ for $\frac{9}{2}b \leq \frac{a}{m} < \bar{a}_\mathrm{hyp}(b)$.

\begin{Remark}\label{rem:layerhomoclinics}
In addition to the class of homoclinic orbits described above, there also exist singular homoclinic orbits to the equilibrium $p_0(a)$ lying entirely in the plane $u=a$. These orbits in fact correspond to solutions of the layer problem~\eqref{eq:layer} for $u=a$ and $c=0$, and they are depicted along the lower green curves in the bifurcation diagrams in Figures~\ref{fig:bd_case1} and~\ref{fig:bd_case2}. As with the singular homoclinic orbits $\mathcal{H}_\mathrm{d}(a)$ constructed in this section, it is possible to show that these layer homoclinic orbits also persist for sufficiently small $\epsilon>0$ using geometric singular perturbation arguments, and in fact they lie on the same continuation branch; see Figure~\ref{fig:bd_numerics}. Furthermore, the bifurcation structure near these orbits is surprisingly rich; a detailed analysis is carried out in~\cite{CD_3Dhomoclinics}. However, unlike the orbits $\mathcal{H}_\mathrm{d}(a)$, the resulting traveling wave solutions are typically unstable as solutions to~\eqref{eq:modKlausmeier}, and we therefore refrain from analyzing these solutions in this work.
\end{Remark}

\begin{figure}
\centering
\includegraphics[width=0.5\textwidth]{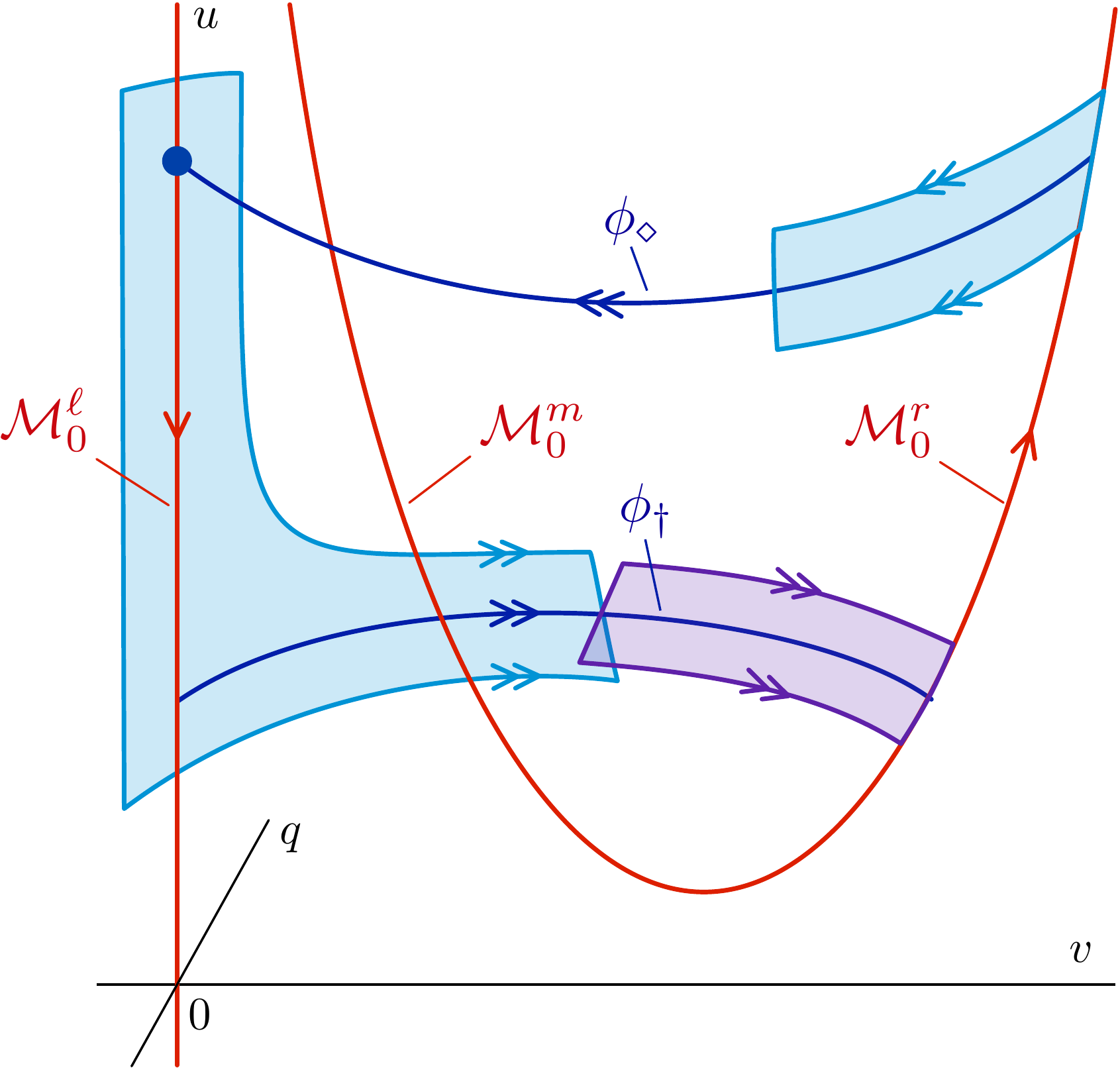}
\caption{Shown is the singular orbit $\mathcal{H}_\mathrm{d}(a)$ homoclinic to the desert state $p_0(a)$. The orbit first traverses a portion of the manifold $\mathcal{M}^\ell_0$, then the front $\phi_\dagger(u^*(a))$, followed by a portion of the critical manifold $\mathcal{M}^r_0$, and finally the front $\phi_\diamond(a)$. }
\label{fig:singular_pulse_desert}
\end{figure}

\subsubsection{Homoclinic orbits to the vegetated state $p_+(u_2)$}\label{sec:singulargaps}
Similarly, we can construct singular homoclinic orbits to the vegetated state $p_+(u_2)$, using the fronts from Lemma~\eqref{lem:veggielayerfronts}. By similar arguments as above, we obtain singular homoclinic orbits 
\begin{align}
\mathcal{H}_\mathrm{v}(a):= \mathcal{M}^r_0[u_2,\hat{u}_2(a)]  \cup \phi_\diamond(\hat{u}_2(a))\cup \mathcal{M}^\ell_0[u_2,\hat{u}_2(a)] \cup \phi_\dagger(u_2),
\end{align}
corresponding to vegetation gap solutions. For each $b,m>0$, these orbits can be constructed for parameters $\bar{a}(b) \leq \frac{a}{m} \leq \frac{9}{2} b + 2/b$. 

\begin{Remark}
Additionally, in the case $b<2/3$, using Lemma~\eqref{lem:veggielayerfrontsii}, when $a=4bm+m/b$, we also obtain homoclinic orbits 
\begin{align}
\widehat{\mathcal{H}}_\mathrm{v}(c):= \mathcal{M}^r_0[u_2,\hat{u}(c)] \cup \phi_\diamond(\hat{u}(c))\cup \mathcal{M}^\ell_0 [u_2,\hat{u}(c)] \cup \phi_\dagger(u_2)
\end{align}
for each $c_{\dagger,\mathrm{crit}}\leq c\leq c^*(4bm+m/b)$.
\end{Remark}

\begin{Remark}
Similarly as in~\S\ref{sec:singularstripes}, there exist singular homoclinic orbits $p_+(u_2)$ lying entirely in the plane $u=u_2$ for $c=0$; see Remark~\ref{rem:layerhomoclinics}. These orbits are depicted in Figures~\ref{fig:bd_case1} and~\ref{fig:bd_case2} along the lower purple curves. We remark on their presence here, but we refer to~\cite{CD_3Dhomoclinics} for a detailed singular bifurcation analysis.
\end{Remark}


\subsubsection{Heteroclinic orbits connecting desert state $p_0(a)$ and vegetated state $p_+(u_2)$}\label{sec:singularfronts}

To construct singular heteroclinic solution that connect the steady state $p_0(a)$ to the steady state $p_+(u_2)$, we can concatenate $\mathcal{M}_0^\ell$ with a front $\phi_\dagger$ that limits onto the fixed point $p_+(u_2)$. The latter fronts only exist when $p_+(u_2)$ lies on $\mathcal{M}_0^r$, i.e. when $\frac{a}{m} > 4 b + \frac{1}{b}$. Hence, a singular heteroclinic orbit connecting $p_0(a)$ to $p_+(u_2)$ is given by
\begin{equation}
	\mathcal{H}_{\mathrm{dv}}(a) := \mathcal{M}_0^\ell[u_2,a]  \cup \phi_{\dagger}(u_2),
\end{equation}
the speed of which is $c = \hat{c}(a)$.

Similarly, a heteroclinic orbit connecting $p_+(u_2)$ to $p_0(a)$ can be found by concatenating $\mathcal{M}_0^r$ with a front $\phi_\diamond$ that limits onto the fixed point $p_0(a)$. Again, this can only happen when $\frac{a}{m} > 4b + \frac{1}{b}$; a candidate orbit is given by
\begin{equation}
	\mathcal{H}_{\mathrm{vd}}(a) := \mathcal{M}_0^r[u_2,a] \cup \phi_\diamond(a),
\end{equation}
the speed of which is $c = c^*(a)$.

\begin{Remark}
We note that there exist additional heteroclinic orbits for values of $2(b+\sqrt{1+b^2})<\frac{a}{m} < 4 b + \frac{1}{b}$. However, in this parameter regime, the steady state $(U_2,V_2)$ corresponding to the equilibrium $p_+(u_2)$ is unstable (against some non-uniform perturbations) in the original PDE~\eqref{eq:modKlausmeier}. Hence a heteroclinic orbit in this regime corresponds to a front which invades the unstable vegetated state. We do not analyze such invasion fronts in this work; rather, we focus on the bistable regime, corresponding to the singular heteroclinic orbits $\mathcal{H}_{\mathrm{vd}}(a)$ described above.
\end{Remark}

\subsubsection{Periodic orbits}\label{sec:singularperiodicorbits}
In this section, we comment briefly on periodic orbits. Following the construction as for singular homoclinic orbits in~\S\ref{sec:singularstripes}--\ref{sec:singulargaps}, it is also possible to construct singular periodic orbits by concatenating portions of the critical manifolds $\mathcal{M}^\ell_0,\mathcal{M}^r_0$ with fast layer transitions in between, provided the relevant segments of $\mathcal{M}^\ell_0,\mathcal{M}^r_0$ do not contain either of the equilibria $p_0(a)$ or $p_+(u_2)$. Hence, one expects to find singular periodic orbits for any value of $\frac{9b}{2}<\frac{a}{m}<\frac{9b}{2}+\frac{2}{b}$, and any value of the wavespeed $0<c<\min\{c^*(a,b,m),\hat{c}(a,b,m)\}$. Further, general theory predicts that such periodic orbits persist for small $\eps>0$~\cite{SOT}; these solutions correspond to wavetrain solutions of~\eqref{eq:modKlausmeier}, or periodic vegetation stripes. While such solutions are perhaps more ecologically relevant, in the following we focus on traveling pulse solutions as the question of stability, particularly in two spatial dimensions, is more analytically tractable.

We remark that periodic wavetrain solutions have been found in a similar slow-fast context in the FitzHugh--Nagumo equation~\cite{CAR,HAS}, and furthermore, their spectral stability (in one spatial dimension) has been studied in~\cite{ESZ}.

\subsection{Main existence results}\label{sec:mainexistenceresults}

In this section, we have studied~\eqref{eq:modKlausmeier} in the singular limit $\varepsilon \downarrow 0$. Here, we have found several homoclinic and heteroclinic orbits. These orbits persist for $\varepsilon > 0$, as we will prove in \S\ref{sec:existence}. To summarize our findings, we end this section with our main existence results.

\begin{Theorem}[Vegetation stripe solution] \label{thm:stripeexistence} Fix $b,m>0$ and $a$ such that $\frac{a}{m} \in \left( \frac{9}{2}b, \bar{a}_\mathrm{hyp}(b)\right)$. There exists $\eps_0>0$ such that for $\eps\in(0,\eps_0)$,~\eqref{eq:modKlausmeier} admits a traveling pulse solution $\phi_\mathrm{d}(\xi;a,\eps) =(U_\mathrm{d},V_\mathrm{d})(\xi;a,\eps)  $ with speed 
\begin{align}
c_\mathrm{d}(a,\eps)=c^*(a)+\mathcal{O}(\eps)
\end{align}
and satisfying $\lim_{|\xi|\to\infty}(U_\mathrm{d},V_\mathrm{d})(\xi;a,\eps)=(U_0,V_0)$. The length of the vegetation stripe is given to leading order by
\begin{equation}
	\eps\, L_\mathrm{d} := \int_{u^*(a)}^a \frac{du}{u - a + u v_+(u)^2}.
\end{equation}

\end{Theorem}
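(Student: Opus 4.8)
The plan is to prove, in \S\ref{sec:existence}, that the singular homoclinic orbit $\mathcal{H}_\mathrm{d}(a)$ of~\eqref{eq:singhomd} persists for small $\eps>0$, using geometric singular perturbation theory together with a transversality (shooting) argument in which the wave speed $c$ is the single free parameter that must be tuned so that the orbit closes up; the stripe length is then read off from the reduced flow~\eqref{eq:reducedr}. First, fix $b,m>0$ and $a$ with $\frac{a}{m}\in\big(\frac92 b,\bar{a}_\mathrm{hyp}(b)\big)$. The definition of $\bar{a}_\mathrm{hyp}(b)$ together with~\eqref{eq:ustar} guarantees $u^*(a)>4bm$, so both fast jumps of $\mathcal{H}_\mathrm{d}(a)$ take off from and land on compact, normally hyperbolic (saddle-type) segments of the critical manifold that are uniformly bounded away from the fold $\mathcal{F}$. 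Fenichel's theorems then provide, for $\eps\in(0,\eps_0)$, locally invariant slow manifolds $\mathcal{M}^\ell_\eps,\mathcal{M}^r_\eps$ that are $C^1$ and $\mathcal{O}(\eps)$-close to the relevant segments of $\mathcal{M}^\ell_0,\mathcal{M}^r_0$, together with their persisting local stable and unstable manifolds and fast fibrations. Moreover $p_0(a)=(a,0,0)$ is an equilibrium of~\eqref{eq:twode} for every $\eps$, and a direct linearization shows it is hyperbolic with a two-dimensional unstable manifold (one slow direction tangent to $\mathcal{M}^\ell_\eps$, one fast) and a one-dimensional stable manifold (the fast stable fiber of $p_0(a)$), so nothing degenerate happens at the desert equilibrium itself.

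Next I would track the two-dimensional unstable manifold $W^u(p_0(a);\eps,c)$ forward around the singular cycle. The orbits in it that shadow $\mathcal{M}^\ell_\eps$ down to $u\approx u^*(a)$ depart along the fast-unstable fiber that is $\mathcal{O}(\eps)$-close to the layer front $\phi_\dagger(\xi;u^*(a))$, thereby entering a neighbourhood of $\mathcal{M}^r_\eps$. The essential geometric input is that $\phi_\dagger$ is a transverse heteroclinic connection between $W^u(\mathcal{M}^\ell_0)$ and $W^s(\mathcal{M}^r_0)$ in the layer flow~\eqref{eq:layer}; a Melnikov-type computation reduces this to $\tfrac{d}{du}c^*_\dagger(u)\neq 0$ at $u=u^*(a)$, which holds because $c^*_\dagger=-c^*_\diamond$ is strictly monotone in $u$ (as already noted in the proof of Lemma~\ref{lem:veggielayerfronts}), and similarly for the $\phi_\diamond$-jump at $u=a$. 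Granting this, $W^u(p_0(a))$ enters the Fenichel neighbourhood of $\mathcal{M}^r_\eps$ transverse to $W^s(\mathcal{M}^r_\eps)$, and the Exchange Lemma applies: while $u$ increases from $\approx u^*(a)$ to $\approx a$ under~\eqref{eq:reducedr}, the tracked piece of $W^u(p_0(a))$ becomes $C^1$ and $\mathcal{O}(\eps)$-close to $W^u(\mathcal{M}^r_\eps)$, hence leaves along the fast-unstable fiber $\mathcal{O}(\eps)$-close to $\phi_\diamond(\xi;a)$ and returns toward the desert equilibrium.

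Finally, the orbit is homoclinic precisely when $W^u(p_0(a);\eps,c)$ meets the one-dimensional stable manifold $W^s(p_0(a);\eps,c)$ — a codimension-one condition in the three-dimensional phase space of~\eqref{eq:twode}. The corresponding split function vanishes at $(\eps,c)=(0,c^*(a))$ by the construction of $\mathcal{H}_\mathrm{d}(a)$ (recall $c^*_\diamond(a)=c^*_\dagger(u^*(a))=c^*(a)$) and, by strict monotonicity of $c^*_\diamond$, has nonzero derivative in $c$ there; the implicit function theorem then yields a unique $c=c_\mathrm{d}(a,\eps)=c^*(a)+\mathcal{O}(\eps)$ and a nearby homoclinic orbit $\phi_\mathrm{d}(\xi;a,\eps)=(U_\mathrm{d},V_\mathrm{d})(\xi;a,\eps)$, which is exactly the claimed traveling pulse of~\eqref{eq:modKlausmeier} with $(U_\mathrm{d},V_\mathrm{d})\to(U_0,V_0)=(a,0)$ as $|\xi|\to\infty$. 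The stripe length is the $\xi$-interval spent near $\mathcal{M}^r_\eps$: in the slow time variable this is $\int_{u^*(a)}^{a}\frac{du}{u-a+G(u,v_+(u))v_+(u)}=\int_{u^*(a)}^{a}\frac{du}{u-a+u\,v_+(u)^2}$ to leading order, and since $\xi=\tau/\eps$ this equals $L_\mathrm{d}$, i.e.\ $\eps L_\mathrm{d}$ is as stated.

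I expect the most delicate step to be the tracking in the second paragraph: bookkeeping which sub-manifold of the two-dimensional $W^u(p_0(a))$ actually enters the Fenichel neighbourhood of $\mathcal{M}^r_\eps$, and applying the Exchange Lemma correctly at the saddle-type slow manifold $\mathcal{M}^r_\eps$. Equivalently, one could carry out the entire concatenation with Lin's method, solving for the homoclinic orbit as a fixed point and extracting $c_\mathrm{d}(a,\eps)$ from the resulting jump condition; this has the practical benefit of setting up the coordinate framework needed for the spectral analysis in \S\ref{sec:stabilityRigorous}. The transversality ingredients, by contrast, are elementary here, reducing to the explicit monotonicity of the layer wave speeds $c^*_\dagger$ and $c^*_\diamond$ established around Lemma~\ref{lem:desertlayerfronts}.
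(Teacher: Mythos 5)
Your proposal follows essentially the same strategy as the paper's proof in \S\ref{sec:existenceproof}: Fenichel persistence of the normally hyperbolic slow manifolds and their (un)stable manifolds, transversality of both fast jumps, the Exchange Lemma to track $\mathcal{W}^\mathrm{u}(p_0(a))$ past the saddle-type slow manifold $\mathcal{M}^r_\eps$, and an implicit function theorem argument in $c$ along the $\phi_\diamond$-jump to close the homoclinic, with the stripe length read off from the reduced flow~\eqref{eq:reducedr}. The only stylistic difference is that you argue transversality via the strict monotonicity of the layer speeds $c^*_\dagger,c^*_\diamond$, whereas the paper computes the Melnikov integrals of Lemmas~\ref{lem:transf}--\ref{lem:transb} directly — these are equivalent (one has $dc^*_\dagger/du = -M_\dagger^u/M_\dagger^c$), though the explicit integrals also furnish the signs used later in the stability analysis; note also that the Exchange Lemma gives $C^1$-$\mathcal{O}(e^{-\eta/\eps})$ closeness, stronger than the $\mathcal{O}(\eps)$ you wrote.
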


\begin{Theorem}[Vegetation gap solution]\label{thm:gapexistence}
Fix $b,m>0$ and $a$ such that $\frac{a}{m} \in \left( \bar{a}(b), \frac{9}{2}b + \frac{2}{b} \right)$. There exists $\eps_0>0$ such that for $\eps\in(0,\eps_0)$,~\eqref{eq:modKlausmeier} admits a traveling pulse solution $\phi_\mathrm{v}(\xi;a,\eps) =(U_\mathrm{v},V_\mathrm{v})(\xi;a,\eps)  $ with speed 
\begin{align}
c_\mathrm{v}(a,\eps)=\hat{c}(a)+\mathcal{O}(\eps)
\end{align}
and satisfying $\lim_{|\xi|\to\infty}(U_\mathrm{v},V_\mathrm{v})(\xi;a,\eps)=(U_2,V_2)$. The length of the vegetation gap is given to leading order by
\begin{equation}
	\eps\, L_\mathrm{v} := \int_{\hat{u}_2}^{u_2} \frac{du}{u - a} = \log\left( \frac{u_2(a)-a}{\hat{u}_2-a}\right).
\end{equation}

\end{Theorem}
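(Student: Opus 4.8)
The plan is to establish \ref{thm:gapexistence} by the same geometric singular perturbation argument that produces the stripe solution of \ref{thm:stripeexistence} in~\S\ref{sec:existence}: to show that the singular homoclinic orbit $\mathcal{H}_\mathrm{v}(a)=\mathcal{M}^r_0[u_2,\hat u_2(a)]\cup\phi_\diamond(\hat u_2(a))\cup\mathcal{M}^\ell_0[u_2,\hat u_2(a)]\cup\phi_\dagger(u_2)$ constructed in~\S\ref{sec:singulargaps} perdures as a genuine orbit of~\eqref{eq:twode} homoclinic to $p_+(u_2)$ for small $\eps>0$, at a uniquely determined nearby wave speed. Two structural features help. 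First, the desert branch $\mathcal{M}^\ell_0=\{v=q=0\}$ is invariant for \emph{all} $\eps$, carrying the exact flow $u_\xi=\frac{\eps}{1+\eps c}(u-a)$, a time-reparametrised copy of~\eqref{eq:reducedl}. Second, $p_+(u_2)=(u_2,v_2,0)$ is an equilibrium of~\eqref{eq:twode} for every $\eps$; since its slow direction on $\mathcal{M}^r_0$ is repelling (see~\S\ref{sec:reduced}) while in the layer problem it is a saddle, it is hyperbolic with a one-dimensional strong stable manifold $W^s(p_+(u_2))$ (to leading order the tail of $\phi_\dagger(u_2)$) and a two-dimensional unstable manifold $W^u(p_+(u_2))$. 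Invoking Fenichel's theorems~\cite{fenichel1979geometric}, for $\eps$ small a compact normally hyperbolic segment of $\mathcal{M}^r_0$ containing $\mathcal{M}^r_0[u_2,\hat u_2(a)]$ in its interior (possible since $4bm<u_2<\hat u_2(a)$) perdures as a locally invariant slow manifold $\mathcal{M}^r_\eps$, $C^1$- and $\mathcal{O}(\eps)$-close to it, together with its stable and unstable foliations, the restricted flow being an $\mathcal{O}(\eps)$-perturbation of~\eqref{eq:reducedr}. The hypothesis $\frac am\in(\bar a(b),\frac92 b+\frac2b)$ is exactly what legitimises this: it keeps $p_+(u_2)$, hence the whole relevant segment of $\mathcal{M}^r_0$, bounded away from the fold $\mathcal{F}$; it forces $u_2<\hat u_2(a)<a$ so that both slow legs of $\mathcal{H}_\mathrm{v}(a)$ have $\mathcal{O}(1)$ length and the desert equilibrium $p_0(a)$ does not block the orbit; and it keeps the layer fronts $\phi_\dagger(\xi;u_2),\phi_\diamond(\xi;\hat u_2(a))$ of Lemma~\ref{lem:veggielayerfronts} nondegenerate, transverse connections between the outer branches.

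Treating $c$ as a parameter near $\hat c(a)$, I would then track $W^u(p_+(u_2))$ forward along $\mathcal{H}_\mathrm{v}(a)$. It leaves $p_+(u_2)$ sweeping (to leading order) the slow leg $\mathcal{M}^r_0$ up to $u=\hat u_2(c):=(c^*_\diamond)^{-1}(c)$, the unique base point from which the connecting front $\phi_\diamond$ departs at speed $c$ (well defined since $c^*_\diamond$ is strictly monotone, with $\hat u_2(\hat c(a))=\hat u_2(a)$ by Lemma~\ref{lem:veggielayerfronts}); thus this first jump imposes no constraint. It peels off along $\phi_\diamond(\hat u_2(c))$ and reaches $\mathcal{M}^\ell_\eps=\mathcal{M}^\ell_0$ transversally near the stable fibre over $p_0(\hat u_2(c))$. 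By the Exchange Lemma, after a slow transit of $\mathcal{M}^\ell_\eps$ the manifold exits $C^1$-exponentially close to $W^u(\mathcal{M}^\ell_\eps)$, i.e.\ to the union of fast-unstable fibres over $p_0(u_\mathrm{exit})$ for $u_\mathrm{exit}$ ranging over an interval about $u_2$ (the exit base point varying as one moves within $W^u(p_+(u_2))$ transverse to $\phi_\diamond$, which changes the slow transit time). Among these, only the fibre with $c^*_\dagger(u_\mathrm{exit})=c$ reconnects to $\mathcal{M}^r_\eps$ — along a $\phi_\dagger$-type front — arriving near $p_+(\rho(c,\eps))$ with $\rho(c,\eps)=(c^*_\dagger)^{-1}(c)+\mathcal{O}(\eps)$. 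Because $p_+(u_2)$ repels within $\mathcal{M}^r_\eps$, the returning orbit closes up as a homoclinic to $p_+(u_2)$ precisely when the scalar matching function $\Xi(c,\eps):=\rho(c,\eps)-u_2$ vanishes.

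To finish, I would apply the implicit function theorem: $\Xi(\hat c(a),0)=0$ because $\mathcal{H}_\mathrm{v}(a)$ is by construction the corresponding singular concatenation, and $\partial_c\Xi(\hat c(a),0)=\big((c^*_\dagger)'(u_2)\big)^{-1}\neq0$ by the strict monotonicity of the layer-front speed recorded in Lemma~\ref{lem:veggielayerfronts}. Hence there is a unique $c_\mathrm{v}(a,\eps)=\hat c(a)+\mathcal{O}(\eps)$ with $\Xi(c_\mathrm{v}(a,\eps),\eps)=0$, producing a transverse intersection $W^u(p_+(u_2))\cap W^s(p_+(u_2))$ and thus the pulse $\phi_\mathrm{v}(\xi;a,\eps)=(U_\mathrm{v},V_\mathrm{v})(\xi;a,\eps)$, which is automatically bi-asymptotic to $(U_2,V_2)$. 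The plateau length is read off from the only $\mathcal{O}(1/\eps)$-long portion of the orbit, the desert transit along $\mathcal{M}^\ell_\eps$ from $u\approx\hat u_2(a)$ to $u\approx u_2$: since that transit carries the exact flow $u_\xi=\frac{\eps}{1+\eps c}(u-a)$, its $\xi$-length is $L_\mathrm{v}=\frac{1+\eps c}{\eps}\int_{\hat u_2}^{u_2}\frac{du}{u-a}$, so $\eps L_\mathrm{v}=\int_{\hat u_2}^{u_2}\frac{du}{u-a}+\mathcal{O}(\eps)=\log\!\big(\frac{u_2-a}{\hat u_2-a}\big)+\mathcal{O}(\eps)$, as claimed.

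The main obstacle is the Exchange-Lemma step: one must track the two-dimensional manifold $W^u(p_+(u_2))$ through a fixed neighbourhood of $\mathcal{M}^\ell_\eps$ — entering tangent (to leading order) to the stable foliation and leaving tangent to the unstable foliation — controlling not merely its position but its tangent planes in $C^1$, so that the reduction to the scalar function $\Xi$ is valid and the transversality $\partial_c\Xi\neq0$ survives the passage; an analogous but shorter estimate is also needed at the departure from and return to the repelling equilibrium $p_+(u_2)$ on $\mathcal{M}^r_\eps$. The degenerate configurations at the two ends of the parameter interval — $p_+(u_2)$ colliding with the fold $\mathcal{F}$, or the heteroclinic-loop limit $c^*_\diamond(u_2)=c^*_\dagger(u_2)=0$ where the two slow legs collapse — are exactly where this control breaks down, which is why \ref{thm:gapexistence} is stated on the open interval. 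An equivalent alternative, matching the machinery later used for the stability analysis in~\S\ref{sec:stabilityRigorous}, would set the existence problem up via Lin's method: build a piecewise orbit with prescribed jumps across $\mathcal{M}^\ell_\eps$ and $\mathcal{M}^r_\eps$ and solve the resulting jump conditions for $c=c_\mathrm{v}(a,\eps)$.
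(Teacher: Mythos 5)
Your proposal is correct and takes essentially the same geometric singular perturbation route the paper uses for the stripe, Theorem~\ref{thm:stripeexistence}, of which the paper explicitly says the gap proof is ``similar'': Fenichel persistence of $\mathcal{M}^r_\eps$ (with $\mathcal{M}^\ell_0$ exactly invariant), transversality of the layer fronts, the Exchange Lemma for the long passage along $\mathcal{M}^\ell_\eps$, and the implicit function theorem in $c$ to close the homoclinic, plus the same leading-order plateau computation from the exact desert-branch flow $u_\xi=\frac{\eps}{1+\eps c}(u-a)$. The one cosmetic difference is in the final match: you phrase it as vanishing of a base-point mismatch $\Xi(c,\eps)=\rho(c,\eps)-u_2$ with $\partial_c\Xi=1/(c^*_\dagger)'(u_2)\neq0$, whereas the paper (adapting the stripe proof to this case by applying Lemma~\ref{lem:transf} along $\phi_\dagger(u_2)$ at the fixed $u=u_2$) writes the splitting of $\mathcal{W}^{\mathrm{u}}(p_+(u_2))$ against $\mathcal{W}^{\mathrm{s}}(p_+(u_2))$ directly as a Melnikov distance $D(\tilde c,\eps)=M^c_\dagger\tilde c+\mathcal{O}(\eps+\tilde c^2)$; since $(c^*_\dagger)'(u_2)=-M^u_\dagger/M^c_\dagger$, the two nondegeneracy conditions encode the same transversality, but the Melnikov distance is the cleaner object to differentiate (and is what the paper reuses in the stability analysis of \S\ref{sec:stabilityFormal}--\S\ref{sec:stabilityRigorous}), whereas your $\rho(c,\eps)$ needs the $C^1$ Exchange-Lemma control you correctly flag as the main technical burden before it is a well-defined smooth scalar.
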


\begin{Theorem}[Desert front solution] \label{thm:desertFrontExistence} 
Fix $b,m > 0$ and $a$ such that $\frac{a}{m} > 4b + \frac{1}{b}$. There exists $\varepsilon_0 > 0$ such that for $\varepsilon \in (0,\varepsilon_0)$, \eqref{eq:modKlausmeier} admits a traveling front solution $\phi_{\mathrm{dv}}(\xi;a,\varepsilon) = (U_{\mathrm{dv}},V_{\mathrm{dv}})(\xi;a,\varepsilon)$ with speed
\begin{equation}
	c_{\mathrm{dv}}(a,\varepsilon) = c^*(a) + \mathcal{O}(\eps)
\end{equation}
and satisfying $\lim_{\xi \to -\infty} (U_{\mathrm{dv}},V_{\mathrm{dv}})(\xi;a,\eps) = (U_0,V_0)$ and $\lim_{\xi \to \infty} (U_{\mathrm{dv}},V_{\mathrm{dv}})(\xi;a,\eps) = (U_2,V_2)$.
\end{Theorem}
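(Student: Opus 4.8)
The plan is to lift the singular heteroclinic orbit $\mathcal{H}_{\mathrm{vd}}(a)=\mathcal{M}^r_0[u_2,a]\cup\phi_\diamond(a)$ from~\S\ref{sec:singularfronts} -- which runs between the desert equilibrium $p_0(a)$ and the vegetated equilibrium $p_+(u_2)$ with speed $c^*(a)$, being a slow passage along the right branch $\mathcal{M}^r_0$ between $p_+(u_2)$ and $p_+(a)$ concatenated with the fast layer front $\phi_\diamond(a)$ of Lemma~\ref{lem:desertlayerfronts} between $p_+(a)$ and $p_0(a)$ -- to a genuine heteroclinic orbit of the traveling wave ODE~\eqref{eq:twode} for all sufficiently small $\eps>0$, using geometric singular perturbation theory. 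Since this orbit has only one slow segment and one fast jump, the argument is considerably simpler than for the pulse solutions and reduces to Fenichel's invariant manifold theorems together with a transversality computation that selects the wave speed.

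First I would record the local phase-space structure at the two equilibria. The equilibria of~\eqref{eq:twode} do not depend on $\eps$, so $p_0(a)=(a,0,0)$ and $p_+(u_2)=(u_2,v_2,0)$ persist unchanged; since $\tfrac{a}{m}>4b+\tfrac{1}{b}$ forces $u_2>4bm$, the latter lies on the normally hyperbolic part of $\mathcal{M}^r_0$ strictly above the fold $\mathcal{F}$, so no blow-up analysis is required. Linearizing~\eqref{eq:twode} shows that, for small $\eps>0$, $p_0(a)$ has a one-dimensional stable manifold (the fast stable eigendirection, tangent to the $\xi\to+\infty$ tail of $\phi_\diamond(a)$) and a two-dimensional unstable manifold spanned by the fast unstable direction and the slow direction tangent to the exactly invariant branch $\mathcal{M}^\ell_0=\{v=q=0\}$, which is repelling because $u_\xi=\tfrac{\eps}{1+\eps c}(u-a)$; likewise $p_+(u_2)$ has a one-dimensional stable and a two-dimensional unstable manifold, its slow unstable direction being tangent to $\mathcal{M}^r_\eps$ since the reduced flow~\eqref{eq:reducedr} points away from $u_2$ toward $u=a$. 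Thus $\dim W^u(p_+(u_2))+\dim W^s(p_0(a))=3$, one short of the count for a transverse intersection in the three-dimensional phase space, so the connection is codimension one; I would carry out the argument in the extended system obtained by appending $c_\xi=0$, in which the orbit should persist only for an isolated speed $c=c_{\mathrm{dv}}(a,\eps)$.

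Next, I would apply Fenichel's theorems on a compact segment of $\mathcal{M}^r_0$ containing $[u_2,a]$ in its interior: it perturbs to a locally invariant slow manifold $\mathcal{M}^r_\eps$, $C^1$-$\mathcal{O}(\eps)$-close to $\mathcal{M}^r_0$, carrying a flow which is a regular perturbation of~\eqref{eq:reducedr} and equipped with $C^1$-$\mathcal{O}(\eps)$-close fast stable and unstable fibrations, while $\mathcal{M}^\ell_\eps=\mathcal{M}^\ell_0$ exactly. Since $p_+(u_2)\in\mathcal{M}^r_\eps$ has a one-dimensional fast unstable direction transverse to $\mathcal{M}^r_\eps$, the fiber theorem identifies $W^u(p_+(u_2))$ with the union of the fast unstable fibers of $\mathcal{M}^r_\eps$ based along the slow unstable orbit $\gamma^r_\eps\subset\mathcal{M}^r_\eps$ emanating from $p_+(u_2)$; near $u=a$ this coincides with the fast unstable fiber bundle of $\mathcal{M}^r_\eps$ and so lies $C^1$-$\mathcal{O}(\eps)$-close to $\phi_\diamond(a)$. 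Fixing a cross-section $\Sigma$ transverse to $\phi_\diamond(a)$ (say at $v=v_+(a)/2$), $W^u(p_+(u_2))$ meets $\Sigma$ in a curve and $W^s(p_0(a))$ in a point, and a heteroclinic front exists precisely when these intersect; at $(\eps,c)=(0,c^*(a))$ both collapse onto the point $\phi_\diamond(a)\cap\Sigma$. The derivative in $c$ of the signed separation between them equals, up to a positive factor, a Melnikov integral with sign-definite integrand -- the one governing the unfolding in $c$ of the saddle-saddle layer connection $\phi_\diamond$ of~\eqref{eq:layer} -- and is therefore nonzero (equivalently, $\phi_\diamond$ persists only for the single speed $c^*_\diamond(u)$, through which it passes transversally). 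The implicit function theorem then yields a unique $c_{\mathrm{dv}}(a,\eps)=c^*(a)+\mathcal{O}(\eps)$ for which $W^u(p_+(u_2))$ meets $W^s(p_0(a))$, and the orbit through this intersection is the sought traveling front $\phi_{\mathrm{dv}}(\xi;a,\eps)$, whose limiting states are the uniform desert and vegetated states $(U_0,V_0)$ and $(U_2,V_2)$ of~\eqref{eq:modKlausmeier}, read off from the local (un)stable manifolds at $p_0(a)$ and $p_+(u_2)$.

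Relative to the largely routine parts above, the step I expect to require the most care is the transversality itself -- verifying that the splitting function vanishes to exactly first order in $c$ at the singular value, since this is what makes the selected speed unique and $c_{\mathrm{dv}}(a,\eps)$ a well-defined smooth function of $(a,\eps)$; for the present model the layer fronts $\phi_\diamond$ are explicit, so this is a short computation rather than a genuine obstacle. The remaining work -- uniformity of the Fenichel and fiber estimates over the range $\tfrac{a}{m}>4b+\tfrac{1}{b}$ (shrinking $\eps_0$ if needed), and checking that $\gamma^r_\eps$ together with the fast jump stay a fixed distance away from the fold $\mathcal{F}$ and the middle branch $\mathcal{M}^m_0$ -- is immediate from the geometry. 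Equivalently, the construction can be organized via Lin's method, writing a candidate orbit as slow and fast pieces joined by jumps and solving the jump conditions with the implicit function theorem; this is the formulation that dovetails with the eigenfunction construction used later for the stability analysis.
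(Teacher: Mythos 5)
Your overall strategy is correct and matches the route the paper follows (by reference to the proof of Theorem~\ref{thm:stripeexistence}): lift the singular heteroclinic orbit via Fenichel persistence of the normally hyperbolic critical manifold, identify $W^\mathrm{u}$ of the slow equilibrium with the union of fast unstable fibers over the slow unstable orbit, and close the argument by solving the jump condition -- with $c$ as the free parameter -- using the nonzero Melnikov quantity $M^c_\diamond>0$ of Lemma~\ref{lem:transb} and the implicit function theorem. The dimension count ($\dim W^\mathrm{u}(p_+(u_2))+\dim W^\mathrm{s}(p_0(a))=3$, so codimension one in $c$), the observation that $\mathcal{M}^\ell_0=\{v=q=0\}$ is exactly invariant, and the remark that no exchange lemma is needed because there is a single fast jump, are all correct.

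The one genuine problem is the identification of the singular orbit. You chose $\mathcal{H}_{\mathrm{vd}}(a)=\mathcal{M}^r_0[u_2,a]\cup\phi_\diamond(a)$ (matching the theorem's stated speed $c^*(a)$), which runs from $p_+(u_2)$ at $\xi\to-\infty$ to $p_0(a)$ at $\xi\to+\infty$; your construction therefore produces a front with $\lim_{\xi\to-\infty}(U,V)=(U_2,V_2)$ and $\lim_{\xi\to\infty}(U,V)=(U_0,V_0)$, the reverse of what Theorem~\ref{thm:desertFrontExistence} asserts. The front with the stated asymptotics (desert at $-\infty$, vegetation at $+\infty$) perturbs from the \emph{other} singular heteroclinic, $\mathcal{H}_{\mathrm{dv}}(a)=\mathcal{M}^\ell_0[u_2,a]\cup\phi_\dagger(u_2)$, which per~\S\ref{sec:singularfronts} and the bifurcation-diagram captions travels with speed $\hat{c}(a)=c^*_\dagger(u_2)$, not $c^*(a)$; the stated speed in Theorem~\ref{thm:desertFrontExistence} is evidently a typographical slip in which the speeds of Theorems~\ref{thm:desertFrontExistence} and~\ref{thm:vegetationFrontExistence} have been interchanged. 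To prove the statement as written (keeping the asymptotics and replacing the speed by $\hat{c}(a)$), one instead tracks the two-dimensional $W^\mathrm{u}(p_0(a))$ -- the union of fast unstable fibers of the exactly invariant $\mathcal{M}^\ell_0$ over the slow orbit departing $p_0(a)$ -- across the $\phi_\dagger$-jump and matches it with the one-dimensional $W^\mathrm{s}(p_+(u_2))$, invoking $M^c_\dagger>0$ from Lemma~\ref{lem:transf}. The rest of your argument carries over verbatim.
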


\begin{Theorem}[Vegetation front solution] \label{thm:vegetationFrontExistence}
Fix $b,m > 0$ and $a$ such that $\frac{a}{m} > 4b + \frac{1}{b}$. There exists $\varepsilon_0 > 0$ such that for $\varepsilon \in (0,\varepsilon_0)$, \eqref{eq:modKlausmeier} admits a traveling front solution $\phi_{\mathrm{vd}}(\xi;a,\varepsilon) = (U_{\mathrm{vd}},V_{\mathrm{vd}})(\xi;a,\varepsilon)$ with speed
\begin{equation}
	c_{\mathrm{vd}}(a,\varepsilon) = \hat{c}(a) + \mathcal{O}(\eps)
\end{equation}
and satisfying $\lim_{\xi \to -\infty} (U_{\mathrm{vd}},V_{\mathrm{vd}})(\xi;a,\eps) = (U_2,V_2)$ and $\lim_{\xi \to \infty} (U_{\mathrm{vd}},V_{\mathrm{vd}})(\xi;a,\eps) = (U_0,V_0)$.
\end{Theorem}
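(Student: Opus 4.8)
The plan is to promote the singular heteroclinic orbit $\mathcal{H}_{\mathrm{vd}}(a)=\mathcal{M}_0^r[u_2,a]\cup\phi_\diamond(\xi;a)$ constructed in~\S\ref{sec:singularfronts} to a genuine connecting orbit of the traveling-wave system~\eqref{eq:twode} for all sufficiently small $\eps>0$, with the wave speed pinned at $c^*(a)+\mathcal{O}(\eps)$ (the speed of the layer front $\phi_\diamond(\xi;a)$, hence of the singular orbit), via geometric singular perturbation theory. Of the four existence statements this is the cleanest: the candidate orbit contains a \emph{single} fast jump; both endpoints $p_+(u_2)$ and $p_0(a)$ are genuine equilibria of~\eqref{eq:twode} for every $\eps$; and since $\frac{a}{m}>4b+\frac1b$ forces $u_2>4bm$, the equilibrium $p_+(u_2)$ lies on the normally hyperbolic portion of $\mathcal{M}_0^r$ and the reduced orbit from $u_2$ to $a$ stays uniformly away from the fold $\mathcal{F}$, while $a>4bm$ makes $\phi_\diamond(\xi;a)$ a connection between the hyperbolic layer saddles $p_+(a)$ and $p_0(a)$. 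In particular no geometric blow-up is needed, in contrast with the jump-onto-the-fold regimes excluded in~\S\ref{sec:singularstripes}.

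First I would apply Fenichel's theorem~\cite{fenichel1979geometric} on a compact segment $\mathcal{M}_0^r[u_2-\delta,a+\delta]$ and on $\mathcal{M}_0^\ell$ near $u=a$: for $\eps$ small and $c$ near $c^*(a)$ these perturb to locally invariant $C^r$ slow manifolds $\mathcal{M}_\eps^r,\mathcal{M}_\eps^\ell$ that are $C^r$-$\mathcal{O}(\eps)$-close to the critical manifolds, together with their unstable/stable manifolds $W^u(\mathcal{M}_\eps^r)$, $W^s(\mathcal{M}_\eps^\ell)$, all $C^r$-smooth in $(c,\eps)$ and limiting as $\eps\downarrow0$ to the critical manifold together with the layer fibres $W^u(p_+(u))$, resp.\ $W^s(p_0(u))$; the equilibria $(u_2,v_2,0)\in\mathcal{M}_\eps^r$ and $(a,0,0)\in\mathcal{M}_\eps^\ell$ persist exactly. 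Next I would identify the two invariant manifolds to be matched. At $(a,0,0)$ the layer block contributes one positive and one negative eigenvalue and the slow block an eigenvalue $\sim\eps>0$, so $W^s(a,0,0)$ is one-dimensional -- the fast-stable fibre, $\mathcal{O}(\eps)$-close to the layer stable manifold of $p_0(a)$. At $(u_2,v_2,0)$ the layer saddle $p_+(u_2)$ contributes one eigenvalue of each sign, and since the reduced flow on $\mathcal{M}_0^r$ points away from $u_2$ (we have $u_\tau=u-a+uv_+(u)^2>0$ for $u>u_2$ in this parameter range) the slow eigenvalue is positive, so $W^u(u_2,v_2,0)$ is two-dimensional. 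The decisive structural observation is that $W^u(u_2,v_2,0)\subset W^u(\mathcal{M}_\eps^r)$, since any orbit backward-asymptotic to the point $(u_2,v_2,0)$ is backward-asymptotic to $\mathcal{M}_\eps^r$; thus the part of $W^u(u_2,v_2,0)$ approaching the jump is governed by the \emph{globally} $C^r$-close Fenichel manifold $W^u(\mathcal{M}_\eps^r)$, and one need not propagate a $C^1$ transversality estimate through the $\mathcal{O}(1/\eps)$-long slow segment by hand. Since the reduced orbit leaving $u_2$ reaches $u=a$ with $u_\tau=av_+(a)^2\neq0$, at $\eps=0$ this part of $W^u(u_2,v_2,0)$ is precisely the family of fibres $W^u(p_+(u))$ over $u$ in a one-sided neighbourhood of $a$, which contains $\phi_\diamond(\xi;a)$ exactly when $c=c^*(a)$.

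Then I would close the argument with a Lin/Melnikov-type bifurcation equation. Fix a two-dimensional section $\Sigma$ transverse to $\phi_\diamond(\xi;a)$ at an interior point; flowing $W^u(u_2,v_2,0)$ forward to $\Sigma$ gives a $C^1$ curve and flowing $W^s(a,0,0)$ backward to $\Sigma$ gives a point, both $\mathcal{O}(\eps)$-close to $\phi_\diamond(\xi;a)\cap\Sigma$ and $C^1$ in $(c,\eps)$. Let $\Xi(c,\eps)$ be the signed distance in $\Sigma$, along the direction complementary to the curve, from the point to the curve. At $\eps=0$ one has $\Xi(c,0)=0$ iff $c=c^*(a)$ -- the fact that the layer saddle--saddle connection exists only at the speed $c^*_\diamond(a)=c^*(a)$ -- and $\partial_c\Xi(c^*(a),0)\neq0$ by transversality of the layer heteroclinic in $c$, the corresponding Melnikov derivative being the convergent positive integral $\int_{-\infty}^{\infty}\rme^{c^*(a)\xi}\,q_\diamond(\xi;a)^2\,\rmd\xi$ (equivalently this is read off the explicit formulae for $v_\diamond$ and $c^*_\diamond$). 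The implicit function theorem then produces a threshold $\eps_0$ and a unique $c=c_{\mathrm{vd}}(a,\eps)=c^*(a)+\mathcal{O}(\eps)$ with $\Xi(c_{\mathrm{vd}},\eps)=0$, and the associated point of $W^u(u_2,v_2,0)\cap W^s(a,0,0)$ lies on an orbit of~\eqref{eq:twode} backward-asymptotic to $p_+(u_2)$ and forward-asymptotic to $p_0(a)$. Through the identification $(U,V)(x,t)=(u,v)(x-c_{\mathrm{vd}}t)$ this orbit is the traveling front $\phi_{\mathrm{vd}}(\xi;a,\eps)$, with $\lim_{\xi\to-\infty}\phi_{\mathrm{vd}}=(U_2,V_2)$ and $\lim_{\xi\to\infty}\phi_{\mathrm{vd}}=(U_0,V_0)$; the $\mathcal{O}(\eps)$-closeness of the Fenichel manifolds to their singular limits yields the asserted approximation of $\phi_{\mathrm{vd}}$ by $\mathcal{H}_{\mathrm{vd}}(a)$.

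The step I expect to be the main obstacle is the long slow passage: naive Gronwall estimates on how $W^u$ is transported along $\mathcal{M}_\eps^r$ degrade over the $\mathcal{O}(1/\eps)$ flight time, and controlling the $C^1$ geometry needed to set up $\Xi$ is exactly what the Exchange Lemma is designed for. In the present case that difficulty is bypassed by the inclusion $W^u(u_2,v_2,0)\subset W^u(\mathcal{M}_\eps^r)$ noted above, so only smooth dependence of the Fenichel manifolds and their fibrations on $(c,\eps)$ together with the elementary transversality of the layer front in $c$ is actually required; the remainder is routine verification that the singular orbit stays in the normally hyperbolic regime ($4bm<u_2<a$) and connects hyperbolic saddles ($a>4bm$), uniformly for all $\frac{a}{m}>4b+\frac1b$. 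The analogous statement for the desert front (Theorem~\ref{thm:desertFrontExistence}) follows by the mirror-image construction, and -- now with the Exchange Lemma entering in earnest, since the slow plateau is there reached \emph{via} a fast jump rather than from an equilibrium -- so do the stripe and gap pulses (Theorems~\ref{thm:stripeexistence} and~\ref{thm:gapexistence}).
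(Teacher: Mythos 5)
Your argument is correct and follows the same strategy the paper uses for the stripe in Theorem~\ref{thm:stripeexistence} --- Fenichel theory to perturb the critical manifolds and their (un)stable fiber bundles, the Melnikov transversality of Lemma~\ref{lem:transb}, and the implicit function theorem to adjust the speed $c$ --- and your observation that the exchange lemma can be dropped is precisely what makes the front case ``less involved'' as the paper asserts. Since $p_+(u_2)$ is a genuine equilibrium lying on the normally hyperbolic part of $\mathcal{M}_\eps^r$ (a repeller of the reduced flow because $\frac{a}{m}>4b+\frac{1}{b}$ forces $u_2>4bm$), the two-dimensional unstable manifold $\mathcal{W}^{\mathrm{u}}(p_+(u_2))$ coincides locally with the Fenichel manifold $\mathcal{W}^{\mathrm{u}}(\mathcal{M}_\eps^r)$, so the $C^1$ control that the exchange lemma would otherwise provide through the $\mathcal{O}(1/\eps)$-long slow passage comes for free from Fenichel's fiber theorem, leaving a single transversal jump to match.

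One point you should address explicitly rather than silently: your bifurcation equation pins the speed at $c^*(a)+\mathcal{O}(\eps)$, the speed of the layer front $\phi_\diamond(\xi;a)$, whereas the theorem statement reads $\hat{c}(a)+\mathcal{O}(\eps)$. Your value is the internally consistent one --- the singular orbit $\mathcal{H}_{\mathrm{vd}}(a)=\mathcal{M}_0^r[u_2,a]\cup\phi_\diamond(a)$ is explicitly assigned speed $c^*(a)$ in \S\ref{sec:singularfronts}, and only the $\diamond$-jump at $u=a$ is compatible with a slow entry from $p_+(u_2)$ along $\mathcal{M}_0^r$ followed by exit to $p_0(a)$; this is also what lets the dashed green front branch continue the solid green stripe branch (both at $c=c^*(a)$) in Figures~\ref{fig:bd_case1} and~\ref{fig:bd_case2}. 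The leading-order speeds appear to have been interchanged between Theorems~\ref{thm:desertFrontExistence} and~\ref{thm:vegetationFrontExistence}, and a reader comparing your proof to the stated theorem will otherwise think you proved a different result, so the discrepancy should be flagged and resolved in favor of $c^*(a)$ for $\phi_{\mathrm{vd}}$.
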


\section{Persistence of solutions for $0<\eps\ll1$}\label{sec:existence}
In this section, we prove that the singular orbits constructed in~\ref{sec:singularsolns} perturb to solutions of~\eqref{eq:twode} for sufficiently small $\eps>0$ using methods of geometric singular perturbation theory. In~\S\ref{sec:transversality}, we prove technical lemmata regarding the transversality of the fast connections $\phi_{\dagger,\diamond}$, and we discuss the proofs of Theorems~\ref{thm:stripeexistence}--\ref{thm:vegetationFrontExistence} in~\S\ref{sec:existenceproof}.

\subsection{Transversality along singular orbits}\label{sec:transversality}

We consider the layer system~\eqref{eq:fast0}  
\begin{align}\label{eq:layer0}
\begin{cases}
u'&=0\\
v'&=q\\
q'&= mv-(1-bv)uv^2-cq.
\end{cases}
\end{align}
As outlined in~\S\ref{sec:layer}, this system possesses heteroclinic connections $\phi_\mathrm{\diamond,\dagger} = (v_{\diamond,\dagger},q_{\diamond,\dagger})$ between the left and right critical manifolds $\mathcal{M}^{\ell,r}_0$, where the speed $c$ for a given heteroclinic orbit depends on the value of $u$ (as well as the other parameters). We define the stable and unstable manifolds, $\mathcal{W}^\mathrm{s}(\mathcal{M}^j_0)$ and $\mathcal{W}^\mathrm{u}(\mathcal{M}^j_0)$, of a critical manifold $\mathcal{M}^j_0,j=\ell,r$, as the union of the stable and unstable manifolds, respectively, of the corresponding equilibria of the layer problem~\eqref{eq:layer0}.

Then an orbit $\phi_\dagger$ lies in the intersection of $\mathcal{W}^\mathrm{u}(\mathcal{M}^\ell_0)$ and $\mathcal{W}^\mathrm{s}(\mathcal{M}^r_0)$, while an orbit $\phi_\diamond$ lies in the intersection of $\mathcal{W}^\mathrm{u}(\mathcal{M}^r_0)$ and $\mathcal{W}^\mathrm{s}(\mathcal{M}^\ell_0)$. For a given orbit $\phi_\dagger$, which we suppose exists for some values of $(c,u) = (c_0,u_0)$, we aim to determine how this connection breaks as $(c,u)$ varies near $(c_0,u_0)$; that is, we determine the transversality of the intersection of $\mathcal{W}^\mathrm{u}(\mathcal{M}^\ell_0)$ and $\mathcal{W}^\mathrm{s}(\mathcal{M}^r_0)$ with respect to $(c,u)$. We find the following.
\begin{Lemma}\label{lem:transf}
Consider a heteroclinic orbit $\phi_\dagger$ which lies in the intersection of $\mathcal{W}^\mathrm{u}(\mathcal{M}^\ell_0)$ and $\mathcal{W}^\mathrm{s}(\mathcal{M}^r_0)$ for some $(c,u) = (c_0,u_0)$. Then this intersection is transverse in $(c,u)$, and we compute the splitting of $\mathcal{W}^\mathrm{u}(\mathcal{M}^\ell_0)$ and $\mathcal{W}^\mathrm{s}(\mathcal{M}^r_0)$ along $\phi_\dagger$ via the distance function
\begin{align}\label{eq:splittingf}
D_\dagger(\tilde{c},\tilde{u}) = M_\dagger^c\tilde{c}+M_\dagger^u\tilde{u}+\mathcal{O}(\tilde{c}^2+\tilde{u}^2)
\end{align}
where $\tilde{c} := c-c_0, \tilde{u} := u-u_0$, and
\begin{align}\label{eq:melnikovcomputef}
\begin{split}
M_\dagger^c&= \int^{\infty}_{-\infty} e^{c_0\xi}q_\dagger(\xi)^2 \rmd\xi > 0,\\
M_\dagger^u&= \int^{\infty}_{-\infty} e^{c_0\xi}(1-bv_\dagger(\xi))v_\dagger(\xi)^2q_\dagger(\xi) \rmd\xi>0.
\end{split}
\end{align}
\end{Lemma}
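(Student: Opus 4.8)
The plan is to set up a Melnikov-type computation measuring the splitting of $\mathcal{W}^\mathrm{u}(\mathcal{M}^\ell_0)$ and $\mathcal{W}^\mathrm{s}(\mathcal{M}^r_0)$ as we vary the two parameters $(c,u)$ near $(c_0,u_0)$. First I would observe that since the layer system~\eqref{eq:layer0} has $u'=0$, the parameter $u$ enters as a genuine parameter in the planar $(v,q)$ system, exactly like $c$; so both perturbation directions can be treated by the standard variational (adjoint) approach. Write the planar layer vector field as $\dot{w} = F(w;c,u)$ with $w=(v,q)$, and let $\phi_\dagger(\xi) = (v_\dagger(\xi),q_\dagger(\xi))$ be the given heteroclinic at $(c_0,u_0)$. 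The linearization along $\phi_\dagger$ is $\dot{y} = A(\xi) y$ with $A(\xi) = D_w F(\phi_\dagger(\xi);c_0,u_0)$, which here is $\begin{pmatrix} 0 & 1 \\ m - \partial_v[(1-bv)uv^2]|_{\phi_\dagger} & -c_0 \end{pmatrix}$. Because both endpoints $p_0$ and $p_+(u_0)$ are hyperbolic saddles (by the linearization computations in~\S\ref{sec:criticalmanifold}, valid for $c_0>0$; and in the degenerate fold case $u_0 = 4bm$ one uses the algebraically-decaying front and a limiting argument), the variational equation has a one-dimensional bounded solution space spanned by $\phi_\dagger'(\xi)$, and correspondingly the adjoint equation $\dot{\psi} = -A(\xi)^\top \psi$ has a one-dimensional space of bounded solutions; I would write down the bounded adjoint solution explicitly. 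For a two-dimensional system of this form the bounded adjoint solution is, up to scalar multiple, $\psi(\xi) = e^{c_0 \xi}(-q_\dagger(\xi), v_\dagger'(\xi)) = e^{c_0\xi}(-q_\dagger(\xi), q_\dagger(\xi))$ — the exponential weight $e^{c_0\xi}$ accounts for the trace $-c_0$ of $A$ via Liouville, and one checks directly that this $\psi$ solves the adjoint equation and decays at both ends since $q_\dagger\to 0$ exponentially at a rate beating $e^{c_0\xi}$ at $+\infty$ and trivially at $-\infty$.

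Next I would invoke the standard Melnikov/Lin's-method bookkeeping: the leading-order splitting distance is
\begin{align}
D_\dagger(\tilde c,\tilde u) = \int_{-\infty}^{\infty} \psi(\xi)^\top \big[ \partial_c F(\phi_\dagger(\xi);c_0,u_0)\,\tilde c + \partial_u F(\phi_\dagger(\xi);c_0,u_0)\,\tilde u \big]\, \rmd\xi + \mathcal{O}(\tilde c^2 + \tilde u^2),
\end{align}
so that $M_\dagger^c = \int \psi^\top \partial_c F \,\rmd\xi$ and $M_\dagger^u = \int \psi^\top \partial_u F\,\rmd\xi$. Here $\partial_c F = (0, -q_\dagger(\xi))^\top$ and $\partial_u F = (0, -(1-bv_\dagger)v_\dagger^2)^\top$, reading off from~\eqref{eq:layer0} (note $G(u,v)v = uv^2$ and $R(v) = 1-bv$). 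Pairing with the second component $e^{c_0\xi} q_\dagger(\xi)$ of $\psi$ and absorbing the harmless sign into the normalization of $\psi$ yields exactly
\begin{align}
M_\dagger^c = \int_{-\infty}^{\infty} e^{c_0\xi} q_\dagger(\xi)^2\,\rmd\xi, \qquad
M_\dagger^u = \int_{-\infty}^{\infty} e^{c_0\xi} (1-bv_\dagger(\xi)) v_\dagger(\xi)^2 q_\dagger(\xi)\,\rmd\xi,
\end{align}
matching~\eqref{eq:melnikovcomputef}. Convergence of both integrals follows from the exponential decay of $q_\dagger,v_\dagger$ to their (hyperbolic) limits at $\pm\infty$: at $+\infty$ the decay rate of $\phi_\dagger$ toward the saddle $p_+(u_0)$ strictly exceeds $|c_0|$ (an eigenvalue comparison at the saddle), so $e^{c_0\xi}q_\dagger^2$ is integrable; at $-\infty$ the weight $e^{c_0\xi}$ decays. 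Positivity of $M_\dagger^c$ is immediate since the integrand is a positive weight times a square and $q_\dagger\not\equiv 0$. For $M_\dagger^u$, I would argue from the explicit $\dagger$-profile in~\S\ref{sec:layer}: $v_\dagger(\xi) = \tfrac{v_+}{2}(1+\tanh(\cdot))$ is increasing, so $q_\dagger = v_\dagger' > 0$ throughout; and $v_\dagger(\xi) \le v_+(u_0) = \tfrac{1+\sqrt{1-4bm/u_0}}{2b} \le \tfrac{1}{b}$, so $1 - b v_\dagger(\xi) \ge 0$ (with strict inequality except in the limit), making the whole integrand nonnegative and not identically zero — hence $M_\dagger^u > 0$.

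The main obstacle I anticipate is twofold. First, getting the sign of $M_\dagger^u$ cleanly requires the inequality $v_\dagger \le 1/b$, i.e. $v_+(u) \le 1/b$, which holds on the whole branch $\mathcal{M}^r_0$ but should be stated carefully (it is equivalent to $\sqrt{1-4bm/u}\le 1$, automatic). Second, and more delicate, is the non-hyperbolic fold case $u_0 = 4bm$ that appears in Lemma~\ref{lem:desertlayerfronts} (when $a/m\ge 25b/4$) and Lemma~\ref{lem:veggielayerfronts}(ii): there $p_+(4bm)=p_-(4bm)$ is a saddle-node, the front $\phi_\dagger$ decays only algebraically at one end, and the usual exponential-dichotomy argument for transversality in $(c,u)$ must be replaced by a direct analysis (or one restricts, as the paper does elsewhere via $\bar a_\mathrm{hyp}$, to the normally hyperbolic regime and handles the fold separately). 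In the main line of argument, though, we work with $u_0$ on a normally hyperbolic portion of $\mathcal{M}^r_0$, both endpoints are hyperbolic, and the standard Lin's-method/Melnikov machinery applies verbatim; the transversality claim is then just the statement that $(M_\dagger^c, M_\dagger^u) \ne (0,0)$, which follows a fortiori from $M_\dagger^c > 0$.
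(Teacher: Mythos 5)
Your approach is the paper's approach: measure the splitting of $\mathcal{W}^\mathrm{u}(\mathcal{M}^\ell_0)$ and $\mathcal{W}^\mathrm{s}(\mathcal{M}^r_0)$ along $\phi_\dagger$ via Melnikov integrals, pairing $\partial_c F$ and $\partial_u F$ with the bounded solution of the adjoint variational equation. The positivity arguments you supply for $M_\dagger^c$ and $M_\dagger^u$ (monotonicity of $v_\dagger$, i.e.\ $q_\dagger>0$, together with $v_\dagger\le v_+(u_0)\le 1/b$) are valid and useful fill-ins that the paper's proof asserts but does not spell out. Your concern about the fold case $u_0 = 4bm$ is reasonable in spirit, but it is in fact excluded by the hypothesis: $\mathcal{M}^r_0$ is by definition the open branch $\{p_+(u):u>4bm\}$, so an orbit in $\mathcal{W}^\mathrm{s}(\mathcal{M}^r_0)$ cannot be asymptotic to $\mathcal{F}$.

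There is, however, a concrete error in your stated adjoint solution, and it lives precisely at the step where you say ``one checks directly.'' Writing the linearization as $A(\xi)=\begin{pmatrix}0&1\\ g(\xi)&-c_0\end{pmatrix}$ with $g(\xi)=m-u_0 v_\dagger(\xi)\bigl(2-3b v_\dagger(\xi)\bigr)$, the bounded solution of the adjoint system $\psi'=-A(\xi)^{\top}\psi$ is, up to scalar multiple,
\begin{align*}
\psi(\xi)= e^{c_0\xi}\begin{pmatrix}-q_\dagger'(\xi)\\ v_\dagger'(\xi)\end{pmatrix}=e^{c_0\xi}\begin{pmatrix}-q_\dagger'(\xi)\\ q_\dagger(\xi)\end{pmatrix},
\end{align*}
obtained by rotating $\phi_\dagger'=(v_\dagger',q_\dagger')^{\top}$ by $90^\circ$ and weighting by the Wronskian factor $e^{-\int \mathrm{tr}\,A}=e^{c_0\xi}$; note $\psi$ must be pointwise orthogonal to $\phi_\dagger'$. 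Your formula has first component $-e^{c_0\xi}q_\dagger(\xi)$ in place of $-e^{c_0\xi}q_\dagger'(\xi)$, and that $\psi$ does not solve the adjoint equation: the first component equation $\psi_1'=-g(\xi)\psi_2$ reduces to $q_\dagger'+c_0 q_\dagger=g(\xi)q_\dagger$, which is not an identity along the front. The slip does not propagate to your final formulas only because $\partial_c F$ and $\partial_u F$ both have vanishing first ($v$-)component, so the Melnikov pairing sees only the second component of $\psi$, which you do have correct up to the normalization sign you absorb. Nonetheless, as written the claimed adjoint solution is wrong, and the verification you invoke would fail; you should correct $-q_\dagger$ to $-q_\dagger'$ (or simply note that only the second component matters for this pairing).
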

\begin{proof}
We use Melnikov theory to compute the distance between $\mathcal{W}^\mathrm{u}(\mathcal{M}^\ell_0)$ and $\mathcal{W}^\mathrm{s}(\mathcal{M}^r_0)$ to first order in $|c-c_0|$ and $|u-u_0|$. We consider the adjoint equation of the linearization of~\eqref{eq:layer0} about the front $\phi_\dagger$ given by
\begin{align}\label{eq:layer0adj}
\psi' = \begin{pmatrix}
0&-m+uv_\dagger(\xi)(2-3bv_\dagger(\xi))\\[1em]
-1&c
\end{pmatrix}\psi. 
\end{align}
The space of bounded solutions is one-dimensional and spanned by 
\begin{align}
\begin{split}
\psi_\dagger(\xi):= e^{c_0\xi}\begin{pmatrix}q_\dagger'(\xi)\\ -v_\dagger'(\xi)\end{pmatrix}\\
=e^{c_0\xi}\begin{pmatrix}q_\dagger'(\xi)\\ -q_\dagger(\xi)\end{pmatrix}
\end{split}
\end{align}
Let $F_0$ denote the right hand side of~\eqref{eq:layer0}, and define the Melnikov integrals
\begin{align}\label{eq:melnikov}
M_\dagger^\nu&:= \int^{\infty}_{-\infty} D_\nu F_0(\phi_\dagger(\xi))\cdot \psi_\dagger(\xi)\rmd\xi,
\end{align}
for $\nu = c,u$. The quantities $M_\dagger^c, M_\dagger^u$ measure the distance between $\mathcal{W}^\mathrm{u}(\mathcal{M}^\ell_0)$ and $\mathcal{W}^\mathrm{s}(\mathcal{M}^r_0)$ to first order in $|c-c_0|$ and $|u-u_0|$, respectively. We compute
\begin{align*}
\begin{split}
M_\dagger^c&= \int^{\infty}_{-\infty} e^{c_0\xi}q_\dagger(\xi)^2 \rmd\xi > 0,\\
M_\dagger^u&= \int^{\infty}_{-\infty} e^{c_0\xi}(1-bv_\dagger(\xi))v_\dagger(\xi)^2q_\dagger(\xi) \rmd\xi>0.
\end{split}
\end{align*}
As these are nonzero, we deduce that the intersection of $\mathcal{W}^\mathrm{u}(\mathcal{M}^\ell_0)$ and $\mathcal{W}^\mathrm{s}(\mathcal{M}^r_0)$ along $\phi_\dagger$ is transverse in both $c$ and $u$, and we arrive at the distance function~\eqref{eq:splittingf}.
\end{proof}

Analogously, we can determine the transversality of the intersection of $\mathcal{W}^\mathrm{u}(\mathcal{M}^r_0)$ and $\mathcal{W}^\mathrm{s}(\mathcal{M}^\ell_0)$ along an orbit $\phi_\diamond$.
\begin{Lemma}\label{lem:transb}
Consider a heteroclinic orbit $\phi_\diamond$ which lies in the intersection of $\mathcal{W}^\mathrm{u}(\mathcal{M}^r_0)$ and $\mathcal{W}^\mathrm{s}(\mathcal{M}^\ell_0)$ for some $(c,u) = (c_0,u_0)$. Then this intersection is transverse in $(c,u)$, and we compute the splitting of $\mathcal{W}^\mathrm{u}(\mathcal{M}^r_0)$ and $\mathcal{W}^\mathrm{s}(\mathcal{M}^\ell_0)$ along $\phi_\diamond$ via the distance function
\begin{align}\label{eq:splittingb}
D_\diamond(\tilde{c},\tilde{u}) = M_\diamond^c\tilde{c}+M_\diamond^u\tilde{u}+\mathcal{O}(\tilde{c}^2+\tilde{u}^2),
\end{align}
where $\tilde{c} := c-c_0, \tilde{u} := u-u_0$, and
\begin{align}\label{eq:melnikovcomputeb}
\begin{split}
M_\diamond^c&= \int^{\infty}_{-\infty} e^{c_0\xi}q_\diamond(\xi)^2 \rmd\xi > 0,\\
M_\diamond^u&= \int^{\infty}_{-\infty} e^{c_0\xi}(1-bv_\diamond(\xi))v_\diamond(\xi)^2q_\diamond(\xi) \rmd\xi<0.
\end{split}
\end{align}
\end{Lemma}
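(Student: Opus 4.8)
The plan is to mirror, essentially verbatim, the Melnikov-theoretic argument used for Lemma~\ref{lem:transf}: the $\diamond$-fronts connect the two hyperbolic saddles $p_+(u_0)$ and $p_0(u_0)$ (for $u_0>4bm$) in exactly the same manner as the $\dagger$-fronts, the only structural difference being that the connection runs in the opposite direction, so that the profile $v_\diamond$ is decreasing rather than increasing. First I would linearize the layer system~\eqref{eq:layer0} about $\phi_\diamond=(v_\diamond,q_\diamond)$ and pass to the adjoint equation, which is exactly~\eqref{eq:layer0adj} with $v_\dagger$ replaced by $v_\diamond$. Since both endpoints are saddles, the variational equation along $\phi_\diamond$ has exponential dichotomies on the two half-lines and a one-dimensional space of bounded solutions spanned by $\phi_\diamond'=(q_\diamond,q_\diamond')$; by the usual duality the adjoint likewise has a one-dimensional bounded solution space, and a direct substitution shows it is spanned by $\psi_\diamond(\xi):=e^{c_0\xi}(q_\diamond'(\xi),-q_\diamond(\xi))^\top$, the same closed form as in the $\dagger$ case.

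Next I would form the Melnikov integrals $M_\diamond^\nu:=\int_{-\infty}^{\infty}D_\nu F_0(\phi_\diamond(\xi))\cdot\psi_\diamond(\xi)\,\rmd\xi$ for $\nu=c,u$, exactly as in~\eqref{eq:melnikov}. Because $D_cF_0=(0,-q)^\top$ and $D_uF_0=(0,-(1-bv)v^2)^\top$, pairing against $\psi_\diamond$ collapses the integrands to $e^{c_0\xi}q_\diamond(\xi)^2$ and $e^{c_0\xi}(1-bv_\diamond(\xi))v_\diamond(\xi)^2q_\diamond(\xi)$ respectively, which is exactly~\eqref{eq:melnikovcomputeb}. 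The standard Melnikov bookkeeping (tracking the leading-order separation of $\mathcal{W}^\mathrm{u}(\mathcal{M}^r_0)$ and $\mathcal{W}^\mathrm{s}(\mathcal{M}^\ell_0)$ in a fixed section transverse to $\phi_\diamond$ as $(c,u)$ varies near $(c_0,u_0)$) then identifies these integrals as the coefficients $M_\diamond^c,M_\diamond^u$ in~\eqref{eq:splittingb}; transversality in $(c,u)$ follows at once from the fact that at least one of them is nonzero, since then $\nabla D_\diamond(0,0)\neq 0$ and the zero set of $D_\diamond$ is a smooth curve.

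The one genuinely substantive point — and the place this lemma departs from Lemma~\ref{lem:transf} — is the determination of the \emph{signs} in~\eqref{eq:melnikovcomputeb}, which I would handle last. The factor $e^{c_0\xi}q_\diamond^2$ is nonnegative and not identically zero, so $M_\diamond^c>0$ with no further work. For $M_\diamond^u$ I would invoke the explicit layer profile from~\S\ref{sec:layer}: $v_\diamond$ is strictly decreasing with $v_\diamond(\xi)\in(0,v_+(u_0))$, and since $v_+(u_0)=\tfrac{1+\sqrt{1-4bm/u_0}}{2b}<\tfrac1b$, the factor $1-bv_\diamond(\xi)$ lies in $(0,1)$ and is strictly positive, while $q_\diamond=v_\diamond'<0$ for all $\xi$ (it is a negative multiple of $\sech^2$). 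Hence the integrand of $M_\diamond^u$ is strictly negative wherever it is nonzero, giving $M_\diamond^u<0$; this sign flip relative to $M_\dagger^u>0$ is precisely the imprint of the reversed orientation of the $\diamond$-front. With both coefficients nonzero, the transversality assertion and the distance formula~\eqref{eq:splittingb} follow, completing the proof. I do not anticipate any serious obstacle beyond care with these signs, since every object in sight is available in closed form.
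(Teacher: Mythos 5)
Your proposal is correct and matches the paper's approach exactly: the paper proves Lemma~\ref{lem:transf} by Melnikov theory (adjoint equation, one-dimensional bounded adjoint solution $e^{c_0\xi}(q',-q)^\top$, Melnikov integrals $\int D_\nu F_0\cdot\psi\,\rmd\xi$) and then merely states Lemma~\ref{lem:transb} as the ``analogous'' result without supplying the adaptation; you have supplied precisely that adaptation, including the only nontrivial step, the sign flip $M_\diamond^u<0$, which you justify correctly from $q_\diamond<0$, $0<v_\diamond<v_+(u_0)<1/b$.
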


\subsection{Proof of existence results}\label{sec:existenceproof}
In this section, we conclude the proof of Theorem~\ref{thm:stripeexistence}. The proof of Theorem~\ref{thm:gapexistence} is similar. The proofs of Theorems~\ref{thm:desertFrontExistence} and~\ref{thm:vegetationFrontExistence} also follow a similar argument -- albeit less involved -- and we omit the details.

\begin{proof}[Proof of Theorem~\ref{thm:stripeexistence}]
Based on the analysis in~\S\ref{sec:slowfast}, we obtain a traveling pulse solution of~\eqref{eq:modKlausmeier} as a perturbation from the singular homoclinic orbit $\mathcal{H}_\mathrm{d}(a)$ (see~\eqref{eq:singhomd} and Figure~\ref{fig:pulse_construct_desert}) within the traveling wave ODE~\eqref{eq:twode} for a speed $c\approx c^*(a)$. We will construct a homoclinic orbit for $0<\eps\ll1$ as an intersection of the stable and unstable manifolds $\mathcal{W}^\mathrm{s}(p_0(a))$ and $\mathcal{W}^\mathrm{u}(p_0(a))$ of the equilibrium $p_0(a)$ corresponding to the desert state.

\begin{figure}
\centering
\includegraphics[width=0.5\textwidth]{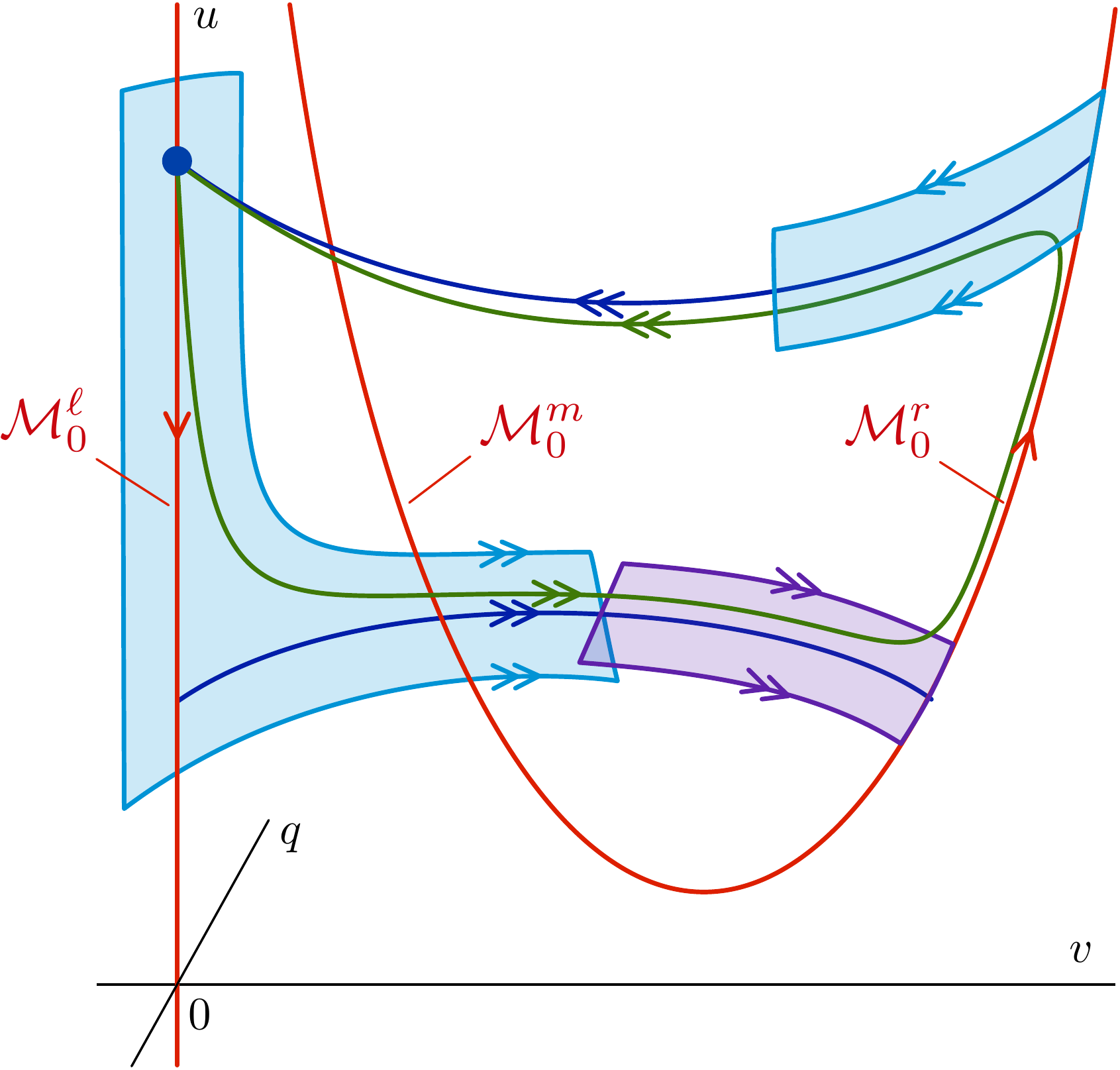}
\caption{The traveling pulse solution of Theorem~\ref{thm:stripeexistence} is obtained for $0<\eps\ll1$ as a perturbation of the singular homoclinic orbit $\mathcal{H}_\mathrm{d}(a)$. }
\label{fig:pulse_construct_desert}
\end{figure}

For $\eps_0>0$ sufficiently small, from standard methods of geometric singular perturbation theory, as the left branch $\mathcal{M}^\ell_0$ of the critical manifold is normally hyperbolic, it persists for $\eps\in(0,\eps_0)$ as a one-dimensional locally invariant slow manifold $\mathcal{M}^\ell_\eps$. Similarly, away from the fold $\mathcal{F}$, the right branch $\mathcal{M}^r_0$ of the critical manifold is normally hyperbolic and persists for $\eps\in(0,\eps_0)$ as a one-dimensional locally invariant slow manifold $\mathcal{M}^r_\eps$. The two-dimensional (un)stable manifolds $\mathcal{W}^\mathrm{u}(\mathcal{M}^j_0)$ and $\mathcal{W}^\mathrm{s}(\mathcal{M}^j_0)$, $j=\ell,r$, persist for $\eps\in(0,\eps_0)$ as two-dimensional locally invariant manifolds $\mathcal{W}^\mathrm{u}(\mathcal{M}^j_\eps)$ and $\mathcal{W}^\mathrm{s}(\mathcal{M}^j_\eps)$, $j=\ell,r$. 

As the equilibrium $p_0(a)$ is repelling with respect to the reduced flow on $\mathcal{M}^\ell_0$ (see~\S\ref{sec:reduced}), for sufficiently small $\eps>0$, the two-dimensional unstable manifold $\mathcal{W}^\mathrm{u}(p_0(a))$ of $p_0(a)$ coincides with $\mathcal{W}^\mathrm{u}(\mathcal{M}^\ell_\eps)$. The equilibrium $p_0(a)$ also admits a one-dimensional stable manifold $\mathcal{W}^\mathrm{s}(p_0(a))$ which precisely corresponds the strong stable fiber of $\mathcal{W}^\mathrm{s}(\mathcal{M}^\ell_\eps)$ with basepoint $p_0(a)$. We note that for $\eps=0$ and $c=c^*(a)$, the manifold $\mathcal{W}^\mathrm{s}(p_0(a))$ is precisely the singular front $\phi_\diamond(a)$.

Using the results of Lemma~\ref{lem:transf} for $c_0=c^*(a), u_0=u^*(a)$, for each fixed $c\approx c^*(a)$ the two-dimensional manifolds $\mathcal{W}^\mathrm{u}(\mathcal{M}^\ell_0)$ and $\mathcal{W}^\mathrm{s}(\mathcal{M}^r_0)$ intersect transversely along the front $\phi_\dagger(u^*(a))$. This transversality persists for sufficiently small $\eps>0$, and using the fact that $\mathcal{W}^\mathrm{u}(p_0(a))=\mathcal{W}^\mathrm{u}(\mathcal{M}^\ell_\eps)$, we deduce the transverse intersection of $\mathcal{W}^\mathrm{u}(p_0(a))$ and $\mathcal{W}^\mathrm{s}(\mathcal{M}^r_\eps)$ for each $c\approx c^*(a)$ and each sufficiently small $\eps>0$. We now track $\mathcal{W}^\mathrm{u}(p_0(a))$ as it passes near $\mathcal{M}^r_\eps$; by the exchange lemma~\cite{jkk,ssch2}, there is a constant $\eta > 0$ such that $\mathcal{W}^\mathrm{u}(p_0(a))$ aligns $C^1$-$\mathcal{O}(e^{-\eta/\eps})$-close to $\mathcal{W}^\mathrm{u}(\mathcal{M}^r_\eps)$ upon exiting a neighborhood of $\mathcal{M}^r_\eps$ near the front $\phi_\diamond(a)$.

Using Lemma~\ref{lem:transb} for $c_0=c^*(a), u_0=a$, we can compute the distance between $\mathcal{W}^\mathrm{u}(\mathcal{M}^r_\eps)$ and $\mathcal{W}^\mathrm{s}(p_0(a))$ along the singular front $\phi_\diamond(a)$ using the distance function~\eqref{eq:splittingb}. In order to find a homoclinic orbit, we are interested in intersections of $\mathcal{W}^\mathrm{u}(p_0(a))$ and $\mathcal{W}^\mathrm{s}(p_0(a))$. By the $C^1$-$\mathcal{O}(e^{-\eta/\eps})$-closeness of $\mathcal{W}^\mathrm{u}(p_0(a))$ and $\mathcal{W}^\mathrm{u}(\mathcal{M}^r_\eps)$, the resulting distance function differs only by $\mathcal{O}(e^{-\eta/\eps})$ terms. Hence we compute the distance between $\mathcal{W}^\mathrm{u}(p_0(a))$ and $\mathcal{W}^\mathrm{s}(p_0(a))$ along $\phi_\diamond(a)$ as
\begin{align}\label{eq:homoclinicsplitting}
D(\tilde{c},\tilde{u},\eps) = M_\diamond^c\tilde{c}+\mathcal{O}(\eps+\tilde{c}^2),
\end{align}
where $M_\diamond^c\neq0$ and $\tilde{c} = c-c^*(a)$. We solve for $D(\tilde{c},\tilde{u},\eps)=0$ when
\begin{align}
c=c_\mathrm{d}(a,\eps) = c^*(a)+\mathcal{O}(\eps),
\end{align}
which corresponds to an intersection of $\mathcal{W}^\mathrm{u}(p_0(a))$ and $\mathcal{W}^\mathrm{s}(p_0(a))$ along a homoclinic orbit of~\eqref{eq:twode}.
\end{proof}

\section{Stability}\label{sec:stabilityFormal}

In the previous sections we have constructed several different localized solutions to~\eqref{eq:TWPDE}: homoclinics to the desert state $(u,v)=(U_0,V_0) = (a,0)$, homoclinics to the vegetated state $(u,v) = (U_2,V_2)$ -- see~\eqref{eq:uniformSteadyStates} -- and heteroclinics connecting these states. In this section we study the linear stability of these solutions using formal arguments; rigorous proofs follows in \S\ref{sec:stabilityRigorous}. We denote a steady state solution to~\eqref{eq:TWPDE} by $(u_s,v_s)$ -- without specifying yet which steady state solution -- and we linearize around this state by setting $(u,v)(\xi,t) = (u_s,v_s)(\xi) + e^{\lambda t+i\ell y} (\bar{u},\bar{v})(\xi)$. The linear stability problem then reads
\begin{equation}
	\begin{cases}
		\lambda \bar{u} & = \frac{1+\varepsilon c_s}{\varepsilon} \bar{u}_\xi - \left(1 + v_s^2\right) \bar{u} - 2 u_s v_s \bar{v}, \\
		\lambda \bar{v} & = \bar{v}_{\xi\xi} + c_s \bar{v}_\xi + \left( -m-\ell^2 + (2 - 3 b v_s) u_s v_s\right)\bar{v} + (1 - b v_s)v_s^2 \bar{u}. \label{eq:eigenvalueProblem}
	\end{cases}
\end{equation}
Here, $c_s$ denotes the speed of the steady state under consideration. With the introduction of $\bar{q} := \bar{v}_\xi$ we can write this stability problem in matrix form as
\begin{equation}
	\left( \begin{array}{c} \bar{u}_\xi \\ \bar{v}_\xi \\ \bar{q}_\xi \end{array} \right) =	A
 \left( \begin{array}{c} \bar{u} \\ \bar{v} \\ \bar{q} \end{array}\right),
 \mbox{ where } A = \left( \begin{array}{ccc}
\frac{\varepsilon}{1+\varepsilon c_s} \left[1 + \lambda + v_s^2 \right] & \frac{\varepsilon}{1+\varepsilon c_s} 2 u_s v_s & 0 \\
0 & 0 & 1 \\
- (1 - b v_s) v_s^2 & m +\ell^2+ \lambda - (2 - 3 b v_s) u_s v_s & -c_s
\end{array}\right).\label{eq:eigenvalueProblemMatrixForm}
\end{equation}

The rest of this section is devoted to finding the spectrum $\Sigma$ of this eigenvalue problem for the different stationary solutions to~\eqref{eq:TWPDE}, using formal computations. This spectrum consists of an essential spectrum $\Sigma_\mathrm{ess}$ and a point spectrum $\Sigma_\mathrm{pt}$. The essential spectrum is studied in \S\ref{sec:essentialSpectrum} and the point spectrum in \S\ref{sec:pointSpectrumFormal}. In~\S\ref{sec:stabilityTheorems} we formulate theorems based on our findings, the proofs of which are given in~\S\ref{sec:stabilityRigorous}.

\subsection{Essential spectrum}\label{sec:essentialSpectrum}

The essential spectrum consists of all eigenvalues $\lambda$ such that an asymptotic matrix of~\eqref{eq:eigenvalueProblemMatrixForm} has a spatial eigenvalue with real part zero. Depending on the type of steady state solution we are inspecting, the asymptotic matrix or matrices might be different. However, since we are only considering steady state solutions that limit to either the desert state $(u,v) = (a,0)$ or the vegetated state $(u,v) = (U_2,V_2)$, there are only two possible asymptotic matrices; when $(u_s,v_s)$ limits to $(a,0)$ (for either $\xi \rightarrow \infty$ or $\xi \rightarrow -\infty$) we have $A_\mathrm{d}$ as asymptotic matrix and when $(u_s,v_s)$ limits to  $(U_2,V_2)$ we have $A_\mathrm{v}$, where these matrices are given by
\begin{align}\label{eq:asympAd}
	A_\mathrm{d}(\lambda;\ell) &= 
\left( \begin{array}{ccc}
\frac{\varepsilon}{1+\varepsilon c_s} [1 + \lambda] & 0 & 0 
\\ 0 & 0 & 1 
\\ 0 & m +\ell^2+ \lambda & -c_s
\end{array} \right)\\ \label{eq:asympAv}
A_\mathrm{v}(\lambda;\ell) &= 
\left( \begin{array}{ccc}
\frac{\varepsilon}{1+\varepsilon c_s} \left[ 1 + \lambda + V_2^2 \right] & \frac{\varepsilon}{1+\varepsilon c_s} 2 U_2 V_2 & 0 \\
0 & 0 & 1 \\
- (1 - b V_2) V_2^2 & m+\ell^2 + \lambda - (2 - 3 b V_2) U_2 V_2 & - c_s
\end{array}\right),
\end{align}
where the values for $U_2$ and $V_2$ are given in~\eqref{eq:uniformSteadyStates}. 

\begin{Lemma}
Concerning the asymptotic matrices $A_\mathrm{d}, A_\mathrm{v}$ defined in~\eqref{eq:asympAd}--\eqref{eq:asympAv}, we have the following.
\begin{enumerate}[(i)]
\item \label{lem:essd}The matrix $A_\mathrm{d}$ is hyperbolic for all $\lambda \in \mathbb{C}$ satisfying \begin{align}\Re \lambda>-\min\{m+\ell^2,1\}.\end{align}

\item \label{lem:essv} For values of $a,m,b>0$ satisfying $\frac{a}{m}>4b+\frac{1}{b}$, the matrix $A_\mathrm{v}$ is hyperbolic for all $\lambda \in \mathbb{C}$ satisfying \begin{align}\Re \lambda>-\min\left\{1+\frac{1}{4b^2},\quad \frac{2m\left(b\sqrt{a^2-4m(m+ab)}-m\right)}{2m+ab-b\sqrt{a^2-4m(m+ab)}}+\ell^2\right\}<0. \end{align}
\end{enumerate}
\end{Lemma}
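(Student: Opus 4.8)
The plan is to compute, for each asymptotic matrix, the spatial eigenvalues $\mu$ solving $\det(A_\bullet(\lambda;\ell) - \mu I) = 0$ and to locate the set of $\lambda$ for which all three $\mu$ have nonzero real part. Since the matrices are block-triangular-ish, this factors nicely. For $A_\mathrm{d}$, the first row is completely decoupled, giving the scalar spatial eigenvalue $\mu_1 = \frac{\varepsilon}{1+\varepsilon c_s}(1+\lambda)$, which has real part zero exactly when $\Re\lambda = -1$ (assuming $1+\varepsilon c_s>0$, which holds for small $\varepsilon$). The remaining $2\times2$ block $\begin{pmatrix}0&1\\ m+\ell^2+\lambda & -c_s\end{pmatrix}$ has characteristic equation $\mu^2 + c_s\mu - (m+\ell^2+\lambda) = 0$; I would show that this has a purely imaginary root $\mu = i k$ precisely when $\lambda = -(m+\ell^2) - k^2 - i c_s k$ for some real $k$, whose rightmost point (over $k\in\mathbb R$) is $\lambda = -(m+\ell^2)$. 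Combining the two conditions, the union of ``bad'' $\lambda$ lies in $\{\Re\lambda = -1\} \cup \{\Re\lambda \le -(m+\ell^2)\}$, so $A_\mathrm{d}$ is hyperbolic whenever $\Re\lambda > -\min\{m+\ell^2, 1\}$, which is exactly statement~(\ref{lem:essd}).

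For $A_\mathrm{v}$ the structure is the same in spirit but the entries are messier. The key observation is that the $(1,2)$ and $(3,1)$ entries couple the $u$-component to the $v$-component, but the characteristic polynomial still factors in a useful way: because the $(1,2)$ and $(1,3)$ pattern leaves the first variable only weakly coupled, one obtains $\det(A_\mathrm v - \mu I) = \bigl(\tfrac{\varepsilon}{1+\varepsilon c_s}[1+\lambda+V_2^2] - \mu\bigr)\bigl(\mu^2 + c_s\mu - (m+\ell^2+\lambda-(2-3bV_2)U_2V_2)\bigr) + (\text{coupling term})$. I would argue that for the purpose of locating where a spatial eigenvalue crosses the imaginary axis it suffices to track the two ``branches'': the slow branch $\mu \approx \tfrac{\varepsilon}{1+\varepsilon c_s}[1+\lambda+V_2^2]$ and the fast quadratic branch. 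Setting $\mu = ik$ and separating real and imaginary parts in each gives, for the slow branch, $\Re\lambda = -(1+V_2^2)$, and for the fast branch the dispersion curve $\lambda = -(m+\ell^2) + (2-3bV_2)U_2V_2 - k^2 - ic_sk$, whose rightmost extent is $\lambda = -(m+\ell^2) + (2-3bV_2)U_2V_2$. Then I substitute the explicit formulas~\eqref{eq:uniformSteadyStates} for $U_2,V_2$ to rewrite $1+V_2^2$ and $(m+\ell^2) - (2-3bV_2)U_2V_2$ in the closed forms appearing in~(\ref{lem:essv}); in particular, using $V_2 > \frac{1}{2b}$ in the regime $\frac{a}{m}>4b+\frac{1}{b}$ one checks $1+V_2^2 \ge 1 + \frac{1}{4b^2}$ (or rather the relevant bound), and a direct computation with the discriminant $\sqrt{a^2 - 4m(m+ab)}$ produces the second entry in the minimum, together with its negativity.

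The main obstacle will be the bookkeeping in the $A_\mathrm{v}$ case: one must verify that the cross-coupling term does not create additional imaginary-axis crossings to the right of the two branches identified above, and one must carry out the algebraic simplification of $(2-3bV_2)U_2V_2$ and $1+V_2^2$ in terms of $a,m,b$ using the defining relations for $(U_2,V_2)$ (namely $R(V_2)G(U_2,V_2)=m$, i.e. $(1-bV_2)U_2V_2 = m$, and $U_2 = a - \frac{mV_2}{1-bV_2}$... wait, more precisely $U_2 = m(\tfrac am - \tfrac{V_2}{1-bV_2})$ from~\eqref{eq:uniformSteadyStates}). I would use these algebraic identities rather than brute-forcing the quadratic-formula expressions, as this is what turns an unwieldy expression into the stated form. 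Showing the stated quantity is negative (the final inequality in~(\ref{lem:essv})) then reduces to checking $b\sqrt{a^2-4m(m+ab)} > m$, equivalently $a^2 - 4m(m+ab) > m^2/b^2$, equivalently $\frac{a}{m} > 4b + \frac{1}{b}$ after rearranging — i.e. precisely the standing hypothesis — which I would verify at the end to close the argument. Finally, since hyperbolicity of the asymptotic matrix on a half-plane translates directly into the essential spectrum lying to the left of these curves (by the standard Fredholm characterization), no further work is needed beyond what the lemma states.
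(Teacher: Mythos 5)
Your treatment of part~(i) is correct and matches the paper: the $u$-row decouples, giving the vertical line $\Re\lambda=-1$, and the residual $2\times2$ block yields the parabola $\lambda=-(m+\ell^2)-k^2-ic_s k$, whose rightmost point is $\lambda=-(m+\ell^2)$.

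For part~(ii), however, you have identified the right setup but left the load-bearing step unresolved. You correctly expand $\det(A_\mathrm{v}-\mu I)$ into the product of the ``slow'' linear factor and the ``fast'' quadratic, plus the coupling term $a_{12}a_{31}=-\tfrac{\varepsilon}{1+\varepsilon c_s}\,2U_2V_2(1-bV_2)V_2^2$, and your algebraic simplifications (using $(1-bV_2)U_2V_2=m$, the bound $V_2>\tfrac{1}{2b}$, and the equivalence $b\sqrt{a^2-4m(m+ab)}>m \Leftrightarrow \tfrac{a}{m}>4b+\tfrac1b$) are exactly what is needed. But you then write ``I would argue that \ldots it suffices to track the two branches,'' and several sentences later concede that ``one must verify that the cross-coupling term does not create additional imaginary-axis crossings to the right of the two branches.'' That verification is precisely the content of the lemma; asserting sufficiency and then flagging it as an open obstacle leaves the proof incomplete, and the branches cannot simply be tracked independently because the coupling term and the slow factor are both $O(\varepsilon)$, so no crude perturbative separation applies near $\mu=0$.

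The paper closes this gap with a short self-consistent argument that you should supply: set $\mu=i\nu$ in the full cubic, and \emph{solve for} $\lambda$, obtaining
\begin{align*}
\lambda=-1-V_2^2+\frac{2 U_2 V_2(1 - b V_2) V_2^2}{i\nu c_s-\nu^2- m-\ell^2 - \lambda + (2 - 3 b V_2) U_2 V_2}+i\nu\,\frac{1+\varepsilon c_s}{\varepsilon}.
\end{align*}
In the region $\Re\lambda>-m-\ell^2+(2-3bV_2)U_2V_2$ the denominator has strictly negative real part, while the numerator $2U_2V_2(1-bV_2)V_2^2$ is a positive real number (since $1-bV_2>0$ by $(1-bV_2)U_2V_2=m>0$). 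Hence the fraction has nonpositive real part, and taking real parts gives $\Re\lambda\le-1-V_2^2<-1-\tfrac{1}{4b^2}$. This shows any imaginary-axis crossing lying to the right of the fast-branch threshold must in fact lie to the left of $-1-\tfrac{1}{4b^2}$, which is exactly the claimed hyperbolicity region. Adding this step would complete your argument and align it with the paper's proof.
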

\begin{proof}
For~\ref{lem:essd}, a straightforward computation reveals that $A_\mathrm{d}$ is non-hyperbolic when $\lambda \in \{ \lambda \in \C: \Re \lambda = -1\} \cup \{ \lambda = -m-\ell^2 - k^2 + i c_s k; k \in \R\}$; see Figure~\ref{fig:essential_spec}.

For~\ref{lem:essv}, we compute that $A_\mathrm{v}$ is non-hyperbolic when
\begin{align}\label{eq:essv_solve}
\left(\frac{\varepsilon}{1+\varepsilon c_s} \left( 1 + \lambda + V_2^2 \right)-i\nu\right)\left(i\nu c_s-\nu^2- m-\ell^2 - \lambda + (2 - 3 b V_2) U_2 V_2\right)-\frac{\varepsilon}{1+\varepsilon c_s} 2 U_2 V_2(1 - b V_2) V_2^2=0,
\end{align}
for some $\nu\in \R$. We note that 
\begin{align}
\Re \left(i\nu c_s-\nu^2- m -\ell^2- \lambda + (2 - 3 b V_2) U_2 V_2\right)<0
\end{align}
whenever
\begin{align}
\Re \lambda > -m-\ell^2 + (2 - 3 b V_2) U_2 V_2. 
\end{align}
Furthermore, using the expressions~\eqref{eq:uniformSteadyStates}, when $\frac{a}{m}>4b+\frac{1}{b}$, we have that $V_2>\frac{1}{2b}$ and
\begin{align*}
-m-\ell^2 + (2 - 3 b V_2) U_2 V_2&=-\frac{2m\left(b\sqrt{a^2-4m(m+ab)}-m\right)}{2m+ab-b\sqrt{a^2-4m(m+ab)}}-\ell^2\\
&<0.
\end{align*}
for all $\ell \in\R$. By rearranging~\eqref{eq:essv_solve}, we deduce that $A_\mathrm{v}$ is non-hyperbolic when
\begin{align}\label{eq:essv_solve_re}
\lambda=-1-V_2^2+\frac{2 U_2 V_2(1 - b V_2) V_2^2}{\left(i\nu c-\nu^2- m-\ell^2 - \lambda + (2 - 3 b V_2) U_2 V_2\right)}+i\nu\frac{1+\varepsilon c_s}{\varepsilon}.
\end{align}
Taking real parts of~\eqref{eq:essv_solve_re} in the region 
\begin{align}
\Re \lambda > -\frac{2m\left(b\sqrt{a^2-4m(m+ab)}-m\right)}{2m+ab-b\sqrt{a^2-4m(m+ab)}} -\ell^2,
\end{align} we have that $\Re \lambda<-1-V_2^2$, and noting $V_2>\frac{1}{2b}$, the result follows.
\end{proof}

\begin{figure}
\centering
\includegraphics[width=0.3\textwidth]{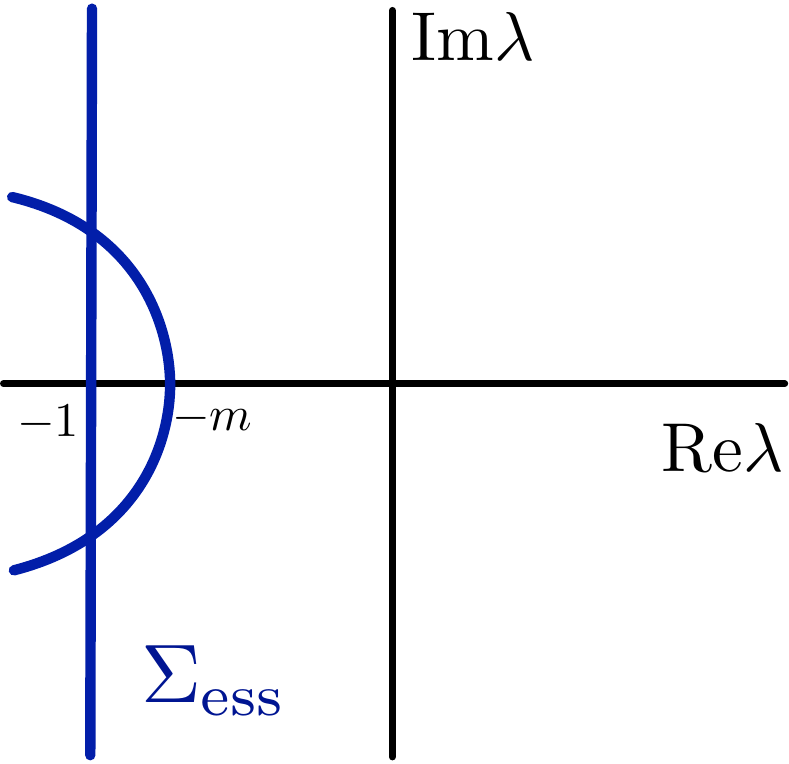}
\caption{Shown is the essential spectrum $\Sigma_{\mathrm{ess}}$ associated with the desert state $(u,v) = (a,0)$ in the case $\ell=0$. }
\label{fig:essential_spec}
\end{figure}

Thus, since both $A_\mathrm{d}$ and $A_\mathrm{v}$ stay hyperbolic for all $\lambda$ with $\Re \lambda \geq 0$ for the relevant parameter values, the essential spectrum of all of the types of steady state solutions found in Section~\ref{sec:slowfast} is located in the left half-plane.

\subsection{Point spectrum}\label{sec:pointSpectrumFormal}

In this section we study the point spectrum $\Sigma_\mathrm{pt}$ using formal perturbation theory. Here we focus on $1$D stability, that is $\ell=0$. Rigorous proofs of the statements in this section, and the extension to all $\ell\in\R$, follow in~\S\ref{sec:stabilityRigorous}. 

We observe that the slow manifolds $\mathcal{M}^{\ell,r}_0$ are hyperbolic (away from the fold point $\mathcal{F}$) and consist entirely of saddle equilibria of the fast layer problem~\eqref{eq:fast0}. Hence, these slow manifolds should not contribute any eigenvalues; the only eigenvalues come from the contribution of the fast fronts $\phi_\dagger$ and $\phi_\diamond$. That is, eigenvalues in the point spectrum lie close to the eigenvalues of the fast-reduced subsystem~\eqref{eq:fast0}. Since $\phi_\dagger$ and $\phi_\diamond$ are fronts and~\eqref{eq:fast0} is translational invariant, standard Sturm-Liouville theory indicates that they carry an eigenvalue $\lambda = 0$ and possibly several other eigenvalues that are all real and negative. Therefore, if there are potentially unstable eigenvalues in the point spectrum $\Sigma_\mathrm{pt}$ they need to lie close to $\lambda = 0$. Specifically, there are as many eigenvalues close to $0$ as there are fronts in the steady state solution $(u_s,v_s)$ under consideration.

Because the full system~\eqref{eq:TWPDE} is translational invariant, $\lambda = 0$ is an eigenvalue of the full system. When we study the stability of a heteroclinic connection (connecting the desert state $p_0(a)$ to the vegetated state $p_+(u_2)$ or vice-versa) this is the only eigenvalue close to $0$; in particular $\Sigma_\mathrm{pt} \backslash \{0\} \subset \{\lambda \in \C: \Re \lambda < 0\}$. On the other hand, when we study the stability of a homoclinic connection (connecting either the desert state $p_0(a)$ or the vegetated state $p_+(u_2)$ to itself), there is an additional eigenvalue close to $0$. This eigenvalue -- of the homoclinic steady state solutions -- can, in principle, move either to the left or to the right (making the steady state unstable). In this section, we use perturbation theory to track this movement and pinpoint the location of the second eigenvalue formally. 

\subsection{Formal computation of small eigenvalues}
Let $(u_s,v_s)$ be an exact solution to~\eqref{eq:TWPDE}. The linearized stability problem~\eqref{eq:eigenvalueProblem} can be recast to the following form

\begin{equation}
\mathcal{L}(\ell) \left( \begin{array}{c} \bar{u}\\\bar{v}\end{array}\right) = \lambda  \left( \begin{array}{c} \bar{u}\\\bar{v}\end{array}\right),
\qquad
\mathcal{L}(\ell) := \left( \begin{array}{cc} 
\eps^{-1}(1+\eps c_s)\partial_\xi - (1 + v_s^2) & - 2 u_s v_s \\
(1-bv_s)v_s^2 & \partial_\xi^2 + c_s \partial_\xi - m - \ell^2 + (2 - 3 b v_s) u_s v_s
\end{array}\right).
\label{eq:stabilityProblemOperator}
\end{equation}
For simplicity, we focus on the operator $\mathcal{L}(0)$ corresponding to the case $\ell=0$; the case of $\ell \in\R$ is similar and is carried out in detail in~\S\ref{sec:stabilityRigorous}. 

Since we are looking for a small (order $\mathcal{O}(\varepsilon)$) eigenvalue closely related to the derivatives of the fast fronts $\phi_\dagger = (u_\dagger,v_\dagger)^T$ and $\phi_\diamond = (u_\diamond,v_\diamond)^T$, in particular at leading order~\eqref{eq:stabilityProblemOperator} is satisfied in the fast $\xi$-fields by any linear combination of $\phi_\dagger'$ and $\phi_\diamond'$. We denote the fast region with the front $\phi_\dagger$ by $I_\dagger$ and the fast region with the front $\phi_\diamond$ by $I_\diamond$. Then, to find the small eigenvalues we therefore use regular expansion and determine the eigenvalues with a Fredholm solvability condition. In particular, we first focus on the fast fields and we expand the eigenvalue and $(\bar{u},\bar{v})^T$ in these fast regions as
\begin{alignat}{4}
	 \left(\begin{array}{c} \bar{u} \\ \bar{v} \end{array}\right) & = \alpha_j \phi_j' &&+ \varepsilon \left(\begin{array}{c} \bar{u}_{j,1} \\ \bar{v}_{j,1} \end{array}\right) && + \mathcal{O}(\varepsilon^2), & \qquad (\xi \in I_j, j = \dagger,\diamond)\\
	 \lambda & =  0 && + \varepsilon \tilde{\lambda} && + \mathcal{O}(\varepsilon^2), &
\intertext{where $\alpha_{\dagger,\diamond}$ are constants to be determined. Moreover, we also need to expand the exact solution $(u_s,v_s)^T$ as well as the speed $c_s$:}
	 \left(\begin{array}{c} u_s \\ v_s \end{array}\right) & = \left(\begin{array}{c} u_{j} \\ v_{j} \end{array}\right) && + \varepsilon \left(\begin{array}{c} u_{j,1} \\ v_{j,1} \end{array}\right) && + \mathcal{O}(\varepsilon^2), &\qquad (\xi \in I_j, j = \dagger,\diamond)\\
	 c_s & = c_{0} && + \varepsilon c_1 && + \mathcal{O}(\varepsilon^2), &
\end{alignat}
where $(u_j,v_j)^T\, (j  = \dagger,\diamond)$ and $c_0$ are the leading order approximations of the exact solutions as constructed in~\S\ref{sec:mainexistenceresults}, Theorems~\ref{thm:stripeexistence} and~\ref{thm:gapexistence}. 
Substitution in~\eqref{eq:stabilityProblemOperator} leads at order $\mathcal{O}(\varepsilon)$ to the following equation (the $\mathcal{O}(1)$ equations are automatically satisfied):
\begin{equation}
	\begin{cases}
		\bar{u}_1' & = 2 \alpha_j u_j v_j v_j', \\
		\mathcal{L}^r_j \bar{v}_1 & = \left( \tilde{\lambda} - c_1 \partial_\xi - \left[2-6 b v_{j}\right]u_{j} v_{j,1} - \left[2-3bv_{j}\right]v_{j}u_{j,1}\right) \alpha_j v_j' - \left[1-bv_{j}\right]v_{j}^2\bar{u}_1,
	\end{cases}\quad (\xi \in I_j,\,\, j = \dagger,\diamond)
\label{eq:eigenvalueProblemOrderEpsFastFields}
\end{equation}
where 
\begin{equation}
\mathcal{L}^r_j := \partial_\xi^2 + c_0 \partial_\xi - m + (2-3b v_{j})u_{j}v_{j}.
\end{equation}
In~\eqref{eq:eigenvalueProblemOrderEpsFastFields} terms with $c_1$, $v_{j,1}$ and $u_{j,1}$ appear, and to determine these, we expand the existence problem~\eqref{eq:twode} in $\varepsilon$ as well. In the fast fields the order $\mathcal{O}(\varepsilon)$ terms read
\begin{equation}
	\begin{cases}
		u_{s,1}' & = u_j - a + u_j v_j^2, \\
		\mathcal{L}^r_j v_{j,1} & = -(1-b v_j)v_j u_{j,1} - c_1 v_j'.
	\end{cases}\quad (\xi \in I_j,\,\, j = \dagger,\diamond)
\label{eq:existenceProblemOrderEps}
\end{equation}
Taking the derivative with respect to $\xi$ of the second equation then yields
\begin{equation}
\mathcal{L}^r_j v_{j,1}'  = \left( -c_1 \partial_\xi - \left[2-6 b v_j\right]u_j v_{j,1} - \left[2-3bv_j\right]v_ju_{j,1}\right)v_j' - \left[1-bv_j\right]v_j^2 u_{j,1}'
\end{equation}
Substitution in~\eqref{eq:eigenvalueProblemOrderEpsFastFields} then reduces the core stability problem to
\begin{equation}
	\begin{cases}
		\bar{u}_1' & = 2 \alpha_j u_j v_j v_j', \\
		\mathcal{L}^r_j \bar{v}_1 & = \alpha_j \mathcal{L}^r_j v_{j,1}' + \tilde{\lambda}\alpha_j v_j' + \left[1-bv_j\right]v_j^2 \left(\alpha_j u_{j,1}' - \bar{u}_1\right).
	\end{cases}\quad (\xi \in I_j,\,\, j  = \dagger,\diamond)
\label{eq:eigenvalueProblemOrderEpsFastFields2}
\end{equation}
From this equation it is clear that $\bar{u}_{j,1}$ can be found by integration (regardless of the value of $\tilde{\lambda}$, $\alpha_\dagger$ and $\alpha_\diamond$). However, since $\mathcal{L}^r_j$ has a non-trivial kernel, we have to impose a solvability condition on $\bar{v}_{j,1}$. We define $v_j^*$ as a solution to the adjoint equation $(\mathcal{L}^r_j)^* v_j^* = 0$ and note that
\begin{equation}\label{eq:adjointSolutionLr}
	v_j^*(\xi) = e^{c_0\xi} v_j'(\xi), \qquad (\xi \in I_j, j = \dagger,\diamond).
\end{equation}
Thus we obtain the following Fredholm solvability condition
\begin{equation}
	0 = \alpha_j \tilde{\lambda} \int_{-\infty}^\infty (v_j')^2 e^{c_0 \xi} d\xi + \int_{-\infty}^\infty \left[1-bv_j\right]v_j^2 e^{c_0\xi} v_j' \left(\alpha_j u_{j,1}' - \bar{u}_{j,1}\right)\ d\xi \qquad (j  = \dagger,\diamond) \label{eq:stabilityFredholmCondition1}
\end{equation}
We observe from~\eqref{eq:eigenvalueProblemOrderEpsFastFields} and~\eqref{eq:existenceProblemOrderEps} that $\alpha_j u_{j,1}' - \bar{u}_{j,1}$ is constant in the fast fields $I_j\, (j = \dagger,\diamond)$. Thus the Fredholm condition reduces to
\begin{equation}
	0 = \alpha_j \tilde{\lambda} \int_{-\infty}^\infty (v_j')^2 e^{c_0 \xi} d\xi + \left(\alpha_j u_{j,1}' - \bar{u}_{j,1}\right) \int_{-\infty}^\infty \left[1-bv_j\right]v_j^2 e^{c_0\xi} v_j'  \ d\xi \qquad (j = \dagger,\diamond)\label{eq:stabilityFredholmCondition1}
\end{equation}
Note that we thus have two solvability conditions. Only when both are satisfied simultaneously, it is possible to find $(\bar{u},\bar{v})^T$ that solve~\eqref{eq:stabilityProblemOperator}. The terms in~\eqref{eq:stabilityFredholmCondition1} change depending on the type of steady state solution we are considering, and in particular, to which equilibrium state these solutions are homoclinic, as this determines the value of $\alpha_j u_{j,1}' - \bar{u}_{j,1}$.

\paragraph{Homoclinics to desert state}
In this situation, $u_{\diamond,1}(\xi) \rightarrow 0$ for $\xi \rightarrow \infty$ in $I_\diamond$, since the jump here is onto the fixed point. Moreover, $\bar{u}_{\diamond,1}(\xi) \rightarrow 0$ for $\xi \rightarrow \infty$ in $I_\diamond$ to ensure integrability of the eigenfunction. Thus, the condition in $I_\diamond$ is
\begin{equation}
	\alpha_\diamond \tilde{\lambda} M_{\diamond,\lambda}^\mathrm{d} = 0, \label{eq:FredholmConditionDesertHomoclinic1}
\end{equation}
where
\begin{equation}
 M_{\diamond,\lambda}^\mathrm{d} := \int_{-\infty}^\infty v_\diamond'(\xi)^2 e^{c^*(a)\xi}\ d\xi > 0. \label{eq:MdDiamondLambda}
\end{equation}
Therefore, either $\tilde{\lambda} = 0$ or $\alpha_\diamond = 0$. The former gives us back the translational invariant eigenvalue (with eigenfunction $(\bar{u},\bar{v})^T = (u_s',v_s')^T$, so we focus on the latter possibility. Note that $\alpha_\diamond = 0$ implies that $\bar{u}_{\diamond,1} = 0$ in the fast field $I_\diamond$. Thus, this provides a matching condition for the equations in the slow field between the fast fields $I_\dagger$ and $I_\diamond$. By expanding the slow field equation in the slow variable, it immediately follows, from this fact, that the eigenfunction must be $0$ in the slow field between $I_\dagger$ and $I_\diamond$ as well. Hence we conclude that $\bar{u}_{\dagger,1}(\xi) \rightarrow 0$ for $\xi \rightarrow \infty$ in $I_\dagger$ as well. Moreover, $u_{\dagger,1}(\xi) \rightarrow u_\dagger - a - u_\dagger v_+(u_\dagger)^2 = u^*(a) - a + u^*(a)v_+(u^*(a))^2$ for $\xi \rightarrow \infty$ in $I_\dagger$ -- see equation~\eqref{eq:existenceProblemOrderEps} and Theorem~\ref{thm:stripeexistence}. Thus the second solvability condition becomes
\begin{equation}
	\alpha_\dagger \left[ \tilde{\lambda} M_{\dagger,\lambda}^\mathrm{d} + M_{\dagger,\varepsilon}^\mathrm{d}\right] = 0, \label{eq:FredholmConditionDesertHomoclinic2}
\end{equation}
where
 \begin{align}
	 M_{\dagger,\lambda}^\mathrm{d} & := \int_{-\infty}^\infty v_\dagger'(\xi)^2 e^{c^*(a)\xi}\ d\xi > 0, \label{eq:MdDaggerLambda}\\
	 M_{\dagger,\eps}^\mathrm{d} & := \left[ u^*(a) - a + u^*(a) v_+(u^*(a))^2 \right] \int_{-\infty}^\infty (1-bv_\dagger(\xi)) v_\dagger(\xi)^2 e^{c^*(a) \xi} v_\dagger'(\xi)\ d\xi > 0.\label{eq:MdDaggerDelta}
 \end{align}
The signs of these are positive, since $v_\dagger$ is increasing with $\xi$, and the quantity $\left(u^*(a) - a + u^*(a) v_+(u^*(a))^2\right)$ is positive per construction. Because taking $\alpha_\dagger = 0$ leads to the trivial solution (on $\mathbb{R}$), we therefore obtain the additional eigenvalue $\lambda = \varepsilon \tilde{\lambda} = - \varepsilon \frac{M_{\dagger,\varepsilon}^\mathrm{d}}{M_{\dagger,\lambda}^\mathrm{d}} < 0$, which indicates that the eigenvalue $\lambda$ close to zero has moved into the stable half-plane $\{\lambda \in \C: \Re \lambda < 0\}$. A plot of the corresponding eigenfunction, computed numerically, is given in Figure~\ref{fig:spec_numerics_ef}.

\paragraph{Homoclinics to the vegetated state} This case is very similar. However, now the solution in $I_\dagger$ limits to the fixed point of~\eqref{eq:twode}. Using similar arguments, we then find the following condition in $I_\dagger$:
\begin{equation}
	\alpha_\dagger \tilde{\lambda} M_{\dagger,\lambda}^\mathrm{v} = 0,
\end{equation}
where
\begin{equation}
M_{\dagger,\lambda}^\mathrm{v}  := \int_{-\infty}^\infty v_\dagger'(\xi)^2 e^{\hat{c}(a)\xi}\ d\xi > 0\label{eq:MvDaggerLambda}.
\end{equation}
This time we need to take $\alpha_\dagger = 0$. Similar to before, matching through the slow field yields $\bar{u}_{\dagger,1}(\xi) \rightarrow 0$  and $u_{\diamond,1} \rightarrow u_\diamond - a = \hat{u}_2(a) - a$ for $\xi \rightarrow \infty$ in $I_\diamond$. Therefore the second condition for this steady state reads
\begin{equation}
	\alpha_\diamond \left[ \tilde{\lambda} M_{\diamond,\lambda}^\mathrm{v} + M_{\diamond, \varepsilon}^d\right] = 0,
\end{equation}
where
\begin{align}
	 M_{\diamond,\lambda}^\mathrm{v} &:= \int_{-\infty}^\infty v_\diamond'(\xi)^2 e^{\hat{c}(a)\xi}\ d\xi > 0,\label{eq:MvDiamondLambda}\\
	 M_{\diamond,\eps}^\mathrm{v} & := \left[ \hat{u}_2(a) - a \right] \int_{-\infty}^\infty (1-bv_\diamond(\xi)) v_\diamond(\xi)^2 e^{\hat{c}(a) \xi} v_\diamond'(\xi)\ d\xi > 0\label{eq:MvDiamondDelta}.
 \end{align}
Because $\hat{u}_2(a) - a < 0$ and $v_\diamond$ is decreasing with $\xi$, the sign of all these terms are positive again. Therefore we obtain the additional eigenvalue $\lambda = \varepsilon \tilde{\lambda} = - \varepsilon \frac{M_{\diamond,\varepsilon}^\mathrm{v}}{M_{\diamond,\lambda}^\mathrm{v}} < 0$, and again the eigenvalue has moved into the stable half-plane.

\subsection{Main stability results}\label{sec:stabilityTheorems}

In the previous sections we have formally determined the spectrum of all kind of steady state solutions to~\eqref{eq:TWPDE}. The computations in these sections hold for 1D perturbations of the steady state in question. We do, however, also want to understand the stability of these steady states under 2D perturbations. For that, we linearize around this state by setting $(u,v)(\xi,y,t) = (u_s,v_s)(\xi) + e^{\lambda t + i \ell y} (\bar{u},\bar{v})(\xi)$, where $\ell\in \R$ is the transverse wavenumber, which results in the family of linearized PDE operators
\begin{align}
\mathcal{L}(\ell) := \left( \begin{array}{cc} 
\eps^{-1}(1+\eps c_s)\partial_\xi - 1 - v_s^2 & - 2 u_s v_s \\
(1-bv_s)v_s^2 & \partial_\xi^2 +\ell^2+ c_s \partial_\xi - m + (2 - 3 b v_s) u_s v_s
\end{array}\right).
\label{eq:linPDEoperator}
\end{align}
Linear stability is then determined by the corresponding family of eigenvalue problems
\begin{align}\label{eq:2Devalproblem}
\mathcal{L}(\ell) \begin{pmatrix} U\\V\end{pmatrix}= \lambda  \begin{pmatrix} U\\V\end{pmatrix}, \qquad \ell \in \mathbb{R}. \end{align}

Introducing $\Psi := (\bar{u},\bar{v},\bar{v}')^T$ we write the eigenvalue problem~\eqref{eq:2Devalproblem} as the first order nonautonomous ODE
	\begin{align}
	\Psi' &=	A(\xi;\lambda, \ell,\eps)\Psi,\qquad 
	 A(\xi;\lambda, \ell,\eps)=\begin{pmatrix}
\frac{\varepsilon}{1+\varepsilon c_s} \left[1 + \lambda + v_s^2 \right] & \frac{\varepsilon}{1+\varepsilon c_s} 2 u_s v_s & 0 \\
0 & 0 & 1 \\- (1 - b v_s) v_s^2 & m + \lambda +\ell^2- (2 - 3 b v_s) u_s v_s & -c_s\end{pmatrix}.\label{eq:evalprobleml}
	\end{align}
By introducing $\hat{\lambda} = \lambda - \ell^2$ it is suggested that previous computations for the point spectrum, in~\S\ref{sec:pointSpectrumFormal}, still hold up to leading order by replacing $\lambda$ with $\hat{\lambda}$. The change in the essential spectrum is a bit more subtle, but computations are similar to those in~\S\ref{sec:essentialSpectrum}. To summarize our findings, we formulate several stability theorems for the various types of steady state solutions; these are proved rigorously in~\S\ref{sec:stabilityRigorous}.

\begin{Theorem}[Spectrum of traveling front solutions] \label{thm:heteroclinicStability} Let $a,b,m,\varepsilon$ as in Theorem~\ref{thm:desertFrontExistence} or~\ref{thm:vegetationFrontExistence} and let $\phi_h$ denote a traveling front solution as in the same theorem. Then, the following hold.
\begin{enumerate}[(i)]
\item  The spectrum of the operator $\mathcal{L}(0)$ is contained in the set $\{\lambda \in \C:\Re \lambda <0\}\cup \{0\}$, and the spectrum of the operator $\mathcal{L}(\ell),\ell\neq0$ is contained in the set $\{\lambda \in \C:\Re \lambda <0\}$. 
\item The eigenvalue $\lambda_0(0)=0$ of $\mathcal{L}(0)$ is simple and continues to an eigenvalue of $\mathcal{L}(\ell), |\ell|\leq L_M$ for some $L_M\gg1$, satisfying $\lambda_0'(0)=0$ and
\begin{align}
	\lambda_0(\ell) &= - \ell^2 + \mathcal{O}(|\varepsilon \log \varepsilon|^2), \qquad \lambda''_0(\ell) = -2 + \mathcal{O}(|\varepsilon \log \varepsilon|^2), \qquad |\ell|\leq L_M.
\end{align}
\item The remaining spectrum of $\mathcal{L}(\ell)$ is bounded away from the imaginary axis uniformly in $\eps>0$ sufficiently small and $\ell\in\R$.
\end{enumerate}
\end{Theorem}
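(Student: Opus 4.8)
The plan is to analyze the eigenvalue problem \eqref{eq:evalprobleml} using exponential dichotomies and Lin's method, following the strategy of \cite{cdrs}, but with an extra Fourier parameter $\ell$ to handle. First, for part (i) regarding the essential spectrum: by the analysis of \S\ref{sec:essentialSpectrum}, both asymptotic matrices $A_\mathrm{d}(\lambda;\ell)$ and $A_\mathrm{v}(\lambda;\ell)$ are hyperbolic whenever $\Re\lambda\geq 0$ and $\ell\in\mathbb{R}$ (in fact the essential spectrum shifts by $-\ell^2$ to the left as $\ell$ grows), so the operator $\mathcal{L}(\ell)$ is Fredholm of index zero on the closed right half-plane, and the spectrum there consists only of isolated eigenvalues of finite multiplicity. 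Thus it suffices to control the point spectrum, which is what the bulk of the argument addresses.

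For parts (i)--(ii) concerning eigenvalues near the origin: since the front $\phi_h$ consists of a single fast jump $\phi_\dagger$ (or $\phi_\diamond$) connecting normally hyperbolic slow manifolds, and the slow manifolds consist entirely of saddles in the layer problem, the only eigenvalues with $\Re\lambda$ near zero must lie near $\lambda=0$ (the translational eigenvalue). I would construct the relevant eigenfunction piecewise: on the fast field use the exponential dichotomy of the variational equation about $\phi_\dagger$, whose unique (up to scaling) bounded solution at $\lambda=0,\ell=0$ is $\phi_\dagger'$; on the slow segments use the dichotomies coming from persistence of $\mathcal{W}^{\mathrm{s},\mathrm{u}}(\mathcal{M}^{\ell,r}_\eps)$; then match via Lin's method. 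The jump/mismatch in the matching conditions, evaluated to leading order, yields an Evans-function-type reduced equation whose lowest-order terms are precisely the Melnikov-type quantities $M_\dagger^c, M_\dagger^u$ from Lemma \ref{lem:transf} together with a $(\lambda-\ell^2)$ contribution from the $e^{c_0\xi}(v_\dagger')^2$ integral. Because the heteroclinic carries only the single translational eigenvalue (no second small eigenvalue as in the homoclinic case — see \S\ref{sec:pointSpectrumFormal}), solving this reduced equation gives $\lambda_0(\ell)$ analytic in $\ell$ with $\lambda_0(0)=0$. Substituting $\hat\lambda=\lambda-\ell^2$ shows the reduced equation depends on $\ell$ (at this order) only through $\hat\lambda$, which forces $\lambda_0(\ell)=\ell^2\cdot(\text{coefficient})+\mathcal{O}(|\eps\log\eps|^2)$; a direct check of the coefficient (it equals $-1$ because the $\hat\lambda$-term enters with the opposite sign of the $M$-terms, exactly as in the homoclinic computation) gives $\lambda_0'(0)=0$, $\lambda_0(\ell)=-\ell^2+\mathcal{O}(|\eps\log\eps|^2)$, and $\lambda_0''(0)=-2+\mathcal{O}(|\eps\log\eps|^2)$. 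The $\eps\log\eps$ error (rather than $\eps$) is the standard price for tracking an orbit through a neighborhood of a slow manifold via the exchange lemma.

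For part (iii), bounding the remaining spectrum away from the imaginary axis uniformly in small $\eps$ and in $\ell\in\mathbb{R}$: I would split the parameter region. For $|\lambda|$ bounded but bounded away from $0$, and $\ell$ in a compact set, the construction above shows there are no further eigenvalues since the only candidates are the $\mathcal{O}(\eps)$-small ones already located; here one uses that the fast fronts are Sturm–Liouville stable, carrying $\lambda=0$ as the largest eigenvalue of the fast subsystem with the rest real and strictly negative and uniformly bounded away from zero. For $\ell$ large, the essential spectrum and all point spectrum move left at rate $\sim-\ell^2$ (substituting $\hat\lambda = \lambda-\ell^2$ reduces to the bounded-$\ell$ case shifted), so uniformity there is automatic. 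For $|\lambda|$ large one uses a standard energy/resolvent estimate on \eqref{eq:linPDEoperator}: the $v$-component is governed by $\partial_\xi^2+c_s\partial_\xi$ plus bounded terms, and the $u$-equation is a scalar transport equation with a large relaxation rate $\eps^{-1}(1+\eps c_s)$, so one obtains invertibility of $\mathcal{L}(\ell)-\lambda$ for $|\lambda|$ large uniformly in $\eps,\ell$.

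The main obstacle will be part (iii) — specifically, obtaining the bound \emph{uniformly in $\eps$}. The difficulty is that as $\eps\to 0$ the slow manifold $\mathcal{M}^r_\eps$ has a spectral gap that degenerates (the slow direction is $\mathcal{O}(\eps)$), so one must carefully quantify how the exponential dichotomy rates on the slow segments depend on $\eps$ and confirm that the small eigenvalue we extract is the \emph{only} consequence of this near-degeneracy, with everything else staying $\mathcal{O}(1)$ away from $i\mathbb{R}$. This is exactly the delicate bookkeeping carried out in \cite{cdrs} for FitzHugh–Nagumo, and the new ingredient here is checking that introducing the transverse wavenumber $\ell$ via the substitution $\hat\lambda=\lambda-\ell^2$ does not spoil these uniform estimates — which it does not, since $\ell$ enters $A(\xi;\lambda,\ell,\eps)$ only through the combination $\lambda+\ell^2$ in the lower-right block and is otherwise harmless.
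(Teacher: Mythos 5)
Your proposal follows essentially the same strategy that the paper uses: the paper proves the stripe case (Theorem~\ref{thm:stripeStability}) in full detail in~\S\ref{sec:stabilityRigorous} by splitting into regions $R_1,R_2,R_3$, using exponential dichotomies and Lin's method, and then simply remarks that the heteroclinic case is ``also very similar, though less involved'' --- the simplification being exactly what you identify, namely a single fast jump and hence a single matching condition, so the reduced Evans-type equation is scalar and produces only the translational root $\tilde{\lambda}_0(\ell) = \mathcal{O}(|\eps\log\eps|^2)$, with no second critical eigenvalue.

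One slip worth correcting: the substitution that decouples the transverse wavenumber is $\tilde{\lambda} := \lambda + \ell^2$ (as the paper uses in~\S\ref{sec:stabilityRigorous}), not $\hat{\lambda} = \lambda - \ell^2$; the $v$-equation carries the combination $-m - \ell^2 - \lambda$, so it is $\lambda + \ell^2$ that recurs as a single spectral parameter. With the correct sign, the coefficient matrix $\tilde{A}(\xi;\tilde{\lambda},\ell,\eps)$ depends on $\ell$ essentially only through $\tilde{\lambda}$ (apart from a harmless $\mathcal{O}(\eps\ell^2)$ term in the slow $u$-row), and solving $\tilde{\lambda}_0(\ell) = \mathcal{O}(|\eps\log\eps|^2)$ yields $\lambda_0(\ell) = -\ell^2 + \mathcal{O}(|\eps\log\eps|^2)$ directly, without your somewhat ad hoc argument that ``the coefficient equals $-1$ because the $\hat{\lambda}$-term enters with the opposite sign of the $M$-terms.'' As you define $\hat{\lambda}$, the reduced equation would \emph{not} depend on $\ell$ only through $\hat{\lambda}$, so that reasoning would not close. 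Finally, for the region of large $|\lambda|$ you propose an energy/resolvent estimate where the paper uses a rescaling $\hat{\xi} = \sqrt{|\tilde{\lambda}|}\xi$ to reduce to a constant-coefficient perturbation problem; both routes work, but the paper's rescaling is the one that gives the uniform-in-$\eps$ hyperbolicity directly and dovetails with the $|\ell|\gg 1$ argument, which also needs the companion rescaling $\bar{\xi} = \sqrt{\lambda+\ell^2}\,\xi$ rather than merely ``reducing to the bounded-$\ell$ case shifted.''
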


\begin{Theorem}[Spectrum of vegetation stripe solutions] \label{thm:stripeStability} Let $a,b,m,\varepsilon$ as in Theorem~\ref{thm:stripeexistence} and let $\phi_\mathrm{d}$ be a traveling pulse `stripe' solution as in Theorem~\ref{thm:stripeexistence}. Then, the following hold.
\begin{enumerate}[(i)]
\item The spectrum of the operator $\mathcal{L}(0)$ is contained in the set $\{\lambda \in \C:\Re \lambda <0\}\cup \{0\}$, and the spectrum of the operator $\mathcal{L}(\ell),\ell\neq0$ is contained in the set $\{\lambda \in \C:\Re \lambda <0\}$. 

\item The eigenvalue $\lambda_0(0)=0$ of $\mathcal{L}(0)$ is simple and continues to an eigenvalue of $\mathcal{L}(\ell), |\ell|\leq L_M$ for some $L_M\gg1$, satisfying $\lambda_0'(0)=0$ and
\begin{align}
	\lambda_0(\ell) &= - \ell^2 + \mathcal{O}(|\varepsilon \log \varepsilon|^2), \qquad \lambda''_0(\ell) = -2 + \mathcal{O}(|\varepsilon \log \varepsilon|^2), \qquad |\ell|\leq L_M.
\end{align}
\item  The operator $\mathcal{L}(\ell),|\ell|\leq L_M$ admits an additional critical eigenvalue 
\begin{equation}
\lambda_c(\ell) = -\ell^2 - \frac{M^\mathrm{d}_{\dagger,\eps}}{M^\mathrm{d}_{\dagger,\lambda}} \varepsilon + \mathcal{O}(|\varepsilon \log \varepsilon|^2),\qquad |\ell|\leq L_M,
\end{equation}
where $M^\mathrm{d}_{\dagger,\lambda}$ and $M^\mathrm{d}_{\dagger,\eps}$ are as defined in~\eqref{eq:MdDaggerLambda} and~\eqref{eq:MdDaggerDelta}.
\item The remaining spectrum of $\mathcal{L}(\ell)$ is bounded away from the imaginary axis uniformly in $\eps>0$ sufficiently small and $\ell\in\R$.
\end{enumerate}
\end{Theorem}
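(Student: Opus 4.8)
The plan is to combine the essential-spectrum computation of \S\ref{sec:essentialSpectrum} with a rigorous construction of eigenfunctions via exponential dichotomies and Lin's method, adapting the scheme used for the FitzHugh--Nagumo pulse in~\cite{cdrs} to accommodate the transverse wavenumber $\ell$. Since the stripe $\phi_\mathrm{d}$ is bi-asymptotic to the desert state $(a,0)$, the only relevant asymptotic matrix is $A_\mathrm{d}(\lambda;\ell)$ of~\eqref{eq:asympAd}, which by the essential-spectrum analysis of \S\ref{sec:essentialSpectrum} is hyperbolic for $\Re\lambda>-\min\{m+\ell^2,1\}$; hence the essential spectrum of $\mathcal{L}(\ell)$ is confined to $\{\Re\lambda\le-\min\{m,1\}\}$, uniformly in $\ell$, and it remains only to locate the point spectrum in the region $\Omega_\delta:=\{\Re\lambda>-\min\{m+\ell^2,1\}+\delta\}$ for a small fixed $\delta>0$. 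Writing the eigenvalue problem as the first-order system $\Psi'=A(\xi;\lambda,\ell,\eps)\Psi$ from~\eqref{eq:evalprobleml} and absorbing the transverse wavenumber through the shift $\hat\lambda=\lambda+\ell^2$, the $\ell$-dependence disappears from the $\bar v$-block and survives only as an $\mathcal{O}(\eps\ell^2)$ term in the $\bar u$-row. Thus for bounded wavenumbers $|\ell|\le L_M$ the problem is a regular $\mathcal{O}(\eps)$-perturbation of the $\ell=0$ case and can be treated perturbatively, while for $|\ell|>L_M$ (with $L_M$ large) the $-\ell^2$ shift already forces any candidate eigenvalue below the essential-spectrum threshold, so it suffices to carry out the construction in detail for $\ell=0$.

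For $\ell=0$ (so $\hat\lambda=\lambda$) and $\lambda\in\Omega_\delta$, I would partition $\mathbb{R}$ into two fast windows $I_\dagger,I_\diamond$ centred on the jumps $\phi_\dagger(u^*(a)),\phi_\diamond(a)$ and the complementary slow windows along $\mathcal{M}^\ell_\eps$ and $\mathcal{M}^r_\eps$. On the slow windows the variational equation admits an exponential dichotomy whose rates are bounded below uniformly in $\eps$ and in $\lambda\in\Omega_\delta$ (the slow manifolds are normally hyperbolic and consist of saddles of the layer problem), so these windows contribute no point spectrum; on each fast window the variational equation is an $\mathcal{O}(\eps)$-perturbation of the linearization of~\eqref{eq:fast0} about the explicit $\tanh$-profile front $\phi_j$, whose kernel is spanned by $\phi_j'$ and whose adjoint kernel is spanned by $\psi_j(\xi)=e^{c_0\xi}(q_j'(\xi),-q_j(\xi))^\top$ (cf.\ Lemmas~\ref{lem:transf}--\ref{lem:transb} and~\eqref{eq:adjointSolutionLr}). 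Lin's method then produces a piecewise-continuous solution lying in $\mathcal{W}^\mathrm{u}(p_0(a))$ as $\xi\to-\infty$ and in $\mathcal{W}^\mathrm{s}(p_0(a))$ (which at leading order is $\phi_\diamond(a)$) as $\xi\to+\infty$, with jumps across $I_\dagger,I_\diamond$ measured against $\psi_\dagger,\psi_\diamond$; requiring these jumps to vanish yields a $2\times2$ reduced bifurcation equation for the amplitudes $(\alpha_\dagger,\alpha_\diamond)$, whose solvability determinant $\mathcal{E}(\lambda,\eps)$ plays the role of a reduced Evans function. To leading order, and up to $\mathcal{O}(e^{-\eta/\eps})$ coupling through the long slow segments, $\mathcal{E}$ reproduces the Fredholm conditions~\eqref{eq:FredholmConditionDesertHomoclinic1}--\eqref{eq:FredholmConditionDesertHomoclinic2}, so that $\mathcal{E}(\lambda,\eps)=0$ has exactly two roots near the origin: the simple translational eigenvalue $\lambda=0$ (with eigenfunction $(u_s',v_s')$, exact by translation invariance) and the critical eigenvalue determined by $\lambda M^\mathrm{d}_{\dagger,\lambda}+\eps M^\mathrm{d}_{\dagger,\eps}=0$, i.e.\ $\lambda_c=-\eps M^\mathrm{d}_{\dagger,\eps}/M^\mathrm{d}_{\dagger,\lambda}+\mathcal{O}(|\eps\log\eps|^2)<0$ by the positivity of $M^\mathrm{d}_{\dagger,\lambda},M^\mathrm{d}_{\dagger,\eps}$. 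Reinstating $\hat\lambda=\lambda+\ell^2$, using evenness of the spectrum in $\ell$, and differentiating the implicit relation $\mathcal{E}(\hat\lambda,\ell,\eps)=0$ at $\ell=0$ yields $\lambda_0'(0)=0$, $\lambda_0''(0)=-2+\mathcal{O}(|\eps\log\eps|^2)<0$, $\lambda_0(\ell)=-\ell^2+\mathcal{O}(|\eps\log\eps|^2)$, and $\lambda_c(\ell)=-\ell^2-\eps M^\mathrm{d}_{\dagger,\eps}/M^\mathrm{d}_{\dagger,\lambda}+\mathcal{O}(|\eps\log\eps|^2)$, which gives (ii) and (iii) and, since $\lambda_0(0)=0$ exactly with $\lambda_0''(0)<0$, also shows $\lambda_0(\ell)<0$ for all $\ell\ne0$.

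It remains to show that $\Omega_\delta$ contains no further spectrum, which gives (i) and (iv). For $\lambda$ bounded but bounded away from the origin, $\mathcal{E}(\lambda,\eps)\ne0$ because the fast-front operators $\mathcal{L}^r_\dagger,\mathcal{L}^r_\diamond$ have no spectrum with $\Re\lambda\ge-\gamma_0$ other than the simple eigenvalue $0$ -- their potentials are of P\"oschl--Teller type, a consequence of the $\tanh$-profiles, with explicitly computable discrete spectrum and an $\eps$-independent gap $\gamma_0>0$ -- while the slow passages still contribute nothing; for $|\lambda|$ large a dominant-balance estimate on $A(\xi;\lambda,\ell,\eps)$ shows that no nontrivial bounded solution exists. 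All constants are $\eps$-independent, and for $|\ell|>L_M$ the uniform separation from $i\mathbb{R}$ follows by combining these estimates with the $-\ell^2$ shift; together with the essential-spectrum bound this yields $\Sigma(\mathcal{L}(0))\subset\{\Re\lambda<0\}\cup\{0\}$, $\Sigma(\mathcal{L}(\ell))\subset\{\Re\lambda<0\}$ for $\ell\ne0$, and the uniform spectral gap of (iv).

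The main obstacle is the rigorous Lin's-method bookkeeping underlying the second paragraph: one must track $\mathcal{W}^\mathrm{u}(p_0(a))$ past the slow manifold $\mathcal{M}^r_\eps$ via the exchange lemma \emph{with the spectral parameter present}, keeping the dichotomy rates and all remainder estimates uniform for $\lambda\in\Omega_\delta$ and -- crucially -- uniform in $\ell\in\mathbb{R}$, which is exactly what forces the split into a bounded $\ell$-window treated perturbatively and its complement handled by the $-\ell^2$ shift; and then one must verify that the $2\times2$ reduced equation genuinely matches the formal Fredholm conditions with errors of the claimed order $\mathcal{O}(|\eps\log\eps|^2)$ coming from the slow-passage and finite-window truncations. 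A secondary but necessary point is that the hypothesis $\tfrac92 b<\tfrac{a}{m}<\bar{a}_\mathrm{hyp}(b)$ is precisely what keeps the singular orbit $\mathcal{H}_\mathrm{d}(a)$ away from the nonhyperbolic fold $\mathcal{F}$, so that no geometric blow-up desingularization is needed and the normally hyperbolic machinery applies along the entire orbit.
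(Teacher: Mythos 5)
Your proposal follows essentially the same strategy as the paper: compute the essential spectrum from the asymptotic matrix $A_\mathrm{d}$, perform the shift $\tilde\lambda=\lambda+\ell^2$ to absorb the transverse wavenumber (leaving only an $\mathcal{O}(\eps\ell^2)$ remainder in the $\bar u$-row), treat large $|\ell|$ and large $|\lambda|$ by a dominant-balance/rescaling argument, apply Lin's method with matching at the two fast jumps near the origin to recover the Fredholm conditions and locate $\lambda_0,\lambda_c$, and rule out the intermediate region via Sturm--Liouville on the fast-front linearizations together with slow-manifold dichotomies. This is the structure of \S\ref{sec:stabilityRigorous}, including the split into $R_1,R_2,R_3$, the role of the reduced $2\times2$ determinant as an Evans-type function, and the $\mathcal{O}(|\eps\log\eps|^2)$ error coming from the corner-type existence estimates; the one technical device you elide, though it does not change the approach, is the small exponential weight $\eta>0$ introduced in \S\ref{sec:r1r2setup} to remove the zero spatial eigenvalue that the coefficient matrix acquires along the slow manifolds at $\eps=0$, which is what makes the dichotomy splitting on $I_\ell,I_r$ uniform in $\eps$.
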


\begin{Theorem}[Spectrum of vegetation gap solutions] \label{thm:gapStability} Let $a,b,m,\varepsilon$ as in Theorem~\ref{thm:gapexistence} and let $\phi_\mathrm{v}$ be a travelling pulse `gap' solution as in Theorem~\ref{thm:gapexistence}. Then, the following hold.
\begin{enumerate}[(i)]
\item The spectrum of the operator $\mathcal{L}(0)$ is contained in the set $\{\lambda \in \C:\Re \lambda <0\}\cup \{0\}$, and the spectrum of the operator $\mathcal{L}(\ell),\ell\neq0$ is contained in the set $\{\lambda \in \C:\Re \lambda <0\}$. 

\item The eigenvalue $\lambda_0(0)=0$ of $\mathcal{L}(0)$ is simple and continues to an eigenvalue of $\mathcal{L}(\ell), |\ell|\leq L_M$ for some $L_M\gg1$, satisfying $\lambda_0'(0)=0$ and
\begin{align}
	\lambda_0(\ell) &= - \ell^2 + \mathcal{O}(|\varepsilon \log \varepsilon|^2), \qquad \lambda''_0(\ell) = -2 + \mathcal{O}(|\varepsilon \log \varepsilon|^2), \qquad |\ell|\leq L_M.
\end{align}

\item The operator $\mathcal{L}(\ell),|\ell|\leq L_M$ admits an additional critical eigenvalue 
\begin{equation}
\lambda_c(\ell) = -\ell^2 - \frac{M^\mathrm{v}_{\diamond,\eps}}{M^\mathrm{v}_{\diamond,\lambda}} \varepsilon + \mathcal{O}(|\varepsilon \log \varepsilon|^2),\qquad |\ell|\leq L_M,
\end{equation}
where $M^\mathrm{v}_{\diamond,\lambda}$ and $M^\mathrm{v}_{\diamond,\eps}$ are as defined in~\eqref{eq:MvDiamondLambda} and~\eqref{eq:MvDiamondDelta}.
\item  The remaining spectrum of $\mathcal{L}(\ell)$ is bounded away from the imaginary axis uniformly in $\eps>0$ sufficiently small and $\ell\in\R$.
\end{enumerate}
\end{Theorem}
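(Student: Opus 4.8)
The plan is to mirror the treatment of the front and stripe cases (Theorems~\ref{thm:heteroclinicStability}--\ref{thm:stripeStability}) and of the analogous slow/fast pulse in the FitzHugh--Nagumo equation~\cite{cdrs}: handle the essential spectrum through the asymptotic matrices, and locate the point spectrum near the origin by writing the eigenvalue problem~\eqref{eq:2Devalproblem} as the first-order system~\eqref{eq:evalprobleml} and combining exponential dichotomies along the singular skeleton of $\phi_\mathrm{v}$ with Lin's method.

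\emph{Essential spectrum.} The gap solution $\phi_\mathrm{v}$ is bi-asymptotic to the vegetated state $(U_2,V_2)$, so the only relevant asymptotic matrix is $A_\mathrm{v}(\lambda;\ell)$ of~\eqref{eq:asympAv}. By part~(ii) of the Lemma in~\S\ref{sec:essentialSpectrum}, $A_\mathrm{v}$ is hyperbolic for every $\lambda$ with $\Re\lambda\geq-\delta$, where $\delta>0$ is independent of $\eps$ and the hyperbolicity region only grows with $\ell^2$. Hence the Fredholm borders of $\mathcal{L}(\ell)$ --- the $\lambda$ for which $A_\mathrm{v}(\lambda;\ell)$ has a purely imaginary spatial eigenvalue --- lie strictly in $\{\Re\lambda<0\}$ for all $\ell\in\R$, uniformly for small $\eps$. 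This supplies the essential-spectrum part of (i) and (iv).

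\emph{Point spectrum near $0$.} By Theorem~\ref{thm:gapexistence}, $\phi_\mathrm{v}$ is $\mathcal{O}(\eps)$-close to $\mathcal{H}_\mathrm{v}(a)=\mathcal{M}^r_0[u_2,\hat u_2(a)]\cup\phi_\diamond(\hat u_2(a))\cup\mathcal{M}^\ell_0[u_2,\hat u_2(a)]\cup\phi_\dagger(u_2)$, and I would build exponential dichotomies for~\eqref{eq:evalprobleml} on each of the four pieces. Along the normally hyperbolic segments of $\mathcal{M}^{r,\ell}_\eps$ the dichotomy comes from the fast saddle structure of~\eqref{eq:fast0} together with the single slow direction, and these slow manifolds contribute no eigenvalues. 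Along each fast front the dichotomy is governed by the Sturm--Liouville structure of the layer problem: $\phi_j'$ spans the kernel at $\lambda=0$, and all further layer eigenvalues are real, negative, and bounded away from the origin. Gluing these pieces by Lin's method produces a piecewise solution of~\eqref{eq:evalprobleml} decaying at $\pm\infty$ with prescribed jumps in fixed complementary subspaces at the two fronts; requiring the jumps to vanish gives a reduced bifurcation equation $\mathcal{E}(\lambda,\ell,\eps)=0$ whose roots near $0$ are exactly the eigenvalues near $0$. To leading order $\mathcal{E}$ reproduces the Fredholm conditions of~\S\ref{sec:pointSpectrumFormal}: passage through the slow field forces the amplitude at the front $\phi_\dagger(u_2)$ (which lands on the fixed point) to vanish, and the remaining condition at $\phi_\diamond(\hat u_2(a))$ reads, to leading order, $\tilde\lambda\,M^\mathrm{v}_{\diamond,\lambda}+M^\mathrm{v}_{\diamond,\eps}=0$, with $M^\mathrm{v}_{\diamond,\lambda},M^\mathrm{v}_{\diamond,\eps}$ as in~\eqref{eq:MvDiamondLambda}--\eqref{eq:MvDiamondDelta} (computed via Lemmas~\ref{lem:transf}--\ref{lem:transb}); a second root is pinned at $\lambda=0$ by translation invariance, with eigenfunction $\partial_\xi\phi_\mathrm{v}$. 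Since $\ell$ enters $A(\xi;\lambda,\ell,\eps)$ only in the combination $m+\lambda+\ell^2$ up to $\mathcal{O}(\eps)$ corrections, each root takes the form $\lambda(\ell)=-\ell^2+(\text{$\ell$-independent})+\mathcal{O}(|\eps\log\eps|^2)$, the $|\eps\log\eps|^2$ error stemming from the two slow passages; this yields $\lambda_0'(0)=0$, $\lambda_0''(0)=-2+\mathcal{O}(|\eps\log\eps|^2)$ and $\lambda_c(\ell)=-\ell^2-\frac{M^\mathrm{v}_{\diamond,\eps}}{M^\mathrm{v}_{\diamond,\lambda}}\eps+\mathcal{O}(|\eps\log\eps|^2)$. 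Simplicity of $\lambda_0(0)$ follows since $\mathcal{E}_\lambda\neq0$ at that root, and $\lambda_c<0$ because $\hat u_2(a)-a<0$ and $v_\diamond$ is decreasing, so $M^\mathrm{v}_{\diamond,\eps},M^\mathrm{v}_{\diamond,\lambda}>0$ (cf.~Lemma~\ref{lem:transb}). This gives (ii), (iii) and the $\ell=0$ part of (i).

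\emph{Remaining spectrum, uniformity, and main obstacle.} For $\lambda$ bounded away from $0$ with $\Re\lambda\geq-\delta$ and $|\ell|\leq L_M$, the same dichotomy bookkeeping shows $\mathcal{L}(\ell)$ is invertible (the slow manifolds carry no eigenvalues, the fronts only their $\mathcal{O}(1)$-negative layer eigenvalues), and a Lyapunov--Schmidt/eigenvalue-count argument caps the number of eigenvalues near $0$ at exactly two; this is (iv) on $|\ell|\leq L_M$. For $|\ell|>L_M$ one instead uses that the $\ell^2$ term dominates, so a direct resolvent estimate on~\eqref{eq:2Devalproblem} confines the whole spectrum to $\{\Re\lambda\leq-\mu_0\ell^2+C_0\}$ with $\eps$-independent $\mu_0,C_0>0$; combining the two ranges gives uniformity in $\ell\in\R$ and in small $\eps$. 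The main obstacle is the uniform control of the Lin's-method reduction through the two slow passages: deriving the $\mathcal{O}(|\eps\log\eps|^2)$ errors and showing they hold uniformly in $\ell$ requires careful exchange-lemma-type estimates near $\mathcal{M}^{r,\ell}_\eps$ and, at $u=u_2$, attention to the geometry where $\phi_\dagger$ lands on the fixed point rather than on a normally hyperbolic part of the slow manifold. This is where the real work lies; it is essentially the same analysis as for the stripe solution in Theorem~\ref{thm:stripeStability}, with the roles of $\phi_\dagger,\phi_\diamond$ (and of the desert and vegetated states) interchanged.
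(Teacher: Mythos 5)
Your proposal is correct and takes essentially the same route as the paper: the paper proves Theorem~\ref{thm:stripeStability} in full in~\S\ref{sec:stabilityRigorous} and explicitly states that the gap case is proved analogously, which is exactly what you carry out — essential spectrum from $A_\mathrm{v}$, Lin's method with exponential dichotomies along the four singular pieces, the $\tilde\lambda=\lambda+\ell^2$ substitution, and the matching conditions reproducing the Fredholm relations of~\S\ref{sec:pointSpectrumFormal} with the roles of $\phi_\dagger$ and $\phi_\diamond$ (and the desert and vegetated states) interchanged. One minor remark: the front $\phi_\dagger(u_2)$ lands at the fixed point $p_+(u_2)$, which \emph{is} on a normally hyperbolic portion of $\mathcal{M}^r_0$ for the parameter regime of Theorem~\ref{thm:gapexistence}; the relevant feature (as in the stripe case with $p_0(a)$) is that the terminus is a full-system equilibrium, not a loss of normal hyperbolicity, so no extra geometric complication arises there.
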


\begin{figure}
\hspace{.1\textwidth}
\begin{subfigure}{.3 \textwidth}
\centering
\includegraphics[width=1\linewidth]{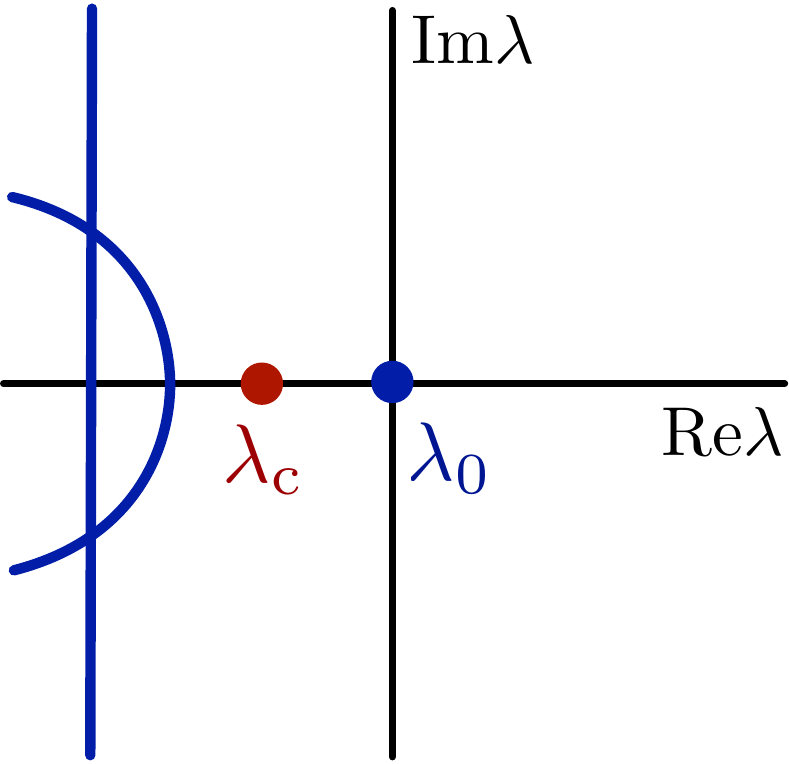}
\end{subfigure}
\hspace{.15\textwidth}
\begin{subfigure}{.3 \textwidth}
\centering
\includegraphics[width=1\linewidth]{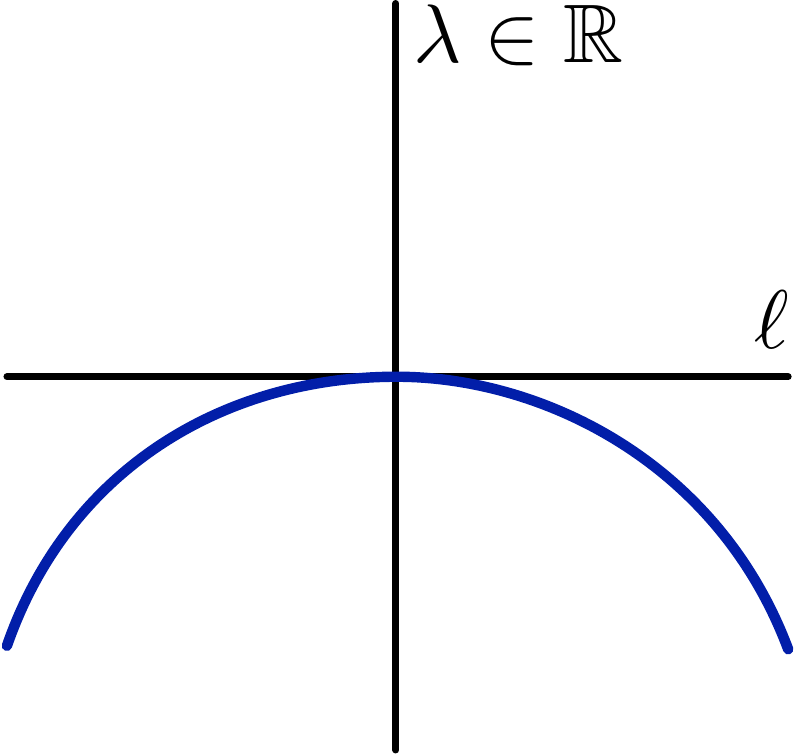}
\end{subfigure}
\hspace{.1\textwidth}
\caption{Shown are the results of Theorem~\ref{thm:stripeStability}. The left panel depicts the spectrum of the $\ell=0$ operator $\mathcal{L}(0)$, corresponding to $1D$ stability. The point spectrum contains two critical eigenvalues $\lambda_0, \lambda_c$ close to the origin, while the remainder of the spectrum is bounded away from the imaginary axis in the left half plane. The right panel depicts a schematic of the continuation of the critical eigenvalue $\lambda_0$ for $|\ell|>0$.  }
\label{fig:specL}
\end{figure}

\begin{figure}
\hspace{.05\textwidth}
\begin{subfigure}{.4 \textwidth}
\centering
\includegraphics[width=1\linewidth]{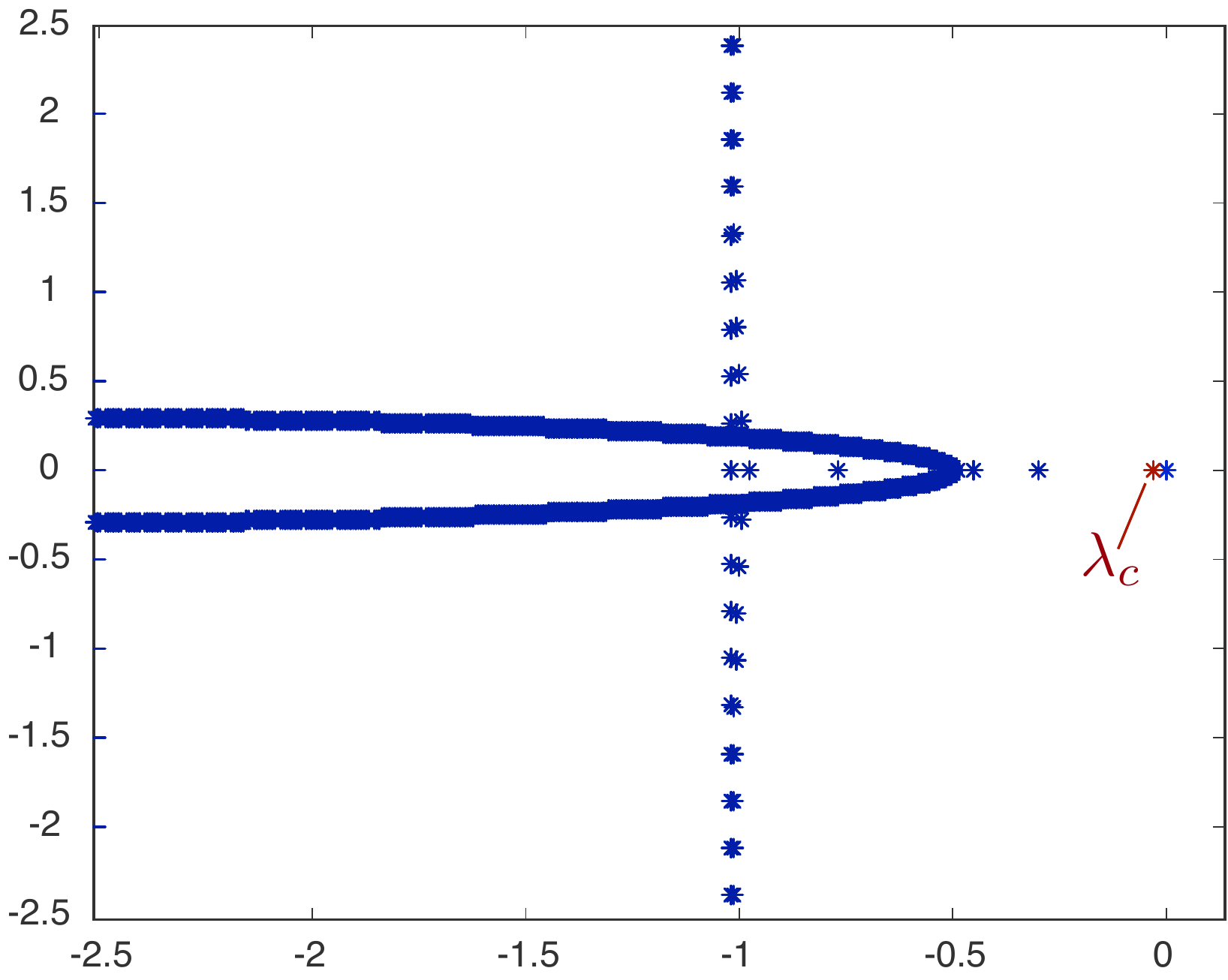}
\caption{}
\end{subfigure}
\hspace{.05\textwidth}
\begin{subfigure}{.4 \textwidth}
\centering
\includegraphics[width=1\linewidth]{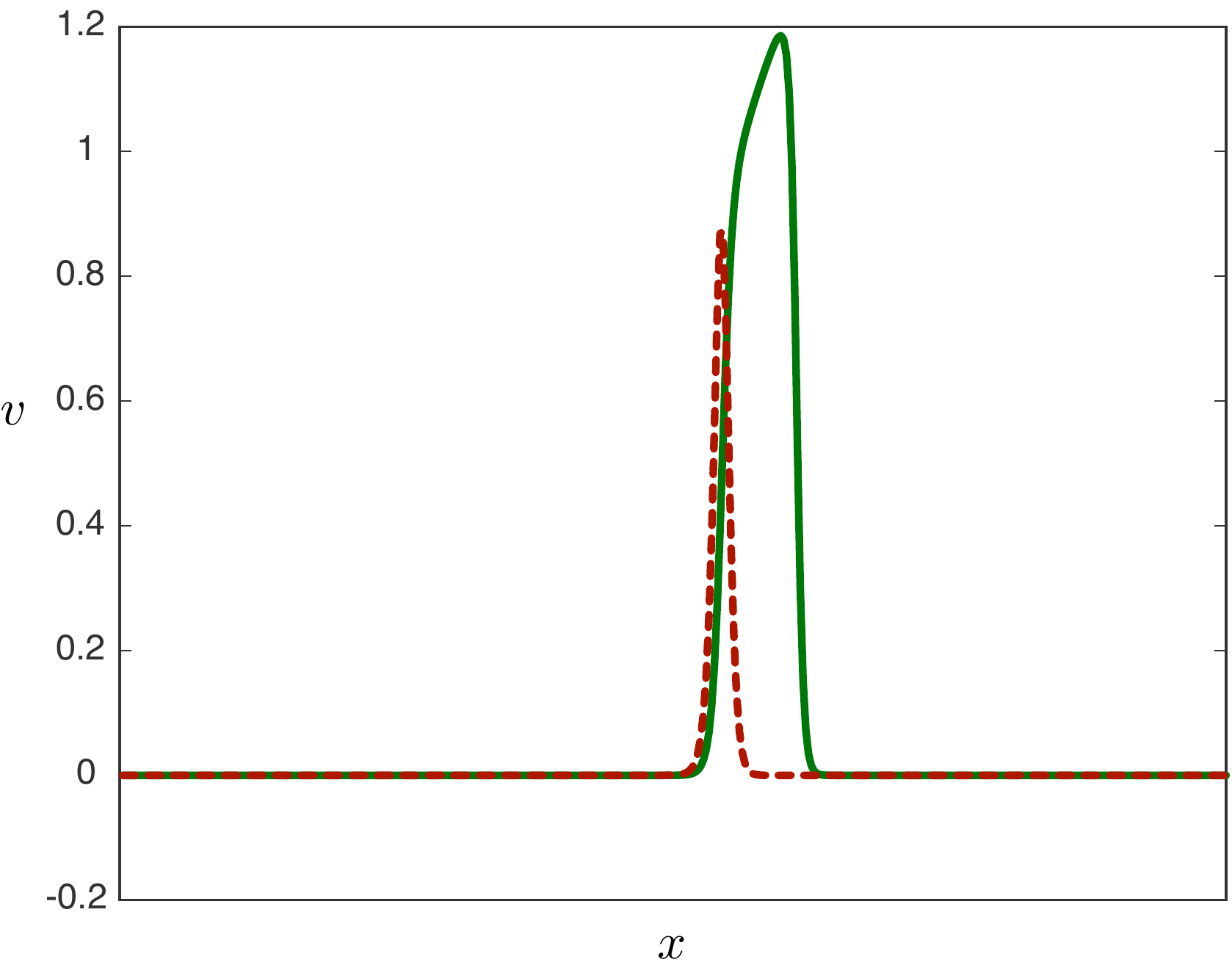}
\caption{}
\label{fig:spec_numerics_ef}
\end{subfigure}
\hspace{.01\textwidth}
\caption{Shown is the numerically computed $1$D spectrum (left panel) associated with a traveling pulse solution of~\eqref{eq:modKlausmeier} found for $a=1.61,b=0.6,m=0.5, \eps=0.003$. The $v$ profile of the solution is shown in the right panel, along with the eigenfunction corresponding to the critical eigenvalue $\lambda_c$.}
\label{fig:spec_numerics}
\end{figure}

\section{Rigorous proof for stability theorems}\label{sec:stabilityRigorous}
The theorems in \S\ref{sec:stabilityTheorems} are based on computations of the essential spectrum in \S\ref{sec:essentialSpectrum} and a formal computation of the point spectrum in \S\ref{sec:pointSpectrumFormal}. The former directly provides proof for the theorem statements concerning the essential spectrum. The latter, however, does not provide a rigorous proof for the theorem statements concerning the point spectrum; to that end, in this section we provide the rigorous justification for the formal point spectrum computations in \S\ref{sec:pointSpectrumFormal}. We restrict ourselves to the study of the traveling pulse `stripe' solution $\phi_\mathrm{d}$ as in Theorem~\ref{thm:stripeexistence} and Theorem~\ref{thm:stripeStability}. The setup and proof for the traveling `gap' solution $\phi_{\mathrm{v}}$ as in Theorem~\ref{thm:gapexistence} and Theorem~\ref{thm:gapStability} is similar; the setup and proofs for the traveling heteroclinic orbits $\phi_{\mathrm{vd}}$ and $\phi_{\mathrm{dv}}$ as in Theorem~\ref{thm:desertFrontExistence}, Theorem~\ref{thm:vegetationFrontExistence} and Theorem~\ref{thm:heteroclinicStability} are also very similar, though less involved. Therefore, the details of these are omitted.

To determine the point spectrum, it is useful to split the complex plane in several regions. For $M \gg 1$ and $\delta\ll1$ fixed independent of $\eps$, we define the following regions (see Figure~\ref{fig:stabilityRegions})
	\begin{align}\begin{split}\label{eq:stabilityregions}
	R_1(\delta)&:=\{\zeta\in \C: |\zeta|\leq \delta\}\\
	R_2(\delta,M)&:=\{\zeta\in \C: \delta<|\zeta|<M, \Re\zeta>-\delta\}\\
	R_3(M)&:= \{\zeta\in \C: |\zeta|>M, |\arg(\zeta)|<2\pi/3\}.
	\end{split}
	\end{align}
	
	In~\S\ref{sec:stabilityLargeL}, we first show that large wavenumbers $\ell$ do not contribute eigenvalues, and hence it suffices to restrict to a region of bounded $\ell$. We then set $\tilde{\lambda}(\ell) := \lambda - \ell^2$ and study the behavior of solutions to\eqref{eq:evalprobleml} for $\tilde{\lambda} $ in the various regions~\eqref{eq:stabilityregions}. The region $R_3$ is considered in~\S\ref{sec:stabilityRegionR3}. In~\S\ref{sec:r1r2setup}, we collect preliminary results in order to set up the analysis for $\tilde{\lambda}(\ell)$ in the regions $R_1$ and $R_2$, which are analyzed in~\S\ref{sec:regionr1} and~\ref{sec:regionr2}, respectively. We briefly conclude the proof of Theorem~\ref{thm:stripeStability} in~\S\ref{sec:finishstabproof}.
	
\begin{figure}
	\centering
	\includegraphics[width = 0.4 \textwidth]{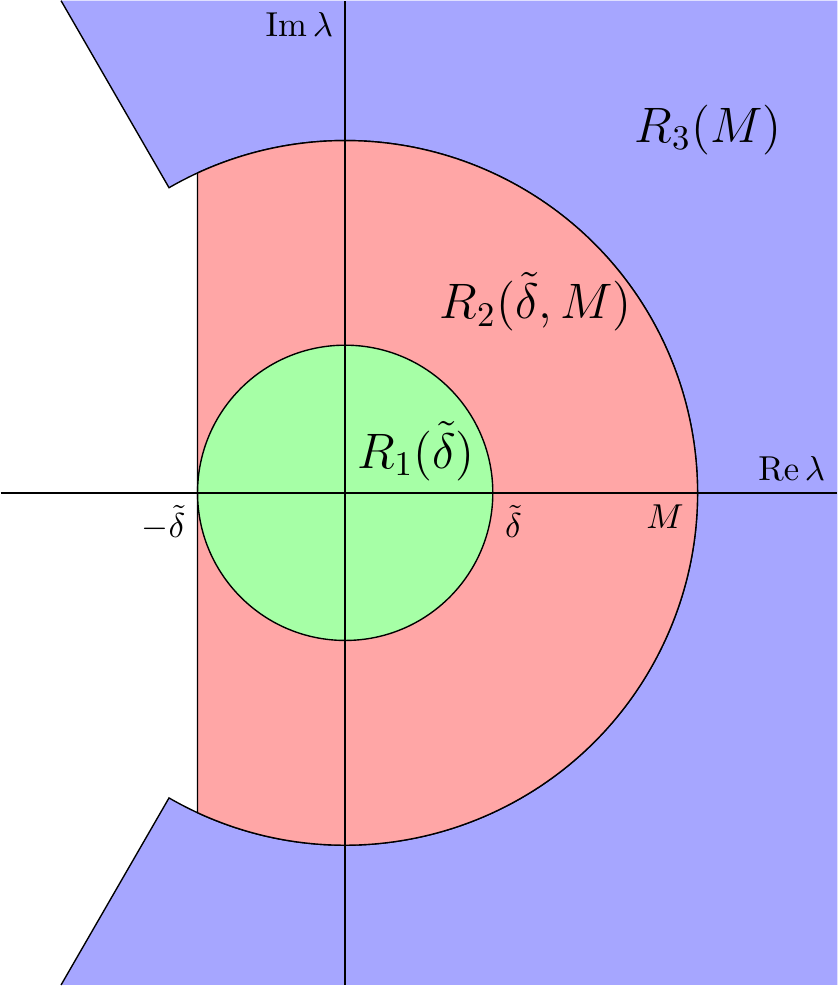}
\caption{Sketch of the regions $R_1(\delta)$, $R_2(\delta,M)$ and $R_3(M)$ as considered in the analysis of the point spectrum.}
\label{fig:stabilityRegions}
\end{figure}
	
	\subsection{Reduction to region of bounded $|\ell|$}\label{sec:stabilityLargeL}
	
	In this section, we show that it suffices to consider bounded wavenumbers $|\ell|\leq L_M$ for some $L_M\gg 1$.
	
	\subsubsection{The region $|\ell| \gg 1$}\label{sec:largel}
	We first consider the region of large transverse wavenumber, that is we consider $(\lambda, \ell)$ such that $\lambda \in R_1(\delta)\cup R_2(\delta,M)\cup R_3(M)$ and $|\ell|\geq L_M$ for a fixed constant $1\ll L_M\ll M$ independent of $\eps$. In this region, we perform a rescaling of the stability problem~\eqref{eq:evalprobleml} and show that the rescaled problem is a small perturbation of a constant coefficient problem which admits exponential di/trichotomies and no exponentially localized solutions.
	
	We rescale $\bar{\xi} = \sqrt{\lambda + \ell^2} \xi, \bar{q} = q/\sqrt{\lambda + \ell^2}$, which results in the system
		\begin{align}
	\frac{d\Psi}{d\bar{\xi}} &=	\bar{A}(\bar{\xi};\lambda, \ell,\eps)\Psi,\qquad \bar{A}(\bar{\xi};\lambda, \ell,\eps)=\bar{A}_1(\lambda, \ell,\eps)+\bar{A}_2(\bar{\xi};\ell,\eps)
\label{eq:evalproblemlres}
	\end{align}
	where $\bar{A}_1(\lambda, \ell,\eps)$ is the constant coefficient matrix
	\begin{align*}
		 \bar{A}_1(\lambda, \ell,\eps)=\begin{pmatrix}
\frac{\varepsilon}{1+\varepsilon c_s} \frac{\lambda}{\sqrt{\lambda+\ell^2}} & 0 & 0 \\
0 & 0 & 1 \\0 &\frac{ \lambda +\ell^2}{| \lambda + \ell^2 |}& 0\end{pmatrix}
	\end{align*}
	and 
		\begin{align*}
		\bar{A}_2(\bar{\xi};\ell,\eps)=\mathcal{O}\left(\frac{1}{\sqrt{\lambda+\ell^2}}\right)
	\end{align*}
	uniformly in $\bar{\xi}, \eps$. We consider $|\ell|\geq L_M$ for some sufficiently large, fixed constant $L_M$. We can compute the eigenvalues of $\bar{A}_1(\lambda, \ell,\eps)$ explicitly as
	\begin{align*}
	\nu_\pm &= \pm \sqrt{ \frac{\lambda + \ell^2}{|\lambda + \ell^2|}}, \qquad \nu_\eps=\frac{\varepsilon}{1+\varepsilon c_\mathrm{d}} \frac{\lambda}{\sqrt{\lambda + \ell^2}}.
	\end{align*}
	For $\lambda \in R_1(\delta)\cup R_2(\delta,M)\cup R_3(M)$ for any $\delta\ll 1$ and $M\gg L_M$, we note that the pair of eigenvalues $\nu_\pm$ have absolute real part greater than $1/2$, because $|\arg \sqrt{ (\lambda + \ell^2) / |\lambda + \ell^2|}|<\pi/3$. One of these eigenvalues has negative real part and the other positive real part.

For the third eigenvalue $\nu_\eps$, there are three cases: $\Re \nu_\eps>1/4, |\Re \nu_\eps|\leq1/4,$ or $\Re \nu_\eps<-1/4$. If $\Re \nu_\eps>1/4$, then, by roughness, \eqref{eq:evalproblemlres} admits exponential dichotomies and hence no exponentially localized solutions. If $|\Re \nu_\eps|\leq1/4$, by roughness~\eqref{eq:evalproblemlres} admits exponential trichotomies with one-dimensional center subspace. Any bounded solution must lie entirely in the center subspace. By continuity, the eigenvalues of the asymptotic matrix $\bar{A}^{\pm\infty}(\lambda, \ell,\eps)=\lim_{\bar{\xi}\to \pm \infty} \bar{A}(\bar{\xi};\lambda, \ell,\eps)$ are separated so that only the eigenvalue $\nu_\eps$ has absolute real part less than $1/4+\kappa$ for some small $\kappa>0$. For $\lambda$ to the right of the essential spectrum, we have that $\Re\nu_\eps>0$. Let $\Psi_\mathrm{c}$ be the corresponding eigenvector. Any solution $\Psi(\xi)$ in the center subspace satisfies $\lim_{\xi \to \pm \infty} \Psi(\xi)e^{-\nu_\eps\xi} = \zeta_\pm \Psi_\mathrm{c}$ for some $\zeta_\pm \in \C\setminus\{0\}$, which contradicts the fact that $\Psi(\xi)$ is bounded. Finally we note that the case $\Re \nu_\eps<-1/4$ cannot occur for $\lambda$ to the right of the essential spectrum since in this region the asymptotic matrix $\bar{A}^{\pm\infty}(\lambda, \ell,\eps)$ has two eigenvalues of positive real part and one of negative real part.
	
	Thus we conclude that for $|\ell|\geq L_M$ and any $\lambda \in R_1(\delta)\cup R_2(\delta,M)\cup R_3(M)$ to the right of the essential spectrum,~\eqref{eq:evalprobleml} admits no exponentially localized solutions.

	\subsubsection{Setup for $|\ell|\leq L_M$}

 In the following sections, we will consider the region where $|\ell|$ is bounded. We begin by setting $\tilde{\lambda}=\tilde{\lambda}(\ell):=\lambda+\ell^2$. Under this transformation,~\eqref{eq:evalprobleml} becomes
		\begin{align}
	\Psi' &=	\tilde{A}(\xi;\tilde{\lambda}, \ell,\eps)\Psi, \label{eq:evalproblemtilde}
	\end{align}
	where
	\begin{align} 
	 \tilde{A}(\xi;\tilde{\lambda}, \ell,\eps):=A(\xi;\tilde{\lambda}-\ell^2, \ell,\eps)=\begin{pmatrix}
\frac{\varepsilon}{1+\varepsilon c_s} \left[1 + \tilde{\lambda}-\ell^2 + v_s^2 \right] & \frac{\varepsilon}{1+\varepsilon c_s} 2 u_s v_s & 0 \\
0 & 0 & 1 \\
- (1 - b v_s) v_s^2 & m + \tilde{\lambda}- (2 - 3 b v_s) u_s v_s & -c_s
\end{pmatrix}.
	\end{align}

In the following we characterize all eigenvalues $\lambda\in \C$ such that
	\begin{align}
	(\tilde{\lambda}, \ell)\in R_1(\delta)\cup R_2(\delta,M)\cup R_3(M)\times [-L_M,L_M].
	\end{align}
This characterizes all eigenvalues $\lambda \in \C$ with $\Re \lambda > - \ell^2 - \delta$ and thus all eigenvalues $\lambda \in \C$ with $\Re \lambda > - \delta$. In particular, all unstable eigenvalues with $\Re \lambda \geq 0$ are found this way.
	
	\subsection{The region $(\tilde{\lambda}(\ell),\ell)\in R_3(M)\times[-L_M,L_M]$}\label{sec:stabilityRegionR3}
	In this region, we follow a similar strategy to that in~\S\ref{sec:largel} and perform the rescaling $\hat{\xi} = \sqrt{|\tilde{\lambda}|} \xi, \hat{q} = q/ \sqrt{|\tilde{\lambda}|}$, which results in the system
		\begin{align}
	\frac{d\Psi}{d\hat{\xi}} &=	\hat{A}(\hat{\xi};\tilde{\lambda}, \ell,\eps)\Psi,\qquad \hat{A}(\hat{\xi};\tilde{\lambda}, \ell,\eps)=\hat{A}_1(\tilde{\lambda}, \ell,\eps)+\hat{A}_2(\hat{\xi};\tilde{\lambda},\ell,\eps)
\label{eq:evalproblemlreslambda}
	\end{align}
	where $\hat{A}_1(\tilde{\lambda}, \ell,\eps)$ is the constant coefficient matrix
	\begin{align*}
		 \hat{A}_1(\tilde{\lambda}, \ell,\eps)=\begin{pmatrix}
\frac{\varepsilon}{1+\varepsilon c_s} \frac{\tilde{\lambda}}{\sqrt{|\tilde{\lambda}|}} & 0 & 0 \\
0 & 0 & 1 \\0 &\frac{ \tilde{\lambda}}{|\tilde{\lambda}|}& 0\end{pmatrix}
	\end{align*}
	and 
		\begin{align*}
		\hat{A}_2(\hat{\xi};\tilde{\lambda},\ell,\eps)=\mathcal{O}\left(\frac{1}{\sqrt{|\tilde{\lambda}|}}\right),
	\end{align*}
	uniformly in $\hat{\xi},\eps,$ and $|\ell|\leq L_M$, where we recall that $1\ll L_M \ll M$. The remainder of the argument follows analogously as in~\S\ref{sec:largel}, and we conclude that for any fixed $L_M$, any sufficiently large $M$ and any $(\tilde{\lambda}(\ell),\ell)\in R_3(M)\times[-L_M,L_M]$ with $\lambda=\tilde{\lambda}-\ell^2$ to the right of the essential spectrum,~\eqref{eq:evalprobleml} admits no exponentially localized solutions.

\subsection{Setup for the region $(\tilde{\lambda}(\ell),\ell)\in R_1(\delta)\cup R_2(\delta,M)\times[-L_M,L_M]$}\label{sec:r1r2setup}

In the previous section we have deduced that all eigenvalues need to be located in the region $(\tilde{\lambda}(\ell),\ell) \in R_1(\delta) \cup R_2(\delta,M) \times [-L_m,L_m]$. The analysis in this region is more involved and we need a specific set-up for this region, the details of which are explained in the next subsections.

\subsubsection{Estimates from the existence analysis}

To study the stability of the traveling pulse solution $\phi_d$, we need to be able to approximate it pointwise by its singular limit, and bound the resulting error terms. The following theorem establishes these estimates.
	
	\begin{Theorem}\label{thm:existencebounds}
For each $\nu>0$ sufficiently large, there exists $\eps_0 > 0$ such that the following holds. Let $\phi_\mathrm{d}(\xi)$ be a traveling-pulse solution as in Theorem~\ref{thm:stripeexistence} for $0<\eps<\eps_0$, and define $L_\eps:=-\nu \log \eps$. There exists $0<Z_\eps =\mathcal{O}(1/\eps)$ such that:
\begin{enumerate}[(i)]
\item \label{thm:existenceboundl} For $\xi \in I_\ell := (-\infty, -L_\eps]$, $\phi_\mathrm{d}(\xi)$ is approximated by the left slow manifold $\mathcal{M}_0^\ell$ with
\begin{align*} d(\phi_\mathrm{d}(\xi),\mathcal{M}_0^\ell) =\mathcal{O}(\eps). \end{align*}
\item \label{thm:existenceboundf} For $\xi \in I_\dagger := [-L_\eps,L_\eps]$, $\phi_\mathrm{d}(\xi)$ is approximated by the front $\phi_{\dagger}$ with
\begin{align*}
\left|\phi_\mathrm{d}(\xi)-\left(\begin{array}{c} u^*(a)\\ \phi_{\dagger}(\xi) \end{array}\right)\right|&=\mathcal{O}( \eps\!\log\eps),\qquad \left|\phi_\mathrm{d}'(\xi)-\left(\begin{array}{c}0\\ \phi_{\dagger}'(\xi)\end{array}\right)\right| =\mathcal{O}( \eps\!\log\eps).
\end{align*}
\item  \label{thm:existenceboundr} For $\xi \in I_r := [L_\eps,Z_{a,\eps} - L_\eps]$, $\phi_\mathrm{d}(\xi)$ is approximated by the right slow manifold $\mathcal{M}_0^r$ with
\begin{align*} d(\phi_\mathrm{d}(\xi),\mathcal{M}_0^r) =\mathcal{O}(\eps).\end{align*}
\item  \label{thm:existenceboundb} For $\xi \in I_\diamond := [Z_{a,\eps} -L_\eps, \infty)$, $\phi_\mathrm{d}(\xi)$ is approximated by the front $\phi_{\diamond}$ with
\begin{align*}\left|\phi_\mathrm{d}(\xi)- \left(\begin{array}{c}a\\ \phi_{\diamond}(\xi - Z_{a,\eps}) \end{array}\right)\right| &= \mathcal{O}( \eps\!\log\eps), \qquad \left|\phi_\mathrm{d}'(\xi)- \left(\begin{array}{c}0\\  \phi_{\diamond}'(\xi - Z_{a,\eps})\end{array}\right)\right| = \mathcal{O}( \eps\!\log\eps)\end{align*}
\end{enumerate}
\end{Theorem}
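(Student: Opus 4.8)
The plan is to revisit the geometric construction of $\phi_\mathrm{d}$ carried out in the proof of Theorem~\ref{thm:stripeexistence} in~\S\ref{sec:existenceproof} and to attach quantitative bounds to each of its ingredients; no genuinely new mechanism is needed, only careful bookkeeping of magnitudes. The tools are: the quantitative form of Fenichel's theorem~\cite{fenichel1979geometric} (away from the fold $\mathcal{F}$, the slow manifolds $\mathcal{M}^{\ell,r}_\eps$ and their stable/unstable manifolds are $C^1$-$\mathcal{O}(\eps)$-close to their singular counterparts and carry a stable/unstable Fenichel fibration whose contraction/expansion rates are bounded below by a fixed $\mu>0$ uniformly in $\eps$); the Exchange Lemma~\cite{jkk,ssch2} in the $C^1$-$\mathcal{O}(e^{-\eta/\eps})$ form; the explicit exponential decay of the layer fronts $\phi_{\dagger,\diamond}(\cdot;u)$ onto $\mathcal{M}^{\ell,r}_0$, read off from their $\tanh$ profiles, together with the transversality Lemmas~\ref{lem:transf}--\ref{lem:transb}; and the speed bound $c_\mathrm{d}=c^*(a)+\mathcal{O}(\eps)$ already established in Theorem~\ref{thm:stripeexistence}. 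First I would fix $\nu>0$ large enough that $e^{-\mu L_\eps}=\eps^{\mu\nu}\ll\eps$ for the finitely many relevant rates $\mu$, and translate $\phi_\mathrm{d}$ so that the first fast jump $\phi_\dagger$ is centered at $\xi\approx 0$; then the second jump $\phi_\diamond$ is centered at $\xi\approx Z_{a,\eps}$, where $Z_{a,\eps}=\mathcal{O}(1/\eps)$ is the $\xi$-length of the passage near $\mathcal{M}^r_\eps$ and equals, to leading order, the quantity $L_\mathrm{d}$ of Theorem~\ref{thm:stripeexistence} (obtained by integrating the reduced flow~\eqref{eq:reducedr}, up to an $\mathcal{O}(\eps)$-correction coming from $\mathcal{M}^r_\eps$).

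For the slow segments~\ref{thm:existenceboundl} and~\ref{thm:existenceboundr} I would work in Fenichel coordinates. On $I_\ell$ the orbit lies in $\mathcal{W}^\mathrm{u}(\mathcal{M}^\ell_\eps)$ — in~\S\ref{sec:existenceproof} one has $\mathcal{W}^\mathrm{u}(p_0(a))=\mathcal{W}^\mathrm{u}(\mathcal{M}^\ell_\eps)$ — so its fast-stable coordinate vanishes identically while its fast-unstable coordinate $w^\mathrm{u}$ grows forward at rate $\geq\mu$ and reaches $\mathcal{O}(1)$ only in the jump region near $\xi\approx 0$; hence $|w^\mathrm{u}(\xi)|=\mathcal{O}(e^{-\mu|\xi|})=\mathcal{O}(\eps)$ for $\xi\leq -L_\eps$, and with $d(\mathcal{M}^\ell_\eps,\mathcal{M}^\ell_0)=\mathcal{O}(\eps)$ this gives~\ref{thm:existenceboundl}. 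On $I_r$ the orbit sits in the Fenichel neighborhood of $\mathcal{M}^r_\eps$, entering (just after $\phi_\dagger$) exponentially close to $\mathcal{W}^\mathrm{s}(\mathcal{M}^r_\eps)$ and leaving (just before $\phi_\diamond$) exponentially close to $\mathcal{W}^\mathrm{u}(\mathcal{M}^r_\eps)$, which is exactly the Exchange-Lemma passage used in the existence proof; its transverse coordinates then satisfy $|w^\mathrm{s}(\xi)|=\mathcal{O}(e^{-\mu\xi})$ and $|w^\mathrm{u}(\xi)|=\mathcal{O}(e^{-\mu(Z_{a,\eps}-\xi)})$, both of which are $\mathcal{O}(e^{-\mu L_\eps})=\mathcal{O}(\eps)$ on $I_r$, and together with $d(\mathcal{M}^r_\eps,\mathcal{M}^r_0)=\mathcal{O}(\eps)$ this yields~\ref{thm:existenceboundr}. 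Note that the slow drift of the $u$-coordinate is a drift \emph{along} the slow manifold and therefore does not enter these distance estimates.

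For the fast segment~\ref{thm:existenceboundf} I would write $\phi_\mathrm{d}=(u_\mathrm{d},v_\mathrm{d},q_\mathrm{d})$ on $I_\dagger=[-L_\eps,L_\eps]$. The first equation of~\eqref{eq:twode} gives $u_\mathrm{d}'=\frac{\eps}{1+\eps c_\mathrm{d}}(u_\mathrm{d}-a+u_\mathrm{d}v_\mathrm{d}^2)=\mathcal{O}(\eps)$ uniformly on $I_\dagger$, and at the center $\xi\approx 0$ of the transition the transversal $\phi_\dagger$-connection forces $u_\mathrm{d}=u^*(a)+\mathcal{O}(c_\mathrm{d}-c^*(a))+\mathcal{O}(\eps)=u^*(a)+\mathcal{O}(\eps)$; integrating over the width-$2L_\eps$ window yields $|u_\mathrm{d}(\xi)-u^*(a)|=\mathcal{O}(\eps L_\eps)=\mathcal{O}(\eps\log\eps)$ throughout $I_\dagger$. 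Then $(v_\mathrm{d},q_\mathrm{d})$ solves the planar layer system~\eqref{eq:layer} with the slowly varying parameter $u=u_\mathrm{d}(\xi)=u^*(a)+\mathcal{O}(\eps\log\eps)$ and speed $c=c_\mathrm{d}=c^*(a)+\mathcal{O}(\eps)$, so $w:=(v_\mathrm{d},q_\mathrm{d})-\phi_\dagger(\cdot;u^*(a))$ satisfies a linear inhomogeneous equation $w'=M(\xi)w+R(\xi)$, where $M(\xi)$ is the linearization of~\eqref{eq:layer} along $\phi_\dagger$ at $(u^*(a),c^*(a))$ and $R=\mathcal{O}(\eps\log\eps)+\mathcal{O}(|w|^2)$; since $p_0$ and $p_+(u^*(a))$ are hyperbolic saddles, the homogeneous equation has exponential dichotomies on $(-\infty,0]$ and on $[0,\infty)$ with the translational mode $\phi_\dagger'$ the only bounded solution on $\mathbb{R}$, and solving for $w$ as a boundary-value problem on $I_\dagger$ — with the $\mathcal{O}(\eps\log\eps)$ transverse interface data at $\xi=\pm L_\eps$ and the translational direction pinned via the transversality of Lemma~\ref{lem:transf}, which is precisely the Lin's-method solvability setup underlying~\S\ref{sec:existenceproof} — gives $\|w\|_{L^\infty(I_\dagger)}=\mathcal{O}(\eps\log\eps)$ and hence also $\|w'\|_{L^\infty(I_\dagger)}=\mathcal{O}(\eps\log\eps)$ directly from the equation. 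Part~\ref{thm:existenceboundb} follows identically, using $\phi_\diamond(\cdot-Z_{a,\eps};a)$, speed $c^*(a)$, and Lemma~\ref{lem:transb}; on the portion of $I_\diamond$ beyond the jump, $\phi_\mathrm{d}$ and $\phi_\diamond(\cdot-Z_{a,\eps})$ both converge to the desert equilibrium along the fast-stable direction at the same $\mathcal{O}(1)$ rate, so the $\mathcal{O}(\eps\log\eps)$ bound persists for all $\xi\in I_\diamond$.

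The hard part is the estimate over the window $I_\dagger$ (resp.\ $I_\diamond$), whose length $2L_\eps=-2\nu\log\eps$ diverges as $\eps\downarrow 0$: a naive Gronwall bound would amplify the $\mathcal{O}(\eps\log\eps)$ forcing by $e^{CL_\eps}=\eps^{-C\nu}$, and one cannot simply invoke a spectral gap on all of $I_\dagger$ because the linearization along $\phi_\dagger$ carries the neutral translational mode. The way around this — splitting at $\xi=0$, using the two half-line dichotomies, pinning the translational direction through Lemmas~\ref{lem:transf}--\ref{lem:transb}, and matching to exponentially small transverse data at the interfaces $\xi=\pm L_\eps$ — is exactly the mechanism already in place in~\S\ref{sec:existenceproof}, so in the end Theorem~\ref{thm:existencebounds} should follow by re-running that construction while carrying the magnitudes along. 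A secondary point to check is the uniformity of~\ref{thm:existenceboundl} and~\ref{thm:existenceboundb} as $\xi\to\mp\infty$, which relies on the normal hyperbolicity of $\mathcal{M}^\ell_\eps$ in a fixed neighborhood of $p_0(a)$ (fast rates $\mathcal{O}(1)$, slow rate $\mathcal{O}(\eps)$).
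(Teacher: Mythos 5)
Your proposal is correct and takes essentially the same route as the paper, which defers to Theorem~4.3 and Theorem~4.5 of~\cite{cdrs}: the $\mathcal{O}(\eps)$ bounds on the slow intervals come from Fenichel theory together with the choice of $\nu$ so that the fast transverse coordinates are $\mathcal{O}(e^{-\mu L_\eps})=\mathcal{O}(\eps)$, and the $\mathcal{O}(\eps\log\eps)$ bounds on the fast intervals come from corner-type estimates near the transition points, which you make explicit via the $\mathcal{O}(\eps L_\eps)$ drift of $u_\mathrm{d}$ together with the half-line exponential dichotomies of the layer front and Lin's-method pinning of the neutral translational mode. The paper's own proof is a brief citation; yours reconstructs the cited argument in more detail but with the same ingredients.
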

\begin{proof}
The proof is similar to Theorem 4.3 in~\cite{cdrs}. The estimates are based on the proximity of the solution to the singular limit; along each of the slow manifolds, and along each of the fast jumps outside small neighborhoods of the slow manifolds, these estimates follow directly from the existence analysis, and $\phi_\mathrm{d}(\xi)$ is within $\mathcal{O}(\eps)$ of the corresponding singular profile. The regions in between, i.e. where $\phi_\mathrm{d}(\xi)$ transitions from a fast jump to a slow manifold or vice versa, are more delicate and require corner-type estimates, which result in the $\mathcal{O}(\eps \log \eps)$ errors; see, e.g.~\cite[Theorem 4.5]{cdrs} or~\cite{ESZ,HOLZ}.
\end{proof}
	
	\subsubsection{Weighted eigenvalue problem}
	In this section we introduce a small exponential weight to the stability problem~\eqref{eq:evalproblemtilde}. This weight is introduced to deal with the  inconvenience that arises due to the fact that when $\eps=0$, along the critical manifolds $\mathcal{M}^\ell_0, \mathcal{M}^r_0$ the matrix $A$ admits three spatial eigenvalues: one negative, one positive, and a zero eigenvalue which corresponds to the slow direction. On the other hand, for $\eps>0$ the asymptotic matrix $\mathcal{A}_\mathrm{d}$ is hyperbolic with two positive spatial eigenvalues and one negative eigenvalue. In the following, we will construct exponential dichotomies for~\eqref{eq:evalproblemtilde} along each of the slow manifolds $\mathcal{M}^\ell_\eps, \mathcal{M}^r_\eps$ and each of the fast jumps, and for the following computations it will be convenient to preserve this dichotomy splitting at $\eps=0$ and preserve the exponential decay in forward (resp. backward) time within the corresponding stable (resp. unstable) dichotomy subspaces. To this end, for each $\eta\geq0$ we consider the weighted eigenvalue problem
	\begin{align}\label{eq:Wevalproblem}
	\Psi' &=	A_\eta(\xi;\tilde{\lambda}, \ell,\eps)\Psi,
	\end{align}
	where
	\begin{align}
	 A_\eta(\xi;\tilde{\lambda}, \ell,\eps) &:= \tilde{A}(\xi;\tilde{\lambda}, \ell,\eps)+\eta I=\begin{pmatrix}
\frac{\varepsilon}{1+\varepsilon c} \left[1 + \tilde{\lambda}-\ell^2 + v_\mathrm{d}^2 \right] +\eta& \frac{\varepsilon}{1+\varepsilon c} 2 u_\mathrm{d} v_\mathrm{d} & 0 \\
0 & \eta & 1 \\
- (1 - b v_\mathrm{d}) v_\mathrm{d}^2 & m + \tilde{\lambda} - (2 - 3 b v_\mathrm{d}) u_\mathrm{d} v_\mathrm{d} & \eta-c
\end{pmatrix}.
	\end{align}
	The effect of introducing the weight $\eta$ is to shift the spectrum (i.e. the spatial eigenvalues) of the matrix $\tilde{A}(\xi;\tilde{\lambda},\ell,\eps)$ to the right. For any $\tilde{\lambda}$ chosen so that $\lambda=\tilde{\lambda}-\ell^2$ lies to the right of the essential spectrum of $\mathcal{L}$, the asymptotic matrix $\tilde{A}^{\pm \infty}(\tilde{\lambda},\ell,\eps)=\lim_{\xi \to \infty}\tilde{A}(\xi;\tilde{\lambda},\ell,\eps)$ admits two eigenvalues of positive real part and one of negative real part. Provided $\eta$ is chosen so that $A_\eta^{\pm \infty}(\tilde{\lambda},\ell,\eps)=\lim_{\xi \to \infty}A_\eta(\xi;\tilde{\lambda},\ell,\eps)$ retains this spectral splitting, the original eigenvalue problem~\eqref{eq:evalprobleml} admits a nontrivial exponentially localized solution $\Psi(\xi)$ if and only if the weighted problem~\eqref{eq:Wevalproblem} admits a solution given by $e^{\eta \xi} \Psi(\xi)$.

We proceed by determining $\eta, \nu > 0$ such that the spectrum of the coefficient matrix $A_\eta(\xi;\tilde{\lambda}, \ell,\eps)$ of~\eqref{eq:Wevalproblem} has a consistent splitting into one unstable and two stable eigenvalues for any $\tilde{\lambda}\in R_1(\delta)\cup R_2(\delta,M)$ such that $\lambda=\tilde{\lambda}-\ell^2$ lies to the right of the essential spectrum of  $\mathcal{L}$ and any $\xi\in I_\ell \cup I_r$, where $I_\ell, I_r$ are as in Theorem~\ref{thm:existencebounds}. This consistent splitting will be used to construct exponential dichotomies for~\eqref{eq:Wevalproblem} on the intervals $I_\ell, I_r$. This is the content of the following proposition.

\begin{Proposition} \label{prop:slowexpdich}
There exists $C,\mu,\eta,\eps_0 > 0$ such that for $\eps\in(0, \eps_0)$,~\eqref{eq:Wevalproblem} admits exponential dichotomies on the intervals $I_\ell = (-\infty,-L_\eps]$ and $I_r = [L_\eps,Z_\eps - L_\eps)$ with constants $C, \mu > 0$, and the associated projections $\mathcal{Q}_{\ell, r}^{\mathrm{u},\mathrm{s}}(\xi;\tilde{\lambda},\eps)$ are analytic in $\tilde{\lambda}\in  R_1(\delta)\cup R_2(\delta,M)$ and satisfy
\begin{align*}
\left\|[\mathcal{Q}_{\ell}^\mathrm{s} - \mathcal{P}](-L_\eps;\tilde{\lambda},\eps)\right\|, \left\|[\mathcal{Q}_{r}^\mathrm{s} - \mathcal{P}]( L_\eps;\tilde{\lambda},\eps)\right\|, \left\|[\mathcal{Q}_{r}^\mathrm{s} - \mathcal{P}](Z_\eps- L_\eps;\tilde{\lambda},\eps)\right\| \leq C|\eps\!\log \eps|,
\end{align*}
where $ \mathcal{P}(\xi;\tilde{\lambda},\eps)$ denotes the spectral projection onto the stable eigenspace of the coefficient matrix $A_\eta(\xi;\tilde{\lambda}, \ell,\eps)$ in~\eqref{eq:Wevalproblem}. 
\end{Proposition}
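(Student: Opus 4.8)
\emph{Strategy.} The plan is to produce the dichotomies in two steps: first establish a uniform spectral-gap estimate for the frozen coefficient matrix $A_\eta(\xi;\tilde{\lambda},\ell,\eps)$ of~\eqref{eq:Wevalproblem} at each $\xi\in I_\ell\cup I_r$, and then invoke standard roughness theory, exploiting the slow $\xi$-dependence of $A_\eta$ along the slow manifolds, to turn this pointwise gap into a genuine exponential dichotomy. Recall from~\S\ref{sec:criticalmanifold} that for $c>0$ the layer equilibria $p_0(u)$ and $p_+(u)$ are hyperbolic saddles, uniformly for $u$ in compact subsets bounded away from the fold value $4bm$; by the restriction $\frac{a}{m}\in(\frac{9}{2}b,\bar{a}_\mathrm{hyp}(b))$ together with Theorem~\ref{thm:existencebounds}, the profile $\phi_\mathrm{d}$ stays, on $I_\ell$ and on $I_r$, within $\mathcal{O}(\eps)$ of a fixed compact normally hyperbolic segment of $\mathcal{M}^\ell_0$, respectively $\mathcal{M}^r_0$, the latter bounded away from $\mathcal{F}$. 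Hence at $\eps=0$ the matrix $A_\eta$ evaluated along these segments has eigenvalues $\eta$ and $\eta+\sigma_\pm$, where $\sigma_\pm$ are the two spatial eigenvalues of the linearization of the layer system at the corresponding saddle, with the $\tilde{\lambda}$-shift included. Taking $\delta$ small, a compactness argument over $(\tilde{\lambda},\ell)$ in the closure of $(R_1(\delta)\cup R_2(\delta,M))\times[-L_M,L_M]$ with $\lambda=\tilde{\lambda}-\ell^2$ to the right of the essential spectrum of~\S\ref{sec:essentialSpectrum} shows $\Re\sigma_+\geq g_0>0$ and $\Re\sigma_-\leq-g_0$ for an $\eps$-independent $g_0$. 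Fixing $0<\eta<g_0/2$ then gives, at $\eps=0$, a spectral splitting of $A_\eta$ with uniform gap whose stable and unstable dimensions coincide with those of the asymptotic matrices $A_\eta^{\pm\infty}$ established above; since the $\eps$-dependent entries of $A_\eta$ are $\mathcal{O}(\eps)$, this splitting persists for $\eps\in(0,\eps_0)$ with a slightly smaller uniform gap $2\mu>0$, the surviving ``slow'' eigenvalue being $\eta+\mathcal{O}(\eps)$. This is precisely why the weight is introduced: at $\eps=0$ it replaces the degenerate zero (slow) eigenvalue by $\eta>0$, and at $\eps>0$ it keeps the $\mathcal{O}(\eps)$-small slow eigenvalue robustly off the imaginary axis.

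\emph{From spectral gap to dichotomy, and analyticity.} With the uniform gap in place, the existence of exponential dichotomies for~\eqref{eq:Wevalproblem} on $I_\ell$ and $I_r$ follows from the standard principle that a linear system $\Psi'=A(\xi)\Psi$ whose coefficient matrix has, at every $\xi$, a spectral splitting of constant dimensions and uniform gap $2\mu$, and which satisfies $\|A'(\xi)\|_{L^\infty}\leq\rho$ with $\rho/\mu$ sufficiently small, admits an exponential dichotomy on that interval with rate $\mu$ and constants depending only on $\mu$ --- standard roughness theory for slowly varying systems (cf.\ also~\cite{cdrs}). The smallness hypothesis holds because along the slow manifolds the flow of~\eqref{eq:twode} is $\mathcal{O}(\eps)$-slow, so Theorem~\ref{thm:existencebounds} gives $\phi_\mathrm{d}'(\xi)=\mathcal{O}(\eps)$ on $I_\ell\cup I_r$, whence $\partial_\xi A_\eta(\xi;\tilde{\lambda},\ell,\eps)=\mathcal{O}(\eps)$ there uniformly in $(\tilde{\lambda},\ell)$. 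Crucially the resulting constants $C,\mu$ depend only on the gap and not on the length of $I_r$, which is $\mathcal{O}(1/\eps)$; and on the half-line $I_\ell$ the exponential convergence $\phi_\mathrm{d}(\xi)\to p_0(a)$ as $\xi\to-\infty$ makes $A_\eta^{-\infty}$ hyperbolic with the same splitting, so the dichotomy there is the canonical one whose unstable fibre limits onto the unstable eigenspace of $A_\eta^{-\infty}$. Since $A_\eta$ is affine in $\tilde{\lambda}$ and the gap is uniform over $R_1(\delta)\cup R_2(\delta,M)$, the projections $\mathcal{Q}_{\ell,r}^{\mathrm{u},\mathrm{s}}(\xi;\tilde{\lambda},\eps)$ depend analytically on $\tilde{\lambda}$ there, by the standard analytic dependence of exponential dichotomies on parameters entering the coefficients analytically.

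\emph{Closeness to the spectral projection.} The same slowly varying estimate shows that at interior points of $I_\ell$ and $I_r$ the dichotomy projection differs from the pointwise spectral projection $\mathcal{P}$ of $A_\eta$ by $\mathcal{O}(\|\partial_\xi A_\eta\|/\mu)=\mathcal{O}(\eps)$. The only delicate points are the three endpoints $-L_\eps$, $L_\eps$ and $Z_\eps-L_\eps$, which lie in the corner regions where $\phi_\mathrm{d}$ switches between a slow manifold and a fast jump and where Theorem~\ref{thm:existencebounds} controls $\phi_\mathrm{d}$ and $\phi_\mathrm{d}'$ only up to $\mathcal{O}(\eps\!\log\eps)$; combining these with the roughness estimate yields the asserted bound $\|[\mathcal{Q}^\mathrm{s}-\mathcal{P}]\|\leq C|\eps\!\log\eps|$ at each of the three endpoints.

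\emph{Main obstacle.} The heart of the argument is the uniform spectral-gap step: one must make a single, $\eps$-independent choice of $\eta$ work simultaneously for all $\xi\in I_\ell\cup I_r$, all admissible $(\tilde{\lambda},\ell)$ with $\lambda$ to the right of the essential spectrum, and all small $\eps$ --- $\eta$ small enough not to absorb the genuinely stable eigenvalue $\eta+\sigma_-$, yet large enough to push the degenerate $\eps=0$ slow eigenvalue strictly into the unstable group --- and one must confirm that the segment of $\mathcal{M}^r_0$ visited by $\phi_\mathrm{d}$ stays bounded away from the non-hyperbolic fold $\mathcal{F}$ on the parameter range in question, so that $\sigma_\pm$ stay uniformly off the imaginary axis. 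Once the gap is secured, the remaining steps --- roughness theory and the $\mathcal{O}(\eps\!\log\eps)$ proximity estimates of Theorem~\ref{thm:existencebounds} --- are routine.
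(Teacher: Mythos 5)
Your proposal follows essentially the same route as the paper's proof: use Theorem~\ref{thm:existencebounds} to place $\phi_\mathrm{d}$ within $\mathcal{O}(\eps)$ of the normally hyperbolic slow manifolds on $I_\ell$, $I_r$, verify a uniform spectral gap for the weighted frozen coefficient matrix (the paper does this by writing out the constant-coefficient reductions $A^\ell_\eta$, $A^r_\eta$ explicitly and checking the key inequality $m-(2-3bv_+(u))uv_+(u)=(-1+2bv_+(u))uv_+(u)\geq 0$, which you instead attribute to the saddle structure established in~\S\ref{sec:criticalmanifold} plus compactness), and then pass from a pointwise gap to genuine dichotomies via roughness for slowly varying coefficients, citing the treatment in~\cite{cdrs}. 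Two small remarks: the paper's proof does not restrict to $\lambda$ to the right of the essential spectrum — the whole point of the weight $\eta$ is that the gap estimate holds uniformly over all of $R_1(\delta)\cup R_2(\delta,M)\times[-L_M,L_M]$, so your added caveat is unnecessary (and would, if taken literally, prove a slightly weaker statement than claimed); and the source of the $|\eps\log\eps|$ factor at the endpoints is not quite the slow-interval estimates of Theorem~\ref{thm:existencebounds}\ref{thm:existenceboundl},\ref{thm:existenceboundr} (which are $\mathcal{O}(\eps)$) but the corner-type estimates entering the comparison at $\pm L_\eps$ and $Z_\eps-L_\eps$, as in~\cite[Proposition~6.5]{cdrs}; you gesture at this correctly but the paper is equally terse, so this is not a gap.
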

\begin{proof}
By Theorem~\ref{thm:existencebounds}, for $\xi \in I_\ell \cup I_r$, the pulse solution is $\mathcal{O}(\eps)$-close to the slow manifolds $\mathcal{M}^\ell_\eps$ and $\mathcal{M}^r_\eps$, respectively. For $|\ell|\leq L_M$ bounded and any $\tilde{\lambda}\in  R_1(\delta)\cup R_2(\delta,M)$, on $I_\ell$ the matrix $A_\eta(\xi;\tilde{\lambda}, \ell,\eps)$ has slowly varying coefficients and is an $\mathcal{O}(\eps)$ perturbation of the constant-coefficient matrix 
	\begin{align}
A^\ell_\eta(\xi;\tilde{\lambda}, \ell,\eps)=\begin{pmatrix}
\eta& 0 & 0 \\
0 & \eta & 1 \\
0 & m + \tilde{\lambda}&\eta -c^*(a)
\end{pmatrix}.
	\end{align}
	For any sufficiently small $\eta>0$ fixed independently of $\eps$ and $\tilde{\lambda}\in  R_1(\delta)\cup R_2(\delta,M)$, this matrix is hyperbolic with two eigenvalues with positive real part and one with negative real part and a spectral gap with lower bound independent of $\tilde{\lambda}\in  R_1(\delta)\cup R_2(\delta,M)$. By continuity this also holds for $A_\eta(\xi;\tilde{\lambda}, \ell,\eps)$ for $\xi \in I_\ell$, and since $A_\eta(\xi;\tilde{\lambda}, \ell,\eps)$ has slowly varying coefficients on this interval (see~\cite[Proposition 6.1]{COP}), as in the proof of~\cite[Proposition 6.5]{cdrs}, we can construct exponential dichotomies for~\eqref{eq:Wevalproblem} on $I_\ell$ with constants $C,\mu$ independent of $\tilde{\lambda}\in  R_1(\delta)\cup R_2(\delta,M)$ and all sufficiently small $\eps$. 
	
	We proceed similarly along $I_r$, noting that here the matrix $A_\eta(\xi;\tilde{\lambda}, \ell,\eps)$ again has slowly varying coefficients but is now an $\mathcal{O}(\eps)$ perturbation of the matrix 
	\begin{align}
A^r_\eta(\xi;\tilde{\lambda}, \ell,\eps)=\begin{pmatrix}
\eta& 0 & 0 \\
0 & \eta & 1 \\
-(1-bv_\mathrm{d})v_\mathrm{d} & m + \tilde{\lambda} - (2 - 3 b v_\mathrm{d}) u_\mathrm{d} v_\mathrm{d} &\eta -c^*(a)
\end{pmatrix}.
	\end{align}
	where $(u_\mathrm{d},v_\mathrm{d})$ lies within $\mathcal{O}(\eps)$ of the set $\{(u,v)=(u,v_+(u)):u\in[u^*(a),a]\}$ where $v_+$ is as in~\eqref{eq:vplusminus}. On this set, we note that since $m = (1 - b v_+(u)) u v_+(u)$, $u>0$ and $v_+(u) \geq \frac{1}{2b}$, we have that 
	\begin{align}
	m  - (2 - 3 b v_+(u)) u v_+(u) =(-1 + 2 b v_+(u)) u v_+(u) \geq 0.
	\end{align}
	Hence for $\delta>0$ sufficiently small $A^r_\eta(\xi;\tilde{\lambda}, \ell,\eps)$ is hyperbolic with two eigenvalues with positive real part and one with negative real part and a spectral gap with lower bound independent of $\tilde{\lambda}\in  R_1(\delta)\cup R_2(\delta,M)$. The existence of exponential dichotomies for $A_\eta(\xi;\tilde{\lambda}, \ell,\eps)$ on $I_r$ then proceeds similarly to the case of $I_\ell$ above.
\end{proof}

	\subsection{The region $(\tilde{\lambda}(\ell),\ell)\in R_1(\delta)\times[-L_M,L_M]$}\label{sec:regionr1}
	The argument below is based on the analysis in~\cite{cdrs} regarding the stability of traveling pulse solutions in the FitzHugh--Nagumo equation.  The fundamental idea is to construct potential eigenfunctions as solutions to~\eqref{eq:evalprobleml} using Lin's method: the solutions are constructed along three separate intervals which form a partition of the real line and are matched at two locations corresponding to the two fast jumps in the layer problem; see Figure~\ref{fig:eigenfunction_construct}. The resulting matching conditions give bifurcation equations which can be solved using the eigenvalue $\lambda$ as a free parameter, and to leading order these conditions correspond to the Fredholm conditions~\eqref{eq:FredholmConditionDesertHomoclinic1} and~\eqref{eq:FredholmConditionDesertHomoclinic2}.
	
	\begin{figure}
\centering
\includegraphics[width=0.5\textwidth]{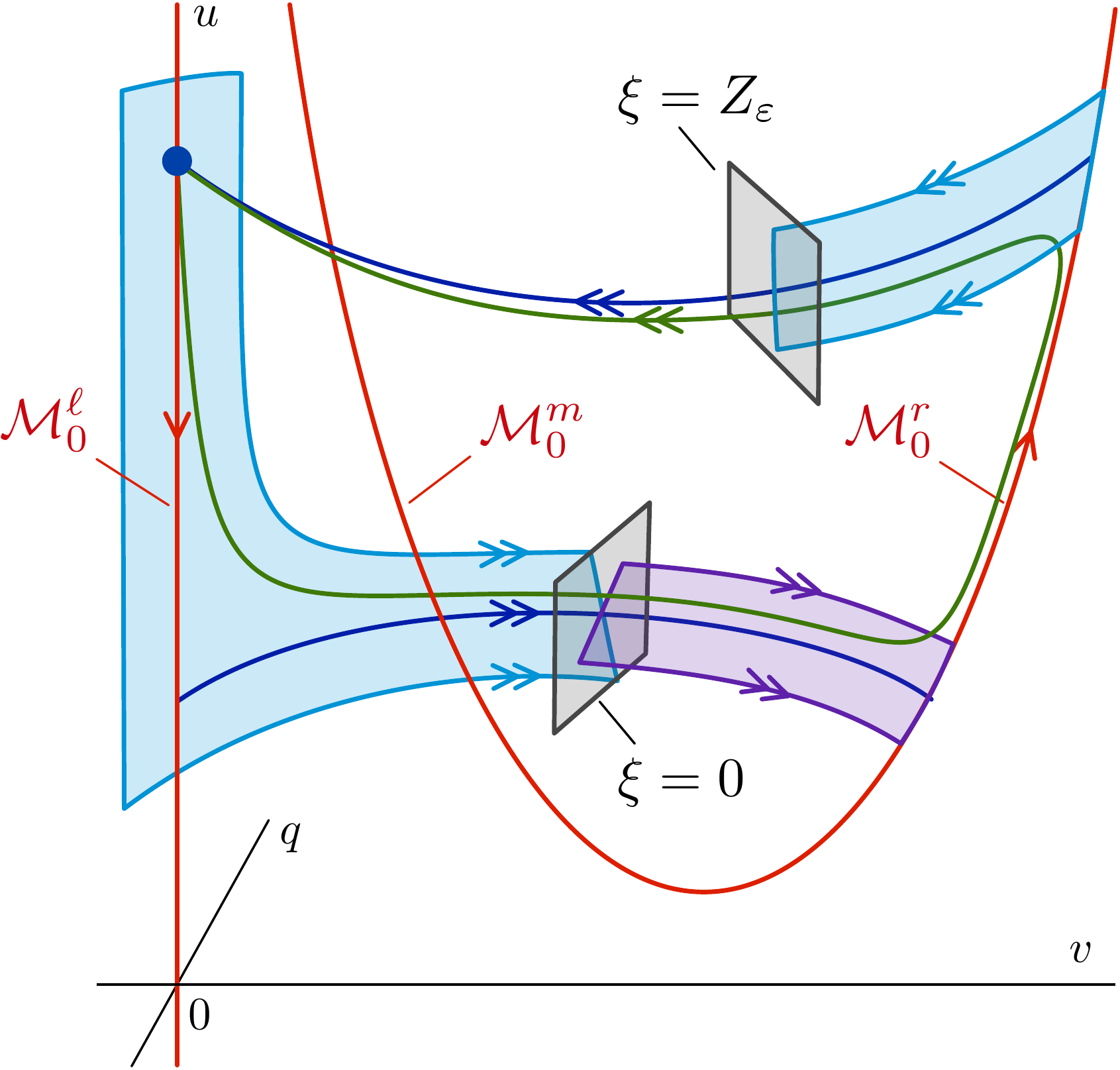}
\caption{Shown is the geometric setup for the construction of potential eigenfunctions using Lin's method. The solutions are constructed along the three intervals $(-\infty,0], [0,Z_\eps], [Z_\eps, \infty)$ and are then matched at $\xi=0$ and $\xi=Z_\eps$ corresponding to the two fast jumps in the layer problem~\eqref{eq:layer}. }
\label{fig:eigenfunction_construct}
\end{figure}

	\subsubsection{Reduced eigenvalue problems along fast jumps} 

	We consider the reduced eigenvalue problems
\begin{align}
\Psi' = A_{j,\eta}(\xi)\Psi, \quad A_{j,\eta}(\xi) := \left(\begin{array}{ccc} \eta & 0 & 0 \\ 0 & \eta & 1 \\ -(1 - b v_j(\xi)) v_j(\xi)^2 & m-(2 - 3 b v_j(\xi)) u_j v_j(\xi) & \eta-c^*(a) \end{array}\right), \quad j = \dagger,\diamond, \label{eq:Wevalproblemreduced}
\end{align}
obtained by considering~\eqref{eq:Wevalproblem} with $\eps=\tilde{\lambda}=0$ and approximating $\phi_\mathrm{d}$ by the fast front solutions $\phi_j, j = \dagger,\diamond$. In~\eqref{eq:Wevalproblemreduced}, $v_j(\xi)$ denotes the $v$-component of $\phi_j(\xi)$, and $u_\dagger=u^*(a), u_\diamond=a$. Hence, for $\xi\in I_\dagger = [-L_\eps,L_\eps]$,~\eqref{eq:Wevalproblem} can be written as the perturbation
\begin{align}\label{eq:Wreducedfront}
 \Psi' = \left(A_{\dagger,\eta}(\xi)+B_\dagger(\xi;\tilde{\lambda}, \ell,\eps) \right)\Psi,\qquad B_\dagger(\xi;\tilde{\lambda}, \ell,\eps) &:=A_\eta(\xi;\tilde{\lambda}, \ell, \eps)-A_{\dagger,\eta}(\xi)
\end{align}
and for $\xi\in [-L_\eps,\infty)$,~\eqref{eq:Wevalproblem} can be written as the perturbation
\begin{align}\label{eq:Wreducedback}
 \Psi' = \left(A_{\diamond,\eta}(\xi) + B_\diamond(\xi;\tilde{\lambda}, \ell,\eps) \right)\Psi,\qquad B_\diamond(\xi;\tilde{\lambda}, \ell,\eps):=A_\eta(\xi+ Z_{\eps};\tilde{\lambda}, \ell, \eps)-A_{\diamond,\eta}(\xi).
\end{align}
We note by Theorem~\ref{thm:existencebounds}~\ref{thm:existenceboundf} and~\ref{thm:existenceboundb} that the perturbation matrices $B_\dagger, B_\diamond$ satisfy
\begin{align}
\begin{split}
\|B_\dagger(\xi;\tilde{\lambda}, \ell,\eps)\| &\leq C(\eps|\!\log\eps| + |\tilde{\lambda}|), \quad \xi \in [-L_\eps,L_\eps],\\
\|B_\diamond(\xi;\tilde{\lambda}, \ell,\eps)\| &\leq C(\eps|\!\log\eps| + |\tilde{\lambda}|), \quad \xi \in [-L_\eps,\infty).
\end{split} \label{Bbound}\end{align}

Next, we note that~\eqref{eq:Wevalproblemreduced} has a lower triangular block structure and leaves the two-dimensional subspace $\{0\}\times \C^2 \subset \C^3$ invariant, the dynamics on which are given by
\begin{align}
\psi' = C_{j,\eta}(\xi)\psi, \quad C_{j,\eta}(\xi) := \left(\begin{array}{cc} \eta & 1 \\  m-(2 - 3 b v_j(\xi)) u_j v_j(\xi) & \eta-c^*(a)  \end{array}\right), \quad j = \dagger,\diamond. \label{eq:Wreducedinvariant}
\end{align}
The space of bounded solutions of~\eqref{eq:Wreducedinvariant} is one-dimensional and spanned by
\begin{align}\label{eq:defpsi}
\psi_j(\xi):=e^{\eta \xi} \phi_j'(\xi), \quad j = \dagger,\diamond.\end{align}
Likewise, the associated adjoint system
\begin{align}
\psi' = -C_{j,\eta}(\xi)^*\psi, \quad j = \dagger,\diamond,  \label{eq:Wreducedinvariantadjoint}
\end{align}
has a one-dimensional space of bounded solutions spanned by
\begin{align} \psi_{j,\ad}(\xi) := \left(\begin{array}{c} q_j'(\xi) \\ -v_j'(\xi)\end{array}\right)e^{(c^*(a)-\eta)\xi}, \quad \ j = \dagger,\diamond. \label{eq:defpsiad}\end{align}
Note the similarities with~\eqref{eq:adjointSolutionLr} in the formal computation. The system~\eqref{eq:Wreducedinvariant} admits exponential dichotomies on both half-lines, which can be extended to the full system~\eqref{eq:Wevalproblemreduced} by exploiting the lower triangular block structure and using variation of constants formulae. This is the content of the following proposition.

\begin{Proposition} \label{prop:expdich} There exist $C, \mu > 0$ such that the following hold.
\begin{enumerate}[(i)]
\item \label{prop:expdichi} The system~\eqref{eq:Wreducedinvariant} admits exponential dichotomies on $\R_{\pm}$ with constants $C, \mu > 0$, projections $\Pi_{j,\pm}^{\mathrm{u},\mathrm{s}}(\xi) = \Pi_{j,\pm}^{\mathrm{u},\mathrm{s}}(\xi;a)$, and corresponding (un)stable evolutions $\Phi_{j,\pm}^{\mathrm{u},\mathrm{s}}(\xi,\hat{\xi}) = \Phi_{j,\pm}^{\mathrm{u},\mathrm{s}}(\xi,\hat{\xi};a)$, $j = \dagger,\diamond$. The projections can be chosen so that
\begin{align} \begin{split}R(\Pi_{j,+}^\mathrm{s}(0)) = \mathrm{Span}(\psi_j(0)) = R(\Pi_{j,-}^\mathrm{u}(0)), \quad R(\Pi_{j,+}^\mathrm{u}(0)) = \mathrm{Span}(\psi_{j,\ad}(0)) = R(\Pi_{j,-}^\mathrm{s}(0)), \quad j = \dagger,\diamond.\end{split} \label{projdich}\end{align}
\item  \label{prop:expdichii} The system~\eqref{eq:Wevalproblemreduced} admits exponential dichotomies on $\R_\pm$ with constants $C, \mu > 0$, projections $Q_{j,\pm}^{\mathrm{u},\mathrm{s}}(\xi) = Q_{j,\pm}^{\mathrm{u},\mathrm{s}}(\xi;a), j = \dagger,\diamond,$ and (un)stable evolutions $T_{j,\pm}^{\mathrm{u},\mathrm{s}}(\xi,\hat{\xi}) = T_{j,\pm}^{\mathrm{u},\mathrm{s}}(\xi,\hat{\xi};a)$. We have that
\begin{align}
\begin{split}
Q_{j,+}^\mathrm{s}(\xi) &= \left(\begin{array}{cc}0 & 0\\ -\int_0^\xi e^{-\eta(\xi - \hat{\xi})}\Phi_{j,+}^\mathrm{s}(\xi,\hat{\xi})F_j(\xi) d\hat{\xi}& \Pi_{j,+}^\mathrm{s}(\xi)  \end{array}\right) = 1 - Q_{j,+}^\mathrm{u}(\xi),
\quad \xi \geq 0,\\
Q_{j,-}^\mathrm{s}(\xi) &= \left(\begin{array}{cc} 0 & 0\\ -\int_{-\infty}^\xi e^{-\eta(\xi - \hat{\xi})}\Phi_{j,-}^\mathrm{s}(\xi,\hat{\xi})F_j(\xi) d\hat{\xi}&\Pi_{j,-}^\mathrm{s}(\xi)   \end{array}\right) = 1 - Q_{j,-}^\mathrm{u}(\xi), \quad \xi \leq 0,
\end{split} \label{eq:projdichfull}
\end{align}
where $F_j(\xi): = \left( 0,-(1 - b v_j(\xi)) v_j(\xi)^2\right)^t$. Furthermore, the projections satisfy
\begin{align} \begin{split}R(Q_{j,+}^\mathrm{u}(0)) &= \mathrm{Span}(\omega_{j,\ad}(0),\Psi_0),\quad R(Q_{j,+}^\mathrm{s}(0)) = \mathrm{Span}(\omega_j(0)),
\\ R(Q_{j,-}^\mathrm{u}(0)) &=  \mathrm{Span}(\omega_j(0),\Psi_0), \quad R(Q_{j,-}^{\mathrm{s}}(0)) = \mathrm{Span}(\omega_{j,\ad}(0)), \end{split}  \label{eq:projdichfull0}
\end{align}
where 
\begin{align}
\omega_j(\xi) :=  \left(\begin{array}{c}0\\  \psi_j(\xi) \end{array}\right), \quad \omega_{j,\ad}(\xi) := \left(\begin{array}{c}0\\  \psi_{j,\ad}(\xi) \end{array}\right), \quad  \Psi_0 := \left(\begin{array}{c}1\\  0 \\ 0\end{array}\right), \quad j = \dagger,\diamond, \label{eq:omegadef}\end{align}
with $\psi_j(\xi)$ and $\psi_{j,\ad}(\xi)$ defined in~\eqref{eq:defpsi} and~\eqref{eq:defpsiad}, respectively.
\end{enumerate}
\end{Proposition}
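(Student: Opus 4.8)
The plan is to establish the two assertions in turn, deducing (ii) from (i) together with the block structure of the reduced system~\eqref{eq:Wevalproblemreduced}.

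For (i): observe that $C_{j,\eta}(\xi)$ is exactly the $\eta$-translate of the linearization of the layer equation~\eqref{eq:layer} about the front $\phi_j$, so its limits as $\xi\to\pm\infty$ are the $\eta$-translates of $D_{(v,q)}F$ at the equilibria $p_0$ and $p_+(u_j)$. In the parameter regime of Theorem~\ref{thm:stripeexistence} the fronts jump on and off normally hyperbolic portions of the critical manifold (i.e.\ $u_\dagger=u^*(a)>4bm$ and $u_\diamond=a>4bm$), so by the computation in~\S\ref{sec:criticalmanifold} these limiting matrices are hyperbolic with one stable and one unstable eigenvalue, and choosing $\eta>0$ small independently of $\eps$ preserves this index. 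Since $C_{j,\eta}(\xi)$ converges to its limits exponentially (because $v_j(\xi)$ does), the standard theory of asymptotically hyperbolic linear systems yields exponential dichotomies for~\eqref{eq:Wreducedinvariant} on $\R_\pm$ with $\eps$-independent constants. Because $\psi_j=e^{\eta\xi}\phi_j'$ is, up to scalars, the unique solution of~\eqref{eq:Wreducedinvariant} bounded on $\R$ and it decays at both ends for $\eta$ small, the stable subspace on $\R_+$ and the unstable subspace on $\R_-$ are both forced to equal $\mathrm{Span}(\psi_j(\cdot))$; the complementary subspaces are free, and I would fix them as $\mathrm{Span}(\psi_{j,\ad}(\cdot))$, which is admissible since $\psi_j(0)\perp\psi_{j,\ad}(0)$ (hence independent) — this normalization is what makes the later Lin's-method bifurcation equations reduce cleanly to the Fredholm conditions~\eqref{eq:FredholmConditionDesertHomoclinic1}--\eqref{eq:FredholmConditionDesertHomoclinic2}. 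That $\psi_{j,\ad}$ in~\eqref{eq:defpsiad} spans the bounded solutions of the adjoint system~\eqref{eq:Wreducedinvariantadjoint} is a one-line computation using that the trace of $C_{j,\eta}$ equals $2\eta-c^*(a)$.

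For (ii): exploit the structure recorded after~\eqref{eq:Wevalproblemreduced} — the subspace $\{0\}\times\C^2$ is invariant with induced dynamics~\eqref{eq:Wreducedinvariant}, while the transverse coordinate obeys the scalar equation $\Psi_1'=\eta\Psi_1$, which (as $\eta>0$) has trivial stable and one-dimensional unstable subspace. I would write the full evolution by variation of constants: $\Psi_1(\xi)=e^{\eta(\xi-\hat\xi)}\Psi_1(\hat\xi)$, and $(\Psi_2,\Psi_3)$ solves~\eqref{eq:Wreducedinvariant} forced by $\Psi_1(\xi)F_j(\xi)$, solved using the dichotomy of (i). The full-system stable subspace is then the lift of the planar stable subspace (with $\Psi_1=0$), and the full-system unstable subspace is the lift of the planar unstable subspace together with the $\Psi_1$-axis $\mathrm{Span}(\Psi_0)$, the lifts carrying the particular-solution correction generated by the coupling through $F_j$; writing this out gives exactly the block formulas~\eqref{eq:projdichfull}. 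It then remains to check that the resulting $Q_{j,\pm}^{\mathrm{u},\mathrm{s}}$ are complementary projections satisfying the dichotomy estimates: idempotency follows from the fact that $\Phi_{j,\pm}^{\mathrm{s}}(\xi,\hat\xi)F_j(\hat\xi)$ lies in the stable range $\mathrm{Span}(\psi_j(\xi))$, so the coupling integral is annihilated by $\Pi_{j,\pm}^{\mathrm{u}}$; the exponential estimates follow from those of (i) multiplied by the weight factor, once convergence of the improper integrals is checked. Evaluating at $\xi=0$ then yields~\eqref{eq:projdichfull0}: on $\R_+$ the integral $\int_0^0$ vanishes, so $Q_{j,+}^{\mathrm{s}}(0)$ is block-diagonal with $R(Q_{j,+}^{\mathrm{s}}(0))=\mathrm{Span}(\omega_j(0))$ and $R(Q_{j,+}^{\mathrm{u}}(0))=\mathrm{Span}(\omega_{j,\ad}(0),\Psi_0)$; on $\R_-$ the integral $\int_{-\infty}^0$ lands in $\mathrm{Span}(\psi_{j,\ad}(0))$, giving $R(Q_{j,-}^{\mathrm{s}}(0))=\mathrm{Span}(\omega_{j,\ad}(0))$ and, taking complements, $R(Q_{j,-}^{\mathrm{u}}(0))=\mathrm{Span}(\omega_j(0),\Psi_0)$.

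The main obstacle is the bookkeeping in (ii): a single small weight $\eta$ must be chosen so that simultaneously the asymptotic matrices of $C_{j,\eta}$ stay hyperbolic with the correct index, $\psi_j$ still decays at both ends, $\psi_{j,\ad}$ still decays at the correct rate (this is where $\eta$ must be small relative to $c^*(a)$ and the spectral gap), and the variation-of-constants integrals defining the off-diagonal blocks converge with a decay rate $\mu$ uniform in $\eps$; and one must keep careful track of which one-dimensional subspace each correction term lies in at $\xi=0$ in order to read off the clean spans in~\eqref{eq:projdichfull0}. Once (i) supplies the planar dichotomy data this is routine — it is the same reduction used in~\cite[Section 6]{cdrs}.
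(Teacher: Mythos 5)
Your proof follows the same two-step strategy as the paper, which defers the details to~\cite[Proposition~6.6, Corollary~6.7]{cdrs}: planar exponential dichotomies from asymptotic hyperbolicity of the fronts for~(i), then the block-triangular lift by variation of constants for~(ii). The content of~(i) — the limiting matrices are the $\eta$-shifted saddle linearizations at $p_0$ and $p_+$, the solution $\psi_j$ fixes $R(\Pi^\mathrm{s}_{j,+}(0))=R(\Pi^\mathrm{u}_{j,-}(0))$ as the unique bounded direction, and the adjoint bounded solution $\psi_{j,\ad}$ obtained via the trace identity gives an admissible transverse complement at $\xi=0$ — and the variation-of-constants mechanism behind the formulas in~\eqref{eq:projdichfull} are all in order.

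There is, however, a gap in the final ``taking complements'' step of~(ii). Write $Q_{j,-}^\mathrm{s}(0)=\left(\begin{smallmatrix}0&0\\B&\Pi_{j,-}^\mathrm{s}(0)\end{smallmatrix}\right)$ with $B=-\int_{-\infty}^0 e^{\eta\hat\xi}\Phi_{j,-}^\mathrm{s}(0,\hat\xi)F_j(\hat\xi)\,d\hat\xi$. Since $\Phi_{j,-}^\mathrm{s}(0,\cdot)$ takes values in $R(\Pi_{j,-}^\mathrm{s}(0))=\mathrm{Span}(\psi_{j,\ad}(0))$, we have $B\in\mathrm{Span}(\psi_{j,\ad}(0))$, and a direct computation of the kernel gives $\ker Q_{j,-}^\mathrm{s}(0)=\mathrm{Span}\bigl(\omega_j(0),\,(1,-B)^T\bigr)$. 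This coincides with $\mathrm{Span}(\omega_j(0),\Psi_0)$ if and only if $B\in\mathrm{Span}(\psi_j(0))$, which — since $\psi_j(0)\perp\psi_{j,\ad}(0)$ — forces $B=0$. But the component of $B$ along $\psi_{j,\ad}(0)$ is proportional to $\int_{-\infty}^0 e^{c^*(a)\hat\xi}(1-bv_j)v_j^2v_j'\,d\hat\xi$, which has a definite sign and is nonzero. So in fact $R(Q_{j,-}^\mathrm{u}(0))=\mathrm{Span}\bigl(\omega_j(0),\,\Psi_0-\kappa\,\omega_{j,\ad}(0)\bigr)$ for a nonzero scalar $\kappa$, i.e. the clean span in~\eqref{eq:projdichfull0} carries a correction in the $\omega_{j,\ad}(0)$-direction that ``take complements'' as written does not produce. (You faithfully reproduce the paper's stated span, which has the same imprecision; it happens to be harmless downstream, because the vector $\Psi_{j,\perp}$ used in Theorem~\ref{thm:matching} is also orthogonal to $(1,-B)^T$ — this follows from the adjoint identity $\langle\psi_{j,\ad}(0),\Phi^\mathrm{s}_{j,-}(0,\hat\xi)v\rangle=\langle\psi_{j,\ad}(\hat\xi),v\rangle$, which gives $\langle\psi_{j,\ad}(0),B\rangle=-\int_{-\infty}^0 e^{\eta\hat\xi}\langle\psi_{j,\ad}(\hat\xi),F_j(\hat\xi)\rangle\,d\hat\xi$, exactly the coefficient defining $\Psi_{j,\perp}$.) If you need $R(Q_{j,-}^\mathrm{u}(0))$ itself, either record the $\omega_{j,\ad}$-correction or verify the orthogonality of $\Psi_{j,\perp}$ directly rather than inferring it from the asserted span.
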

\begin{proof}
For~\ref{prop:expdichi}, we refer to~\cite[Proposition 6.6]{cdrs}. The exponential dichotomies in~\ref{prop:expdichii} can be constructed from those in~\ref{prop:expdichi} using variation of constants formulae, by exploiting the block triangular structure in~\eqref{eq:Wevalproblemreduced}; see~\cite[Corollary 6.7]{cdrs}.
\end{proof}

	\subsubsection{Construction of eigenfunctions}
	In this section, we use the exponential dichotomies from Proposition~\ref{prop:expdich}, variation of constants formulae, and the estimates from Theorem~\ref{thm:existencebounds} to construct potential eigenfunctions. These eigenfunctions are constructed in three pieces along the intervals $(-\infty,0],[0,Z_\eps],[Z_\eps,\infty)$ (see Figure~\ref{fig:eigenfunction_construct}), and then matched together at $\xi=0,Z_\eps$; the associated matching conditions can then be solved to find eigenvalues $\tilde{\lambda}$. We begin with the following proposition, which describes potential eigenfunctions along each of the three intervals.
	\begin{Proposition}
\label{prop:eigenconstruct}
Let $B_j$ be as in~\eqref{eq:Wreducedfront} and~\eqref{eq:Wreducedback}, and $\omega_j,\Psi_0$ as in~\eqref{eq:omegadef} for $j = \dagger,\diamond$. There exists $\delta,\eps_0, C, q > 0$ such that for $\tilde{\lambda} \in R_1(\delta)$ and all sufficiently small $\eps>0$, the following hold.
\begin{enumerate}[(i)]
\item \label{prop:eigenconstructi}
Any solution $\Psi_{\dagger,-}(\xi,\tilde{\lambda})$ to~\eqref{eq:Wevalproblem}, which decays exponentially in backward time, satisfies
\begin{align}
\Psi_{\dagger,-}(0,\tilde{\lambda}) &= \beta_{\dagger,-}\omega_{\dagger}(0) + \zeta_{\dagger,-} Q_{\dagger,-}^\mathrm{u}(0) \Psi_0 + \beta_{\dagger,-}\int_{-L_\eps}^0 T_{\dagger,-}^{\mathrm{s}}(0,\hat{\xi})B_{\dagger}(\hat{\xi};\tilde{\lambda}, \ell,\eps)\omega_{\dagger}(\hat{\xi})d\hat{\xi} + \h_{\dagger,-}(\beta_{\dagger,-},\zeta_{\dagger,-}), \label{eq:decay-}\end{align}
for some $\beta_{\dagger,-}, \zeta_{\dagger,-} \in \C$, where $\h_{\dagger,-}$ is a linear map satisfying
\begin{align*} \|\h_{\dagger,-}(\beta_{\dagger,-},\zeta_{\dagger,-})\| \leq  C\left((\eps |\!\log \eps | + |\tilde{\lambda}|)|\zeta_{\dagger,-}|+(\eps |\!\log \eps | + |\tilde{\lambda}|)^2|\beta_{\dagger,-}|\right).\end{align*}

\item \label{prop:eigenconstructii} Any solution $\Psi_{\mathrm{sl}}(\xi,\tilde{\lambda})$ to~\eqref{eq:Wevalproblem} which is bounded along the slow manifold $\mathcal{M}^r_\eps$ satisfies
\begin{align}
  \label{eq:slowentry} \Psi_{\mathrm{sl}}(0,\tilde{\lambda})&= \beta_{\dagger}\omega_{\dagger}(0)+ \beta_{\dagger}\int_{L_\eps}^0 T_{\dagger,+}^{\mathrm{u}}(0,\hat{\xi})B_{\dagger}(\hat{\xi};\tilde{\lambda}, \ell,\eps)\omega_{\dagger}(\hat{\xi})d\hat{\xi} + \h_\dagger(\beta_\dagger,\beta_\diamond, \zeta_\diamond),\\
\label{eq:slowexit}
\Psi_{\mathrm{sl}}(Z_{\eps},\tilde{\lambda}) &= \beta_{\diamond}\omega_{\diamond}(0)+\zeta_{\diamond}Q_{\diamond,-}^\mathrm{u}(0)\Psi_0+ \beta_{\diamond}\int_{-L_\eps}^0 T_{\diamond,-}^{\mathrm{s}}(0,\hat{\xi})B_{\diamond}(\hat{\xi};\tilde{\lambda}, \ell,\eps)\omega_{\diamond}(\hat{\xi})d\hat{\xi} + \h_\diamond(\beta_\dagger,\beta_\diamond,\zeta_\diamond),\end{align}
for some $\beta_{\dagger},\beta_{\diamond},\zeta_{\diamond} \in \C$, where $\h_\dagger$ and $\h_\diamond$ are linear maps satisfying
\begin{align*}\|\h_\dagger(\beta_\dagger,\beta_\diamond,\zeta_\diamond)\| & \leq C\left((\eps |\!\log \eps | + |\tilde{\lambda}|)^2|\beta_{\dagger}|+e^{-q/\eps}(|\beta_{\diamond}|+|\zeta_{\diamond}|)\right),\\
\|\h_\diamond(\beta_\dagger,\beta_\diamond,\zeta_\diamond)\|& \leq C\left((\eps |\!\log \eps | + |\tilde{\lambda}|)|\zeta_{\diamond}|+(\eps |\!\log \eps | + |\tilde{\lambda}|)^2|\beta_{\diamond}| + e^{-q/\eps}|\beta_{\dagger}|\right).\end{align*}
\item \label{prop:eigenconstructiii}
Any solution $\Psi_{\diamond,+}(\xi,\tilde{\lambda})$ to~\eqref{eq:Wevalproblem} which decays exponentially in forward time satisfies
\begin{align}
\Psi_{\diamond,+}(Z_\eps,\tilde{\lambda}) &= \beta_{\diamond,+} \omega_{\diamond}(0) + \beta_{\diamond,+}\int_{\infty}^0 T_{\diamond,+}^{\mathrm{u}}(0,\hat{\xi})B_{\diamond}(\hat{\xi};\tilde{\lambda}, \ell,\eps)\omega_{\diamond}(\hat{\xi})d\hat{\xi} + \h_{\diamond,+}(\beta_{\diamond,+}),\label{eq:decay+}
\end{align}
for some $\beta_{\diamond,+} \in \C$, where $\h_{\diamond,+}$ is a linear map satisfying
\begin{align*} \|\h_{\diamond,+}(\beta_{\diamond,+})\| \leq C(\eps |\!\log\eps| + |\tilde{\lambda}|)^2|\beta_{\diamond,+}|, \end{align*}
\end{enumerate}
Moreover, the functions $\psi_{\dagger,-}(\xi,\tilde{\lambda})$, $\psi^{\mathrm{sl}}(\xi,\tilde{\lambda})$, and $\psi_{\diamond,+}(\xi,\tilde{\lambda})$ are analytic in $\tilde{\lambda}$.
\end{Proposition}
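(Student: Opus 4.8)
I would follow the Lin's‑method / variation‑of‑constants approach of~\cite[\S6]{cdrs}. On each of the three intervals $(-\infty,0]$, $[0,Z_\eps]$ and $[Z_\eps,\infty)$ I rewrite the weighted eigenvalue problem~\eqref{eq:Wevalproblem} as a perturbation of a \emph{piecewise} reduced problem: along the fast segments I use the reduced front systems~\eqref{eq:Wreducedfront}/\eqref{eq:Wreducedback} for $\phi_\dagger,\phi_\diamond$ together with the exponential dichotomies of Proposition~\ref{prop:expdich}, and along the slow segments (the tails near $p_0(a)$ and the plateau near $\mathcal{M}^r_\eps$) I use the exponential dichotomies of Proposition~\ref{prop:slowexpdich}. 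In every case the two sources of smallness are the perturbation estimate~\eqref{Bbound}, $\|B_j\|\leq C(\eps|\!\log\eps|+|\tilde\lambda|)$, which comes from the pointwise bounds of Theorem~\ref{thm:existencebounds}, and the exponential decay of the stable/unstable evolutions in the dichotomies; the latter is what prevents the relevant integral operators from picking up factors of $L_\eps=-\nu\log\eps$, so the associated fixed‑point maps are contractions with Lipschitz constant $\mathcal{O}(\eps|\!\log\eps|+|\tilde\lambda|)$ for $\delta,\eps$ small, uniformly in $|\ell|\leq L_M$.

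\textbf{Items~\ref{prop:eigenconstructi} and~\ref{prop:eigenconstructiii}.} A solution of~\eqref{eq:Wevalproblem} decaying in backward time must, on $I_\ell=(-\infty,-L_\eps]$, lie in the two‑dimensional range of the unstable projection $\mathcal{Q}_\ell^\mathrm{u}$ of the dichotomy from Proposition~\ref{prop:slowexpdich}; by the estimate $\|[\mathcal{Q}_\ell^\mathrm{s}-\mathcal{P}](-L_\eps)\|\leq C|\eps\!\log\eps|$ its value at $\xi=-L_\eps$ is $\mathcal{O}(\eps|\!\log\eps|)$‑close to the spectral unstable subspace of the frozen coefficient matrix. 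On $[-L_\eps,0]$ I pass to the form~\eqref{eq:Wreducedfront} and apply the $\R_-$‑dichotomies of Proposition~\ref{prop:expdich}\ref{prop:expdichii}, whose unstable subspace at $\xi=0$ is $\mathrm{Span}(\omega_\dagger(0),\Psi_0)$; solving the variation‑of‑constants fixed‑point equation against this dichotomy yields~\eqref{eq:decay-}, where $\beta_{\dagger,-},\zeta_{\dagger,-}$ are the coordinates in this subspace, the displayed integral is the leading correction produced by $B_\dagger$ acting on the bounded solution $\omega_\dagger$, and $\h_{\dagger,-}$ collects the remaining terms (quadratically small in $\beta_{\dagger,-}$, since they involve two applications of $B_\dagger$ to the $\omega_\dagger$‑component, and linearly small in $\zeta_{\dagger,-}$). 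Item~\ref{prop:eigenconstructiii} is the mirror statement on $[Z_\eps,\infty)$: the forward‑decaying solution lies on $[Z_\eps+L_\eps,\infty)$ in the one‑dimensional strong stable fiber of $p_0(a)$, and on $[Z_\eps,Z_\eps+L_\eps]$ is governed by~\eqref{eq:Wreducedback} with the $\R_-$‑dichotomies of Proposition~\ref{prop:expdich}; since the relevant reduced stable subspace at $\xi=0$ is just $\mathrm{Span}(\omega_\diamond(0))$ (the $\Psi_0$‑direction being unstable), only the single constant $\beta_{\diamond,+}$ appears and $\h_{\diamond,+}$ is purely quadratic.

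\textbf{Item~\ref{prop:eigenconstructii}.} Here the interval $[0,Z_\eps]$ must be split into $[0,L_\eps]$ (governed by the $\phi_\dagger$‑front system~\eqref{eq:Wreducedfront} with $\R_+$‑dichotomies), the plateau $I_r=[L_\eps,Z_\eps-L_\eps]$ of length $\mathcal{O}(1/\eps)$ (governed by the dichotomy of Proposition~\ref{prop:slowexpdich} along $\mathcal{M}^r_\eps$), and $[Z_\eps-L_\eps,Z_\eps]$ (governed by the $\phi_\diamond$‑front system~\eqref{eq:Wreducedback} with $\R_-$‑dichotomies). The solution bounded along $\mathcal{M}^r_\eps$ is the one whose component in the slow unstable direction is negligible; transporting a bounded solution forward across the normally hyperbolic segment $I_r$ contracts the associated subspaces by a factor $\mathcal{O}(e^{-q/\eps})$, and this is precisely the origin of the exponentially small coupling terms $e^{-q/\eps}(|\beta_\diamond|+|\zeta_\diamond|)$ and $e^{-q/\eps}|\beta_\dagger|$ in the bounds for $\h_\dagger,\h_\diamond$. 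Concatenating the three variation‑of‑constants fixed points and enforcing continuity at $\xi=L_\eps$ and $\xi=Z_\eps-L_\eps$ — using the $\mathcal{O}(\eps|\!\log\eps|)$ closeness of the fast‑ and slow‑dichotomy projections at these two corners (Proposition~\ref{prop:slowexpdich} and Theorem~\ref{thm:existencebounds}) — gives the expansions~\eqref{eq:slowentry}--\eqref{eq:slowexit} with three free constants $\beta_\dagger,\beta_\diamond,\zeta_\diamond$. Analyticity of $\Psi_{\dagger,-},\Psi_{\mathrm{sl}},\Psi_{\diamond,+}$ in $\tilde\lambda$ then follows from the analytic dependence of $A_\eta(\xi;\tilde\lambda,\ell,\eps)$ and of the dichotomy projections of Propositions~\ref{prop:slowexpdich} and~\ref{prop:expdich} on $\tilde\lambda$, since a uniformly contracting fixed‑point map depending analytically on a parameter has an analytic fixed point.

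\textbf{Main obstacle.} I expect the delicate step to be the bookkeeping in item~\ref{prop:eigenconstructii}: one must transport a bounded solution across the long slow manifold $\mathcal{M}^r_\eps$ while keeping \emph{uniform} control (in $\tilde\lambda\in R_1(\delta)$, $|\ell|\leq L_M$ and $\eps$) of the perturbed dichotomy subspaces through the two corners where $\phi_\dagger,\phi_\diamond$ attach to $\mathcal{M}^r_\eps$, and isolate the $e^{-q/\eps}$ terms cleanly enough that the matching conditions obtained later by equating~\eqref{eq:decay-}, \eqref{eq:slowentry}--\eqref{eq:slowexit} and~\eqref{eq:decay+} at $\xi=0$ and $\xi=Z_\eps$ reduce, at leading order, to the Fredholm solvability conditions~\eqref{eq:FredholmConditionDesertHomoclinic1}--\eqref{eq:FredholmConditionDesertHomoclinic2} found formally in~\S\ref{sec:pointSpectrumFormal}.
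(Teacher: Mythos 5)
Your strategy is essentially the same as the paper's: the paper's proof simply cites Propositions 6.8--6.10 of~\cite{cdrs}, which carry out exactly the Lin's-method / variation-of-constants construction you outline, using the dichotomies of Propositions~\ref{prop:slowexpdich} and~\ref{prop:expdich} and the bounds from Theorem~\ref{thm:existencebounds} and~\eqref{Bbound} to obtain the stated expansions and error estimates. One small slip in item~\ref{prop:eigenconstructiii}: the forward-decaying solution on $[Z_\eps,\infty)$ corresponds to $\hat\xi\in\R_+$ after shifting, so the relevant dichotomy is the $\R_+$ one (projections $Q_{\diamond,+}^{\mathrm{u},\mathrm{s}}$, evolution $T_{\diamond,+}^{\mathrm{u}}$) rather than $\R_-$ as you wrote — though you then correctly identify $R(Q_{\diamond,+}^\mathrm{s}(0))=\mathrm{Span}(\omega_\diamond(0))$ from~\eqref{eq:projdichfull0}, so the rest of your bookkeeping is consistent.
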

\begin{proof}
Using the exponential dichotomies from Propositions~\ref{prop:slowexpdich} and~\ref{prop:expdich}\ref{prop:expdichii}, the proof is nearly identical to the proofs of Propositions~6.8--6.10 in~\cite{cdrs}.
\end{proof}

It remains to solve the matching conditions which arise when attempting to glue together the three solutions from Proposition~\ref{prop:eigenconstruct}~\ref{prop:eigenconstructi}--\ref{prop:eigenconstructiii} at $\xi=0$ and $\xi=Z_\eps$, in order to construct an exponentially localized eigenfunction.
\begin{Theorem} \label{thm:matching}
There exists $\delta, \eps_0 > 0$ such that for $\eps \in (0,\eps_0)$ and $|\ell| \leq L_M$, the eigenvalue problem~\eqref{eq:Wevalproblem} has precisely two  eigenvalues $\tilde{\lambda}_0(\ell),\tilde{\lambda}_c(\ell) \in R_1(\delta)$ given by
\begin{align*} \tilde{\lambda}_0(\ell)=\mathcal{O}(|\eps\!\log \eps |^2), \qquad \tilde{\lambda}_c(\ell) &= -\frac{M_{\dagger,\eps}^\mathrm{d}}{M_{\dagger,\tilde{\lambda}}^\mathrm{d}}\eps + \mathcal{O}\left(\left|\eps \! \log\eps\right|^2\right), \end{align*}
where
\begin{align}
	M_{\dagger,\tilde{\lambda}}^\mathrm{d} &:= \int_{-\infty}^\infty v_\dagger'(\xi;a)^2 e^{c^*(a)\xi}\ d\xi > 0,\\
	  M_{\dagger,\eps}^\mathrm{d} & := \left[ u^*(a) - a + u^*(a) v_+(u^*(a))^2 \right] \int_{\infty}^\infty (1-bv_\dagger(\xi)) v_\dagger(\xi)^2 e^{c^*(a) \xi} v_\dagger'(\xi)\ d\xi > 0.
\end{align} 
The derivatives of $\tilde{\lambda}_0(\ell)$ with respect to $\ell$ satisfy the same estimates, and $\tilde{\lambda}_0(0)=\tilde{\lambda}_0'(0)=0$.
\end{Theorem}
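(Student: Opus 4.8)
The plan is to realise eigenvalues $\tilde{\lambda}\in R_1(\delta)$ of~\eqref{eq:Wevalproblem} as precisely the values for which the three solution families of Proposition~\ref{prop:eigenconstruct} glue into a single exponentially localized solution on $\mathbb{R}$. I will impose the matching conditions $\Psi_{\dagger,-}(0,\tilde{\lambda})=\Psi_{\mathrm{sl}}(0,\tilde{\lambda})$ and $\Psi_{\mathrm{sl}}(Z_\eps,\tilde{\lambda})=\Psi_{\diamond,+}(Z_\eps,\tilde{\lambda})$, and expand both sides at each matching point in the basis $\{\omega_j(0),\omega_{j,\ad}(0),\Psi_0\}$ of $\mathbb{C}^3$ (a basis since $\psi_j(0),\psi_{j,\ad}(0)$ are linearly independent, cf.~\eqref{eq:defpsi}--\eqref{eq:defpsiad}), reading the coefficients off from~\eqref{eq:decay-},~\eqref{eq:slowentry},~\eqref{eq:slowexit},~\eqref{eq:decay+} and the range identities~\eqref{eq:projdichfull0}. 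This produces a $6\times 6$ homogeneous linear system in the coefficients $\beta_{\dagger,-},\zeta_{\dagger,-},\beta_\dagger,\beta_\diamond,\zeta_\diamond,\beta_{\diamond,+}$ with $\tilde{\lambda}$ (and $|\ell|\leq L_M$) as parameters; eigenvalues are exactly the parameter values making it singular. The wavenumber $\ell$ enters the coefficient matrices only through the $\mathcal{O}(\eps)$ term $-\frac{\eps}{1+\eps c_s}\ell^2$ in the $(1,1)$-entry, so it can be carried as a benign parameter, and the whole construction depends on it only through $\ell^2$.

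\textbf{Reduction to a $2\times 2$ bifurcation system.} Next I would carry out a Lyapunov--Schmidt reduction: the components of the matching equations in the directions $\omega_j(0)$ and $\Psi_0$ form a subsystem whose coefficient matrix is, by~\eqref{projdich},~\eqref{eq:projdichfull0}, the perturbation bounds~\eqref{Bbound}, the estimates on $\h_{\dagger,-},\h_\dagger,\h_\diamond,\h_{\diamond,+}$ in Proposition~\ref{prop:eigenconstruct}, and Proposition~\ref{prop:slowexpdich}, an $\mathcal{O}(\eps|\!\log\eps|+|\tilde{\lambda}|)$ perturbation of an invertible matrix; I solve it for the slaved variables $\zeta_{\dagger,-},\zeta_\diamond,\beta_{\dagger,-},\beta_{\diamond,+}$ and substitute back. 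What remains are two scalar bifurcation equations — the $\omega_{\dagger,\ad}(0)$-component of the $\xi=0$ matching and the $\omega_{\diamond,\ad}(0)$-component of the $\xi=Z_\eps$ matching — which, evaluated with $\psi_{j,\ad}(\xi)=(q_j'(\xi),-v_j'(\xi))^t e^{(c^*(a)-\eta)\xi}$, the forcing vectors $F_j=(0,-(1-bv_j)v_j^2)^t$ from~\eqref{eq:projdichfull}, and the explicit $\eps$-dependent entries of $A_\eta$, take the form $\mathcal{M}(\tilde{\lambda},\ell,\eps)(\beta_\dagger,\beta_\diamond)^t=0$ with
\begin{align*}
\mathcal{M}(\tilde{\lambda},\ell,\eps)=\begin{pmatrix}\tilde{\lambda}\,M^\mathrm{d}_{\dagger,\tilde{\lambda}}+\eps\,M^\mathrm{d}_{\dagger,\eps}+E_{11} & E_{12}\\[0.4em] E_{21} & \tilde{\lambda}\,M^\mathrm{d}_{\diamond,\tilde{\lambda}}+E_{22}\end{pmatrix},
\end{align*}
where $M^\mathrm{d}_{\dagger,\tilde{\lambda}},M^\mathrm{d}_{\dagger,\eps},M^\mathrm{d}_{\diamond,\tilde{\lambda}}>0$ are the Melnikov-type integrals of~\S\ref{sec:pointSpectrumFormal}, the entries of $\mathcal{M}$ are analytic in $\tilde{\lambda}$, $E_{11},E_{22}=\mathcal{O}(|\eps\!\log\eps|^2+|\tilde{\lambda}|^2)$, $E_{12}=\mathcal{O}(\eps|\!\log\eps|+|\tilde{\lambda}|)$, and $E_{21}=\mathcal{O}(e^{-q/\eps})$ (the two fast fields being coupled only through the long slow passage along $\mathcal{M}^r_\eps$). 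The key structural point — the rigorous counterpart of~\eqref{eq:FredholmConditionDesertHomoclinic1}--\eqref{eq:FredholmConditionDesertHomoclinic2} — is that the $(2,2)$-entry carries no $\mathcal{O}(\eps)$ term, because the $\diamond$-jump lands on the fixed point $p_0(a)$ and the analogue of the constant $\alpha_\diamond u_{\diamond,1}'-\bar u_{\diamond,1}$ vanishes there, whereas the $\dagger$-jump lands on $\mathcal{M}^r_0$ away from the equilibrium and produces $\eps\,M^\mathrm{d}_{\dagger,\eps}$.

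\textbf{Counting and locating the eigenvalues.} The eigenvalues in $R_1(\delta)$ are the zeros of the analytic function $d(\tilde{\lambda}):=\det\mathcal{M}(\tilde{\lambda},\ell,\eps)=\big(\tilde{\lambda}M^\mathrm{d}_{\dagger,\tilde{\lambda}}+\eps M^\mathrm{d}_{\dagger,\eps}\big)\tilde{\lambda}M^\mathrm{d}_{\diamond,\tilde{\lambda}}+\mathcal{O}(|\eps\!\log\eps|^2(|\tilde{\lambda}|+\eps)+e^{-q/\eps})$. Its leading part is a quadratic in $\tilde{\lambda}$ with positive leading coefficient and simple roots $\tilde{\lambda}=0$ and $\tilde{\lambda}=-\eps M^\mathrm{d}_{\dagger,\eps}/M^\mathrm{d}_{\dagger,\tilde{\lambda}}$, both well inside $R_1(\delta)$ for $\delta$ fixed small and $\eps$ small; since the remainder is dominated on $\partial R_1(\delta)$ and on small disks around each leading root, Rouché's theorem gives precisely two zeros $\tilde{\lambda}_0(\ell),\tilde{\lambda}_c(\ell)$, with $\tilde{\lambda}_0=\mathcal{O}(|\eps\!\log\eps|^2)$ and $\tilde{\lambda}_c=-\eps M^\mathrm{d}_{\dagger,\eps}/M^\mathrm{d}_{\dagger,\tilde{\lambda}}+\mathcal{O}(|\eps\!\log\eps|^2)$. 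Because the whole reduction depends on $\ell$ only through $\ell^2$ and smoothly, both branches are smooth even functions of $\ell$, so their $\ell$-derivatives satisfy the same estimates (Cauchy estimates in $\tilde{\lambda}$, smoothness in $\ell^2$) and $\tilde{\lambda}_0'(0)=0$; and $\tilde{\lambda}_0(0)=0$ exactly, since at $\ell=0$ translation invariance of~\eqref{eq:TWPDE} supplies the exact eigenfunction $\phi_\mathrm{d}'$ (equivalently the weighted solution $e^{\eta\xi}\phi_\mathrm{d}'$), forcing $d(0)=0$.

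\textbf{Main obstacle.} The heart of the argument — and essentially its only delicate part — is the error control in the reduced system: one must propagate the interlocking $\mathcal{O}(\eps|\!\log\eps|)$, $\mathcal{O}(|\tilde{\lambda}|)$, and $\mathcal{O}(e^{-q/\eps})$ contributions through the gluing so that, after projection onto the adjoint directions, they collapse to $\mathcal{O}(|\eps\!\log\eps|^2)$ rather than $\mathcal{O}(\eps|\!\log\eps|)$. This is what makes $\eps M^\mathrm{d}_{\dagger,\eps}$ the genuine leading correction and the asymptotics for $\tilde{\lambda}_c$ sharp; it rests on Theorem~\ref{thm:existencebounds}, on the relevant integrals over $I_\dagger=[-L_\eps,L_\eps]$ differing from those over $\mathbb{R}$ by only $\mathcal{O}(\eps^\nu)$, and on the $\phi_\mathrm{d}$-versus-$\phi_\dagger$ discrepancy integrating against the exponentially weighted adjoint to a quantity of order $\eps$. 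One must also confirm the $\mathcal{O}(e^{-q/\eps})$ decoupling of the fast fields, so that the off-diagonal product in $\det\mathcal{M}$ is negligible and the two roots separate cleanly. All of this is structurally identical to the FitzHugh--Nagumo pulse analysis in~\cite{cdrs}; the one new element, the transverse wavenumber $\ell$, enters only at $\mathcal{O}(\eps)$ and requires no change to the scheme.
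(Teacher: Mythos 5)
Your proposal follows essentially the same route as the paper's proof: Lin's method with matching at $\xi=0$ and $\xi=Z_\eps$, projection onto $\Psi_0$ and $\Psi_{j,\perp}$ to slave out $\zeta_{\dagger,-},\zeta_\diamond,\beta_{\dagger,-},\beta_{\diamond,+}$ and reduce to a $2\times 2$ system in $(\beta_\dagger,\beta_\diamond)$, Rouch\'e on the determinant, translation invariance for $\tilde{\lambda}_0(0)=0$, and evenness in $\ell$ for $\tilde{\lambda}_0'(0)=0$. The one place you only gesture at what the paper carries out in full is the extraction of the $\eps M^\mathrm{d}_{\dagger,\eps}$ coefficient — in the paper this is the integration-by-parts identity for $\mathcal{J}_\dagger$ exploiting that $\Phi_\mathrm{d}'$ solves the $\ell=0$ linearized equation and $\psi_{\dagger,\ad}$ solves the adjoint of $C_{\dagger,0}$, with $\mathcal{J}_\diamond=\mathcal{O}(\eps^2)$ because $u_\mathrm{d}'(Z_\eps+\xi)\to0$ — but you have correctly identified both the quantity and the mechanism, so this is elaboration rather than a gap.
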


\begin{proof}
We recall from Proposition~\ref{prop:eigenconstruct} that any exponentially localized solution must satisfy the conditions~\eqref{eq:decay-}--\eqref{eq:decay+} at $\xi=0,Z_\eps$ for some $\beta_{\dagger,-},\zeta_{\dagger,-},\beta_\dagger,\beta_\diamond,\zeta_\diamond, \beta_{\diamond,+} \in\C$. Therefore, to obtain an exponentially localized solution to~\eqref{eq:Wevalproblem} we match the solutions $\Psi_{\dagger,-},\Psi_{\mathrm{sl}}$ at $\xi = 0$ and the solutions $\Psi_{\mathrm{sl}},\Psi_{\diamond,+}$ at $\xi = Z_{a,\eps}$, which results in matching conditions which must be satisfied by $\tilde{\lambda}$ and $\eps$ which can be solved to find eigenfunctions. Since the projections $Q_{j,+}^{\mathrm{u},\mathrm{s}}(0)$ associated with the exponential dichotomy of~\eqref{eq:Wevalproblemreduced} established in Proposition~\ref{prop:expdich}\ref{prop:expdichii} satisfy
\begin{align*}
Q_{j,+}^\mathrm{u}(0)+Q_{j,+}^\mathrm{s}(0)=I, \quad j=\dagger,\diamond,
\end{align*}
this is equivalent to ensuring that the differences $\Psi_{\dagger,-}(0,\tilde{\lambda})-\Psi_{\mathrm{sl}}(0,\tilde{\lambda})$ and $\Psi_{\mathrm{sl}}(Z_\eps,\tilde{\lambda})-\Psi_{\diamond,+}(Z_\eps,\tilde{\lambda})$ vanish under the projections $Q_{\dagger,+}^{\mathrm{u},\mathrm{s}}(0)$ and $Q_{\diamond,+}^{\mathrm{u},\mathrm{s}}(0)$, respectively.

We first note that we must have $\beta_\dagger = \beta_{\dagger,-}$ and $\beta_\diamond = \beta_{\diamond,+}$. This can be seen by applying $Q_{j,+}^\mathrm{s}(0)$, $j = \dagger,\diamond,$ to the differences $\Psi_{\dagger,-}(0,\tilde{\lambda})-\Psi_{\mathrm{sl}}(0,\tilde{\lambda})$ and $\Psi_{\mathrm{sl}}(Z_\eps,\tilde{\lambda})-\Psi_{\diamond,+}(Z_\eps,\tilde{\lambda})$, respectively, using the expressions~\eqref{eq:decay-}--\eqref{eq:decay+}. 

We next recall the vectors $\omega_{j,\mathrm{ad}}(0)$ and $\Psi_0$ defined in~\eqref{eq:omegadef}. By~\eqref{eq:projdichfull0} the vectors $\Psi_0$ and
\begin{align*} \Psi_{j,\perp} := \omega_{j,\mathrm{ad}}(0) - \left(\int_{-\infty}^0  e^{\eta \xi}\left\langle \psi_{j,\ad}(\xi),F_j(\xi)\right\rangle d\xi \right) \Psi_0, \quad F_j(\xi) = \left(\begin{array}{c} 0 \\ -(1 - b v_j(\xi)) v_j(\xi)^2 \end{array}\right), \quad j = \dagger,\diamond, \end{align*}
span $R(Q_{j,+}^\mathrm{u}(0))$. Hence we aim to show that the inner products of the differences $\Psi_{\dagger,-}(0,\tilde{\lambda})-\Psi_{\mathrm{sl}}(0,\tilde{\lambda})$ and $\Psi_{\mathrm{sl}}(Z_\eps,\tilde{\lambda})-\Psi_{\diamond,+}(Z_\eps,\tilde{\lambda})$ with $\Psi_0$ and $\Psi_{j,\perp}$ vanish for $j = \dagger,\diamond$, respectively. Using~\eqref{eq:decay-}--\eqref{eq:decay+} we first project along along $\Psi_0$, whereby
\begin{align}
\begin{split}
0 &= \left\langle \Psi_0, \Psi_{\dagger,-}(0,\tilde{\lambda})-\Psi_{\mathrm{sl}}(0,\tilde{\lambda}) \right\rangle = \zeta_{\dagger,-} + \mathcal{O}\left(\left(\eps |\!\log \eps | + |\tilde{\lambda}|\right) \left(|\beta_\dagger|+ |\zeta_{\dagger,-}|\right)+e^{-q/\eps}(|\beta_\diamond| + |\zeta_\diamond|)\right),\\
0 &= \left\langle \Psi_0, \Psi_{\mathrm{sl}}(Z_\eps,\tilde{\lambda})-\Psi_{\diamond,+}(Z_\eps,\tilde{\lambda}) \right\rangle = \zeta_\diamond+ \mathcal{O}\left(\left(\eps |\!\log \eps |+|\tilde{\lambda}|\right)\left(|\beta_\diamond| +|\zeta_\diamond|\right)+e^{-q/\eps}|\beta_\dagger|\right),
\end{split} \label{match1}
\end{align}
where we used Theorem~\ref{thm:existencebounds}~\ref{thm:existenceboundf} and~\ref{thm:existenceboundb}, and~\eqref{Bbound}. Provided $|\tilde{\lambda}|,\eps > 0$ are sufficiently small, we can solve~\eqref{match1} for $\zeta_{\dagger,-}$ and $\zeta_\diamond $ to obtain
\begin{align}
\begin{split}\label{eq:matchzeta}
\zeta_{\dagger,-} &= \mathcal{O}\left((\eps |\!\log \eps |+|\tilde{\lambda}|)|\beta_\dagger| + e^{-q/\eps}|\beta_\diamond|\right)\\
\zeta_\diamond &= \mathcal{O}\left((\eps |\!\log \eps |+|\tilde{\lambda}|)|\beta_\diamond| + e^{-q/\eps}|\beta_\dagger|\right). 
\end{split}
\end{align}
We substitute~\eqref{eq:matchzeta} into~\eqref{eq:decay-}--\eqref{eq:decay+} and noting $\Psi_{j,\perp}\in\ker(Q_{j,-}^\mathrm{u}(0)^*) = R(Q_{j,-}^\mathrm{s}(0)^*) \subset R(Q_{j,+}^\mathrm{u}(0)^*)$ for $j = \dagger,\diamond$, we obtain the final conditions by projecting with $\Psi_{j,\perp}, j=\dagger, \diamond,$ whereby
\begin{align}
\begin{split}0 &= \left\langle \Psi_{\dagger, \perp}, \Psi_{\dagger,-}(0,\tilde{\lambda})-\Psi_{\mathrm{sl}}(0,\tilde{\lambda}) \right\rangle\\ 
&= \beta_\dagger \underbrace{\int_{-L_\eps}^{L_\eps} \left\langle T_\dagger(0,\xi)^* \Psi_{\dagger,\perp}, B_\dagger(\xi;\tilde{\lambda}, \ell,\eps) \omega_\dagger(\xi)\right\rangle d\xi }_{=:\mathcal{I}_\dagger}+ \mathcal{O}\left(\left(\eps|\!\log \eps | + |\tilde{\lambda}|\right)^2 |\beta_\dagger| +e^{-q/\eps}|\beta_\diamond|\right),\end{split} \label{eq:matchperp1}\\
\begin{split}0 &= \left\langle \Psi_{\diamond, \perp}, \Psi_{\mathrm{sl}}(Z_\eps,\tilde{\lambda})-\Psi_{\diamond,+}(Z_\eps,\tilde{\lambda}) \right\rangle\\
 &= \beta_\diamond \underbrace{\int_{-L_\eps}^{\infty} \left\langle T_\diamond(0,\xi)^* \Psi_{\diamond,\perp}, B_\diamond(\xi;\tilde{\lambda}, \ell,\eps) \omega_\diamond(\xi)\right\rangle d\xi}_{=:\mathcal{I}_\diamond} + \mathcal{O}\left(\left(\eps|\!\log \eps | +|\tilde{\lambda}|\right)^2 |\beta_\diamond| + e^{-q/\eps}|\beta_\dagger|\right).\end{split} \label{eq:matchperp2}
\end{align}

To estimate the integrals $\mathcal{I}_j$ for $j=\dagger,\diamond$ appearing in~\eqref{eq:matchperp1}--\eqref{eq:matchperp2}, we note that $T_{j}(0,\xi)^* \Psi_{j,\perp} $ is the solution to the adjoint equation
\begin{align}
\Psi'=-A_{j,\eta}^*\Psi
\end{align}
of~\eqref{eq:Wevalproblemreduced} satisfying $\Psi(0)=\Psi_{j,\perp}$; hence we calculate
\begin{align}
T_{j}(0,\xi)^* \Psi_{j,\perp} = \left(\begin{array}{c} -\int_{-\infty}^\xi \left\langle \psi_{j,\ad}(\hat{\xi}), F_j(\hat{\xi}) \right\rangle d\hat{\xi}\\  \psi_{j,\ad}(\xi)\end{array}\right) = \left(\begin{array}{c}  -\int_{-\infty}^\xi e^{(c^*(a)-\eta) \hat{\xi}} (1 - b v_j(\hat{\xi})) v_j(\hat{\xi})^2v_j'(\hat{\xi}) d\hat{\xi}\\ e^{(c^*(a)-\eta) \xi} q_j'(\xi) \\ -e^{(c^*(a)-\eta) \xi} v_j'(\xi)\end{array}\right), \label{adjointintegral}
\end{align}
for $\xi\in\R$ and $j=\dagger,\diamond$. We now approximate $\mathcal{I}_j$ by first extracting the leading order $\tilde{\lambda}$ contribution, whereby we obtain
\begin{align}
\begin{split} \label{eq:matchapproxfront}
\mathcal{I}_\dagger &= \underbrace{\int_{-L_\eps}^{L_\eps} \left\langle e^{\eta \xi } T_\dagger(0,\xi)^* \Psi_{\dagger,\perp}, B_\dagger(\xi;0, \ell,\eps) \phi_\mathrm{d}'(\xi)\right\rangle d\xi }_{=:\mathcal{J}_\dagger} - M^\mathrm{d}_{\dagger,\tilde{\lambda}} \tilde{\lambda} + \mathcal{O}\left(|\eps\!\log\eps|(|\tilde{\lambda}|+|\eps\!\log\eps|)\right)\end{split}\\
 \begin{split}
\mathcal{I}_\diamond
&= \underbrace{\int_{-L_\eps}^{\infty} \left\langle e^{\eta \xi } T_\diamond(0,\xi)^* \Psi_{\diamond,\perp}, B_\diamond(\xi;0, \ell,\eps) \phi_\mathrm{d}'(\xi+Z_\eps)\right\rangle d\xi}_{=:\mathcal{J}_\diamond} - M^\mathrm{d}_{\diamond,\tilde{\lambda}} \tilde{\lambda}  + \mathcal{O}\left(|\eps\!\log\eps|(|\tilde{\lambda}|+|\eps\!\log\eps|)\right),
\end{split}
 \label{eq:matchapproxback}
\end{align}
where 
\begin{align}
M^\mathrm{d}_{\dagger,\tilde{\lambda}}&:=\int_{-\infty}^{\infty} e^{c^*(a) \xi} \left(v_\dagger'(\xi)\right)^2 d\xi = \int_{-L_\eps}^{L_\eps} e^{c^*(a) \xi} \left(v_\dagger'(\xi)\right)^2 d\xi+\mathcal{O}(\eps)\\
M^\mathrm{d}_{\diamond,\tilde{\lambda}}&:=\int_{-\infty}^{\infty} e^{c^*(a) \xi} \left(v_\diamond'(\xi)\right)^2 d\xi = \int_{-L_\eps}^{\infty} e^{c^*(a) \xi} \left(v_\diamond'(\xi)\right)^2 d\xi+\mathcal{O}(\eps),
\end{align}
where we used the fact that the integrands decay exponentially to estimate the tails of the integrals. Finally, in order to obtain the leading order $\eps$ contribution, it remains to estimate the integrals $\mathcal{J}_j$ for $j=\dagger,\diamond$ which appear in the expressions~\eqref{eq:matchapproxfront}--\eqref{eq:matchapproxback}. To do this, we note that the derivative $\Phi_\mathrm{d}'(\xi)$ of the pulse solution solves the linearized equations when $\ell=0$, and therefore satisfies 
\begin{align}\Phi_\mathrm{d}''(\xi) = \left(A_{\dagger,0}(\xi)+B_\dagger(\xi;0,0,\eps) \right)\Phi_\mathrm{d}'(\xi),\qquad \xi\in[-L_\eps, L_\eps]
\end{align}
and
\begin{align} \Phi_\mathrm{d}''(\xi+Z_\eps)= \left(A_{\diamond,0}(\xi) + B_\diamond(\xi;0,0,\eps) \right)\Phi_\mathrm{d}'(\xi+Z_\eps), \qquad \xi\in[-L_\eps, \infty).
\end{align}
In particular, for $\xi\in[-L_\eps, L_\eps]$, we obtain
\begin{align*}B_\dagger(\xi;0,\ell,\eps)\Phi_\mathrm{d}'(\xi)&= \left[ \partial_\xi-A_{\dagger,0}(\xi)+B_\dagger(\xi;0,\ell,\eps)-B_\dagger(\xi;0,0,\eps)\right] \Phi_\mathrm{d}'(\xi)\\
&=\begin{pmatrix} 0\\ \left[ \partial_\xi-C_{\dagger,0}(\xi)\right]\begin{pmatrix} v_\mathrm{d}'(\xi)\\ q_\mathrm{d}'(\xi)\end{pmatrix} \end{pmatrix}+\begin{pmatrix}u_\mathrm{d}''(\xi)-\frac{\eps\ell^2}{1+\eps c_\mathrm{d}}u_\mathrm{d}'(\xi)\\ 0\\ (1-bv_\dagger(\xi))v_\dagger(\xi)^2u_\mathrm{d}'(\xi)\end{pmatrix}
\end{align*}
and similarly 
\begin{align*}B_\diamond(\xi;0,\ell,\eps)\Phi_\mathrm{d}'(\xi+Z_\eps)&= \left[ \partial_\xi-A_{\diamond,0}(\xi)+B_\diamond(\xi;0,\ell,\eps)-B_\diamond(\xi;0,0,\eps)\right] \Phi_\mathrm{d}'(\xi+Z_\eps)\\
&=\begin{pmatrix} 0\\ \left[ \partial_\xi-C_{\diamond,0}(\xi)\right]\begin{pmatrix} v_\mathrm{d}'(\xi+Z_\eps)\\ q_\mathrm{d}'(\xi+Z_\eps)\end{pmatrix} \end{pmatrix}+\begin{pmatrix} u_\mathrm{d}''(\xi+Z_\eps)-\frac{\eps\ell^2}{1+\eps c_\mathrm{d}}u_\mathrm{d}'(\xi+Z_\eps)\\0\\ (1-bv_\diamond(\xi))v_\diamond(\xi)^2u_\mathrm{d}'(\xi+Z_\eps)\end{pmatrix}
\end{align*}
for $\xi\in[-L_\eps, \infty)$. Using the fact that $\psi_{j,\ad}(\xi)$ solves~\eqref{eq:Wreducedinvariantadjoint}, we have
\begin{align}
\left[ \partial_\xi-C_{j,0}(\xi)\right]^*\left(e^{\eta \xi}\psi_{j,\ad}(\xi)\right)=0, \qquad j=\dagger, \diamond,
\end{align}
and we therefore obtain
\begin{align*}
\mathcal{J}_\dagger &=\int_{-L_\eps}^{L_\eps} \left\langle e^{\eta \xi } T_\dagger(0,\xi)^* \Psi_{\dagger,\perp}, \begin{pmatrix}u_\mathrm{d}''(\xi)-\frac{\eps\ell^2}{1+\eps c_\mathrm{d}}u_\mathrm{d}'(\xi)\\ 0\\ (1-bv_\dagger(\xi))v_\dagger(\xi)^2u_\mathrm{d}'(\xi)\end{pmatrix}\right\rangle d\xi\\
&=-\int_{-L_\eps}^{L_\eps}  \left(e^{c^*(a) \xi} v_\dagger'(\xi)(1-bv_\dagger(\xi))v_\dagger(\xi)^2u_\mathrm{d}'(\xi)+u_\mathrm{d}''(\xi)\int_{-\infty}^\xi e^{c^*(a)\hat{\xi}} (1 - b v_\dagger(\hat{\xi})) v_\dagger(\hat{\xi})^2v_\dagger'(\hat{\xi}) d\hat{\xi} \right) d\xi\\
&\qquad +\frac{\eps\ell^2}{1+\eps c_\mathrm{d}}\int_{-L_\eps}^{L_\eps}\left(u_\mathrm{d}'(\xi) \int_{-\infty}^\xi e^{c^*(a)\hat{\xi}} (1 - b v_\dagger(\hat{\xi})) v_\dagger(\hat{\xi})^2v_\dagger'(\hat{\xi}) d\hat{\xi} \right)d\xi+\mathcal{O}(\eps^2),
\end{align*}
where we used the fact that the integrands decay exponentially. Integrating by parts, we have that
\begin{align*}
\mathcal{J}_\dagger&=-\int_{-L_\eps}^{L_\eps} \frac{d}{d\xi} \left(u_\mathrm{d}'(\xi)\int_{-\infty}^\xi e^{c^*(a)\hat{\xi}} (1 - b v_\dagger(\hat{\xi})) v_\dagger(\hat{\xi})^2v_\dagger'(\hat{\xi}) d\hat{\xi} \right)d\xi\\
&\qquad +\frac{\eps\ell^2}{1+\eps c_\mathrm{d}}\left[u_\mathrm{d}(\xi) \int_{-\infty}^\xi e^{c^*(a)\hat{\xi}} (1 - b v_\dagger(\hat{\xi})) v_\dagger(\hat{\xi})^2v_\dagger'(\hat{\xi}) d\hat{\xi} \right]_{-L_\eps}^{L_\eps}\\
&\qquad  \qquad -\frac{\eps\ell^2}{1+\eps c_\mathrm{d}}\int_{-L_\eps}^{L_\eps}u_\mathrm{d}(\xi)  e^{c^*(a)\xi} (1 - b v_\dagger(\xi)) v_\dagger(\xi)^2v_\dagger'(\xi) d\xi +\mathcal{O}(\eps^2)\\
&=- u_\mathrm{d}'(L_\eps)\int_{-\infty}^{L_\eps} e^{c^*(a)\xi} (1 - b v_\dagger(\xi)) v_\dagger(\xi)^2v_\dagger'(\xi) d\xi +\mathcal{O}(\eps^2|\!\log\eps|)\\
&=-\eps\left[ u^*(a) - a + u^*(a) v_+(u^*(a))^2 \right] \int_{-\infty}^{\infty} e^{c^*(a)\xi} (1 - b v_\dagger(\xi)) v_\dagger(\xi)^2v_\dagger'(\xi) d\xi +\mathcal{O}(\eps^2|\!\log\eps|),
\end{align*}
where we again used the fact that the integrands decay exponentially, and we estimated $u_\mathrm{d}(\xi)=u^*(a)+\mathcal{O}(\eps\log \eps)$ for $ \xi\in[-L_\eps,L_\eps]$ and
\begin{align*}
u_\mathrm{d}'(L_\eps)&=\eps\left[ u_\mathrm{d}(L_\eps) - a + u_\mathrm{d}(L_\eps) v_\mathrm{d}'(L_\eps)^2 \right]\\
&=\eps\left[ u^*(a) - a + u^*(a) v_+(u^*(a))^2 +\mathcal{O}(|\eps\!\log\eps|)\right],
\end{align*}
using Theorem~\ref{thm:existencebounds}. Hence we have that 
\begin{align}
\mathcal{J}_\dagger &= -M^\mathrm{d}_{\dagger,\eps}\eps +\mathcal{O}(\eps^2|\!\log\eps|),
\end{align}
where
\begin{align}
M^\mathrm{d}_{\dagger,\eps}:=\left[ u^*(a) - a + u^*(a) v_+(u^*(a))^2 \right] \int_{-\infty}^{\infty} e^{c^*(a)\xi} (1 - b v_\dagger(\xi)) v_\dagger(\xi)^2v_\dagger'(\xi) d\xi>0.
\end{align}
Performing a similar computation for $\mathcal{J}_\diamond$, we arrive at 
\begin{align}
\mathcal{J}_\diamond &= - \lim_{\xi\to\infty}u_\mathrm{d}'(Z_\eps+\xi)\int_{-\infty}^{\infty} e^{c^*(a)\xi} (1 - b v_\dagger(\xi)) v_\dagger(\xi)^2v_\dagger'(\xi) d\xi +\mathcal{O}(\eps^2)=\mathcal{O}(\eps^2),
\end{align}
due to the fact that $u_\mathrm{d}'(Z_\eps+\xi)\to0$ as $\xi \to\infty$.

Substituting the expressions for $\mathcal{I}_j, \mathcal{J}_j, j=\dagger, \diamond$, into the remaining conditions~\eqref{eq:matchperp1}--\eqref{eq:matchperp2}, we find the following linear system of equations for $(\beta_\dagger,\beta_\diamond)$, solutions of which correspond to eigenfunctions of~\eqref{eq:Wevalproblem}:
\begin{align}
\begin{split}
\mathcal{M}(\tilde{\lambda},\eps)\left(\begin{array}{c} \beta_{\dagger} \\ \beta_{\diamond}\end{array}\right) &= 0,
\end{split}\label{eq:finalmatch}
\end{align}
where
\begin{align}
\begin{split}
\mathcal{M}(\tilde{\lambda},\eps)&:=\left(\begin{array}{cc} -\tilde{\lambda} M^\mathrm{d}_{\dagger,\tilde{\lambda}} -M^\mathrm{d}_{\dagger,\eps}\eps+ \mathcal{O} \left((\eps |\!\log \eps |+|\tilde{\lambda}|)^2\right) & \mathcal{O}(e^{-q/\eps})\\
\mathcal{O}(e^{-q/\eps}) & -\tilde{\lambda} M^\mathrm{d}_{\diamond,\tilde{\lambda}}+ \mathcal{O} \left((\eps |\!\log \eps |+|\tilde{\lambda}|)^2\right)\end{array}\right). \end{split} \label{eq:finalmatchmatrix}
\end{align}
Since the solutions $\Psi_{\dagger,-},\Psi_\mathrm{sl},\Psi_{\diamond,+}$ from Proposition~\ref{prop:eigenconstruct} and the matrices $B_j$ are analytic in $\tilde{\lambda}$, all entries in the matrix $\mathcal{M}(\tilde{\lambda},\eps)$~\eqref{eq:finalmatchmatrix}, and futhermore its determinant $D(\tilde{\lambda},\eps)$, are analytic in $\tilde{\lambda}$. Note that the quantities $M^\mathrm{d}_{\dagger,\eps}$ and $M^\mathrm{d}_{j,\tilde{\lambda}}, j=\dagger,\diamond$ are nonzero and independent of $\tilde{\lambda}, \eps$. Hence, provided $\delta,\eps > 0$ are sufficiently small, we have 
\begin{align*} |D(\tilde{\lambda}, \eps) - \tilde{\lambda} M^\mathrm{d}_{\diamond,\tilde{\lambda}}(\tilde{\lambda} M^\mathrm{d}_{\dagger,\tilde{\lambda}} + \eps M^\mathrm{d}_{\dagger,\eps})| < |\tilde{\lambda} M^\mathrm{d}_{\diamond,\tilde{\lambda}}(\tilde{\lambda} M^\mathrm{d}_{\dagger,\tilde{\lambda}} + \eps M^\mathrm{d}_{\dagger,\eps})|. \end{align*}
for $\tilde{\lambda} \in \partial R_1(\delta) = \{\tilde{\lambda} \in \C : |\tilde{\lambda}| = \delta\}$, and by Rouch\'e's Theorem $D(\tilde{\lambda}, \eps)$ has precisely two roots $\tilde{\lambda}_0,\tilde{\lambda}_1$ in $R_1(\delta)$ which are $\mathcal{O}(|\eps\!\log \eps |^2)$-close to the roots 
\begin{align*}\tilde{\lambda}=0,\qquad \tilde{\lambda}=-\frac{M^\mathrm{d}_{\dagger,\eps}}{M^\mathrm{d}_{\dagger,\tilde{\lambda}}}\eps\end{align*}
 of $\tilde{\lambda} M^\mathrm{d}_{\diamond,\tilde{\lambda}}(\tilde{\lambda} M^\mathrm{d}_{\dagger,\tilde{\lambda}} + \eps M^\mathrm{d}_{\dagger,\eps})$.  We deduce that~\eqref{eq:Wevalproblem} has two real eigenvalues in the region $R_1(\delta)$ given by
\begin{align*}
\tilde{\lambda}_0(\ell) &= \mathcal{O}(|\eps\!\log \eps |^2), \qquad \tilde{\lambda}_c(\ell) = -\frac{M^\mathrm{d}_{\dagger,\eps}}{M^\mathrm{d}_{\dagger,\tilde{\lambda}}}\eps+\mathcal{O}(|\eps\!\log \eps |^2),
\end{align*}
and by implicitly differentiating the characteristic equation of~\eqref{eq:finalmatchmatrix}, we furthermore obtain that the derivatives of $\tilde{\lambda}_0(\ell)$ with respect to $\ell$ satisfy the same estimates. We note that the derivative $\Phi_\mathrm{d}'$ of the pulse solution is an eigenfunction with eigenvalue $0$ when $\ell=0$ due to translation invariance, hence $\lambda_0(0)=0$. Furthermore, since~\eqref{eq:finalmatchmatrix} depends on $\ell$ only via the quantity $\ell^2$, we obtain that $\tilde{\lambda}_0'(0) = 0$.

\end{proof}

\subsection{The region $(\tilde{\lambda}(\ell),\ell)\in R_2(\delta,M)\times [-L_M,L_M]$}\label{sec:regionr2}

We now consider the final remaining region, $\tilde{\lambda}(\ell)\in R_2(\delta,M)$ for $|\ell|$ bounded. The fundamental idea is the same as for the region $R_1(\delta)$; using exponential dichotomies along the fast jumps and the slow manifolds, we attempt to construct potential eigenfunctions. However, in this region it is possible to construct exponential dichotomies along each of the intervals $I_\ell, I_\dagger,I_r,I_\diamond$, and by comparing their projections at the endpoints of these intervals we obtain estimates which preclude the existence of a nontrivial exponentially localized eigenfunction. We note that the exponential dichotomies along $I_r$ and $I_\ell$ are guaranteed by Proposition~\ref{prop:slowexpdich}. The existence of exponential dichotomies along $I_\dagger$ and $I_\diamond$ is due to the fact that the associated reduced problems along each of the fast jumps admit no eigenvalues for $\tilde{\lambda}(\ell)\in R_2(\delta,M)$. 

To see this, proceeding in a similar fashion as in~\S\ref{sec:regionr1}, we consider the following reduced problems along $I_\dagger$ and $I_\diamond$ obtained for $\eps=0$ and $\tilde{\lambda}\in R_2(\delta,M)$.
\begin{align}
\psi_\xi = A_{j,\eta}(\xi;\tilde{\lambda})\psi, \quad A_{j,\eta}(\xi;\tilde{\lambda}):= \left(\begin{array}{ccc} \eta & 0 & 0 \\ 0 & \eta & 1 \\ -(1 - b v_j(\xi)) v_j(\xi)^2 & m+\tilde{\lambda}-(2 - 3 b v_j(\xi)) u_j v_j(\xi) & \eta-c^*(a) \end{array}\right), \label{eq:Wevalproblemreduced2}
\end{align}
Here $j = \dagger,\diamond,$ where again $v_j(\xi)$ denotes the $v$-component of $\phi_j(\xi)$, and $u_\dagger=u^*(a), u_\diamond=a$. As in~\S\ref{sec:regionr1}, the lower triangular structure allows us to restrict to a two-dimensional invariant subspace with dynamics
\begin{align}
\psi' = C_{j,\eta}(\xi;\tilde{\lambda})\psi, \quad C_{j,\eta}(\xi;\tilde{\lambda}) := \left(\begin{array}{cc} \eta & 1 \\  m+\tilde{\lambda}-(2 - 3 b v_j(\xi)) u_j v_j(\xi) & \eta-c^*(a)  \end{array}\right), \quad j = \dagger,\diamond. \label{eq:Wreducedinvariant2}
\end{align}
We note that the front profiles $v_\dagger(\xi)$ and $v_\diamond(\xi)$ are solutions to the scalar equations
\begin{align*}
v_t = v_{\xi\xi}+c^*(a)v_\xi -mv+(1-bv)u_jv^2,\qquad j = \dagger,\diamond,
 \end{align*}
 and critically, the linear system~\eqref{eq:Wreducedinvariant2} is precisely the (weighted) eigenvalue problem one obtains by considering their stability with eigenvalue parameter $\tilde{\lambda}$. Since the derivatives $v_j'(\xi), j = \dagger,\diamond$ define exponentially localized eigenfunctions with no zeros when $\tilde{\lambda}=0$, Sturm-Liouville theory precludes the existence of eigenvalues in $R_2(\delta,M)$, provided $\delta$ is sufficiently small. Thus~\eqref{eq:Wreducedinvariant2} admits exponential dichotomies, which can be extended to the full system~\eqref{eq:Wevalproblemreduced2} using variation of constants formulae. Finally, these exponential dichotomies can be extended to the stability problem~\eqref{eq:Wevalproblem} on the intervals $I_\dagger$ and $I_\diamond$ using roughness results.
 
 Once exponential dichotomies are established along each of the intervals $I_\ell, I_\dagger,I_r,I_\diamond$, it remains to compare their projections at the endpoints of each interval. Using the estimates in Theorem~\ref{thm:existencebounds} combined with repeated use of a technical lemma~\cite[Lemma 6.10]{HOLZ}, it is possible to show that each pair of projections are sufficiently close at each endpoint, and further that any exponentially localized solution to~\eqref{eq:Wevalproblem} must be trivial. This is summarized in the following proposition.
 \begin{Proposition} \label{propR2}
Fix $M > 0$. There exists $\delta> 0$ such that for each sufficiently small $\eps>0$ and each $\tilde{\lambda} \in R_2(\delta,M)$, the eigenvalue problem~\eqref{eq:Wevalproblem} admits no nontrivial exponentially localized solution.
\end{Proposition}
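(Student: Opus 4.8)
\emph{Proof plan.} The plan is to build exponential dichotomies for the weighted problem~\eqref{eq:Wevalproblem} on each of the four intervals $I_\ell,I_\dagger,I_r,I_\diamond$ of Theorem~\ref{thm:existencebounds} and then, by comparing the dichotomy subspaces of adjacent intervals at the three transition points $\xi=-L_\eps$, $\xi=L_\eps$, $\xi=Z_\eps-L_\eps$, to rule out a nontrivial exponentially localized solution. Along the slow intervals $I_\ell$ and $I_r$ the dichotomies --- with constants $C,\mu>0$ uniform in $\eps$ and in $\tilde\lambda\in R_2(\delta,M)$ --- are exactly those furnished by Proposition~\ref{prop:slowexpdich}. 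Along the fast intervals $I_\dagger,I_\diamond$ one argues as in the paragraph preceding the statement: restricted to the invariant block~\eqref{eq:Wreducedinvariant2}, the reduced problem is precisely the weighted eigenvalue problem for the scalar travelling front $v_j$, and since $v_j'$ is a nowhere-vanishing, exponentially localized eigenfunction at $\tilde\lambda=0$, Sturm--Liouville theory excludes eigenvalues for all $\tilde\lambda\in R_2(\delta,M)$ once $\delta$ is chosen small; hence~\eqref{eq:Wreducedinvariant2} admits an exponential dichotomy, the block-triangular structure and variation of constants promote it to the full reduced problem~\eqref{eq:Wevalproblemreduced2}, and roughness together with the $\mathcal{O}(\eps|\log\eps|)$ proximity of the pulse to the singular fronts (parts~\ref{thm:existenceboundf} and~\ref{thm:existenceboundb} of Theorem~\ref{thm:existencebounds}) transfers it to~\eqref{eq:Wevalproblem} on $I_\dagger$ and $I_\diamond$. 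On each interval the resulting splitting of $\C^3$ consists of a backward-decaying fibre $\mathcal{V}^-$ and a complementary forward-decaying fibre $\mathcal{V}^+$.

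Next I would argue by contradiction. If $\Phi$ were a nontrivial solution of~\eqref{eq:Wevalproblem} on $\R$ decaying at both ends, then boundedness on $I_\ell$ would force $\Phi(-L_\eps)$ into $\mathcal{V}^-_\ell(-L_\eps)$, and boundedness on $I_\diamond$ would force $\Phi(Z_\eps-L_\eps)$ into $\mathcal{V}^+_\diamond(Z_\eps-L_\eps)$; since $\mathcal{V}^-_\diamond(Z_\eps-L_\eps)\oplus\mathcal{V}^+_\diamond(Z_\eps-L_\eps)=\C^3$, it suffices to show that the forward evolution of $\mathcal{V}^-_\ell(-L_\eps)$ to $\xi=Z_\eps-L_\eps$ lies in a neighbourhood of $\mathcal{V}^-_\diamond(Z_\eps-L_\eps)$ too small to meet $\mathcal{V}^+_\diamond(Z_\eps-L_\eps)$ nontrivially. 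The crucial input is that at each transition point Theorem~\ref{thm:existencebounds} places the pulse $\mathcal{O}(\eps|\log\eps|)$-close to the base points $(u^*(a),0)$, $(u^*(a),v_+(u^*(a)))$ and $(a,v_+(a))$, at which the coefficient matrices of~\eqref{eq:Wevalproblem}, of the slow reduced problem, and of the fast reduced problem~\eqref{eq:Wevalproblemreduced2} all agree to leading order; by roughness the dichotomy projections of the two intervals abutting each transition point therefore differ by $\mathcal{O}(\eps|\log\eps|)$. Transporting $\mathcal{V}^-_\ell(-L_\eps)$ forward and invoking the technical lemma~\cite[Lemma~6.10]{HOLZ} at each junction, the image subspace remains within $\mathcal{O}(\eps|\log\eps|)$ of $\mathcal{V}^-_\dagger(-L_\eps)$, then of $\mathcal{V}^-_\dagger(L_\eps)$ and $\mathcal{V}^-_r(L_\eps)$ --- the evolution over $I_\dagger$ only contracting the transverse forward-decaying component --- then, because $I_r$ has length $Z_\eps=\mathcal{O}(1/\eps)$ and $\mu$ is $\eps$-uniform, within $\mathcal{O}(e^{-\mu Z_\eps})$ of $\mathcal{V}^-_r(Z_\eps-L_\eps)$, and finally within $\mathcal{O}(\eps|\log\eps|)$ of $\mathcal{V}^-_\diamond(Z_\eps-L_\eps)$. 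For $\eps$ small this neighbourhood meets $\mathcal{V}^+_\diamond(Z_\eps-L_\eps)$ only at $0$, forcing $\Phi(Z_\eps-L_\eps)=0$ and hence $\Phi\equiv0$.

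The step I expect to be the main obstacle is the uniform control of the error terms across the long slow block $I_r$: one must verify that the $\mathcal{O}(\eps|\log\eps|)$ discrepancies introduced at the three transition points --- arising from the corner regions where the pulse peels off or onto a slow manifold, as in the proof of Theorem~\ref{thm:existencebounds} --- are genuinely dominated by the $e^{-\mu Z_\eps}$ contraction along $I_r$ supplied by Proposition~\ref{prop:slowexpdich}, and, relatedly, that the dichotomy rate $\mu$ on $I_\ell,I_r$ and the spectral gaps of the reduced problems on $I_\dagger,I_\diamond$ can all be taken uniform in $\tilde\lambda\in R_2(\delta,M)$ and $|\ell|\le L_M$ --- which in turn requires the Sturm--Liouville exclusion of eigenvalues to be quantitative in the radius $\delta$ of the excised disk. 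Everything else is the now-standard Lin's-method bookkeeping: setting up the variation-of-constants formulae, estimating the perturbation matrices $B_j$, and applying \cite[Lemma~6.10]{HOLZ} and the corresponding constructions in \cite[\S6]{cdrs} at each junction, which I would cite rather than reproduce.
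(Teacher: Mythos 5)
Your plan coincides with the paper's proof in every essential respect: exponential dichotomies on the two slow intervals from Proposition~\ref{prop:slowexpdich}, dichotomies on the two fast intervals by reducing to the scalar front problem via the block-triangular structure, invoking Sturm--Liouville theory to exclude eigenvalues of~\eqref{eq:Wreducedinvariant2} and roughness to transfer to~\eqref{eq:Wevalproblem}, $\mathcal{O}(\eps|\!\log\eps|)$ comparison of the dichotomy projections at the three transition points using Theorem~\ref{thm:existencebounds}, and finally the technical lemma of~\cite{HOLZ,cdrs} to conclude any exponentially localized solution is trivial. The one imprecision worth correcting is in your closing discussion: the $\mathcal{O}(\eps|\!\log\eps|)$ junction errors are \emph{not}, and need not be, dominated by the $e^{-\mu Z_\eps}$ contraction over $I_r$ (indeed $\eps|\!\log\eps|\gg e^{-\mu Z_\eps}$); what the argument actually requires is only that the accumulated $\mathcal{O}(\eps|\!\log\eps|)$ deviation of the transported unstable fibre from $\mathcal{V}^-_\diamond(Z_\eps-L_\eps)$ stays below the $\eps$-uniform Grassmannian gap between the two- and one-dimensional dichotomy fibres, which is exactly what the chain of projection estimates~\eqref{eq:slowboundR2} together with \cite[Lemma 6.10]{HOLZ} delivers.
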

The proof of Proposition~\ref{propR2} follows the argument as outlined above, and is similar to the proof of~\cite[Proposition 6.20]{cdrs}. For completeness, we include this in Appendix~\ref{app:r2proof}.

\subsection{Proof of Theorem~\ref{thm:stripeStability}}\label{sec:finishstabproof}

\begin{proof}[Proof of Theorem~\ref{thm:stripeStability}]
This is a direct consequence of \S\ref{sec:essentialSpectrum}, \S\ref{sec:stabilityLargeL}, \S\ref{sec:stabilityRegionR3}, Theorem~\ref{thm:matching} and Proposition~\ref{propR2}. The fact that the translational eigenvalue $\tilde{\lambda}_0(0)=0$ is simple follows from a similar argument as in~\cite[Proposition 6.14]{cdrs}.
\end{proof}

\section{Defects and curved vegetation pattern solutions}\label{sec:corners}
In this section we consider~\eqref{eq:modKlausmeier} with a small diffusion term added to the water component.
\begin{equation}
	\begin{cases}
		u_t & =D \Delta u+ \frac{1}{\varepsilon} u_x + a - u - G(u,v)v, \\
		v_t & = \Delta v - m v + R(v)G(u,v)v,
	\end{cases}\label{eq:modKlausmeier+D}
\end{equation}
where $D\ll 1$. The reason for this is mainly technical, in order to draw on results concerning planar interface propagation in parabolic equations. However, to accurately describe water movement on flat terrains a diffusion term is necessary~\cite{van2013rise} -- see also the upcoming discussion section,~\S\ref{sec:discussion}.

The results of Theorems~\ref{thm:stripeexistence}--\ref{thm:vegetationFrontExistence} and Theorems~\ref{thm:heteroclinicStability}--\ref{thm:gapStability} concern the existence and stability of straight stripe, gap, and front solutions; that is, the traveling patterns are constant in the direction transverse to the slope and are essentially one-dimensional patterns. We reiterate that these patterns are, however, stable to perturbations in two spatial dimensions.

We now consider the system~\eqref{eq:modKlausmeier+D} for which, by a perturbation argument, the results of Theorems~\ref{thm:stripeexistence}--\ref{thm:vegetationFrontExistence}, and furthermore the results of Theorems~\ref{thm:heteroclinicStability}--\ref{thm:gapStability}, are expected to hold for sufficiently small $D>0$. Within this system, we are able to call on general results on the existence and stability of corner defects in planar wave propagation~\cite{HS1, HS2}. In essence, considering a straight vegetation stripe, gap, or front solution satisfying certain hypotheses (see below), for nearby wave speeds there exist stripe solutions at slightly offset angles. Two oppositely angled such stripes can meet at a corner defect, forming a ``curved" stripe solution, which can be oriented convex upslope (exterior corner) or downslope (interior corner). Further, some of these solutions can be shown to be stable.  In particular, we will argue using the results of~\cite{HS1, HS2} that nearby vegetation stripe, gap, or front solutions of~\eqref{eq:modKlausmeier+D}, there exist stable interior corner defects, and in the case of certain front solutions, there exist stable exterior corner defects.

Consider a traveling wave solution $(u,v)(x,y,t) = (u_s, v_s)(\xi)$ of~\eqref{eq:modKlausmeier+D} with speed $c=c_s$, and $\xi=x-ct$. An almost planar interface $\sigma$-close to $(u_s, v_s)(\xi)$ with speed $c$ is a solution of the form
\begin{align}
(u,v)(x,y,t) = (u_s, v_s)(\xi+h(y))+(\tilde{u},\tilde{v})(\xi,y),
\end{align}
where $h\in C^2(\mathbb{R})$ and
\begin{align}
\sup_{y\in\mathbb{R}}|h'(y)| <\sigma, \quad \sup_{y\in\mathbb{R}}\|(\tilde{u},\tilde{v})(\cdot,y)\|_{H^1(\mathbb{R}, \mathbb{R}^2)}<\sigma, \quad |c-c_s|<\sigma
\end{align}
 This solution is a planar interface if $h''=0$ and a corner defect if $h''\not\equiv0$, and $h'(y)\to \eta_\pm$ as $y\to \infty$. A corner defect can be classified depending on the asymptotic orientations $\eta_\pm$ as an (i) interior corner ($\eta_+<\eta_-$), (ii) exterior corner ($\eta_-<\eta_+$), (iii) step ($\eta_+=\eta_-\neq0$), or (iv) hole ($\eta_+=\eta_-=0$).
 
 Depending on the original traveling wave solution $ (u_s, v_s)(\xi)$, it may be possible to determine which type(s) of defects can arise. As stated above, a corner defect is essentially composed of slightly angled stripe solutions meeting along an interface.  An angled stripe solution can be written as a traveling wave
 \begin{align}
 (u,v)(x,y,t)= (u,v)(\xi),\qquad  \xi=x\cos \varphi +y\sin \varphi - ct
 \end{align} 
 where the case $\varphi=0$ corresponds to a solution which is constant in the direction transverse to the slope as before. Substituting this ansatz into~\eqref{eq:modKlausmeier+D} results in the traveling wave ODE
 \begin{equation}
	\begin{cases}
		-c u_\xi & =D  u_{\xi\xi}+ \frac{\cos\varphi}{\varepsilon} u_\xi + a - u - G(u,v)v, \\
		-c v_\xi & =  v_{\xi\xi} - m v + R(v)G(u,v)v.
	\end{cases}\label{eq:TWODE+D}
\end{equation}
By setting $\tilde{\eps} = \eps/\cos\varphi$, we see that~\eqref{eq:TWODE+D} is the same traveling wave equation one obtains in the case of $\varphi=0$, except with $\eps$ replaced by $\tilde{\eps}$. For small values of $\varphi$, we have that
\begin{align}
\tilde{\eps} = \eps(1+\mathcal{O}(\varphi^2))
\end{align}
and~\eqref{eq:TWODE+D} can therefore be solved to find an angled traveling wave solution when
\begin{align}
c = c(\varphi)= c_s+\mathcal{O}(\eps \varphi^2).
\end{align}
 The quantity $c(\varphi)$ is called the nonlinear dispersion relation and relates the speed of propagation and angle of the traveling wave solution. A related quantity 
 \begin{align}
d(\varphi):=\frac{c(\varphi)}{\cos(\varphi)}
\end{align}
called the directional dispersion, or flux, relates the angle to the speed of propagation in the direction of the original traveling wave $(u_s,v_s)$, i.e. the $x$-direction. The flux near $\varphi=0$ is said to be convex if $d''>0$, concave if $d''<0$, and flat if $d''\equiv0$ for small $|\varphi|$. In~\cite{HS2}, the authors related the convexity of the flux to the type of corner defect which is selected: in particular when $d$ is convex, there exist interior corner defects for nearby speeds $c>c_s$, while for $d$ concave there exist exterior corner defects for speeds $c<c_s$.

In the case of~\eqref{eq:TWODE+D}, the directional dispersion is computed as
 \begin{align}
d(\varphi):=c_s\left( 1+\frac{\varphi^2}{2} \right)+\mathcal{O}\left( \eps \varphi^2, \varphi^4 \right),
\end{align}
from which we find that
 \begin{align}
d''(\varphi):=c_s+\mathcal{O}\left( \eps, \varphi^2 \right),
\end{align}
that is, to leading order the convexity is determined by the speed of propagation of the original traveling wave $(u_s,v_s)$. In particular, for sufficiently small $\eps$, the directional dispersion is convex for $c_s>0$ and concave for $c_s<0$. Hence in the setting of Theorems~\ref{thm:stripeexistence},~\ref{thm:gapexistence}, or~\ref{thm:desertFrontExistence}, one expects to see nearby interior corner solutions, but \emph{not} exterior corner solutions. That is, the resulting curved vegetation stripe, gap, or front is oriented convex downslope. However, in the setting of Theorem~\ref{thm:vegetationFrontExistence}, the convexity depends on the value of $a/m$ as the speed $c_s$ can be negative if $a$ is large enough. In particular, one expects interior corner solutions if $\frac{a}{m}<\frac{9b}{2}+\frac{2}{b}$, but exterior corners (oriented convex upslope) if $\frac{a}{m}>\frac{9b}{2}+\frac{2}{b}$.

\section{Numerics}\label{sec:numerics}

In this section we present numerical results related to Theorems~\ref{thm:stripeexistence}--\ref{thm:vegetationFrontExistence} and Theorems~\ref{thm:heteroclinicStability}--\ref{thm:gapStability} regarding the existence and stability of front, stripe, and gap pattern solutions of~\eqref{eq:modKlausmeier} . In particular, we discuss the results of numerical continuation of stripe and gap traveling wave solutions, and direct numerical simulation of planar stripe, gap, and front solutions, as well as corner defect solutions as discussed in~\S\ref{sec:corners}.

\subsection{Continuation of traveling stripes and gaps}
Theorems~\ref{thm:stripeexistence}--\ref{thm:gapexistence} predict the existence of traveling stripe and gap solutions to~\eqref{eq:modKlausmeier} which solve the traveling wave ODE~\eqref{eq:twode}. These solutions were constructed as perturbations of singular homoclinic orbits, organized by the singular bifurcations diagrams in Figures~\ref{fig:bd_case1} and~\ref{fig:bd_case2}, corresponding to the cases of $b<2/3$ and $b>2/3$, respectively. Figure~\ref{fig:bd_numerics} depicts the results of numerical continuation of speed $c$ versus $a$ for traveling stripes and gaps, conducted in AUTO for the parameter values $\eps=3\cdot10^{-4}$, $m=0.5$, and values of $b=0.6,0.7$ on either side of the critical value $b=2/3$. The continuation curves corresponding to vegetation stripe solutions are depicted in green, while those corresponding to gap solutions are in purple, with the relevant singular bifurcation curves depicted as dashed lines.

We note that the upper branches of the bifurcation curves for both stripes and gaps continue towards $c=0$ and eventually turn back onto lower branches which persist for a range of $a$ values and small speeds $c\ll1$. These waves arise as perturbations of a family of fast planar homoclinic orbits, as discussed in Remark~\ref{rem:layerhomoclinics}, and we expect they are unstable (even to $1$D perturbations) as traveling wave solutions of~\eqref{eq:modKlausmeier}. Interestingly, the lower branch of stripe solutions continues for increasing $a$, while the lower branch of gap solutions eventually turns back near the canard value $\frac{a}{m}=4b+1/b$ due to interaction of the equilibrium $p_+(u_2)$ with the fold point $\mathcal{F}$. 

\begin{Remark} We also remark that in the case of $b<2/3$, depicted in the left panel of Figure~\ref{fig:bd_numerics}, that the upper branch of gap solutions also approaches the canard point. Here this branch transitions into a ``double-gap" solution, resembling two copies of the primary homoclinic orbit. This transition is similar to canard transitions observed in systems such as the FitzHugh--Nagumo equation~\cite{CSbanana,champneys2007shil,guckenheimer2010homoclinic}, albeit with a somewhat different mechanism due to the presence of the additional  equilibrium $p_0(a)$.
\end{Remark}

We also depict the results of continuation of both stripe and gap solutions for fixed values of rainfall $a=1.2$ (stripes) and $a=2$ (gaps), with $m=0.45, b=0.5$, and $\eps=0.01$. As discussed in~\S\ref{sec:singularperiodicorbits}, it is expected that nearby the single traveling stripe or gap solutions are periodic wave train solutions corresponding to repeating vegetation patterns which exist for a range of wave speeds, and that these patterns can similarly be constructed by perturbing from singular periodic orbits in the traveling wave equation~\eqref{eq:twode}. We verify this prediction by numerically continuing the stripe (and gap) solutions as periodic orbits for decreasing period, the results of which are depicted in Figure~\ref{fig:wavelength_continuation}. We observe that in general the wave speed $c$ decreases as the period $T$ decreases, as do the total biomass $B:=\int_0^T v\ \mathrm{d} x$ and the maximum value of $v$ over one period, denoted by $v_\mathrm{max}$. Lastly the results of continuation of periodic orbits in $(a,k)$-space for fixed wave speeds $c=\{0.15,0.2,0.25,0.3,0.35\}$ are depicted in Figure~\ref{fig:wavenumber_vs_rainfall}; here $k$ denotes the wavenumber of the corresponding pattern. 

These numerical results align with previous work on (similar) ecosystem models; similar trends are found in, for instance, studies on the Klausmeier vegetation model~\cite{sherratt2013pattern}, on extended Klausmeier models~\cite{siteur2014beyond, BD2018, bastiaansendata}, on the Klausmeier-Gray-Scott model~\cite{sewaltspatially} and the Rietkerk model~\cite{dagbovie2014pattern}. Moreover, measurements on the speed of migrating vegetation patterns, indeed, show vegetation patterns with higher wavelength move faster~\cite{deblauwe2012determinants, bastiaansendata}. Finally, recent in-situ measurement on the above ground biomass in the Horn of Africa corroborate displayed trends in biomass~\cite{bastiaansendata}.

\begin{figure}
\hspace{.025\textwidth}
\begin{subfigure}{.45 \textwidth}
\centering
\includegraphics[width=1\linewidth]{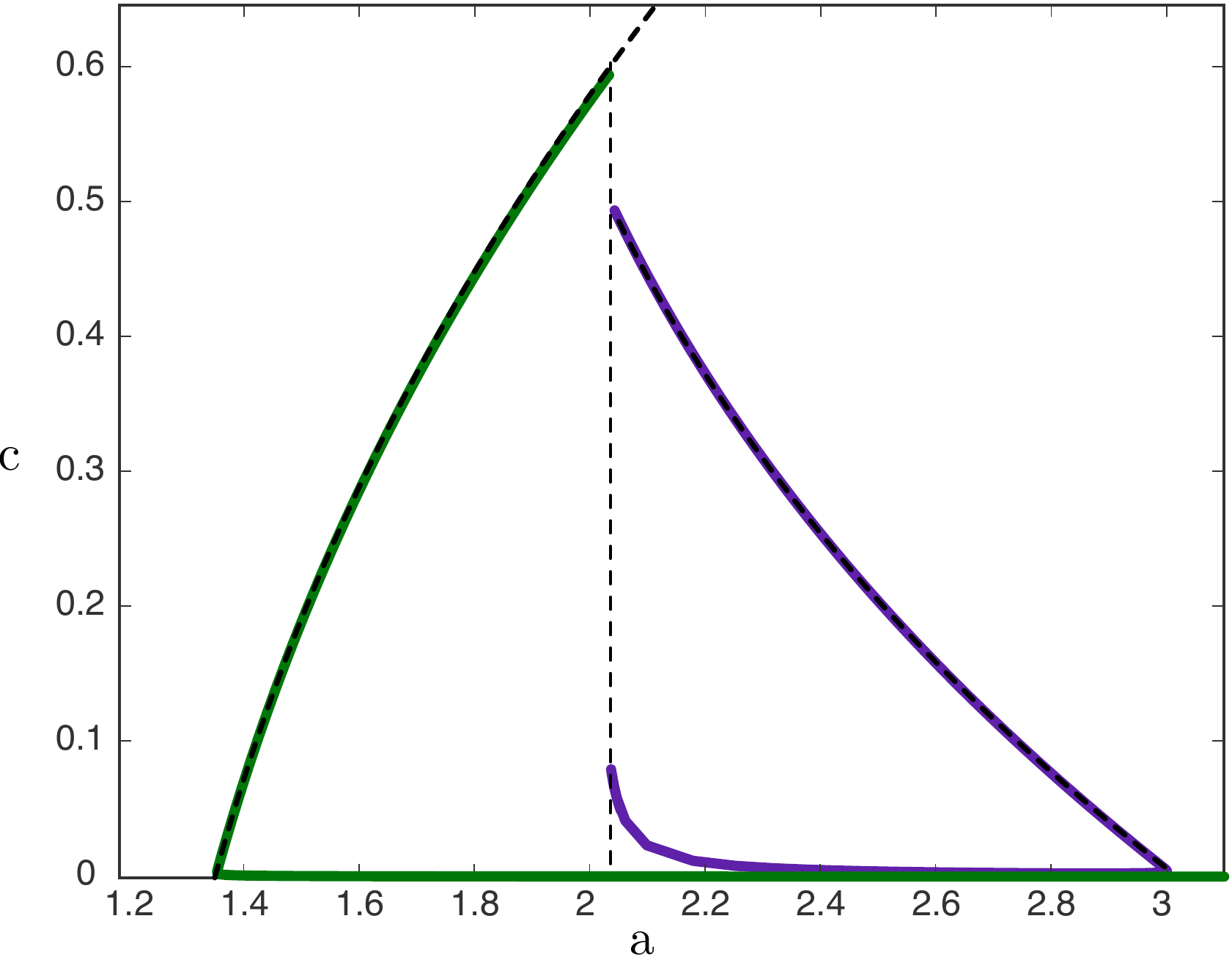}
\end{subfigure}
\hspace{.025\textwidth}
\begin{subfigure}{.45 \textwidth}
\centering
\includegraphics[width=1\linewidth]{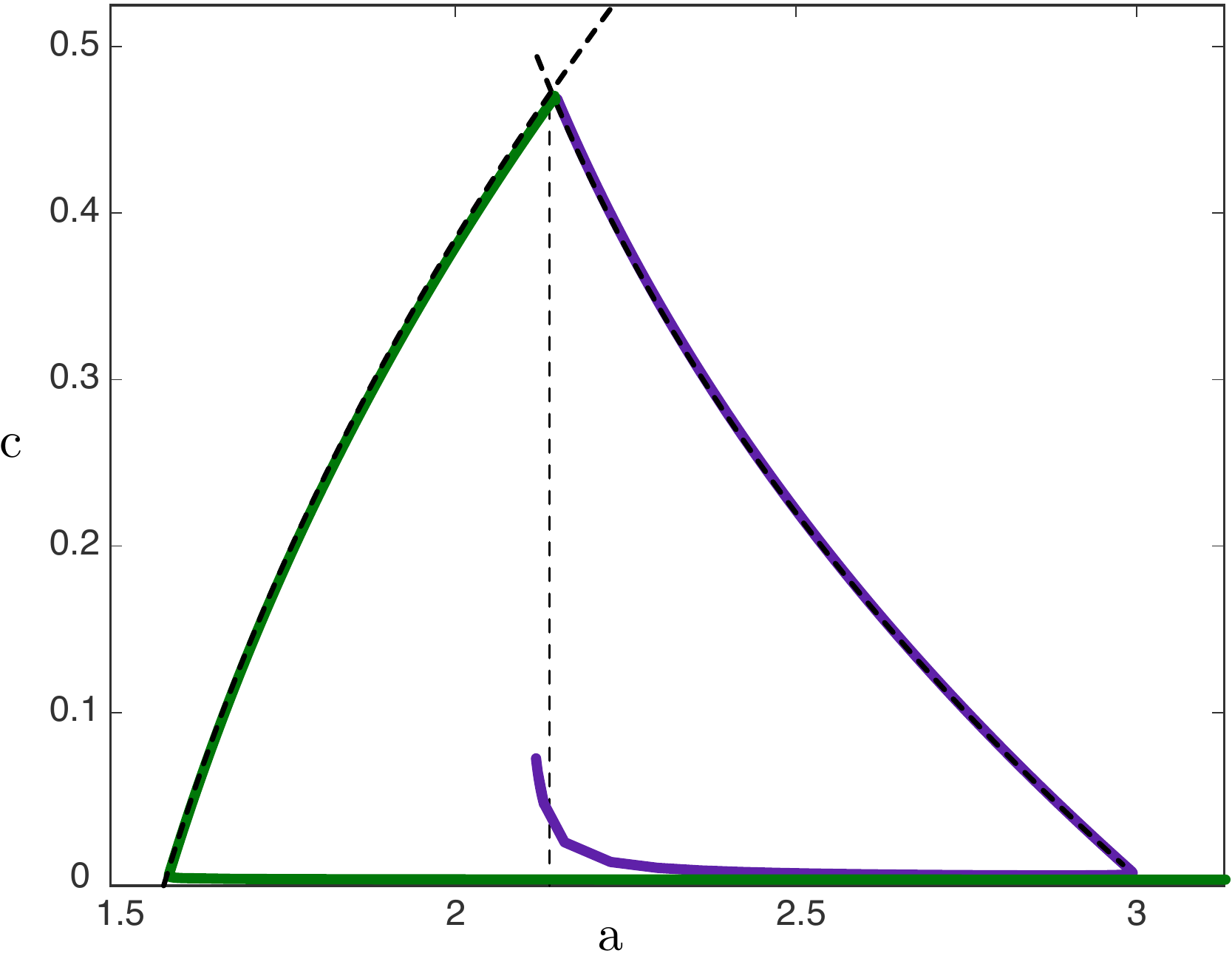}
\end{subfigure}
\hspace{.02\textwidth}
\caption{Shown are numerically computed bifurcation diagrams of vegetation stripes (green curves) and gaps (purple curves) for the parameter values $m=0.5, \eps=0.0003$, and  $b=0.6$ (left panel), $b=0.74$ (right panel). The solutions were obtained via parameter continuation in AUTO for the traveling wave equation~\eqref{eq:twode}. Also plotted in dashed black are the curves $c=c^*(a)$ and $c=\hat{c}(a)$. The vertical dashed curve denotes the location of $\bar{a}$ in each panel. }
\label{fig:bd_numerics}
\end{figure}

\begin{figure}
	\centering
	\begin{subfigure}[t]{0.3\textwidth}
		\centering
			\includegraphics[width=\textwidth]{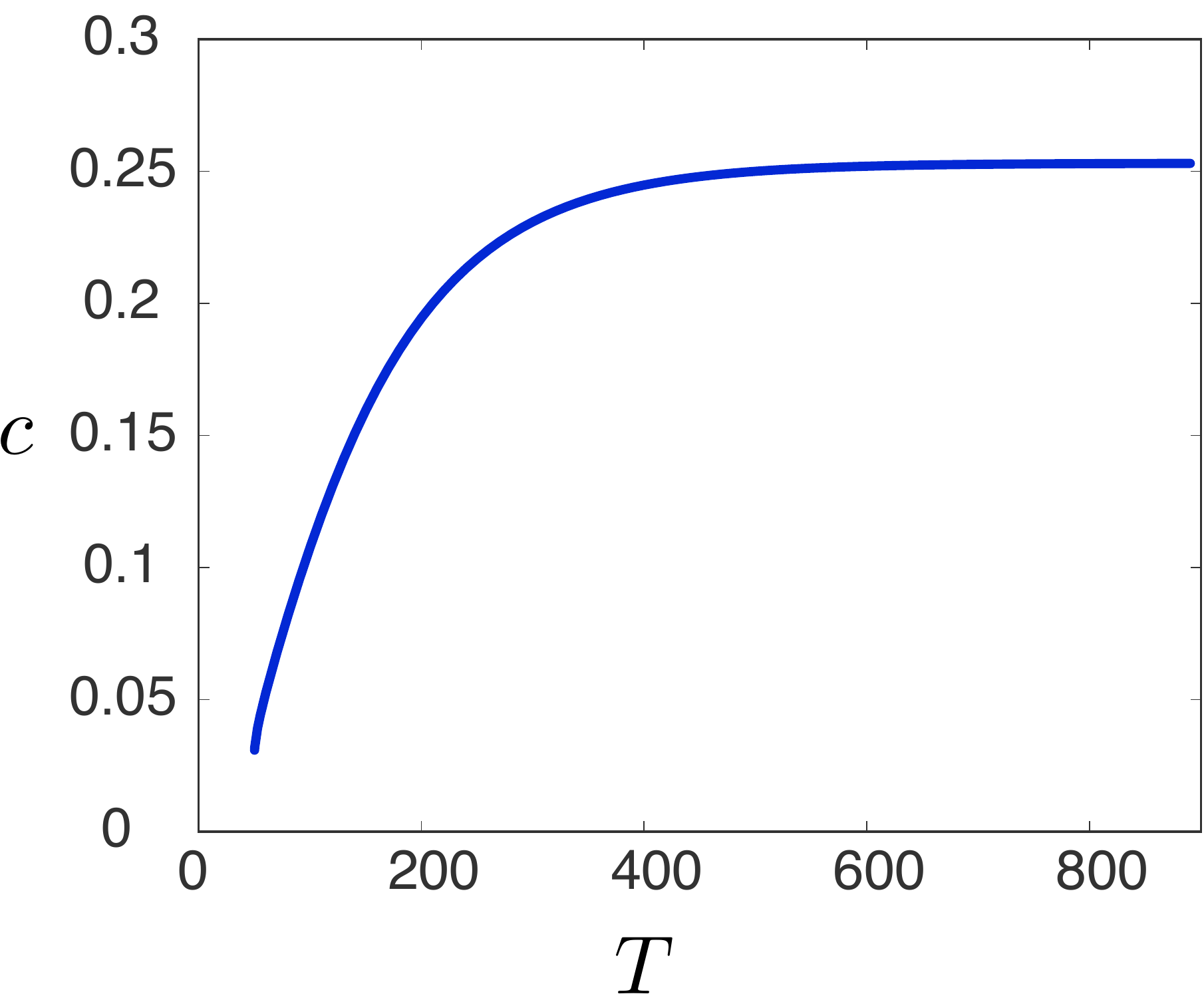}
		\caption{Speed of periodic stripes}
	\end{subfigure}
	\hspace{0.025\textwidth}
	\begin{subfigure}[t]{0.3\textwidth}
		\centering
			\includegraphics[width=\textwidth]{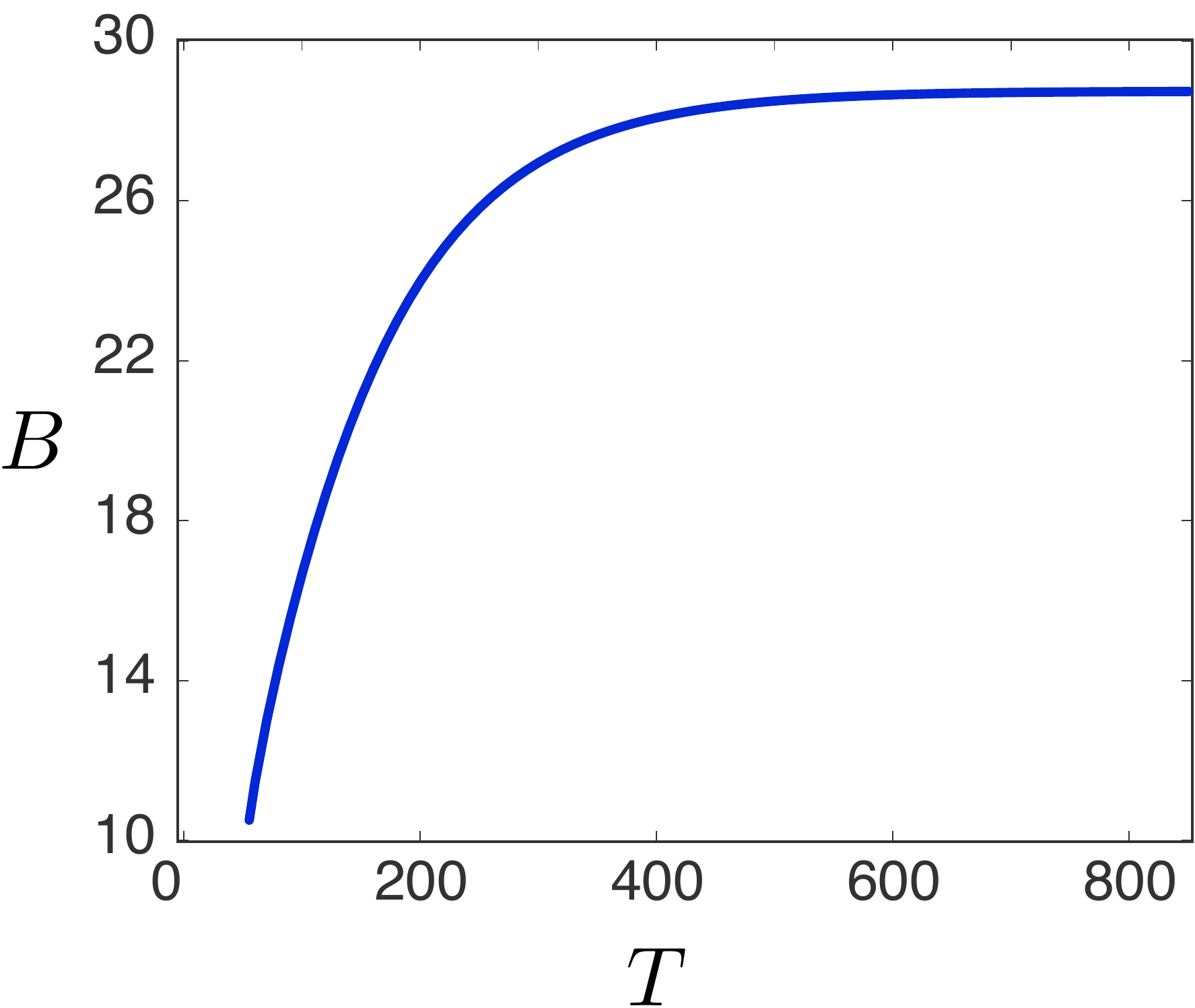}
		\caption{Biomass of periodic stripes}
	\end{subfigure}
	\hspace{0.025\textwidth}
	\begin{subfigure}[t]{0.3\textwidth}
		\centering
			\includegraphics[width=\textwidth]{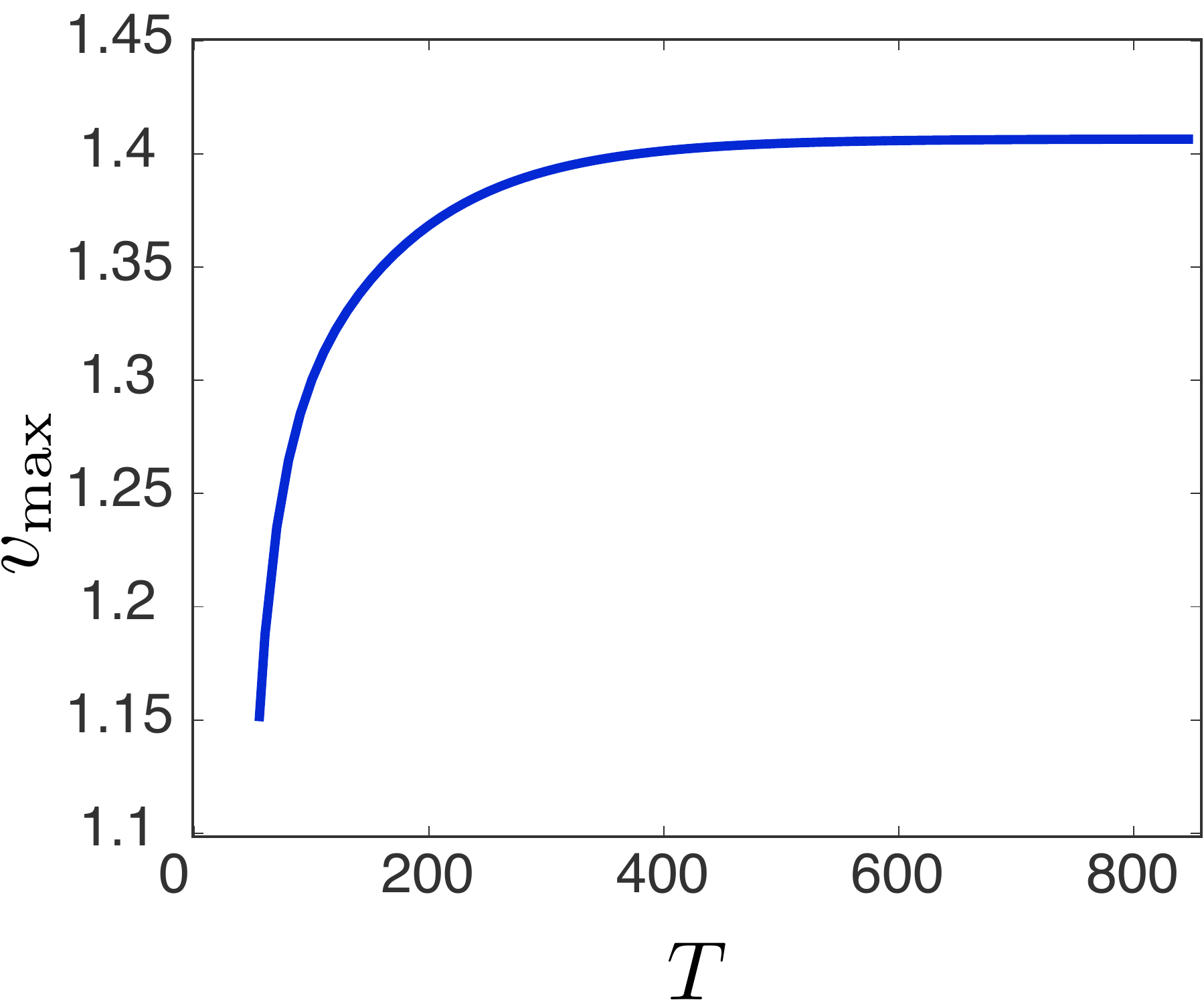}
		\caption{$v_\mathrm{max}$ of periodic stripes}
	\end{subfigure}
	\begin{subfigure}[t]{0.3\textwidth}
		\centering
			\includegraphics[width=\textwidth]{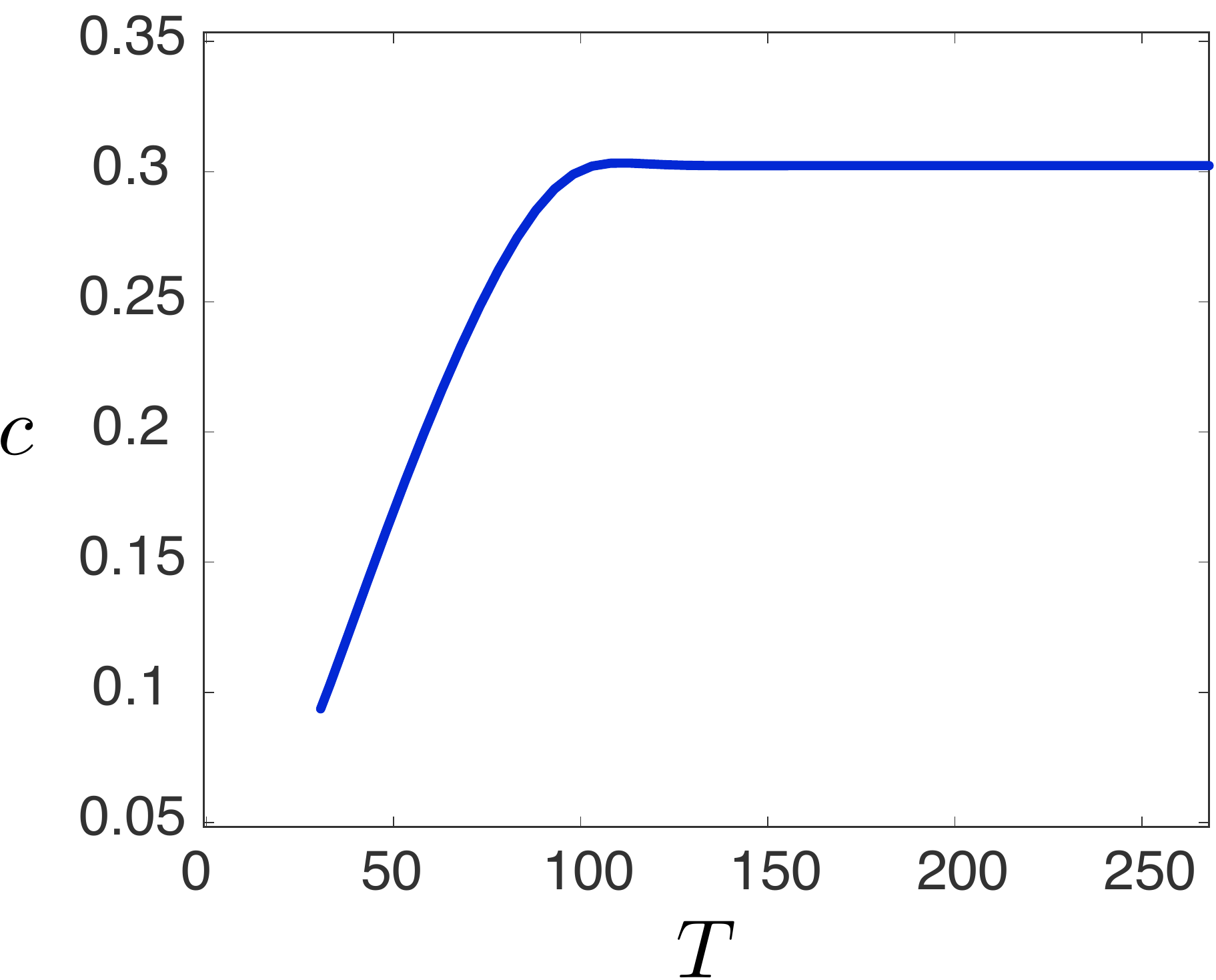}
		\caption{speed of periodic gaps}
	\end{subfigure}
	\hspace{0.025\textwidth}
	\begin{subfigure}[t]{0.3\textwidth}
		\centering
			\includegraphics[width=\textwidth]{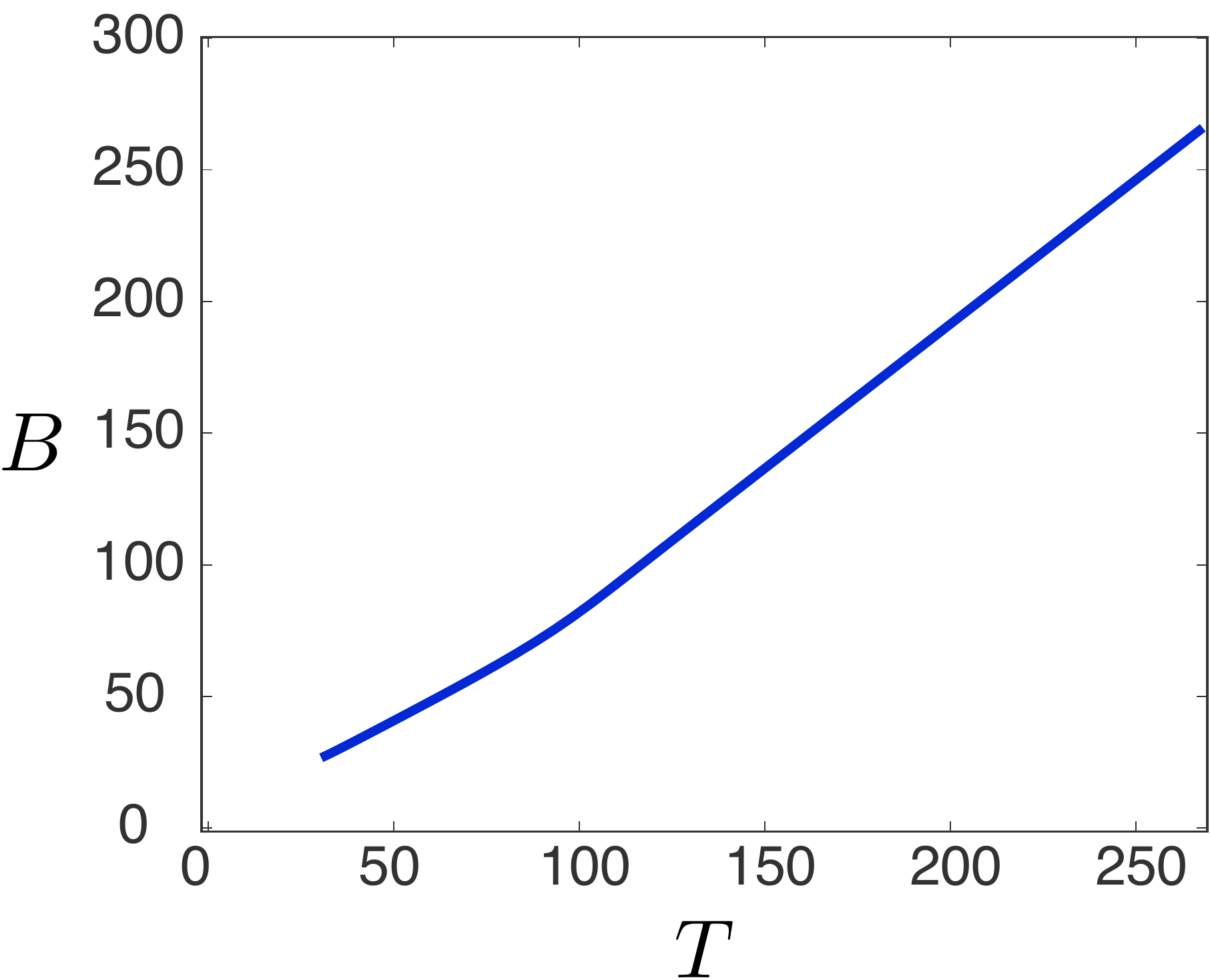}
		\caption{Biomass of periodic gaps}
	\end{subfigure}
	\hspace{0.025\textwidth}
	\begin{subfigure}[t]{0.3\textwidth}
		\centering
			\includegraphics[width=\textwidth]{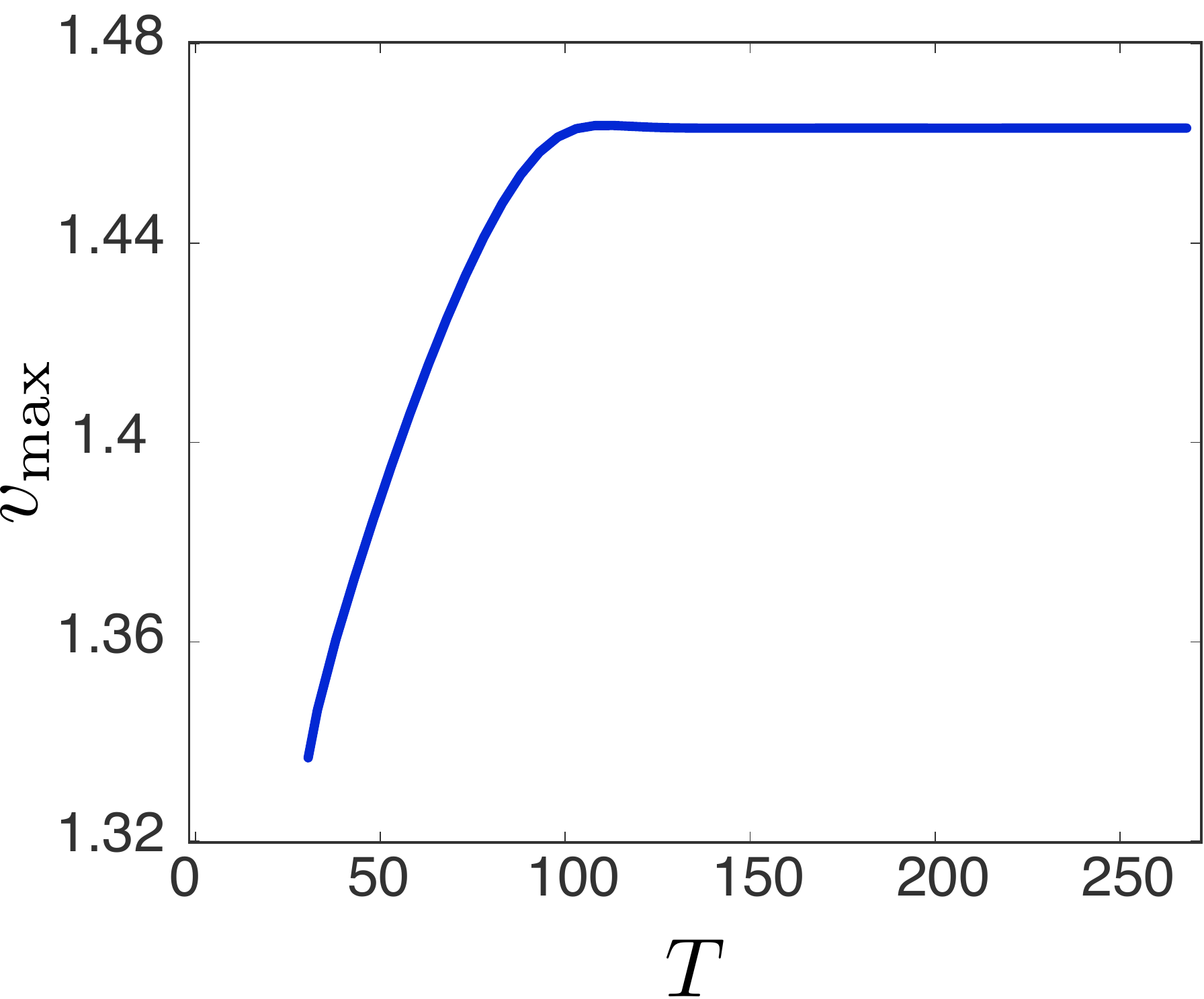}
		\caption{$v_\mathrm{max}$ of periodic gaps}
	\end{subfigure}\\
\caption{Results of numerical continuation of periodic stripe (a-c) and gap (d-f) pattern solutions for decreasing wavelength for the parameter values $m=0.45, b=0.5, \eps=0.01$ and $a=1.2$ (stripes), $a=2$ (gaps). Shown are plots of speed $c$ of the pattern vs. period $T$ (left panels),  biomass $B:=\int_0^Tv~\mathrm{d} x$ vs period $T$ (middle panels), and $v_\mathrm{max}$ versus the period $T$, where $v_\mathrm{max}$ denotes the maximum of $v$ over one period (right panels).}
\label{fig:wavelength_continuation}
\end{figure}

\begin{figure}
		\centering
			\includegraphics[width=0.5\textwidth]{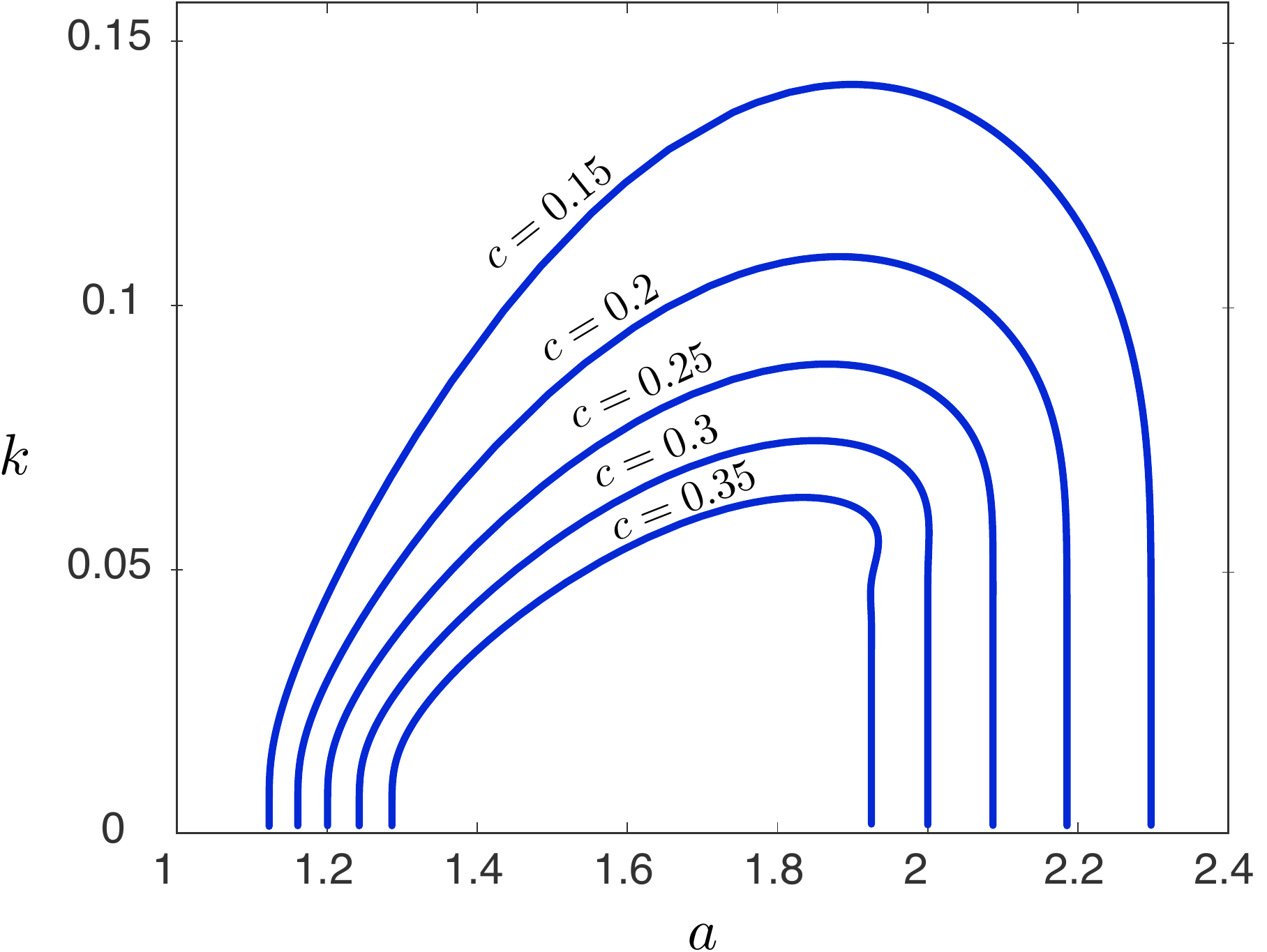}
		\caption{Results of numerical continuation of periodic stripe/gap patterns for spatial wavenumber $k$ versus $a$ for fixed $b=0.5, m=0.45, \eps=0.01$ and wave speeds $c=\{0.15,0.2,0.25,0.3,0.35\}$. }
\label{fig:wavenumber_vs_rainfall}
\end{figure}

\subsection{Direct simulations}
In this section we present direct numerical simulations of the various traveling wave solutions predicted by Theorems~\ref{thm:stripeexistence}--\ref{thm:vegetationFrontExistence}. To that end, we have spatially discretized the PDE~\eqref{eq:modKlausmeier} with a uniformly spaced grid in both $x$ and $y$ directions, which was integrated using a Runge--Kutta solver. In all simulations, the initial conditions were constructed using the approximate expressions derived in the previous sections of this article.

First, we have tested the existence and 2D stability of straight (i.e. non-curved) patterns. The results for $b = 0.5 < 2/3$ are given in Figure~\ref{fig:numericsStraightb0p5} and for $b = 0.75 > 2/3$ in Figure~\ref{fig:numericsStraightb0p75}. In both cases, all solutions from Theorems~\ref{thm:stripeexistence}--\ref{thm:vegetationFrontExistence} could be obtained easily and were (2D) stable in our simulations (and in fact all seem to have a quite large domain of attraction).

Moreover, we numerically inspected corner solutions as described in~\S\ref{sec:corners}. Again, numerical simulations corroborate theoretical predictions -- see Figure~\ref{fig:numericsCorners}. In fact, we were able to find corner-type solutions for each front or pulse in Theorems~\ref{thm:stripeexistence}--\ref{thm:vegetationFrontExistence}. When the speed of the straight pattern is positive, i.e. $c_s > 0$, it is possible to find curved patterns which are oriented convex downslope (interior defect) and when $c_s < 0$ the curved pattern is oriented convex upslope (exterior defect); recall that upslope corresponds to the direction of increasing $x$. This matches the prediction given by the directional dispersion, as outlined in~\S\ref{sec:corners}.

\begin{figure}
	\centering
	\begin{subfigure}[t]{0.18\textwidth}
		\centering
			\includegraphics[width=\textwidth]{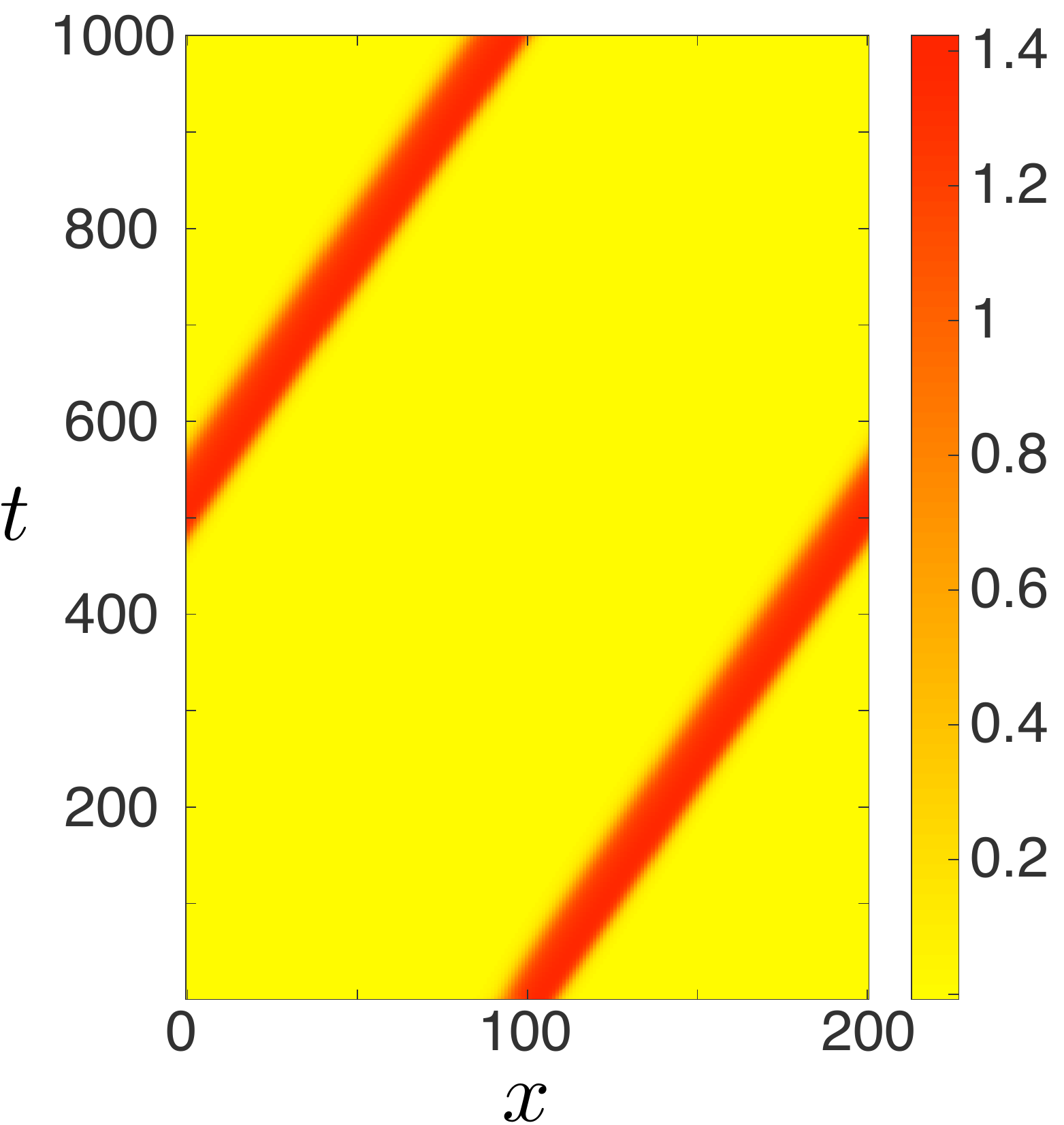}
		\caption{Stripe}
	\end{subfigure}
	\begin{subfigure}[t]{0.18\textwidth}
		\centering
			\includegraphics[width=\textwidth]{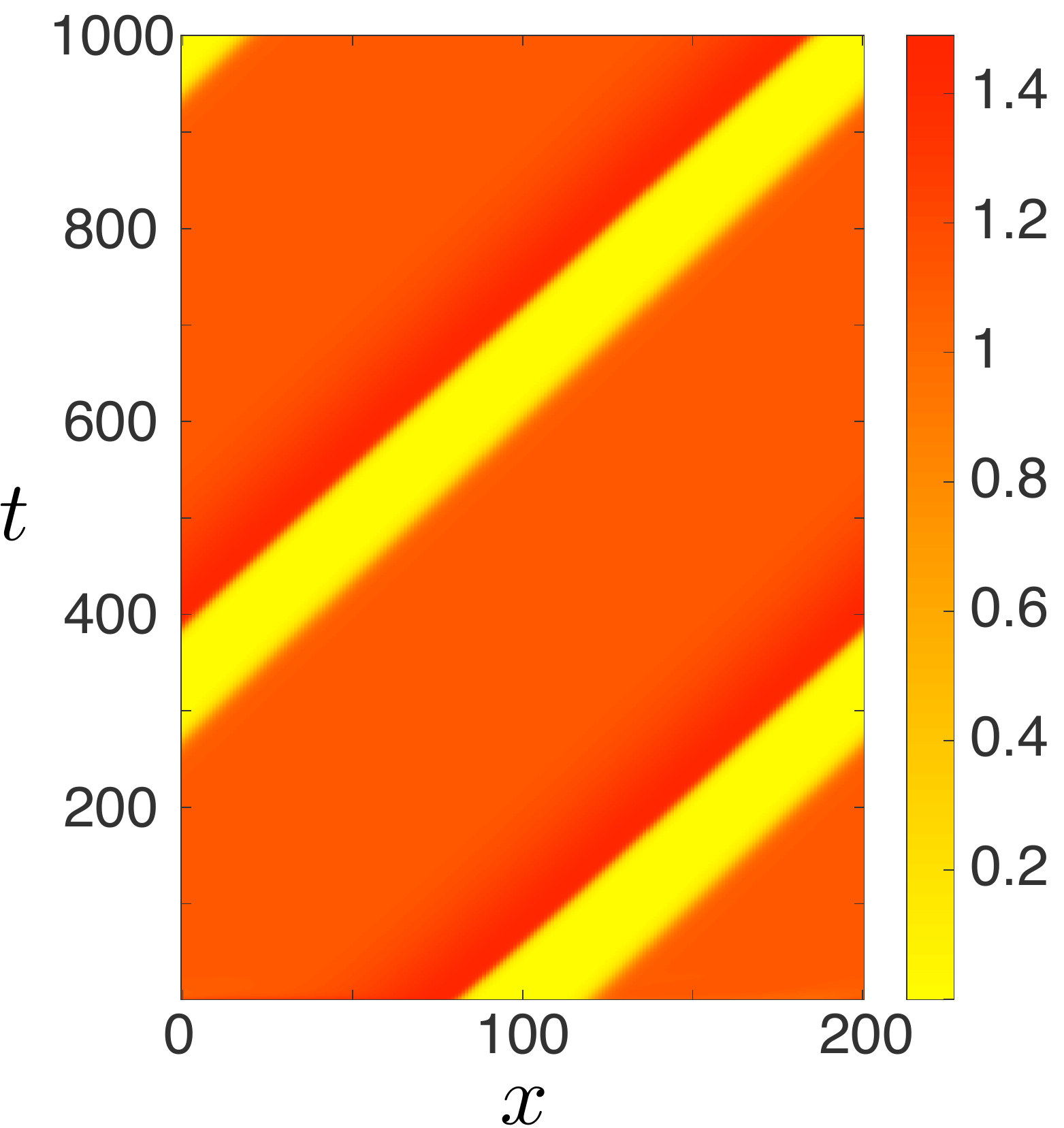}
		\caption{Gap}
	\end{subfigure}
	\begin{subfigure}[t]{0.18\textwidth}
		\centering
			\includegraphics[width=\textwidth]{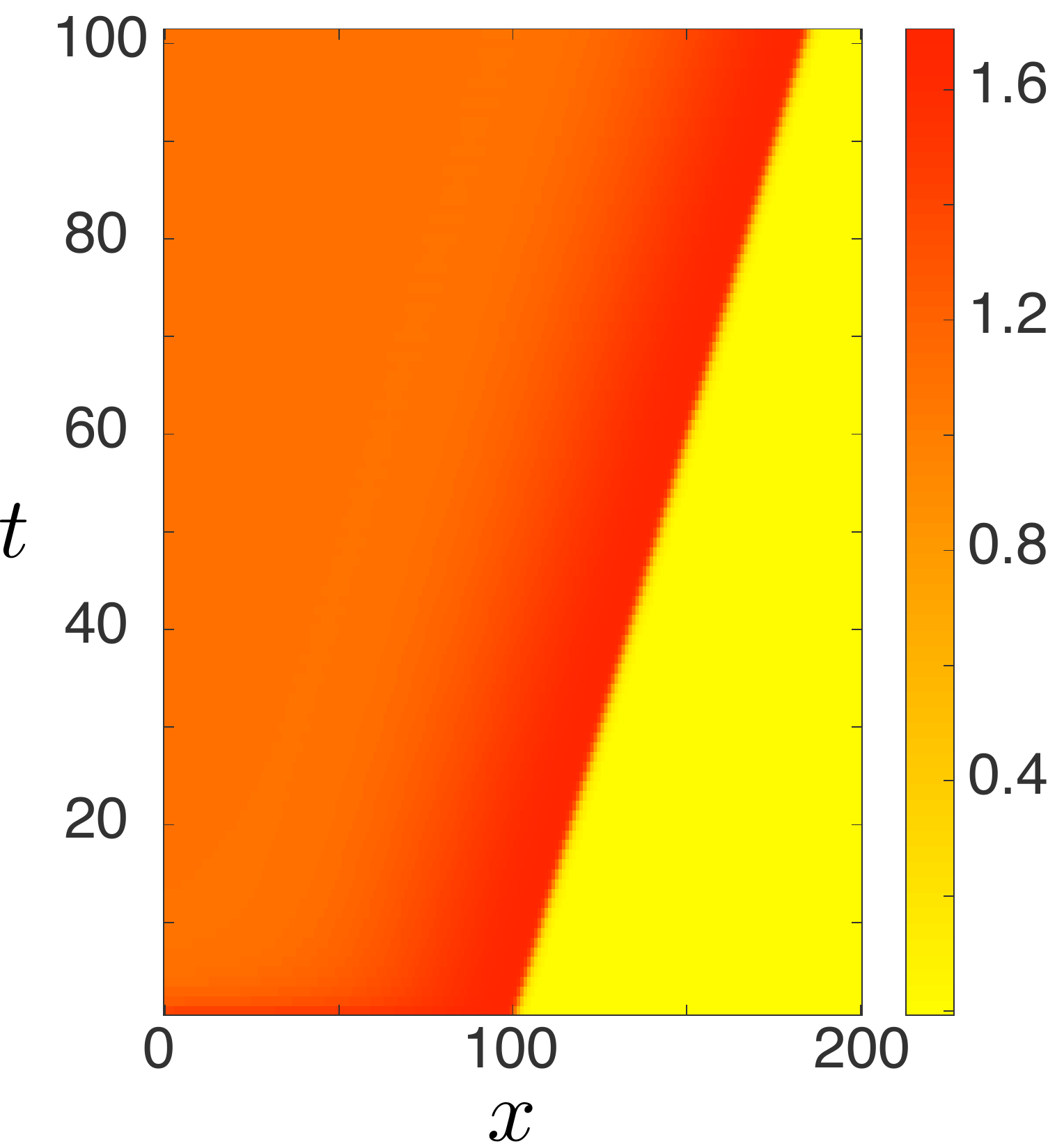}
		\caption{Vegetation front}
	\end{subfigure}
	\begin{subfigure}[t]{0.18\textwidth}
		\centering
			\includegraphics[width=\textwidth]{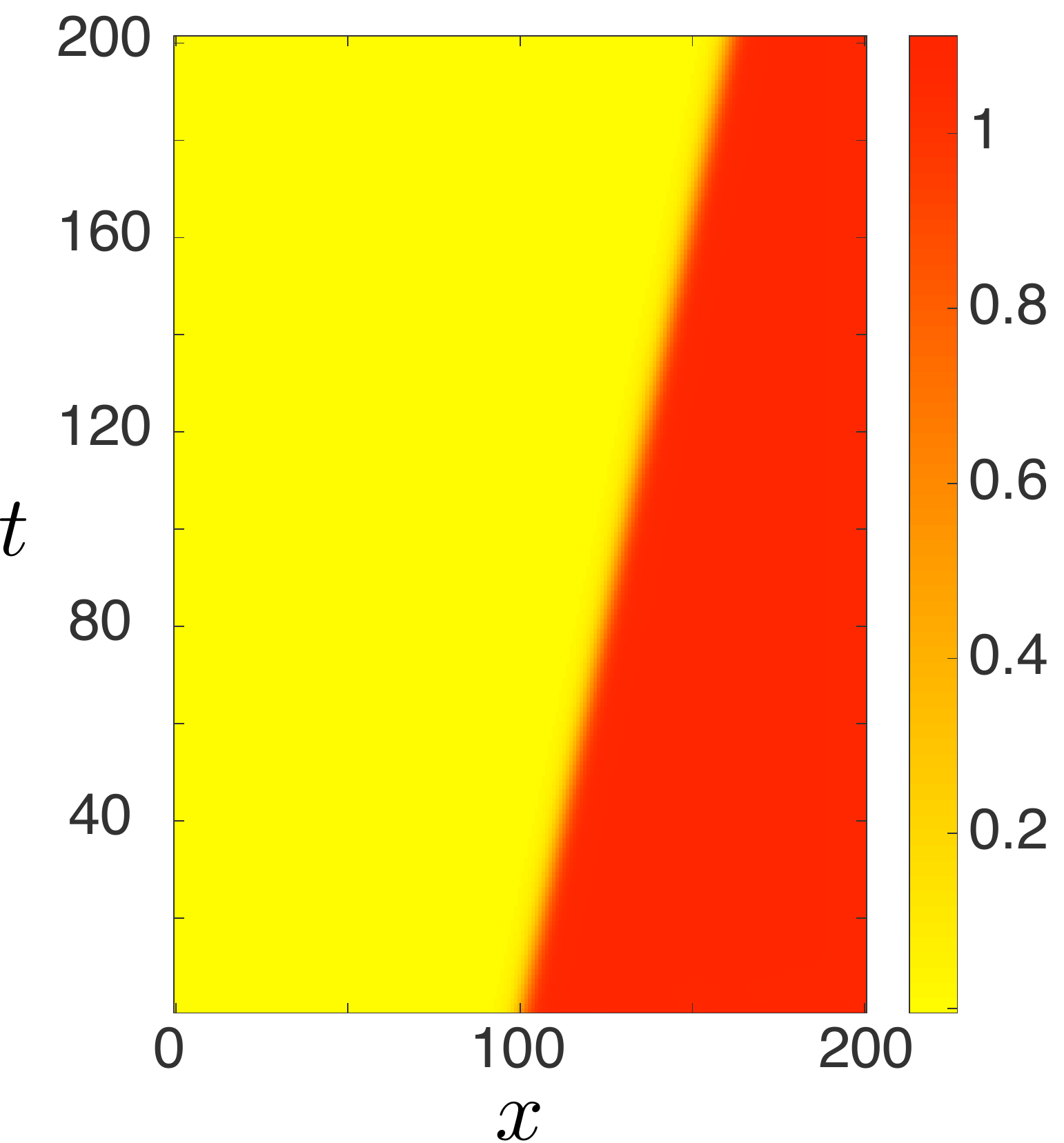}
		\caption{Desert front}
	\end{subfigure}
	\begin{subfigure}[t]{0.18\textwidth}
		\centering
			\includegraphics[width=\textwidth]{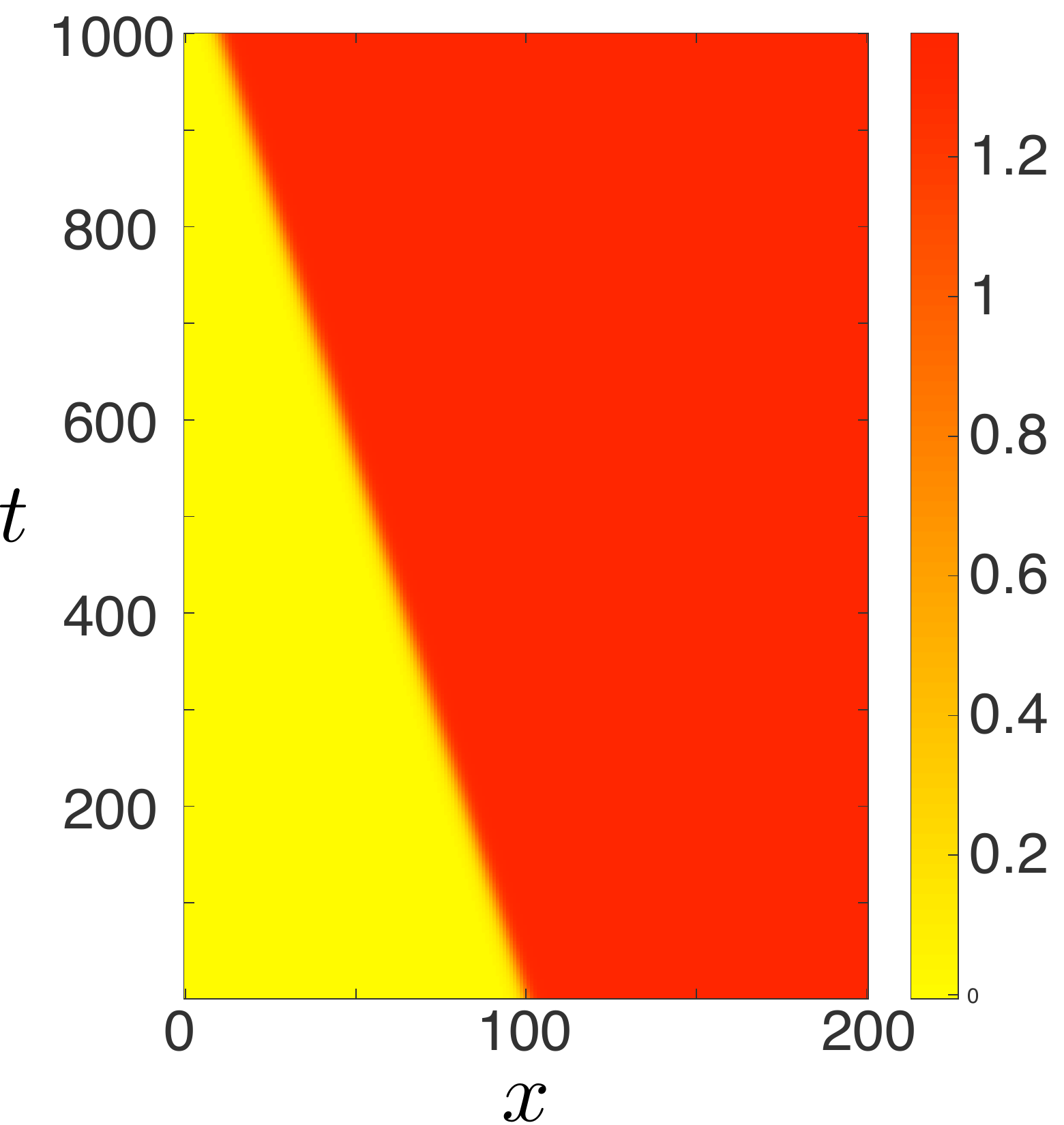}
		\caption{Desert front}
	\end{subfigure}

	\begin{subfigure}[t]{0.18\textwidth}
		\centering
			\includegraphics[width=\textwidth]{Figures/numericsStraightB0p5Stripe-PLANE.pdf}
		\caption{Stripe}
	\end{subfigure}
	\begin{subfigure}[t]{0.18\textwidth}
		\centering
			\includegraphics[width=\textwidth]{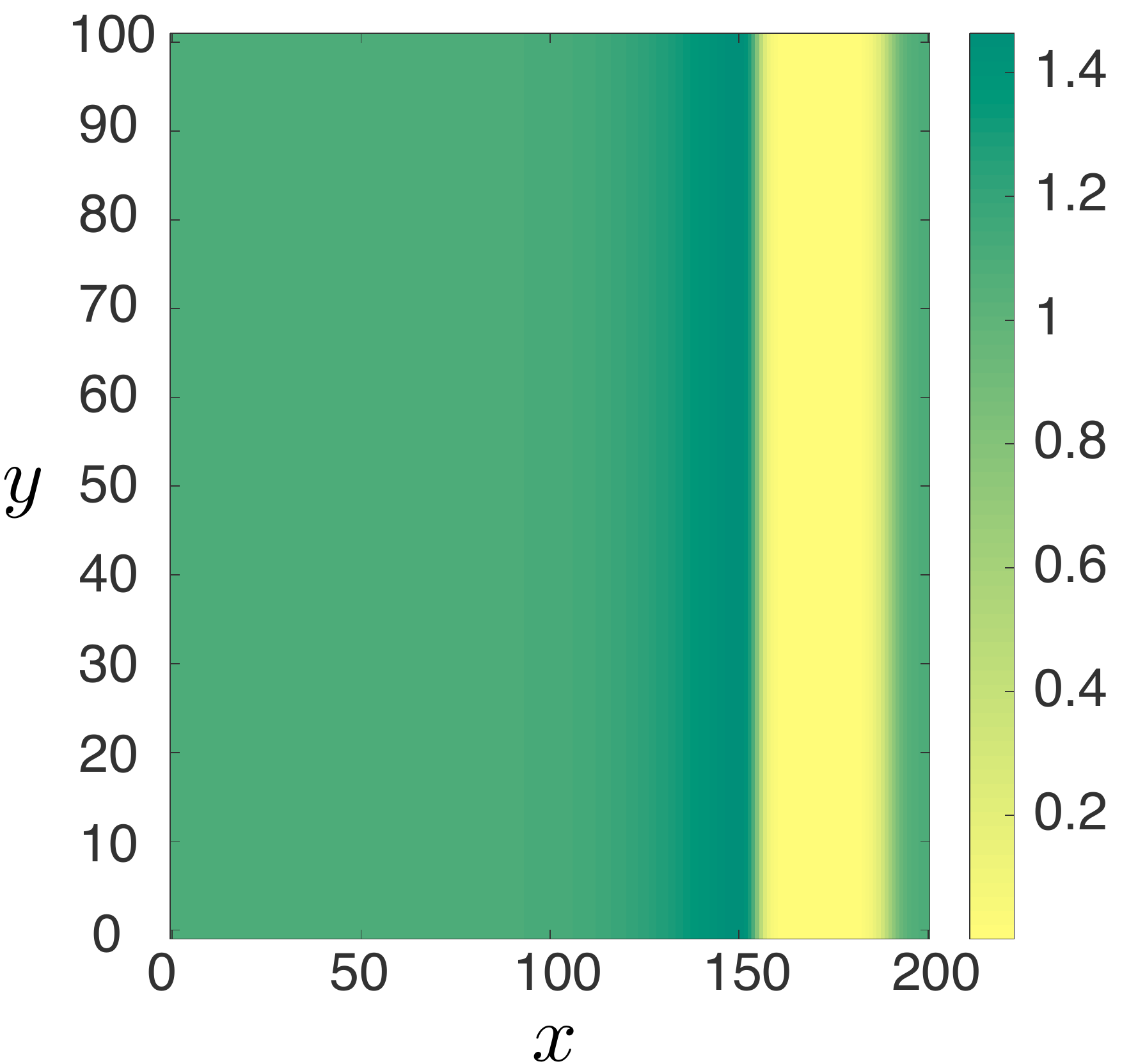}
		\caption{Gap}
	\end{subfigure}
	\begin{subfigure}[t]{0.18\textwidth}
		\centering
			\includegraphics[width=\textwidth]{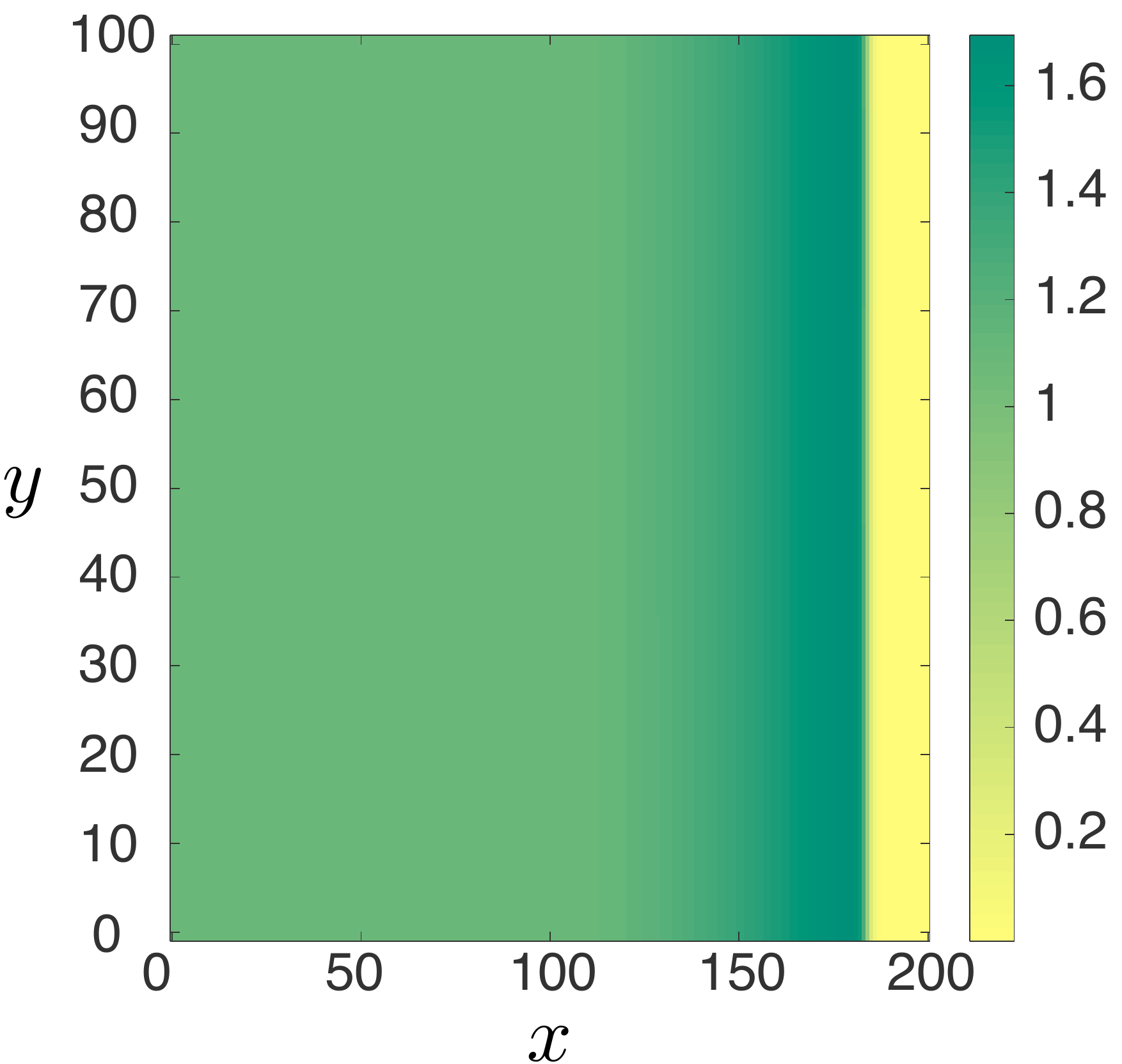}
		\caption{Vegetation front}
	\end{subfigure}
	\begin{subfigure}[t]{0.18\textwidth}
		\centering
			\includegraphics[width=\textwidth]{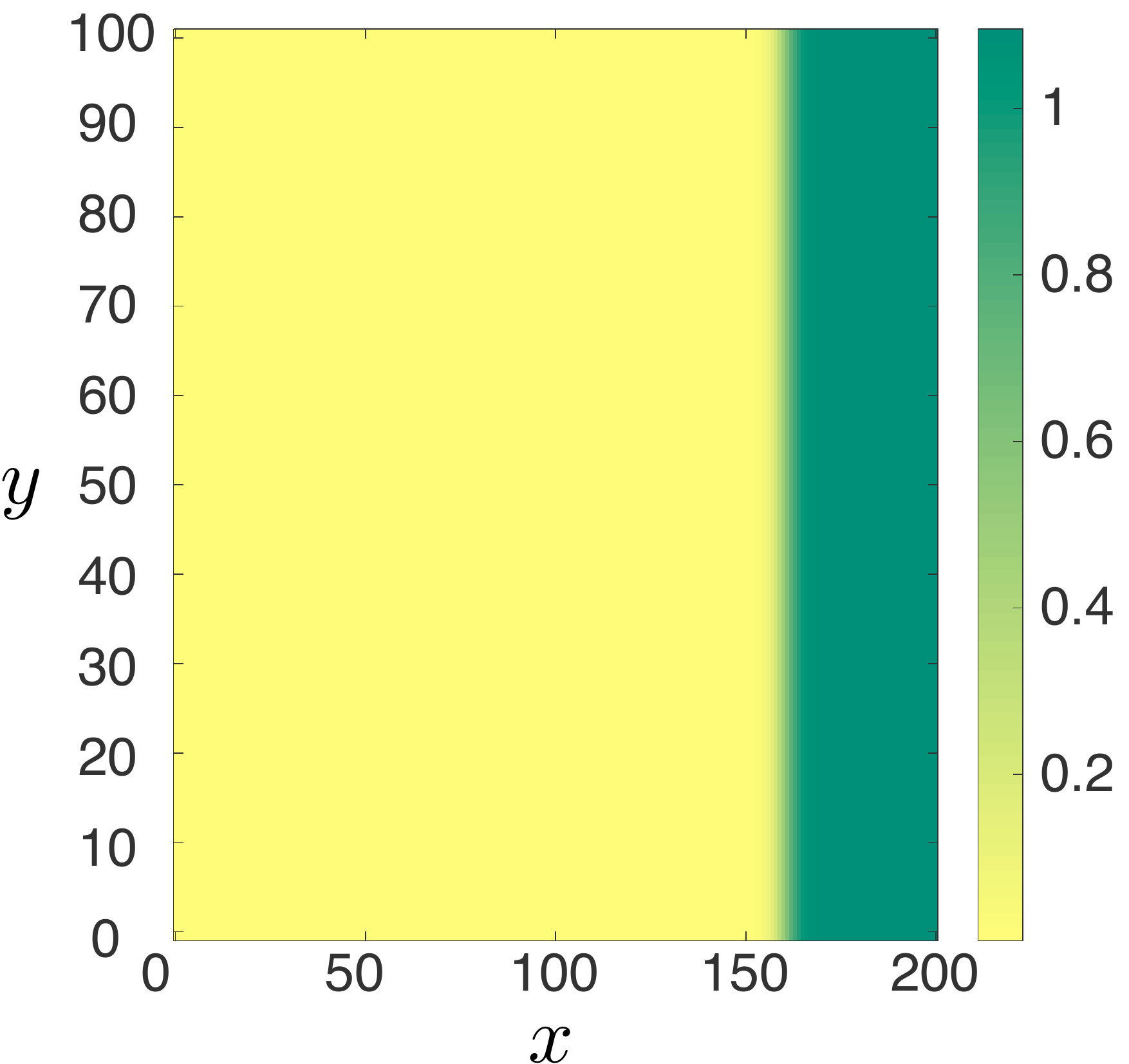}
		\caption{Desert front}
	\end{subfigure}
	\begin{subfigure}[t]{0.18\textwidth}
		\centering
			\includegraphics[width=\textwidth]{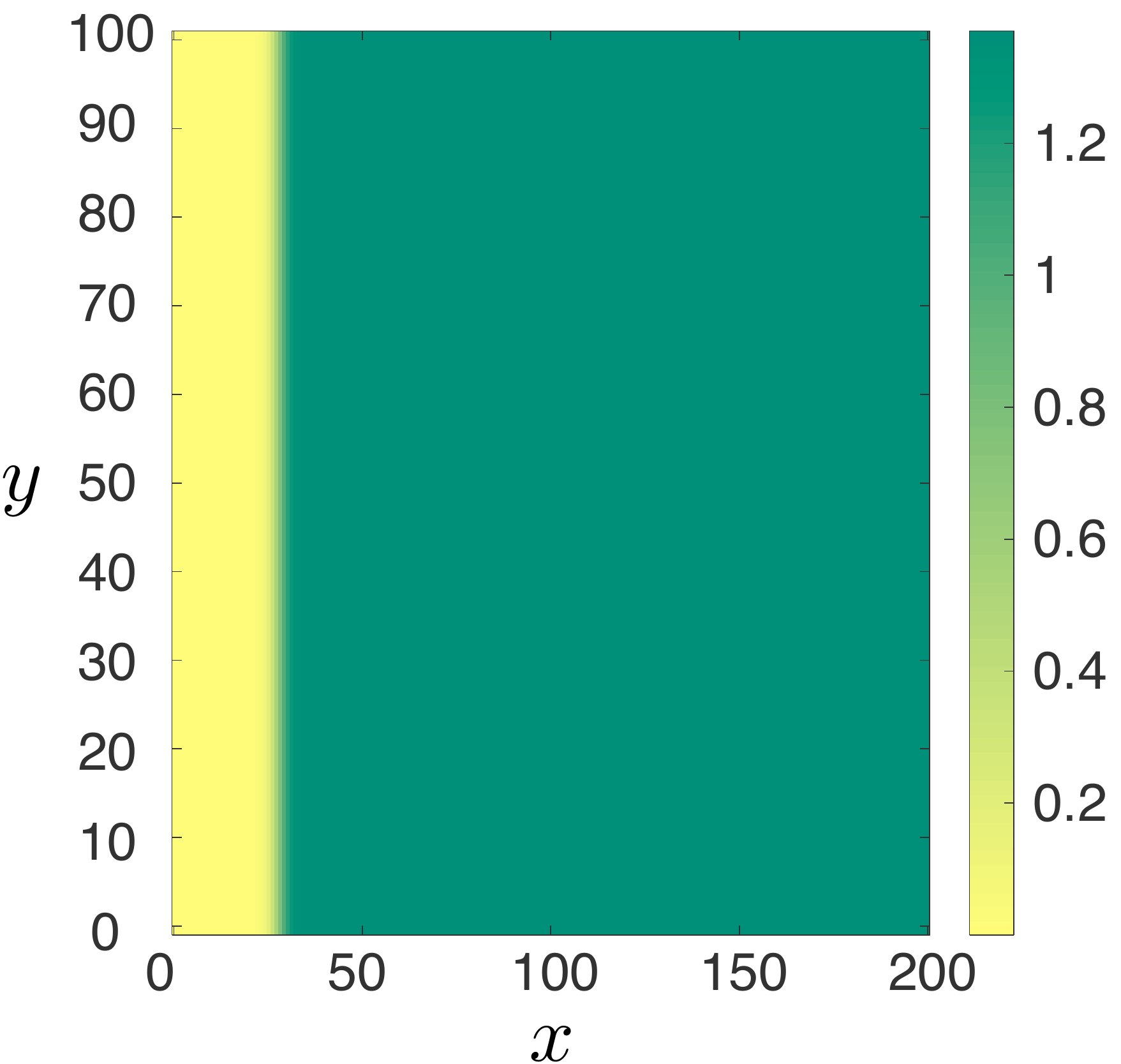}
		\caption{Desert front}
	\end{subfigure}
\caption{Results of direct numerical simulation of the PDE~\eqref{eq:modKlausmeier} for $b = 0.5$, $m = 0.45$, $\varepsilon = 0.01$ and $a = 1.2$ (a,f), $a = 2.0$ (b--d,g--i) or $a = 3.0$ (e,j). Figures a--e show the evolution of a cross section of $v$, i.e. for constant $y$ and figures f--j show the $v(x,y)$ pattern at a specific time. Simulations are run on a finite grid of size $L_x = 200$, $L_y = 100$, accompanied with Neumann boundary conditions for the $y$-direction and either periodic (a--b,f--g) or Neumann (c--e,h--j) boundary conditions in the $x$-direction.}
\label{fig:numericsStraightb0p5}
\end{figure}

\begin{figure}
	\centering
	\begin{subfigure}[t]{0.18\textwidth}
		\centering
			\includegraphics[width=\textwidth]{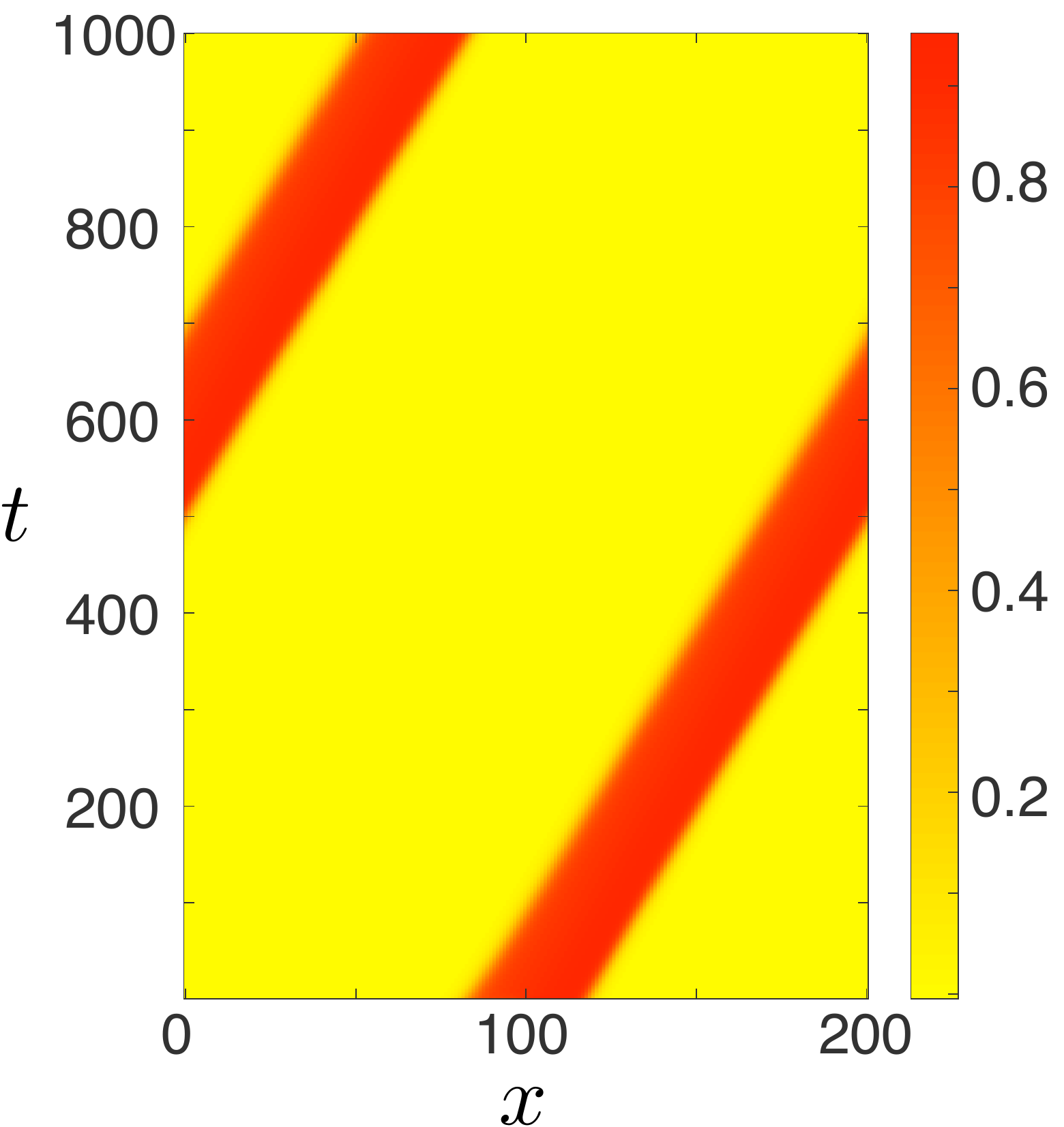}
		\caption{Stripe}
	\end{subfigure}
	\begin{subfigure}[t]{0.18\textwidth}
		\centering
			\includegraphics[width=\textwidth]{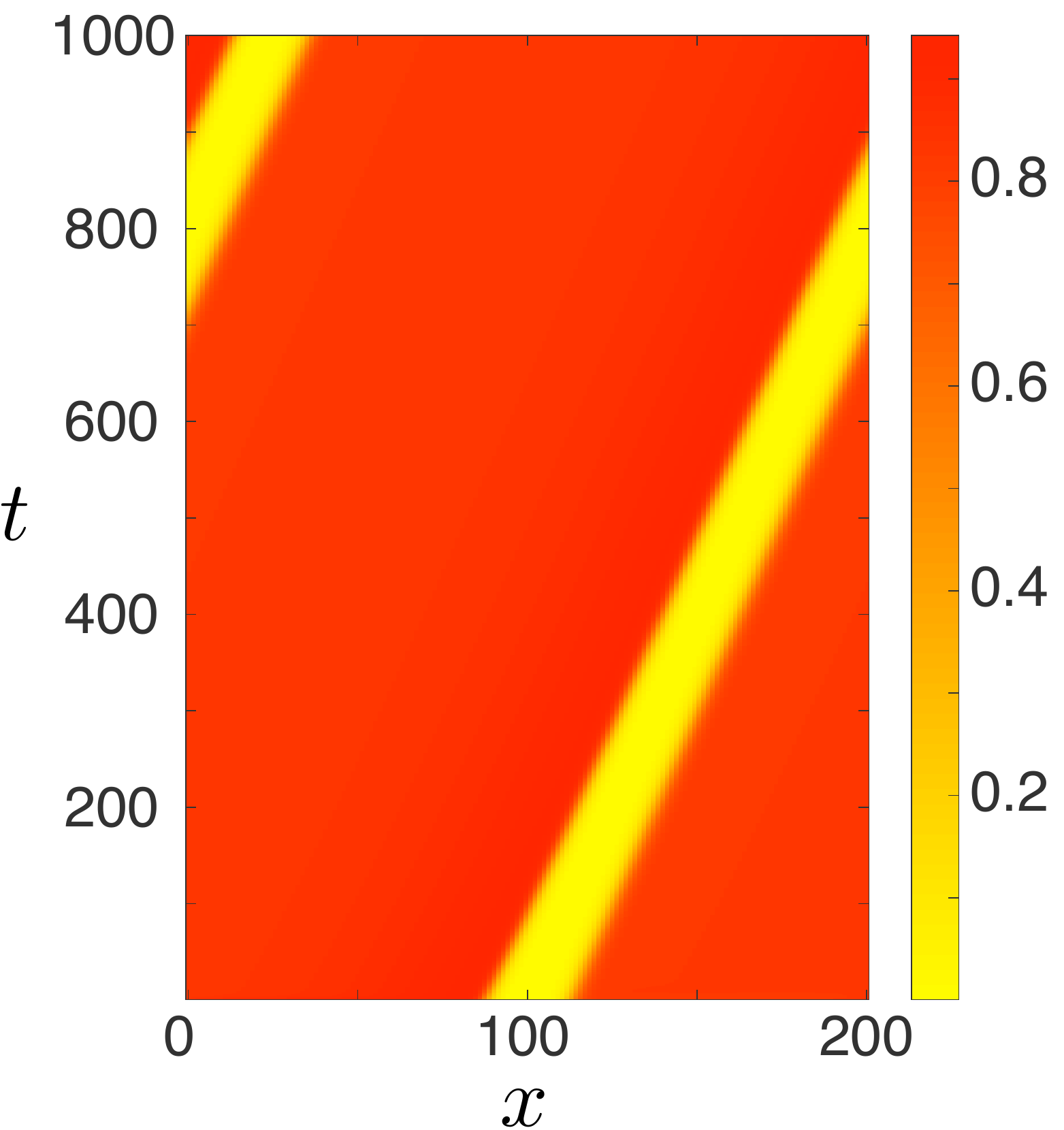}
		\caption{Gap}
	\end{subfigure}
	\begin{subfigure}[t]{0.18\textwidth}
		\centering
			\includegraphics[width=\textwidth]{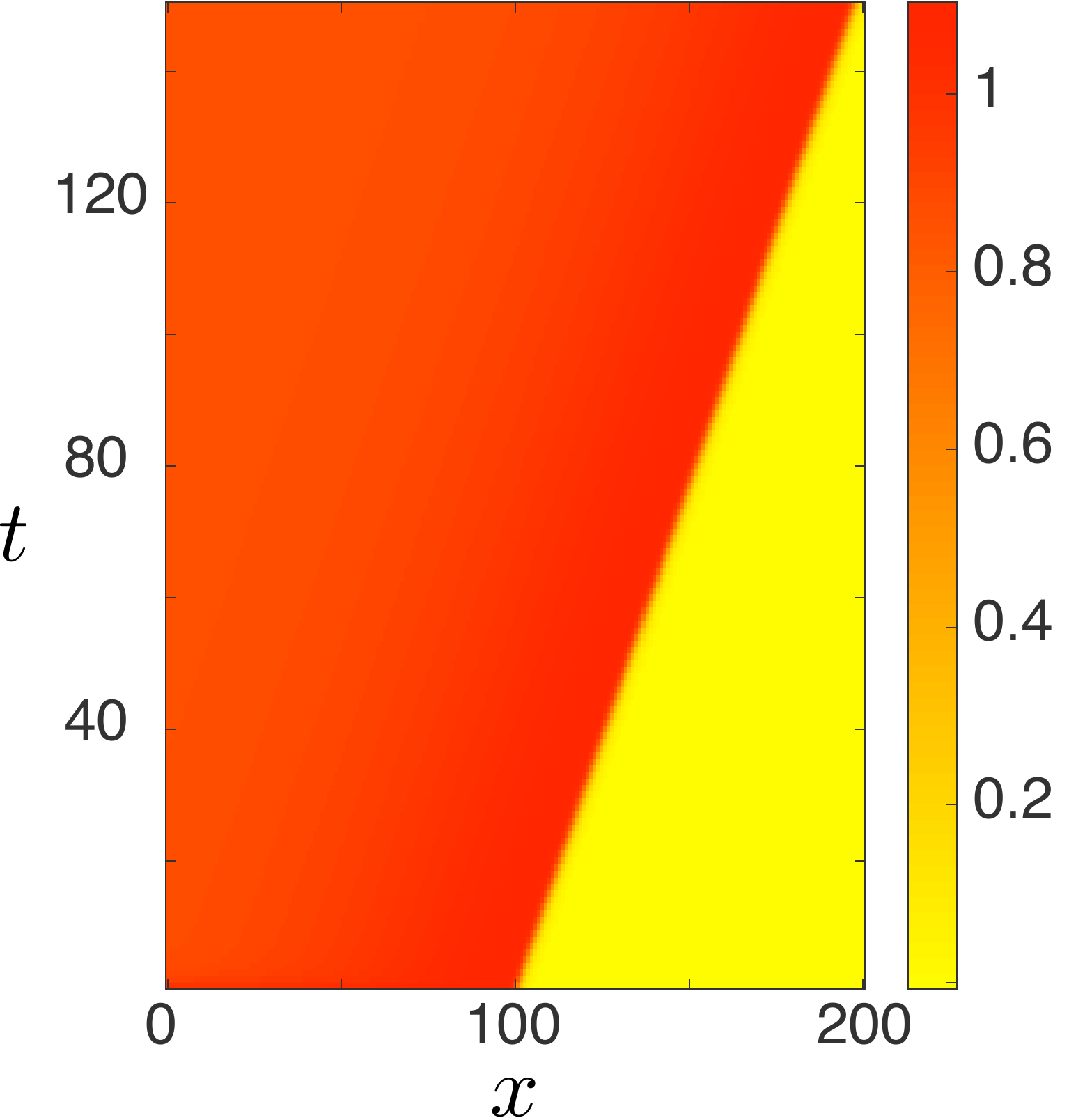}
		\caption{Vegetation front}
	\end{subfigure}
	\begin{subfigure}[t]{0.18\textwidth}
		\centering
			\includegraphics[width=\textwidth]{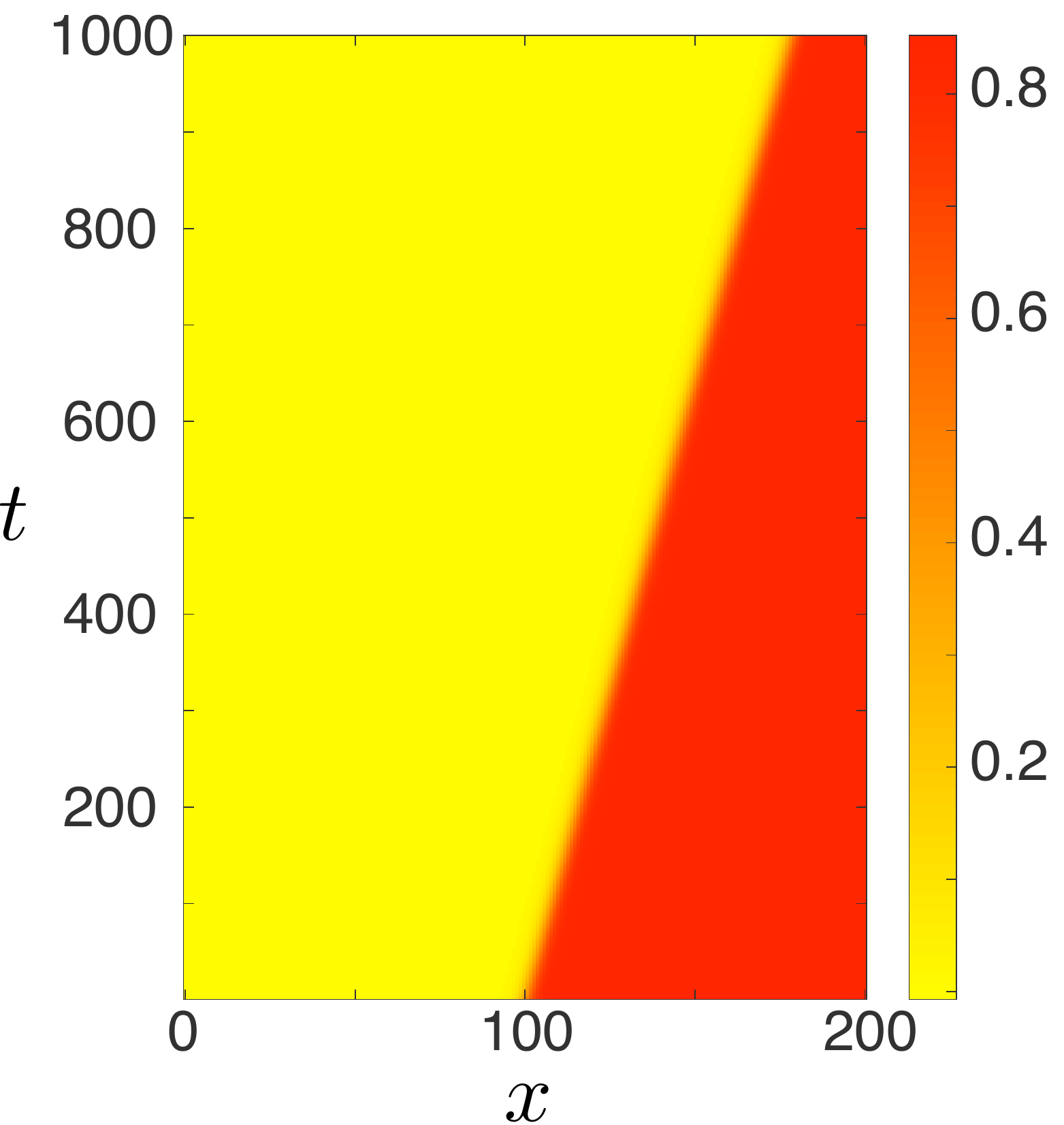}
		\caption{Desert front}
	\end{subfigure}
	\begin{subfigure}[t]{0.18\textwidth}
		\centering
			\includegraphics[width=\textwidth]{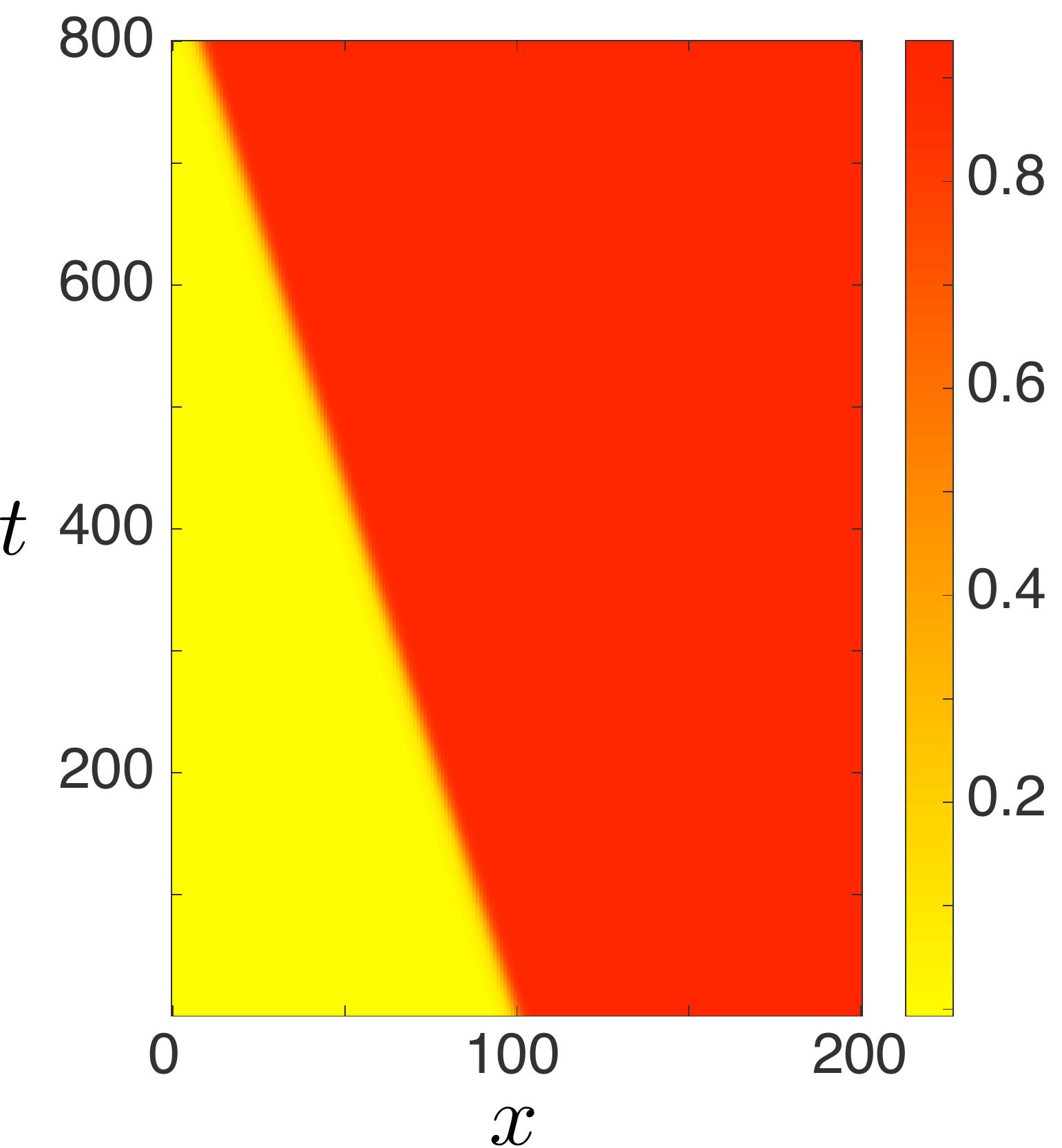}
		\caption{Desert front}
	\end{subfigure}

	\begin{subfigure}[t]{0.18\textwidth}
		\centering
			\includegraphics[width=\textwidth]{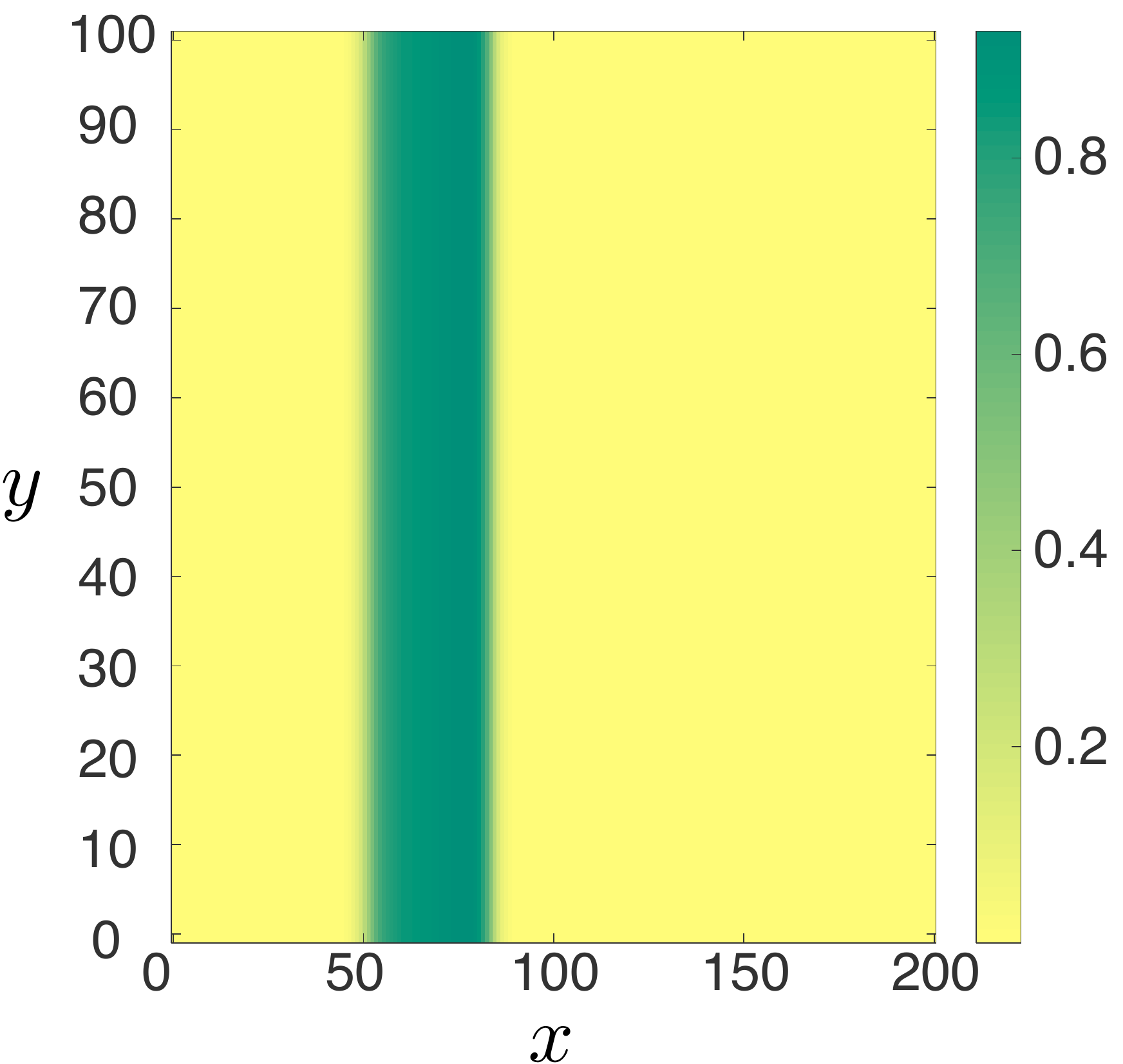}
		\caption{Stripe}
	\end{subfigure}
	\begin{subfigure}[t]{0.18\textwidth}
		\centering
			\includegraphics[width=\textwidth]{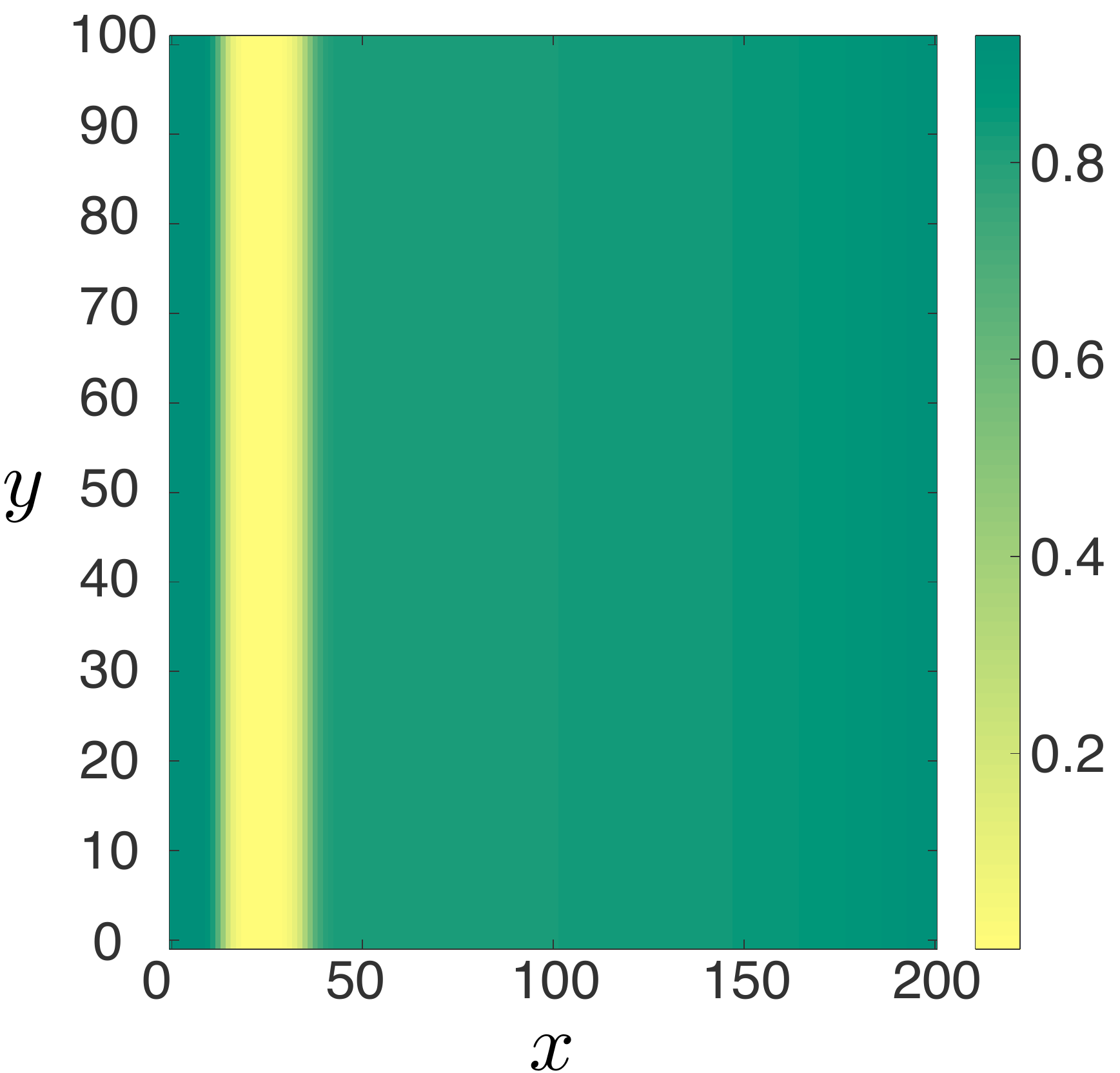}
		\caption{Gap}
	\end{subfigure}
	\begin{subfigure}[t]{0.18\textwidth}
		\centering
			\includegraphics[width=\textwidth]{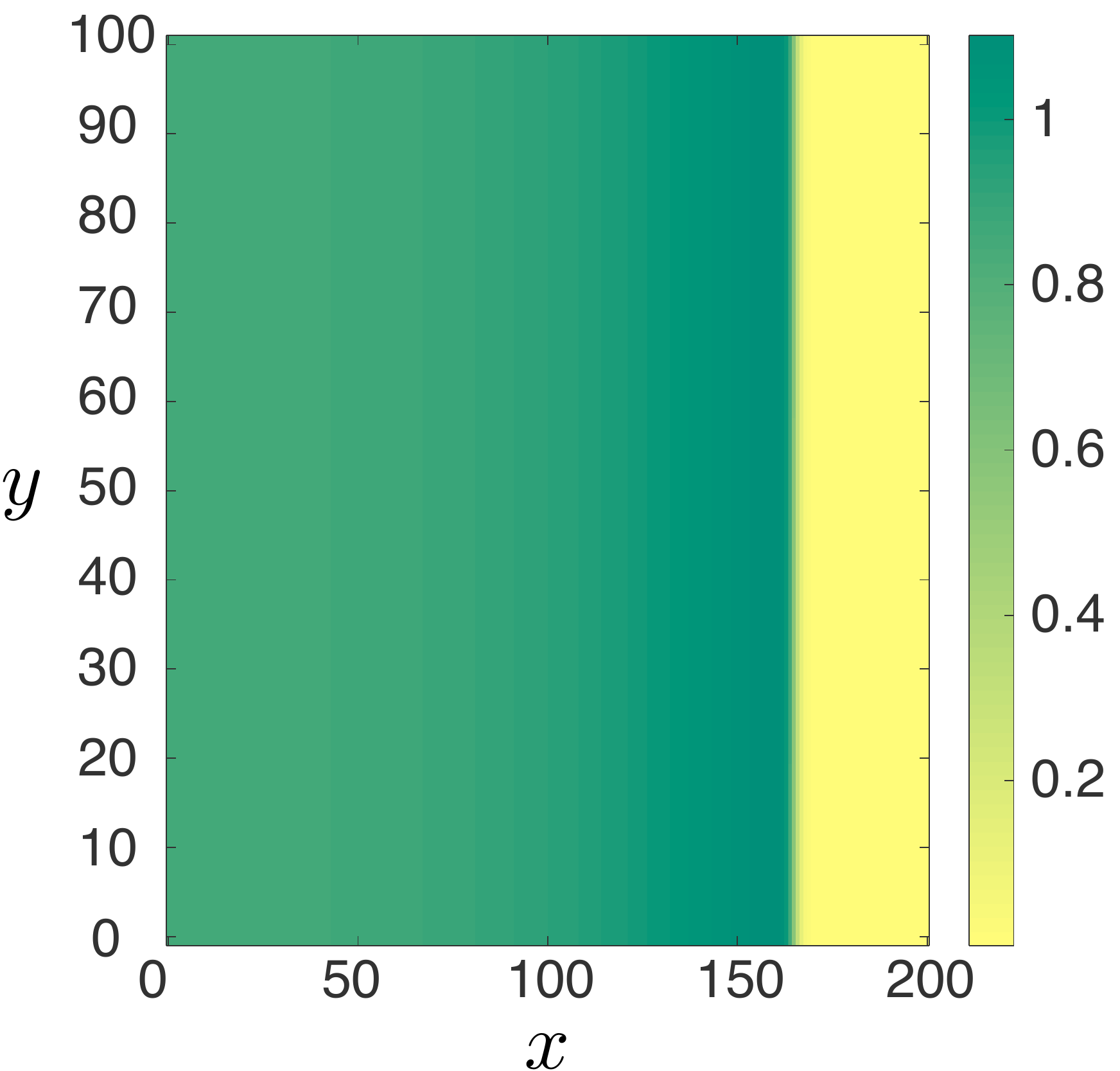}
		\caption{Vegetation front}
	\end{subfigure}
	\begin{subfigure}[t]{0.18\textwidth}
		\centering
			\includegraphics[width=\textwidth]{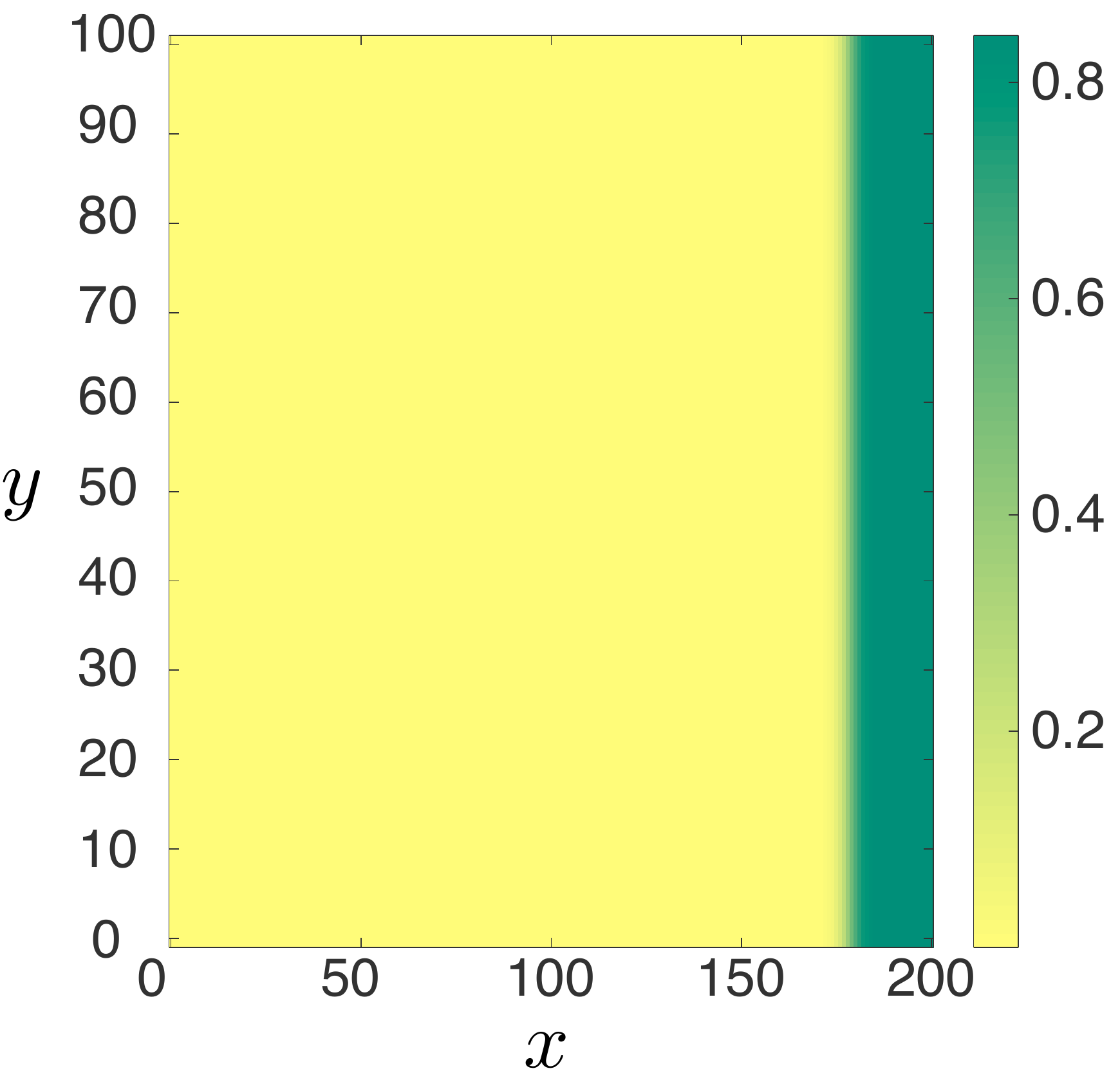}
		\caption{Desert front}
	\end{subfigure}
	\begin{subfigure}[t]{0.18\textwidth}
		\centering
			\includegraphics[width=\textwidth]{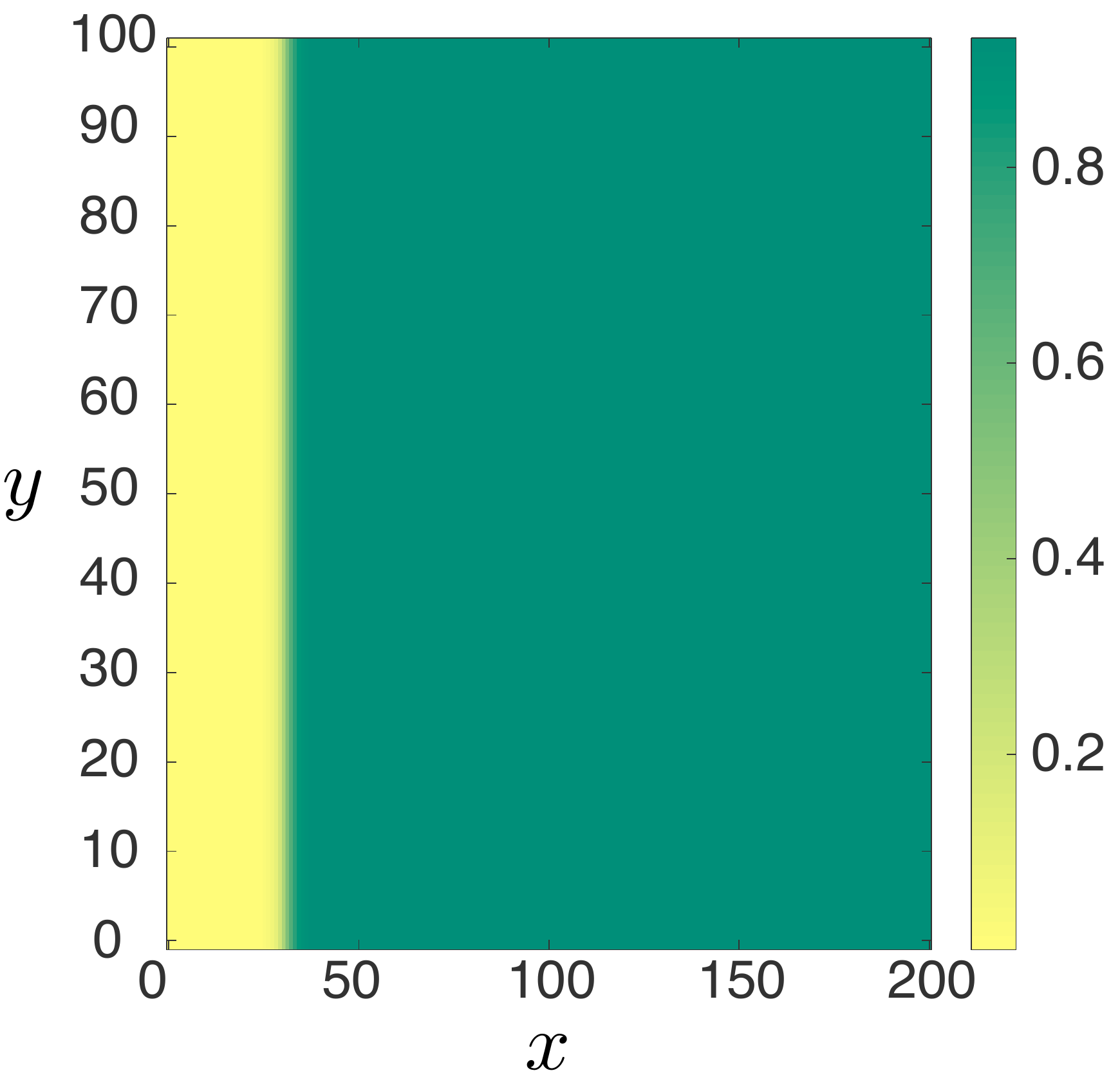}
		\caption{Desert front}
	\end{subfigure}
\caption{Results of direct numerical simulation of the PDE~\eqref{eq:modKlausmeier} for $b = 0.75$, $m = 0.45$, $\varepsilon = 0.01$ and $a = 1.75$ (a,f), $a = 2.4$ (b,g), $a = 2.5$ (c--d,h--i) or $a = 3.0$ (e,j). Figures a--e show the evolution of a cross section of $v$, i.e. for constant $y$ and figures f--j show the $v(x,y)$ pattern at a specific time. Simulations are run on a finite grid of size $L_x = 200$, $L_y = 100$, accompanied with Neumann boundary conditions for the $y$-direction and either periodic (a--b,f--g) or Neumann (c--e,h--j) boundary conditions in the $x$-direction.}
\label{fig:numericsStraightb0p75}
\end{figure}

\begin{figure}

\centering
	\begin{subfigure}[t]{0.18\textwidth}
		\centering
			\includegraphics[width=\textwidth]{Figures/numericsCornerB0p5Stripe}
		\caption{Stripe}
	\end{subfigure}
	\begin{subfigure}[t]{0.18\textwidth}
		\centering
			\includegraphics[width=\textwidth]{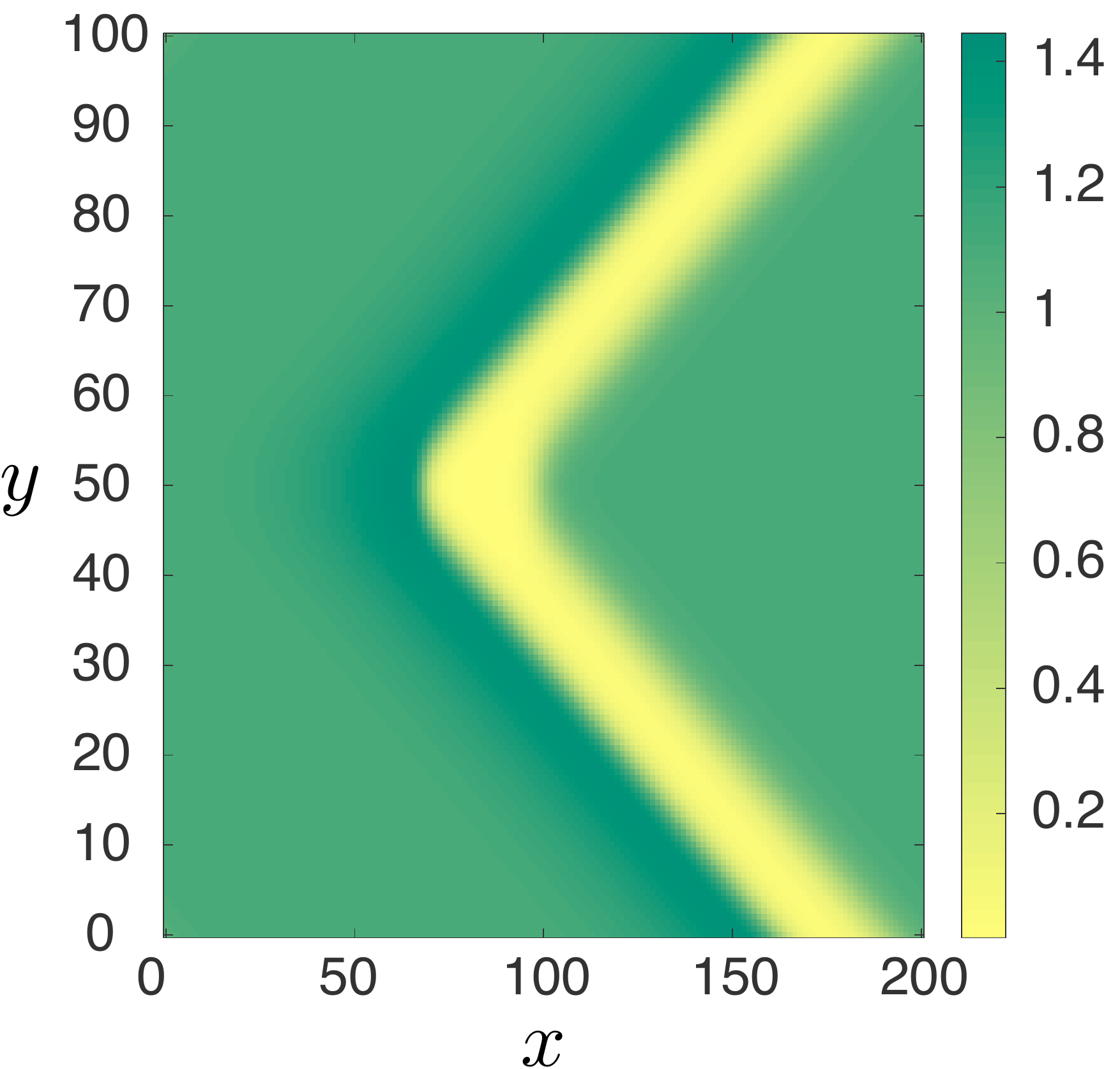}
		\caption{Gap}
	\end{subfigure}
	\begin{subfigure}[t]{0.18\textwidth}
		\centering
			\includegraphics[width=\textwidth]{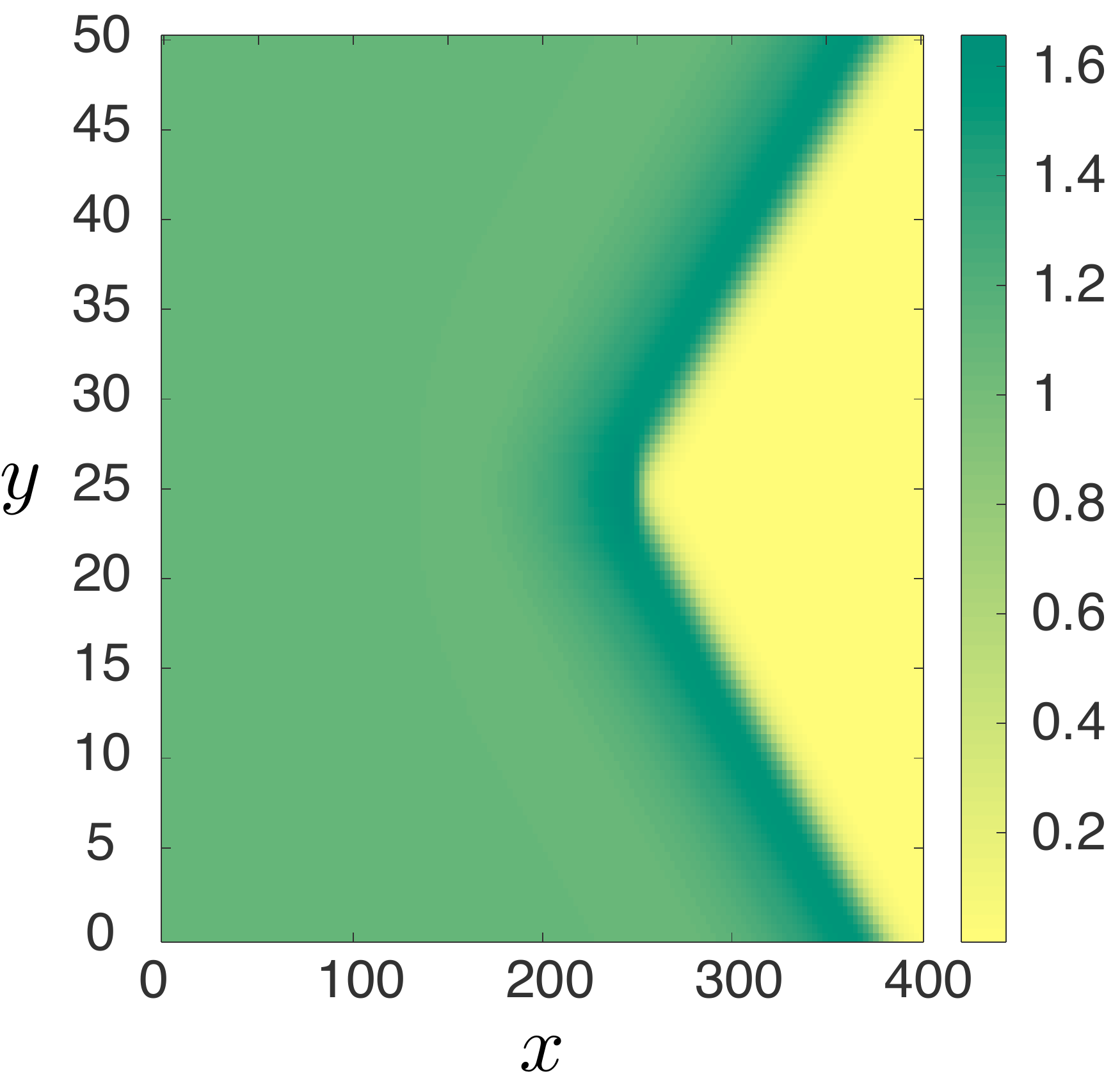}
		\caption{Vegetation front}
	\end{subfigure}
	\begin{subfigure}[t]{0.18\textwidth}
		\centering
			\includegraphics[width=\textwidth]{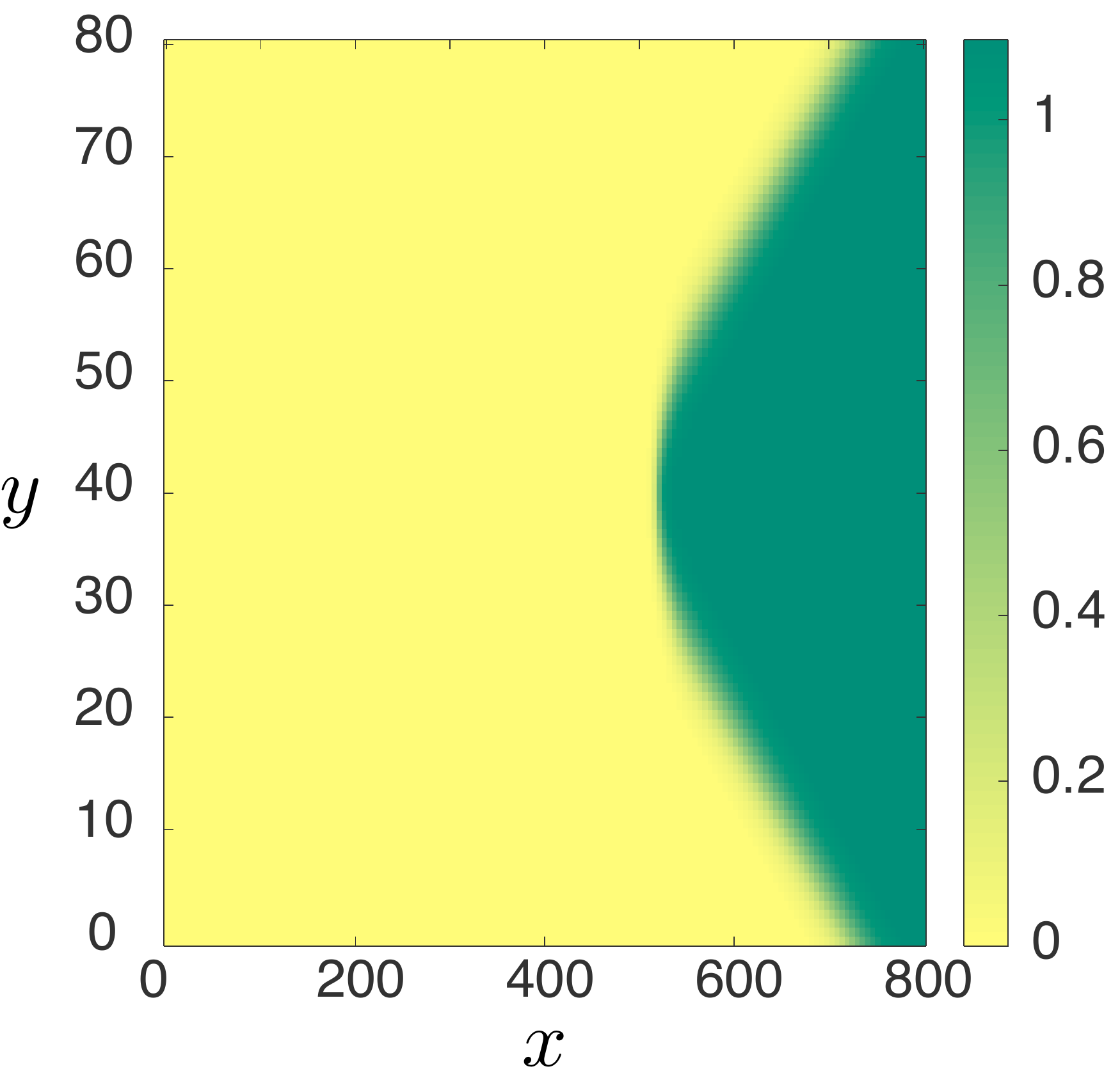}
		\caption{Desert front}
	\end{subfigure}
	\begin{subfigure}[t]{0.18\textwidth}
		\centering
			\includegraphics[width=\textwidth]{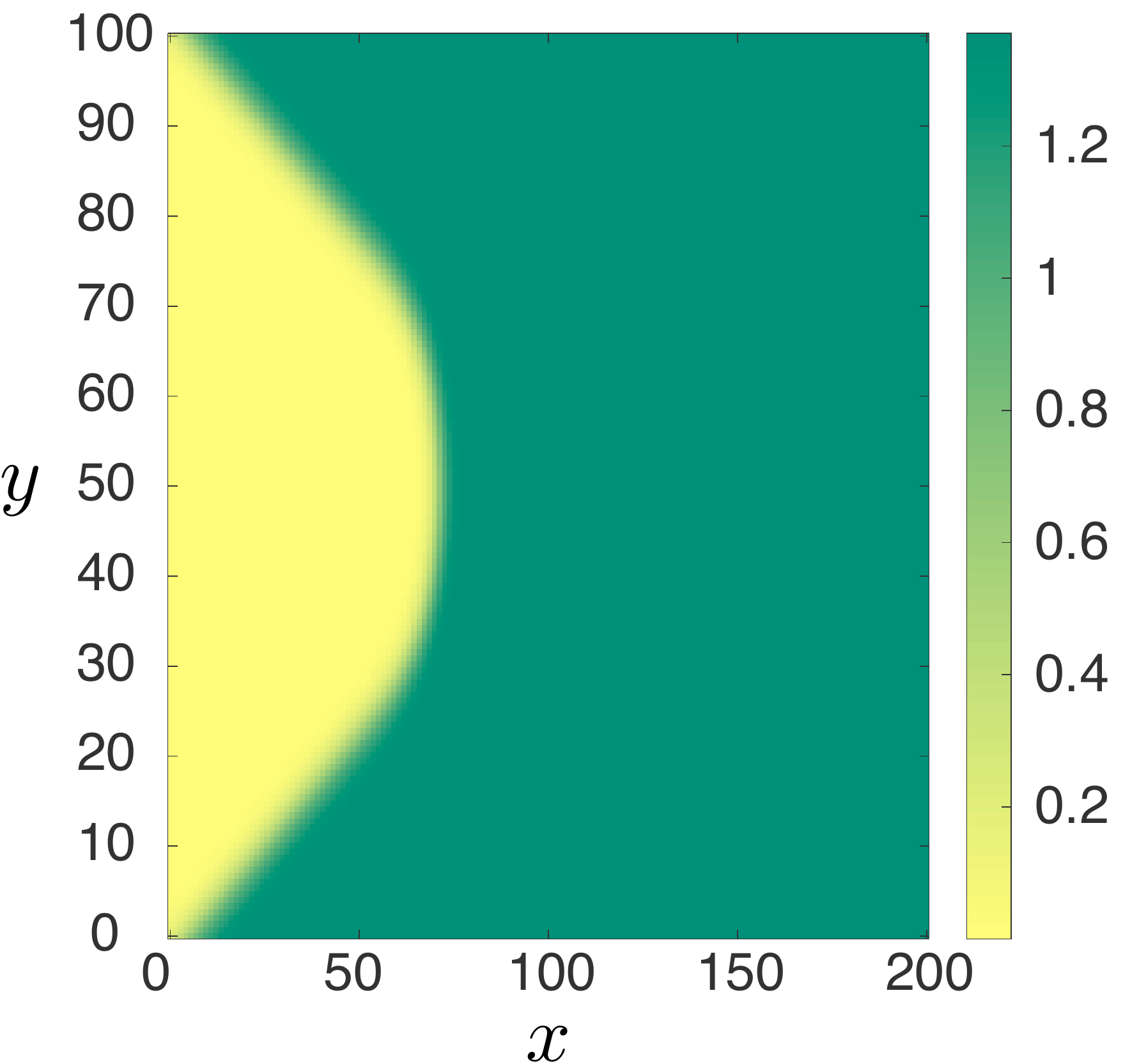}
		\caption{Desert front}
	\end{subfigure}

	\begin{subfigure}[t]{0.18\textwidth}
		\centering
			\includegraphics[width=\textwidth]{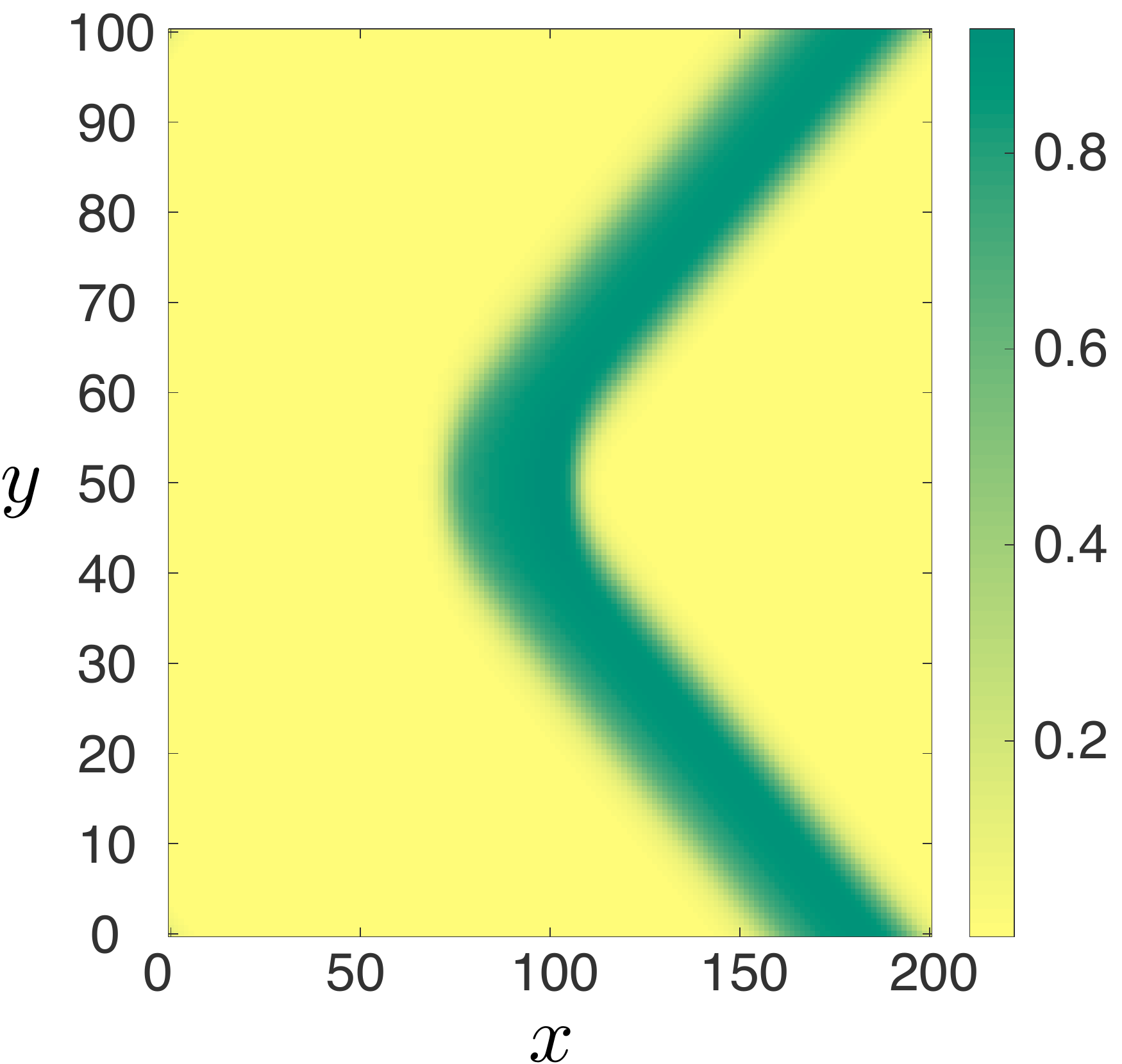}
		\caption{Stripe}
	\end{subfigure}
	\begin{subfigure}[t]{0.18\textwidth}
		\centering
			\includegraphics[width=\textwidth]{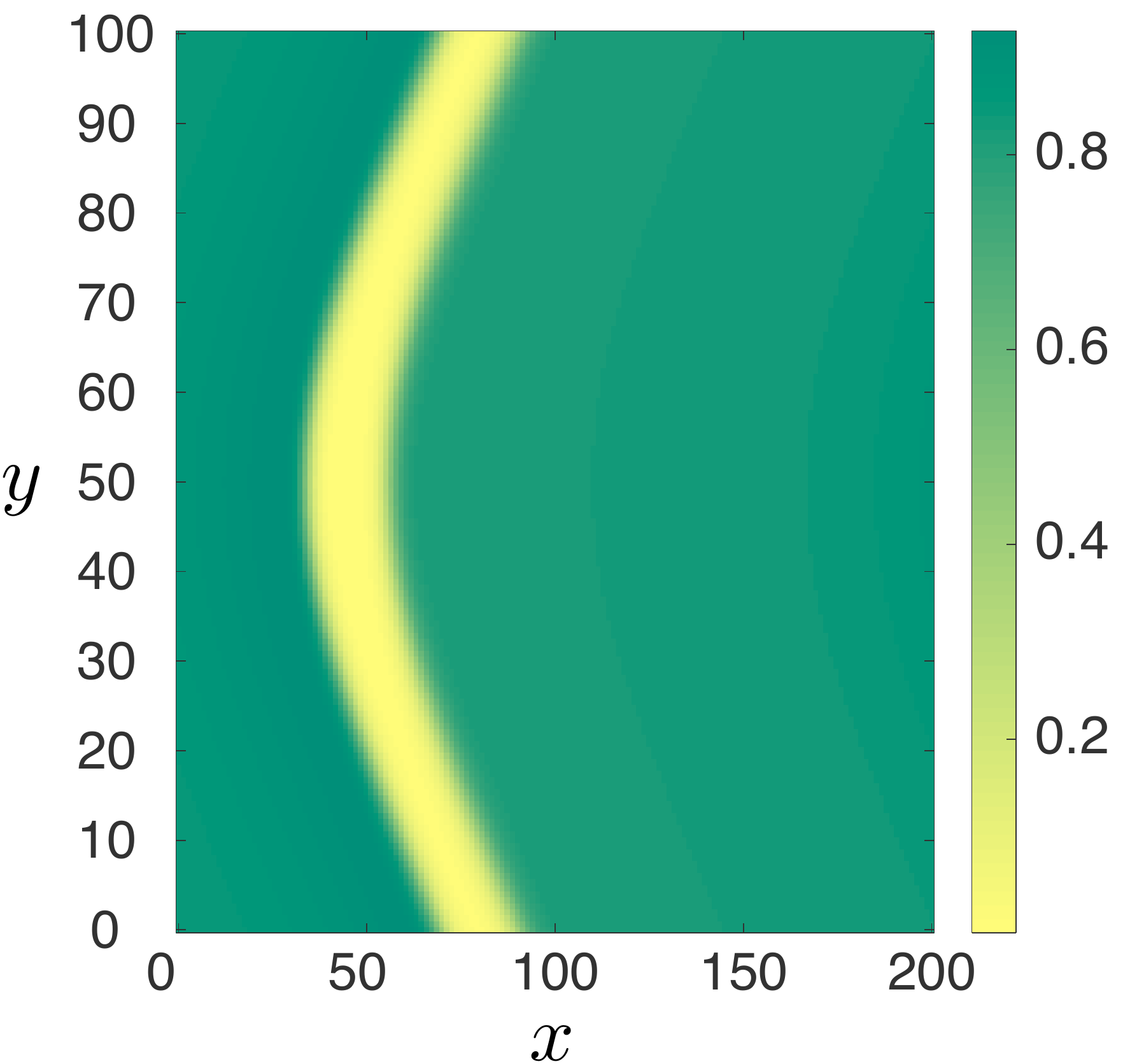}
		\caption{Gap}
	\end{subfigure}
	\begin{subfigure}[t]{0.18\textwidth}
		\centering
			\includegraphics[width=\textwidth]{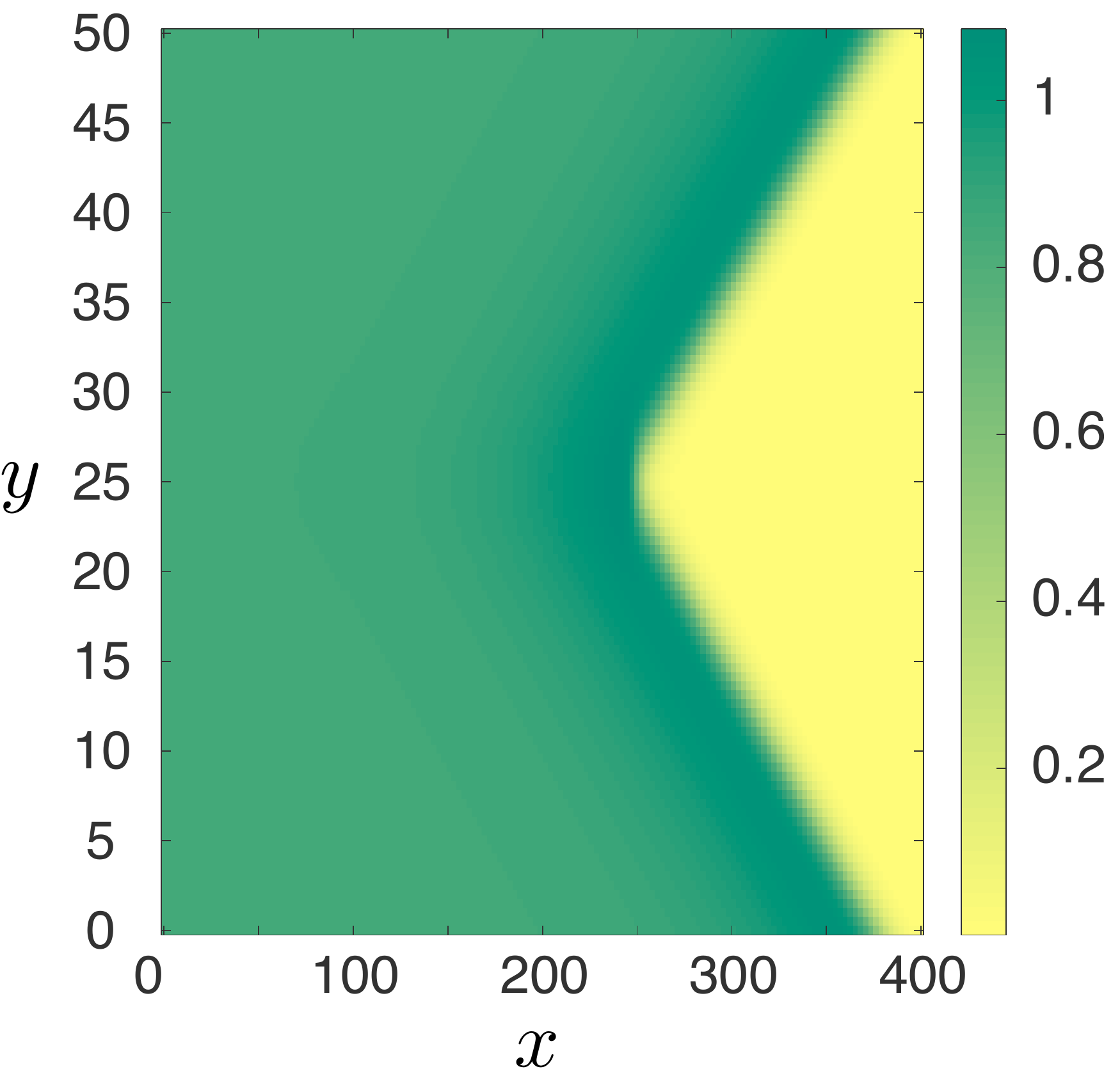}
		\caption{Vegetation front}
	\end{subfigure}
	\begin{subfigure}[t]{0.18\textwidth}
		\centering
			\includegraphics[width=\textwidth]{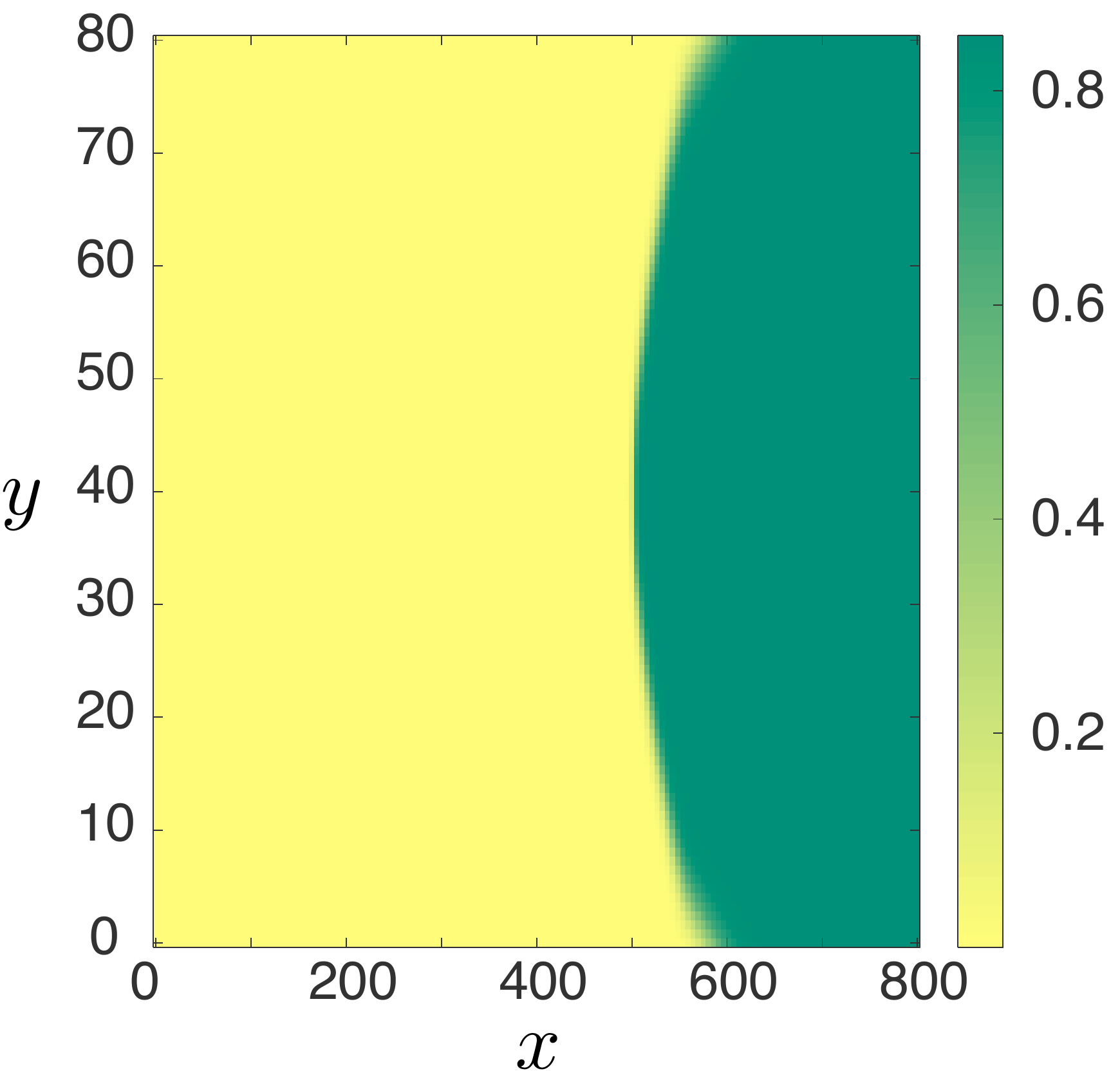}
		\caption{Desert front}
	\end{subfigure}
	\begin{subfigure}[t]{0.18\textwidth}
		\centering
			\includegraphics[width=\textwidth]{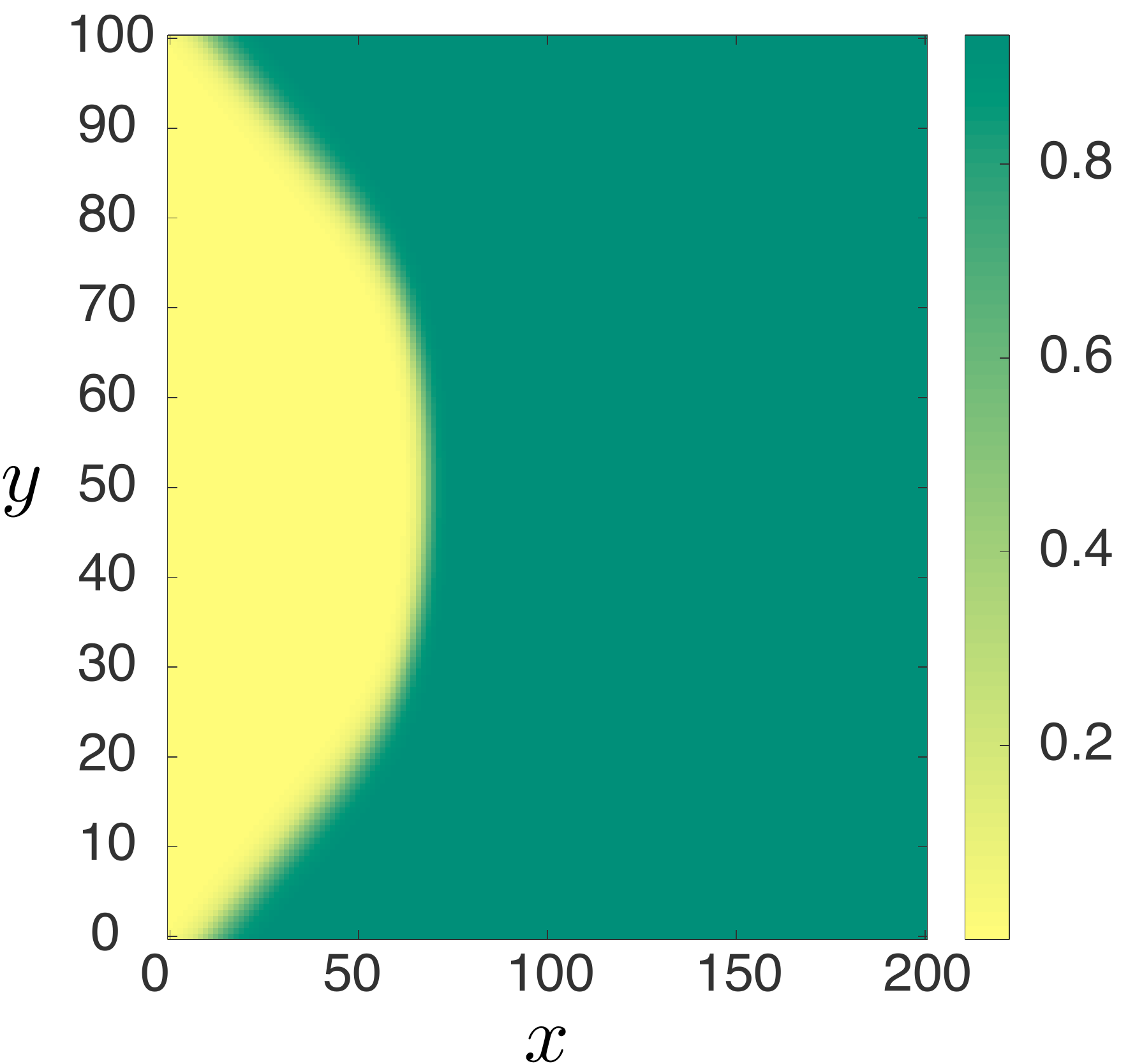}
		\caption{Desert front}
	\end{subfigure}
\caption{$v(x,y)$ configuration of corner solutions in direct numerical simulations of the PDE~\eqref{eq:modKlausmeier} for $m = 0.45$, $\varepsilon = 0.01$, $b = 0.5$ (a--e) or $b = 0.75$ (f--j) and various $a$-values. Simulations are done on a finite grid of various sizes, accompanied with either periodic boundary conditions (a--b,f--g) or Neumann boundary conditions (c--e,h--j) in the $x$-direction and the boundary conditions $v_y(x,L_y) + \alpha v_x(x,L_y) = 0$ and $v_y(x,0) - \alpha u_x(x,0)=0$ in the $y$-direction to accommodate corner solutions, with $\alpha = -1$ (a--d,f,h--i), $\alpha = -0.5$ (g) $\alpha = +1$ (e,j)}
\label{fig:numericsCorners}
\end{figure}

\section{Discussion}\label{sec:discussion}


In this paper we constructed planar traveling stripes, gaps and front-type solutions to the modified Klausmeier model~\eqref{eq:modKlausmeier}. We proved their existence rigorously using geometric singular perturbation methods for a wide range of system parameters $a,b,m$ in the large advection limit $\eps\to0$. We showed that vegetation stripes exist for smaller $a/m$ values, while vegetation gap patterns and front solutions can be found for larger values of $a/m$. For the largest $a/m$ values, stripes and gaps no longer persist, and we find only front-type solutions that correspond to invading vegetation. Contrary to the typical pulse patterns constructed in similar dryland models~\cite{sewaltspatially, BD2018}, the stripes and gaps found in~\eqref{eq:modKlausmeier} are not thin, but have sizable widths -- aligning better with observations of real dryland ecosystems~\cite{von2001diversity, Rietkerk2002, deblauwe2011environmental, gandhi2018influence}.

Furthermore, we showed that all such solutions are $2$D spectrally stable, using exponential dichotomies and Lin's method, based on similar stability analysis of traveling pulse solutions to the FitzHugh--Nagumo equations in~\cite{cdrs}.  We note that, to our knowledge, there are currently no direct results which guarantee nonlinear stability based on spectral stability of traveling wave solutions to~\eqref{eq:modKlausmeier}. Multidimensional nonlinear stability of traveling wave solutions in reaction-diffusion systems, however, has been studied previously~\cite{kapitula1997multidimensional}. By adding a small diffusion term, as in~\eqref{eq:modKlausmeier+D}, we obtain a system which fits into the framework of planar interface propagation studied in~\cite{HS2,HS1}. We expect our results still hold for~\eqref{eq:modKlausmeier+D} using a perturbation argument, provided $D\ll \eps \ll 1$. Further, results relating spectral and nonlinear stability have been found to hold in mixed parabolic-hyperbolic equations such as~\eqref{eq:modKlausmeier} for perturbations in one spatial dimension~\cite{rottmann2012stability}, and we expect that similar results may hold in higher dimensions.  

As far as we are aware, ours is the first construction of $2$D linearly stable traveling stripes in a reaction-diffusion-advection model of vegetation pattern formation. Typically in this class of models, one finds that stripe solutions are stable in $1$D, but destabilize for some range of (small) wavenumbers in $2$D~\cite{siero2015striped, sewaltspatially, doelman2002homoclinic, moyles2016explicitly}. We attribute this phenomenon to the stabilizing effect of the large advection term, as well as the destabilizing effect of water diffusion. By ignoring the diffusion of water and allowing the advection to dominate, the lateral competition for water resources is diminished, and $2$D stability can essentially be reduced to $1$D stability. This is reflected in our stability analysis in which the critical part of the $2$D spectrum is bounded to the left of the $1$D spectrum: In order to compute the $2$D spectrum, a Fourier decomposition in the transverse variable $y$ results in a family of $1$D eigenvalue problems parameterized by the transverse wavenumber $\ell$. These eigenvalue problems can then be solved using the methods of~\cite{cdrs}, and we find that eigenvalues occurring for $\ell\neq0$ can be bounded to the left of those occurring for $\ell=0$, corresponding to the $1$D spectrum. In fact we find that the correspondence is approximately $\lambda \to \lambda-\ell^2$.

An important question is how and why the addition of water diffusion and reduction in the magnitude of the advection term results in instabilities in the resulting patterns. This matches intuition, as water diffusion allows for lateral competition for water resources, which -- if sufficiently large -- could manifest in lateral instabilities. From the mathematical point of view, the onset of these instabilities is not well understood, though we note that one indeed finds lateral instabilities, both numerically and analytically, in similar models where both advection and diffusion are present~\cite{siero2015striped, sewaltspatially, doelman2002homoclinic, moyles2016explicitly}. A natural direction for future research lies in understanding this transition, and in particular the precise relation between the water diffusion and advection which determines the boundary for stability. This is likely to be challenging, given that the singular geometries in the advection-dominant case (as in this paper) versus the diffusion-dominant case are wholly distinct. The traveling wave solutions constructed in this work are all based off of singular fast front-type jumps between one-dimensional slow manifolds, much like one finds in the classic FitzHugh--Nagumo equation. However, typically in the diffusion-dominant regime traveling stripe solutions are constructed as perturbations of fast homoclinic orbits which depart and return to the same two-dimensional slow manifold in a four-dimensional singularly perturbed traveling wave equation~\cite{sewaltspatially, doelman2015explicit}. Hence, even the existence of stripe solutions in an intermediate regime is far from clear, as one must understand how the transition between these two geometries occurs. 

Also novel to our results are the implications for the appearance of curved solutions, even in the absence of terrain curvature. These arise as corner defect solutions~\cite{HS2,HS1}, which resemble two angled planar traveling wave solutions which meet along an interface. We find that the speed of the straight planar traveling wave predicts whether the associated corner solutions are oriented convex upslope or downslope. In particular, since all of the traveling stripe and gap solutions we constructed travel in the uphill direction, the corresponding curved stripes and gaps are oriented convex downslope. The planar front solutions, however, can be oriented either convex downslope or upslope depending on parameters. An interesting direction for future research lies in determining the effect of alternative topographies, in particular topographies which can be viewed as perturbations of constantly sloped terrain, which we expect can be studied using similar methods. A natural question is whether such topographies can destabilize stripe patterns or affect the curvature of these patterns. There are several numerical and observational results in this direction~\cite{gandhi2018influence}, but little is known analytically. A first analytical step towards this can be found in~\cite{BCBD}, in which the impact of non-trivial topographies on 1D stripe patterns is studied.

Finally, we remark on the implications of our results for Klausmeier's original equation~\cite{klausmeier1999regular}, which corresponds to infinite carrying capacity, or setting $b=0$ in~\eqref{eq:modKlausmeier}. As discussed in~\S\ref{sec:criticalmanifold} (see Remark~\ref{rem:b0degenerate}), the limit $b\to0$ is highly singular, and our results no longer hold in this regime. Existence of traveling stripes in this case has been obtained in~\cite{CDklausmeier} using geometric singular perturbation theory and blow-up methods to account for passage near a nonhyperbolic slow manifold. Pulse solutions in that setting consist of portions of two slow manifolds, along with a single fast jump. Stability, however, is not known; this is due to the fact that several rescalings and coordinate transformations are required to recover a slow-fast structure in the corresponding traveling wave equation. The result is that the associated reduced eigenvalue problem across the fast jump can no longer be interpreted in terms of the simpler scalar problem for the corresponding front as in~\S\ref{sec:regionr2}, which precludes the application of Sturm-Liouville theory. However, we expect stability to continue to hold in this regime. In particular, the existence of a single fast jump should result in one matching condition, and hence a single critical eigenvalue $\lambda=0$ due to translation invariance. This intuition supported by the fact that the second critical eigenvalue $\tilde{\lambda}_\mathrm{c}$ of Theorem~\ref{thm:matching} satisfies $\tilde{\lambda}_\mathrm{c}\to-\infty$, when naively taking the limit $b\to0$ for fixed $\eps$. Rigorous verification of the stability of traveling stripes in the Klausmeier equation is the subject of ongoing work.

\paragraph{Acknowledgments.} RB was supported by the Mathematics of Planet Earth program of the Netherlands Organization of Scientific Research (NWO). PC gratefully acknowledges support through NSF grant DMS--1815315. 

\appendix
\section{Stability of steady states}\label{sec:stabilitySteadyStates}

To understand the stability of steady states,~\eqref{eq:desertState} and\eqref{eq:uniformSteadyStates}, against homogeneous perturbations, we linearize~\eqref{eq:modKlausmeier} around the steady states by setting $(U,V)(x,t) = (U^*,V^*) + e^{\lambda t} (\bar{U},\bar{V})$, where $(U^*,V^*)$ is the steady state solution. For the desert-state $(U_0,V_0) = (a,0)$ this gives the linearized system
\begin{equation*}
	\lambda \left( \begin{array}{c} \bar{U} \\ \bar{V} \end{array} \right) = \left( \begin{array}{cc} -1 & 0 \\ 0 & -m \end{array} \right) \left( \begin{array}{c} \bar{U} \\ \bar{V} \end{array} \right).
\end{equation*}
Thus the corresponding eigenvalues are $\lambda = -1 < 0$ and $\lambda = -m < 0$. Both eigenvalues are negative and thus the desert-state $(U_0,V_0) = (a,0)$ is stable against homogeneous perturbations for all parameter values.

Linearization around the other steady states $(U_{1,2},V_{1,2})$ yields the eigenvalue problem
\begin{equation}
	\lambda \left( \begin{array}{c} \bar{U} \\ \bar{V} \end{array} \right) = M \left( \begin{array}{c} \bar{U} \\ \bar{V} \end{array} \right); \qquad M := \left( \begin{array}{cc} -1-V_{1,2}^2 & -2 U_{1,2} V_{1,2} \\ (1 - b V_{1,2}) V_{1,2}^2 & -m+(2-3bV_{1,2}) U_{1,2} V_{1,2} \end{array} \right). \label{eq:ussStabilityProblem}
\end{equation}
The determinant of the matrix on the right-hand side can be computed as
\begin{equation*}
	\det M = \frac{-1 + 2 b V_{1,2} + V_{1,2}^2}{1-b V_{1,2}}\ m.
\end{equation*}
From this, it can be found that the determinant is negative when $V_{1,2} < -b + \sqrt{1+b^2}$ and positive when $V_{1,2} > -b + \sqrt{1+b^2}$. Using~\eqref{eq:uniformSteadyStates}, one can readily obtain that $V_1 < -b + \sqrt{1+b^2}$ and $V_2 > -b + \sqrt{1+b^2}$. Hence the uniform steady state $(U_1,V_1)$ necessarily has a positive eigenvalue and therefore this steady state is unstable. To determine the stability for $(U_2,V_2)$ we need to determine the trace of the matrix $M$. Straightforward computation using the expressions~\eqref{eq:uniformSteadyStates} yields:
\begin{equation*}
	\mbox{Tr}\ \mathcal{M} = -1 - V_2^2 + m\frac{1-2bV_{1,2}} {1-bV_{2}},
\end{equation*}
which we note is always negative if $V_2>\frac{1}{2b}$, corresponding to the condition $\frac{a}{m}>4b+\frac{1}{b}$, and hence the state $(U_2,V_2)$ is stable to homogeneous perturbations in this regime.

\section{Absence of point spectrum in $R_2(\delta,M)$}\label{app:r2proof}

In this section, we complete the proof of Proposition~\ref{propR2}, and show that the region $R_2(\delta,M)$ contains no eigenvalues $\tilde{\lambda}$ of~\eqref{eq:Wevalproblem}.
\begin{proof}[Proof of Proposition~\ref{propR2}]
Following the argument outlined in~\ref{sec:regionr2}, we note that the translated derivative $e^{\eta \xi}\phi_j'(\xi)$ is an exponentially localized solution to~\eqref{eq:Wreducedinvariant2} at $\tilde{\lambda} = 0$, which admits no zeros. Therefore, by Sturm-Liouville theory~\cite[Theorem 2.3.3]{KAP},~\eqref{eq:Wreducedinvariant2} admits no bounded solutions for $\tilde{\lambda} \in R_2(\delta,M)$. Thus, for $\tilde{\lambda} \in R_2(\delta,M)$~\eqref{eq:Wreducedinvariant2} admits an exponential dichotomy on $\R$ with constants $C, \mu > 0$  independent of $\tilde{\lambda}\in R_2(\delta,M)$. Exploiting the lower triangular structure of system~\eqref{eq:Wevalproblemreduced2} the exponential dichotomy of~\eqref{eq:Wreducedinvariant2} can be extended to the  system~\eqref{eq:Wevalproblemreduced2} using variation of constants formulae. We denote the corresponding projections by $Q_{j}^{\mathrm{u},\mathrm{s}}(\xi;\tilde{\lambda})$ for $j = \dagger,\diamond$.

We now consider the eigenvalue problem~\eqref{eq:Wevalproblem} as a perturbation of~\eqref{eq:Wevalproblemreduced}. By Theorem~\ref{thm:existencebounds}, we have that
\begin{align}
\begin{split}
|A_\eta(\xi;\tilde{\lambda},\ell,\eps) - A_{\dagger,\eta}(\xi;\tilde{\lambda})| &=\mathcal{O}(\eps |\!\log \eps |), \quad \xi \in [-L_\eps,L_\eps],\\
|A_\eta(Z_\eps+\xi;\tilde{\lambda},\ell,\eps) - A_{\diamond,\eta}(\xi,\tilde{\lambda})| &=\mathcal{O}(\eps |\!\log \eps |), \quad \xi \in [-L_\eps,\infty).
\end{split}\label{eq:AboundR2}\end{align}
 Denote by $P_j^{\mathrm{u},\mathrm{s}}(\tilde{\lambda})$ the spectral projection onto the (un)stable eigenspace of the asymptotic matrices $A_{j,\eta}^{\pm\infty}(\tilde{\lambda})=\lim_{\xi\to\pm \infty}A_{j,\eta}(\xi;\tilde{\lambda})$ of~\eqref{eq:Wevalproblemreduced}. We note that $ A_{j,\eta}(\xi;\tilde{\lambda})$ converges at an exponential rate to the asymptotic matrix $A_{j,\eta}^{\infty}(\tilde{\lambda})$ as $\xi \to \infty$. Hence, the projections $Q_{j}^{\mathrm{u},\mathrm{s}}(\pm \xi,\tilde{\lambda})$ satisfy
\begin{align} \|Q_{j}^{\mathrm{u},\mathrm{s}}(\pm \xi,\tilde{\lambda}) - P_{j,\pm}^{\mathrm{u},\mathrm{s}}(\tilde{\lambda})\| \leq Ce^{-\tilde{\mu}\xi}, \quad j = \dagger,\diamond, \label{eq:spectralN}\end{align}
for $\xi \geq 0$ for some $\tilde{\mu}>0$ (see for instance~\cite[Lemma 3.4]{PAL}). Using~\eqref{eq:AboundR2} and roughness~\cite[Theorem 2]{COP2}, we obtain exponential dichotomies for~\eqref{eq:Wevalproblem} on $I_\dagger$ and $I_\diamond$ with constants $C,\tfrac{\mu}{2} > 0$ independent of $\tilde{\lambda}\in R_2(\delta,M)$ and projections $\mathcal{Q}_{j}^{\mathrm{u},\mathrm{s}}(\xi;\tilde{\lambda},\eps)$, which satisfy
\begin{align}
\begin{split}
\|\mathcal{Q}_{j}^{\mathrm{u},\mathrm{s}}(\xi;\tilde{\lambda},\eps) - Q_\dagger^{\mathrm{u},\mathrm{s}}(\xi,\tilde{\lambda})\| &\leq C\eps|\!\log \eps |, \\
\|\mathcal{Q}_{j}^{\mathrm{u},\mathrm{s}}(Z_\eps+\xi;\tilde{\lambda},\eps) - Q_\diamond^{\mathrm{u},\mathrm{s}}(\xi,\tilde{\lambda})\| &\leq C\eps|\!\log \eps |,
\end{split}
\label{eq:dichbound}\end{align}
for $|\xi| \leq L_\eps$.

By Proposition~\ref{prop:slowexpdich} system~\eqref{eq:Wevalproblem} admits exponential dichotomies on $I_\ell = (-\infty,L_\eps]$ and $I_r = [L_\eps,Z_{a,\eps} - L_\eps]$  with projections $\mathcal{Q}_{r,\ell}^{\mathrm{u},\mathrm{s}}(\xi;\tilde{\lambda},\eps)$, which satisfy
\begin{align}
\begin{split}
\left\|[\mathcal{Q}_{\ell}^\mathrm{s} - \mathcal{P}](-L_\eps;\tilde{\lambda},\eps)\right\|, \left\|[\mathcal{Q}_{r}^\mathrm{s} - \mathcal{P}](Z_{a,\eps} - L_\eps;\tilde{\lambda},\eps)\right\|, \left\|[\mathcal{Q}_{r}^\mathrm{s} - \mathcal{P}](Z_{a,\eps} + L_\eps;\tilde{\lambda},\eps)\right\| &\leq C\eps|\!\log \eps |,
\end{split} \label{eq:slowprojbound}
\end{align}
where $\mathcal{P}(\xi;\tilde{\lambda},\eps)$ denotes the spectral projection onto the stable eigenspace of $A_\eta(\xi;\tilde{\lambda},\ell,\eps)$.

We now compare the exponential dichotomies for~\eqref{eq:Wevalproblem} constructed on each of the intervals $I_\ell, I_\dagger,I_r,I_\diamond$ at the endpoints of the intervals. Recall that $A_{j,\eta}(\xi;\tilde{\lambda})$ converges at an exponential rate to the asymptotic matrix $A_{j,\eta}^{\pm\infty}(\tilde{\lambda})$ as $\xi \to \pm \infty$ for $j = \dagger,\diamond$. Recalling~\eqref{eq:AboundR2}, we have that
\begin{align*}
|A_\eta(\pm L_\eps;\tilde{\lambda},\ell,\eps) - A_{\dagger,\eta}^{\pm \infty}(\tilde{\lambda})|,  |A_\eta(Z_\eps+L_\eps;\tilde{\lambda},\ell, \eps) - A_{\diamond,\eta}^{-\infty}(\tilde{\lambda})| &\leq C\eps|\!\log \eps |.\end{align*}
By continuity the same bound holds for the spectral projections associated with these matrices. Combining this with~\eqref{eq:spectralN}--\eqref{eq:slowprojbound} we obtain
\begin{align}
\begin{split}
\left\|[\mathcal{Q}_{\ell}^{\mathrm{u},\mathrm{s}} - \mathcal{Q}_\dagger^{\mathrm{u},\mathrm{s}}](-L_\eps;\tilde{\lambda},\eps)\right\|, \left\|[\mathcal{Q}_{r}^{\mathrm{u},\mathrm{s}} - \mathcal{Q}_\dagger^{\mathrm{u},\mathrm{s}}](L_\eps;\tilde{\lambda},\eps)\right\|, \left\|[\mathcal{Q}_{r}^{\mathrm{u},\mathrm{s}} - \mathcal{Q}_\diamond^{\mathrm{u},\mathrm{s}}](Z_\eps-L_\eps;\tilde{\lambda},\eps) \right\| &\leq C\eps|\!\log \eps |.
\end{split} \label{eq:slowboundR2}
\end{align}

Let $\psi(\xi)$ be an exponentially localized solution to~\eqref{eq:Wevalproblem} at some $\tilde{\lambda} \in R_2(\delta,M)$. This implies $\mathcal{Q}_\ell^\mathrm{s}(-L_\eps;\tilde{\lambda},\eps)\psi(-L_\eps) = 0$. By for instance~\cite[Lemma 6.10]{HOLZ} or~\cite[Lemma 6.19]{cdrs}, we have that
\begin{align} |\mathcal{Q}_{r}^\mathrm{s}(L_\eps;\tilde{\lambda},\eps)\psi(L_\eps)| \leq C\eps|\!\log \eps ||\mathcal{Q}_{r}^\mathrm{u}(L_\eps;\tilde{\lambda},\eps)\psi(L_\eps)|, \label{elephant2}\end{align}
using~\eqref{eq:slowboundR2}. Again using~\cite[Lemma 6.19]{cdrs} and~\eqref{eq:slowboundR2} to obtain a similar inequality at the endpoint $Z_{a,\eps} - L_\eps$, we obtain
\begin{align*}  |\mathcal{Q}_{\ell}^\mathrm{s}(Z_{a,\eps} - L_\eps;\tilde{\lambda},\eps)\psi(Z_{a,\eps} - L_\eps)|| \leq C\eps|\!\log \eps ||\mathcal{Q}_{\ell}^\mathrm{u}(Z_{a,\eps} - L_\eps;\tilde{\lambda},\eps)\psi(Z_{a,\eps} - L_\eps)| = 0,\end{align*}
since we assumed $\psi(\xi)$ is exponentially localized. Hence, any exponentially localized solution $\psi(\xi)$ to~\eqref{eq:Wevalproblem} is the trivial solution.\end{proof}

\bibliographystyle{abbrv}
\bibliography{bibfile}

\end{document}